
\RequirePackage[l2tabu, orthodox]{nag}

\documentclass[letterpaper,pagesize=auto,twoside,openright,headsepline,bibliography=totoc,listof=totoc]{scrbook}

\usepackage{scrhack}

\usepackage[all,warning]{onlyamsmath}


\usepackage[pass]{geometry}

\usepackage[tracking=true]{microtype}

\usepackage{amsmath,amsthm,amssymb,amsfonts}

\usepackage{fixltx2e}

\usepackage[T1]{fontenc}
\usepackage[utf8]{inputenc}

\usepackage{varioref}

\usepackage{booktabs, longtable}

\usepackage{listings}

\clubpenalty = 1000
\widowpenalty = 1200
\displaywidowpenalty = 1000

\usepackage[full]{textcomp}



\usepackage[osf]{mathpazo}
\linespread{1.05}






\KOMAoptions{DIV=calc}

\usepackage{mathtools}

\usepackage[style=alphabetic,backend=bibtex]{biblatex}
\addbibresource{../bib/ref.bib}

\usepackage[french,ngerman,english]{babel}
\usepackage[babel,german=swiss]{csquotes}

\usepackage{ellipsis}


\usepackage[final]{graphicx}


\pdfpageattr {/Group << /S /Transparency /I true /CS /DeviceRGB>>}
\usepackage[cmyk]{xcolor}

\usepackage{color}

\usepackage{faktor}

\usepackage{xfrac}

\usepackage{tabularx}



\theoremstyle{plain}
\newtheorem{theorem}{Theorem}[section]
\newtheorem*{theorem*}{Theorem}
\newtheorem{lemma}[theorem]{Lemma}
\newtheorem{proposition}[theorem]{Proposition}
\newtheorem{corollary}[theorem]{Corollary}
\newtheorem*{SMITH}{Smith Conjecture}
\newtheorem*{LOOP}{Loop Theorem}
\newtheorem*{SPHERE}{Sphere Theorem}
\newtheorem*{DEHNSLEMMA}{Dehn's Lemma}
\newtheorem*{ALEXANDER}{Alexander's Theorem}

\theoremstyle{definition}
\newtheorem{definition}{Definition}[section]

\newtheorem{example}{Example}[section]

\theoremstyle{remark}
\newtheorem*{remark}{Remark}
\newtheorem*{note}{Note}
\newtheorem*{question}{Question}

\usepackage{xifthen}    
\newcommand*\foreignlang[3][]{%
	\textit{\foreignlanguage{#3}{#2}}%
	\ifthenelse{\isempty{#1}}{}{ `#1'}%
}


\newcommand*{\checkhere}{\ClassWarning{}{"Check before publishing" warning}}

\newcommand*{\R}{\mathbb{R}}
\newcommand*{\Z}{\mathbb{Z}}
\newcommand*{\N}{\mathbb{N}}
\newcommand*{\C}{\mathbb{C}}
\renewcommand*{\P}{\mathbb{P}}
\newcommand*{\Hy}{\mathbb{H}}

\usepackage{mathtools}
\DeclarePairedDelimiter\abs{\lvert}{\rvert}

\DeclareMathOperator{\tr}{tr}
\DeclareMathOperator{\im}{im}
\DeclareMathOperator{\lcm}{lcm}
\DeclareMathOperator{\id}{id}

\newcommand{\simplies}{\mkern-5mu\implies\mkern-5mu}

\newcommand{\caponept}{\mathrel{\ \mathclap{\cap}\hspace{0.3em}\mathclap{\cap}\ }}

\newcommand{\proofstep}[1]{\par\medskip\par\textit{#1}.}

\usepackage{mathabx,epsfig}

\newcommand \tat {\foreignlanguage{french}{tête-à-tête}}
\newcommand \Tat {\foreignlanguage{french}{Tête-à-tête}}

\newcommand \mcg {\mathrm{Mod}}

\newcommand \LL {\mathcal{L}}
\newcommand \NN {\mathcal{N}}
\newcommand \EE {\mathcal{E}}

\hyphenation{mo-no-dro-my mo-no-dro-mies dia-gram dia-grams Schlei-mer
             Mar-ga-lit mathe-ma-ti-cally}

\newcommand \T[1]{\mathcal{T}_{#1}} 

\newcommand \Tr{\mathcal{T}\mkern-5.5mu r}
\newcommand \Bi{\mathcal{B}i} 

\newcommand \DD{\mathrm{D}^*_2} 
\newcommand \TT{\mathrm{T^*}} 
\newcommand \OO{\mathrm{O^*}} 
\newcommand \II{\mathrm{I^*}} 

\DeclareMathOperator{\ord}{ord}

\newcommand \code[1]{\texttt{#1}}

\newenvironment{enumerate-roman}
    {%
       \begin{enumerate}%
	}
	{%
	   \end{enumerate}%
	}
	
\newenvironment{enumerate-roman-packed}
    {%
       \begin{enumerate}%
	      \setlength{\itemsep}{1pt}
	      \setlength{\parskip}{0pt}
	      \setlength{\parsep}{0pt}
	}
	{%
	   \end{enumerate}%
	}
	
\newenvironment{enumerate-packed}
    {
    \begin{enumerate}
	  \setlength{\itemsep}{1pt}
	  \setlength{\parskip}{0pt}
	  \setlength{\parsep}{0pt}
	}
	{
	\end{enumerate}
	}

\pdfoptionpdfminorversion 6

\pdfinfo{
	/Author (Christian Graf)
	/Title (Tête-à-tête graphs and twists)
	/Keywords (diffeomorphisms;surfaces;finite-order;periodic;fibred knots;fibered knots;fiber surfaces;fibre surfaces;low-dimensional topology;geometric topology;tête-à-tête)}

\begin{document}



\newgeometry{margin=2cm}
\thispagestyle{empty}

\linespread{1}
\begin{center}

	\ 
	{\fontfamily{jkpss}\selectfont\scshape
	
		\vspace{\stretch{35}}
		{\fontsize{25}{22}\selectfont \Tat\\
		Graphs and Twists\\}
		\vspace{4ex}
		{\large PhD thesis in Mathematics\\}
		
		\vspace{\stretch{33}}
		
			\def\svgwidth{0.5\textwidth}
\begingroup%
  \makeatletter%
  \providecommand\color[2][]{%
    \errmessage{(Inkscape) Color is used for the text in Inkscape, but the package 'color.sty' is not loaded}%
    \renewcommand\color[2][]{}%
  }%
  \providecommand\transparent[1]{%
    \errmessage{(Inkscape) Transparency is used (non-zero) for the text in Inkscape, but the package 'transparent.sty' is not loaded}%
    \renewcommand\transparent[1]{}%
  }%
  \providecommand\rotatebox[2]{#2}%
  \ifx\svgwidth\undefined%
    \setlength{\unitlength}{505.14902344bp}%
    \ifx\svgscale\undefined%
      \relax%
    \else%
      \setlength{\unitlength}{\unitlength * \real{\svgscale}}%
    \fi%
  \else%
    \setlength{\unitlength}{\svgwidth}%
  \fi%
  \global\let\svgwidth\undefined%
  \global\let\svgscale\undefined%
  \makeatother%
  \begin{picture}(1,1.04643925)%
    \put(0,0){\includegraphics[width=\unitlength]{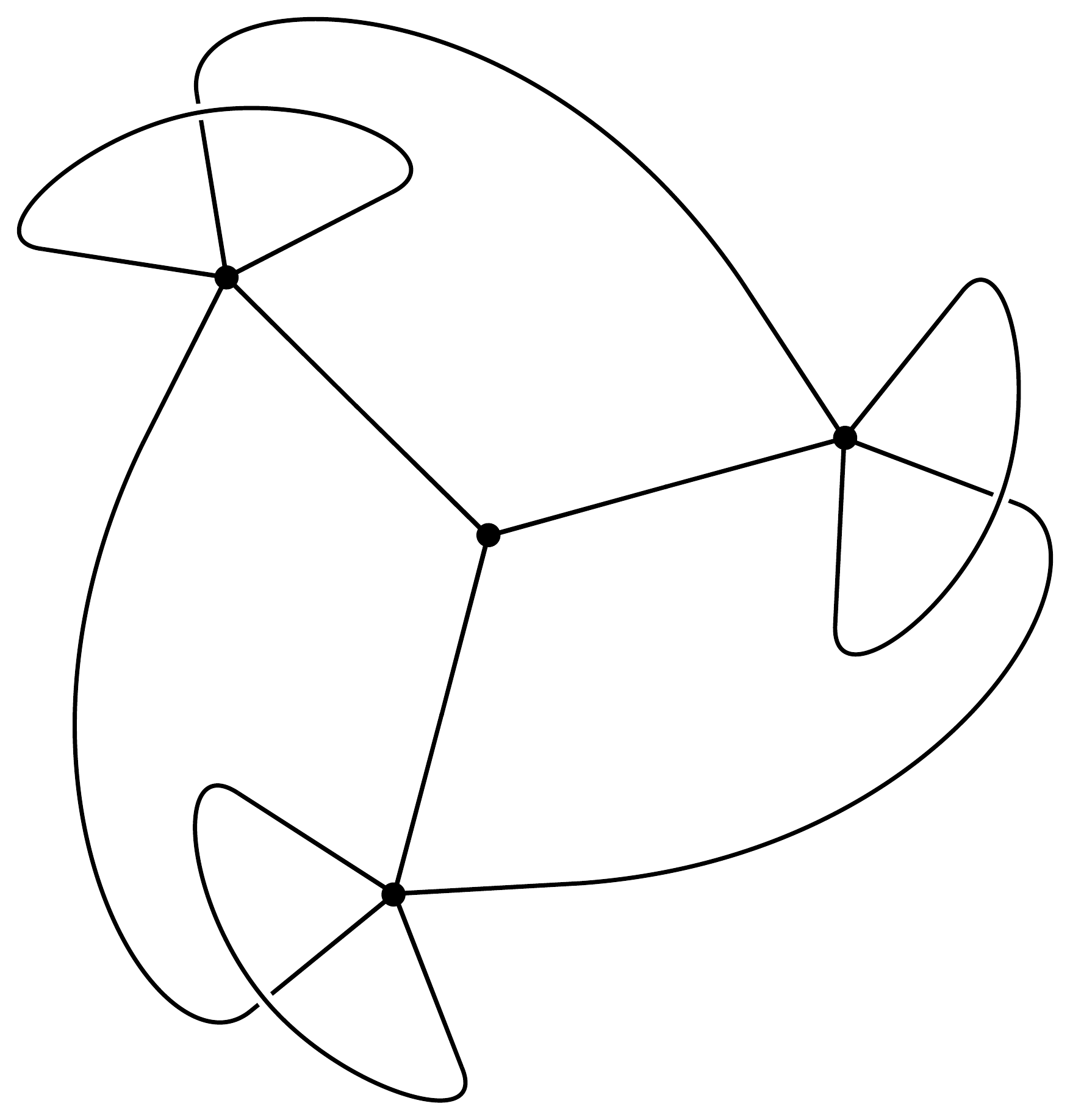}}%
  \end{picture}%
\endgroup%

		
		\vspace{\stretch{17}}
	
		{\Large Christian Graf\\}
		\vspace{1ex}		
		{\large University of Basel, 2014}
	
		\vspace{\stretch{10}}
	}

\end{center}
\restoregeometry

\newpage

\newpage\thispagestyle{empty}

\frontmatter



\begin{titlepage}
	\begin{flushleft}
	    \vspace*{\stretch{1}}
		{\Large\bfseries
			\Tat\\
			Graphs and Twists\\
		}
		\vspace*{\stretch{3}}
	    {\bfseries
	   	    Inauguraldissertation\\
	   	}
	    \vspace*{\stretch{1}}
	    zur\\
	    Erlangung der Würde eines Doktors der Philosophie\\
        \vspace*{\stretch{0.5}}
	    vorgelegt der\\
	    Philosophisch-Naturwissenschaftlichen Fakultät\\
	    der Universität Basel\\
		\vspace*{\stretch{2}}
	    von\\
		Christian Graf\\
		aus\\
		Laufenburg AG\\
		\vspace*{\stretch{2}}
		Basel, 2014
	\end{flushleft}
\end{titlepage}

{
	\newpage
	\thispagestyle{empty}	
	\raggedright
	\vspace*{\stretch{4}}
	Genehmigt von der Philosophisch-Naturwissenschaftlichen Fakultät\\
    auf Antrag von\\
    \ \\
    Prof. Dr. Norbert A'Campo, Universität Basel\\
    Prof. Dr. Michel Boileau, Université d'Aix-Marseille\\
    Prof. Dr. Alexandru Oancea, Université Pierre et Marie Curie 
    
    \vspace*{\stretch{1}}
    Basel, den 25. März 2014
    
    \vspace*{\stretch{1}}    
    \hspace{0.5\textwidth}Prof. Dr. Jörg Schibler\\
    \hspace{0.5\textwidth}Dekan\\
    \vspace*{\stretch{4}}  
}

\microtypesetup{protrusion=false} 
\tableofcontents 
\listoffigures
\microtypesetup{protrusion=true} 
\newpage

\chapter{Summary}
This is a thesis in the field of low-dimensional topology, more specifically about the mapping class group, knots and links, and 3-manifolds.

For the most part, we will define and examine so-called
\emph{\tat\ twists}, a rich and well-structured collection
of elements of the mapping class group that are described
by \emph{\tat\ graphs}.
Whereas Dehn twists are twists around a simple closed curve, \tat\ twists are twists around a graph.
We will see how to describe mapping classes of finite order, or periodic pieces of mapping classes, by \tat\ twists.

Another main result is a new criterion to decide whether a Seifert surface of a fibred knot or link is a fibre surface.

\section*{Organization of the text}
First, Chapter \ref{chap:introduction} will introduce \tat\ graphs and twists
and give some examples.

Chapter \ref{chap:properties} then establishes some basic results about those objects.
We will see a notation, using chord diagrams, that helps to classify and study
them.
And we will study \emph{elementary twists}, which can be seen as building
blocks of more complicated \tat\ twists.

In Chapter \ref{chap:periodic}, an interesting result is proven:
\Tat\ twists describe precisely the (freely) periodic diffeomorphism
classes of surfaces with boundary or punctures.
From this fact, we can deduce combinatorially some properties about the orders of such maps.
We will also see that another characterization of periodic maps is the existence of an invariant
spine of the surface.

The next chapter, Chapter \ref{chap:monodromies}, is concerned
with \tat\ twists as monodromies of knots and links in the
3-sphere, and more generally with open books of \tat\ twists.

Chapter \ref{chap:elastic} is rather independent from the others.
It provides a simple characterization of fibre surfaces, which can
be used to justify the examples given in the introduction, as
well as to give easy new proofs of statements about fibre surfaces.

Chapter \ref{chap:mcg} treats \tat\ twists in the context of
the mapping class group. Among other things, we will see how
to use them to generate the mapping class group, as well as some
statements about roots of mapping classes.

Finally, Chapter \ref{chap:software} describes a software that
I have used to do some experiments with \tat\ graphs and twists.

\chapter{Acknowledgments}
Of the people with a direct connection to my work, I first of all
want to thank my advisor, Norbert A'Campo.
His vast knowledge, his great willingness to share it, and his
inspiring way of teaching and thinking about mathematics have
helped and influenced me enormously.
He set me on track, and to him I owe a great part of my mathematical
knowledge.

I thank the reviewers of this thesis, Alexandru Oancea
and Michel Boileau, for their reports and for participating
in my thesis defense.
Thanks go also to David Masser who was chair of the defense and gave comments on the text.

This work has been partially supported by the Swiss National Science Foundation, in a joint project of Anna Beliakova and Norbert A'Campo,
and I thank Anna for all the administrative work, apart from providing a welcoming
environment in the University of Zurich.
Thanks to Alessandra Iozzi and Marc Burger, I could work at ETH Zurich for
the most part of my time as a PhD student,
an extremely valuable support for which I am very grateful.
I thank the Mathematical Institute of the University of Basel where I have been employed as an assistant for two semesters
and had a very good office during the entire time of writing of this thesis.
In 2013, eventually, I spent two months at the University of Bern thanks to the invitation of Sebastian Baader.

Special thanks go to him for his immense support and his collaboration,
and in particular for a lot of beneficial and encouraging discussions about
mathematics and life and all.

I cannot thank enough my parents for their unconditional love and support.
And I thank Maria Fernanda for having decided to share her life with me.

There are many to whom I am grateful for their help,
for their friendship, for fruitful discussions,
for teaching me mathematics, for organizing conferences,
or just for making life enjoyable.
Among them are people with whom I discussed parts of
my thesis like Jürg Portmann, Tamara Widmer, Jonas Budmiger,
Maria Hempel, Thomas Huber, Filip Misev, Immanuel Stampfli,
and others who accompanied me (mathematically and not)
on some part of the way.
To give some names is really quite unfair because there are many more that have been
important to me:
My office mates from ETH's \emph{G}-, \emph{J}-, and again \emph{G}-floor;
all the company I had in and around ETH, among other things for many lunches, plus some relay races;
my friends at the University of Zurich;
the low-dimensional topology group from Bern;
everyone, including infrequent guests, of the \foreignlang[]{Geometriekaffee}{german} (or \foreignlang[]{-tee}{german} as some might call it);
and of course my colleagues and friends from Basel's Mathematical Institute,
for their company and their kindness from the start until the very day of my PhD defense and beyond.
Thank you for making these places so interesting and friendly for working and living.

\mainmatter

\chapter{Introduction}\label{chap:introduction}

\section{\Tat\ graphs}
It is possible to see a \tat\ graph in the real world.
Imagine two strangers standing on the pavement on two sides of a street with bustling morning
traffic. 
Their eyes meet, but then they both walk on, continuing towards
their right, safe from the cars.
Upon having walked for two hundred metres they once more look across the street
and, to their surprise, find themselves meeting again.
And not by coincidence: It would be bewildering for them to discover that the
same thing would have happened no matter where they started.

We will examine networks of streets with this property; networks which,
like real streets, can include some over- and underpasses.
Mathematically speaking we do the following: Take at a metric graph $G$ embedded in an
oriented surface $\Sigma$ that deformation retracts to $G$. Measure walking
distance by using the retraction $\rho\colon\Sigma\to G$ to pull back the metric
of the graph, which means that only movement in the direction of the edges is
taken into account. If the reunion of the two strangers described above takes place for some fixed
walking distance $l$ and for every starting position on the boundary of
$\Sigma$, $G$ is said to have the \emph{\tat\ property} with \emph{walk length}
$l$. The strangers do not turn back, meaning that their path is required to be
transverse to the fibres of the retraction $\rho$. Fortunately, when they see
each other again, the traffic lessens for a moment; so they get to meet, albeit
in the middle of the road. We call such a path, starting on $\partial\Sigma$,
continuing in $\Sigma\smallsetminus G$, and ending on $G$, a \emph{safe walk} of
length $l$. If $l$ is a negative number, the safe walks are understood to lead
to the left instead of to the right.

\section{\Tat\ twists}
\begin{figure}
\centering
\def\svgwidth{0.35\textwidth}
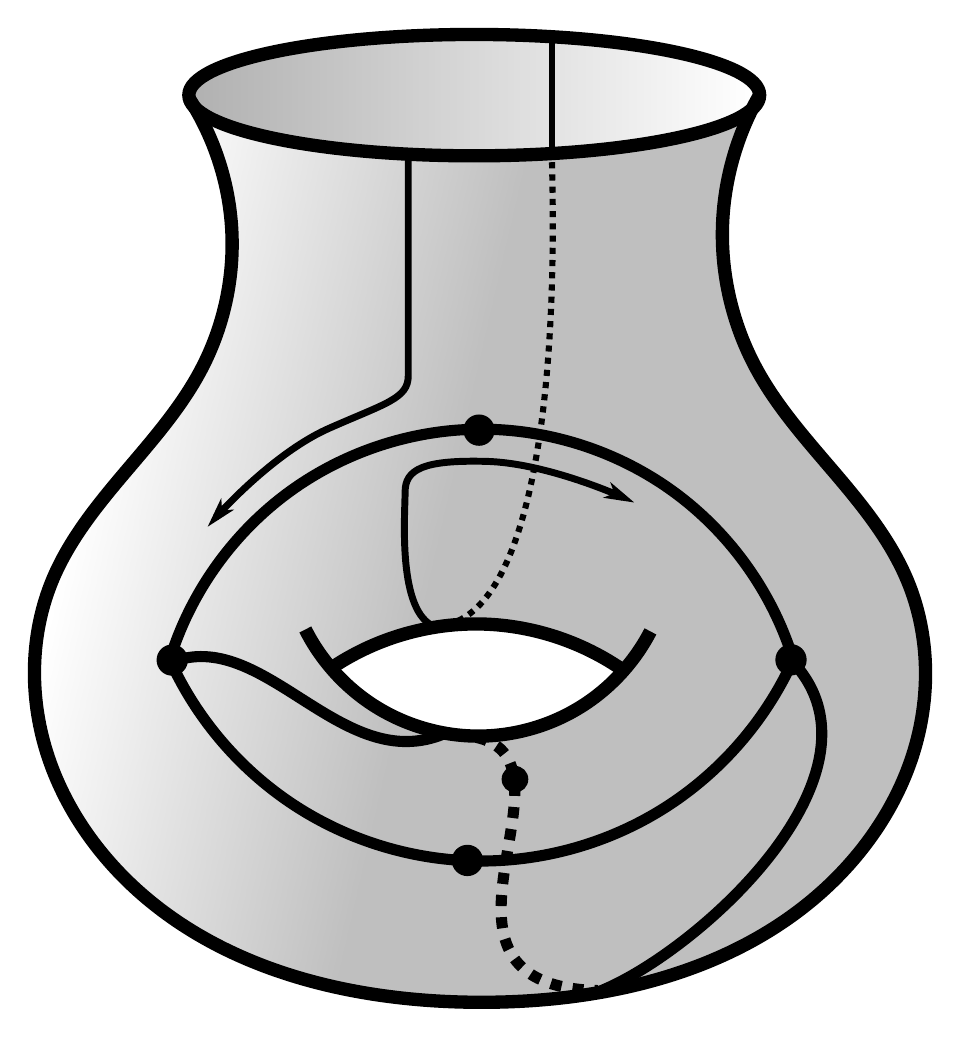
  \caption{\Tat\ graph on a one-holed torus}
  \label{fig:thetatorus}
\end{figure}
Norbert A'Campo, who coined these terms, defined in this way a natural
generalization of Dehn twists (\cite{ACampo2009}).
The cylinder is replaced by an arbitrary
surface with boundary $\Sigma$, and the simple closed curve along one twists by
an embedded graph $G$ as above.

One example is the $\Theta$-graph in Figure \ref{fig:thetatorus}, which
is a deformation retract of the one-holed torus; see also Figure
\vref{fig:E-3-3} for two alternative views.  We parameterize all edges such
that they have length one (and will always do so in this text, unless stated
otherwise). Then one can check that this graph does indeed have the \tat\
property with walk length $2$.

An even simpler example is, of course, a circle consisting of two (unit-length)
edges. Here we have the \tat\ property with walk length $1$. This graph will
give us back the standard Dehn twist, and we define a diffeomorphism of $\Sigma$
accordingly:

\begin{figure}
\centering
\def\svgwidth{0.2\textwidth}
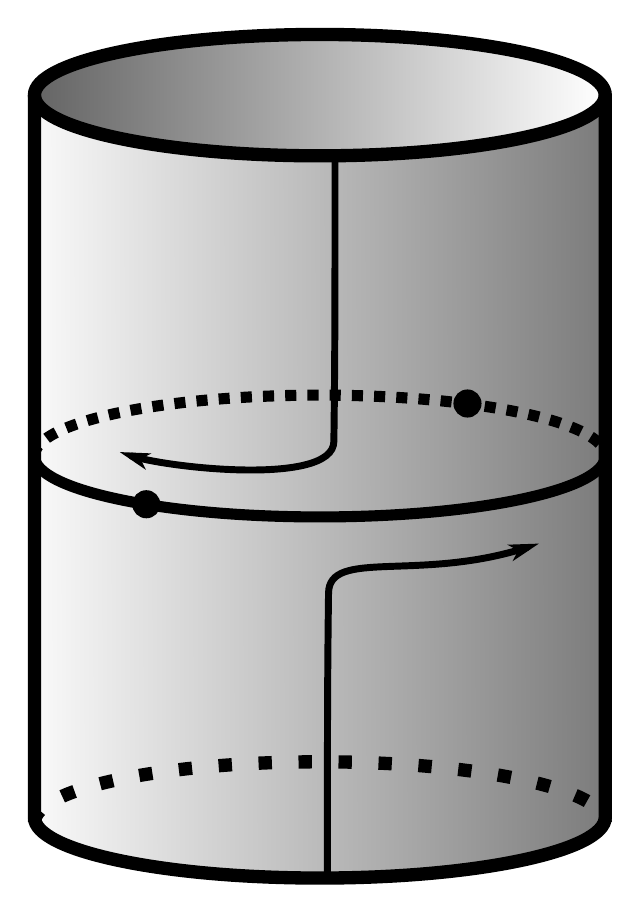
  \caption{\Tat\ graph for a Dehn twist}
  \label{fig:dehntwistgraph}
\end{figure}
Choose one properly embedded arc for each edge of $G$, in such a way that the
deformation retraction contracts it to a single point on the edge, where the arc
meets $G$ transversely. We call such an arc a \emph{crossing arc}. The \tat\ twist $\T{G,l}$ (or simply $\T{G}$)
then maps the two halves of the crossing arc to safe walks of length $l$ along the
graph. The union of all the transverse arcs cuts $\Sigma$ into a collection of
disks, so there is a unique way, up to isotopy, to complete $\T{G,l}$ to a
diffeomorphism of $\Sigma$.
\subsection{Naming matters and conventions}

\begin{figure}
	\begin{minipage}[b]{0.4\textwidth}
		\centering
		\def\svgwidth{0.5\textwidth}
		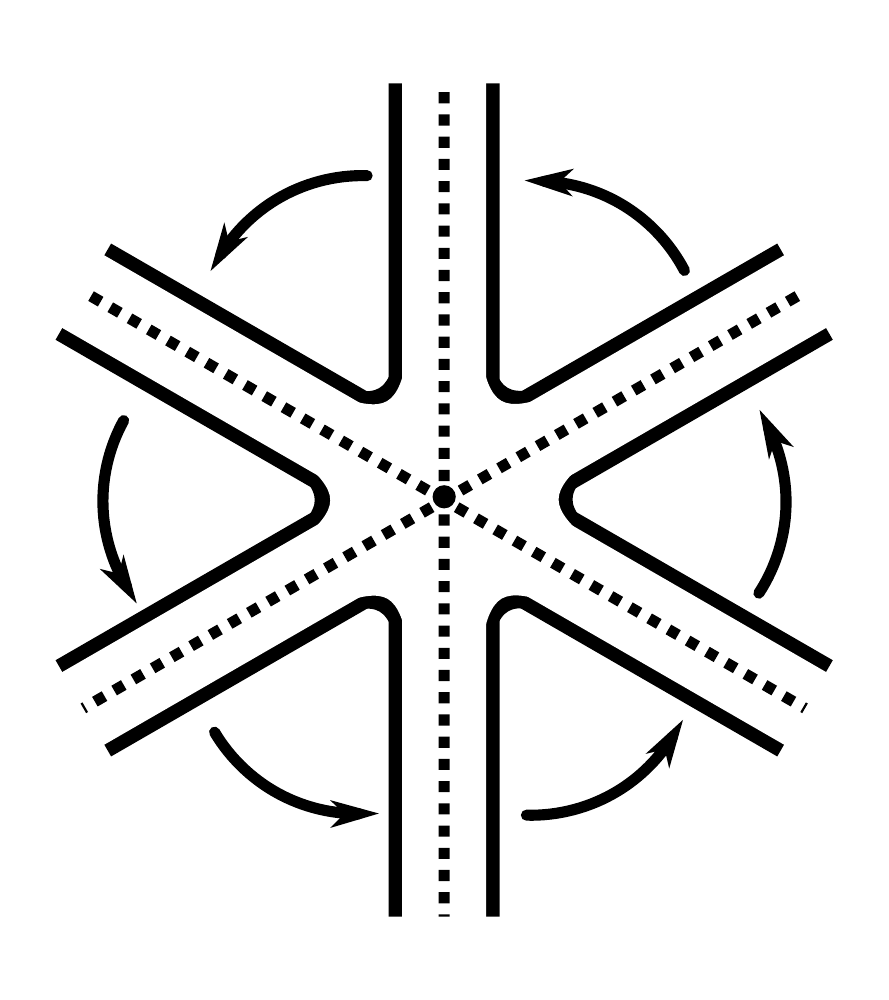
		\caption{Cyclic ordering}
		\label{fig:cyclicordering}
	\end{minipage}
	\hspace{0.5cm}
	\begin{minipage}[b]{0.5\textwidth}
		\centering
		\def\svgwidth{0.9\textwidth}
		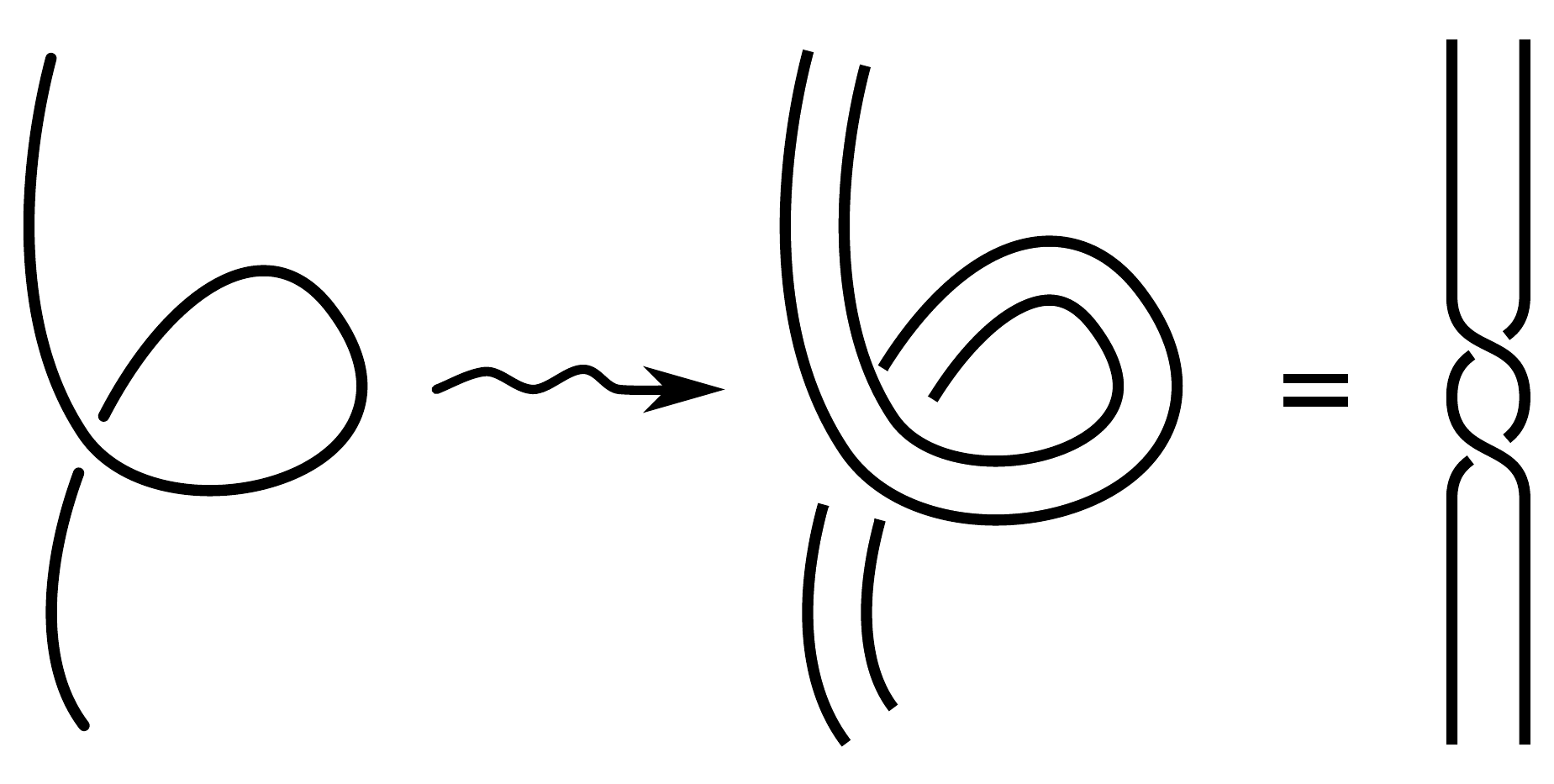
		\caption{Blackboard framing}
		\label{fig:blackboardframing}
	\end{minipage}
\end{figure}

A graph together with an oriented surface that deformation retracts to it is
often called \emph{ribbon graph}, \emph{fatgraph}, or also \emph{spine (of the
surface)}. An alternative description would be an
abstract graph and, additionally, for each vertex a cyclic ordering of the edges
adjacent to it, as illustrated in Figure \ref{fig:cyclicordering}.

\Tat\ graphs are ribbon graphs and thus come automatically equipped with a surface.
Therefore, notation like $\partial G$, when $G$ is a \tat\ graph, means ``the boundary
of the surface which gives $G$ its ribbon structure''. When a safe walk of length $l$
makes an entire turn around one chosen boundary component, we call this $l$ the
\emph{length of this boundary component}.

When the graph is obvious from the picture of the surface, it will often be
omitted. Whenever, on the other hand, the surface is omitted from the drawing,
\emph{blackboard framing} is used: The graph should be thickened inside the plane
of the paper, or blackboard, with the obvious modifications at crossings (see Figure
\ref{fig:blackboardframing}). Over- and undercrossings need not be distinguished and are
not always drawn.
\begin{figure}
\centering
\def\svgwidth{0.7\textwidth}
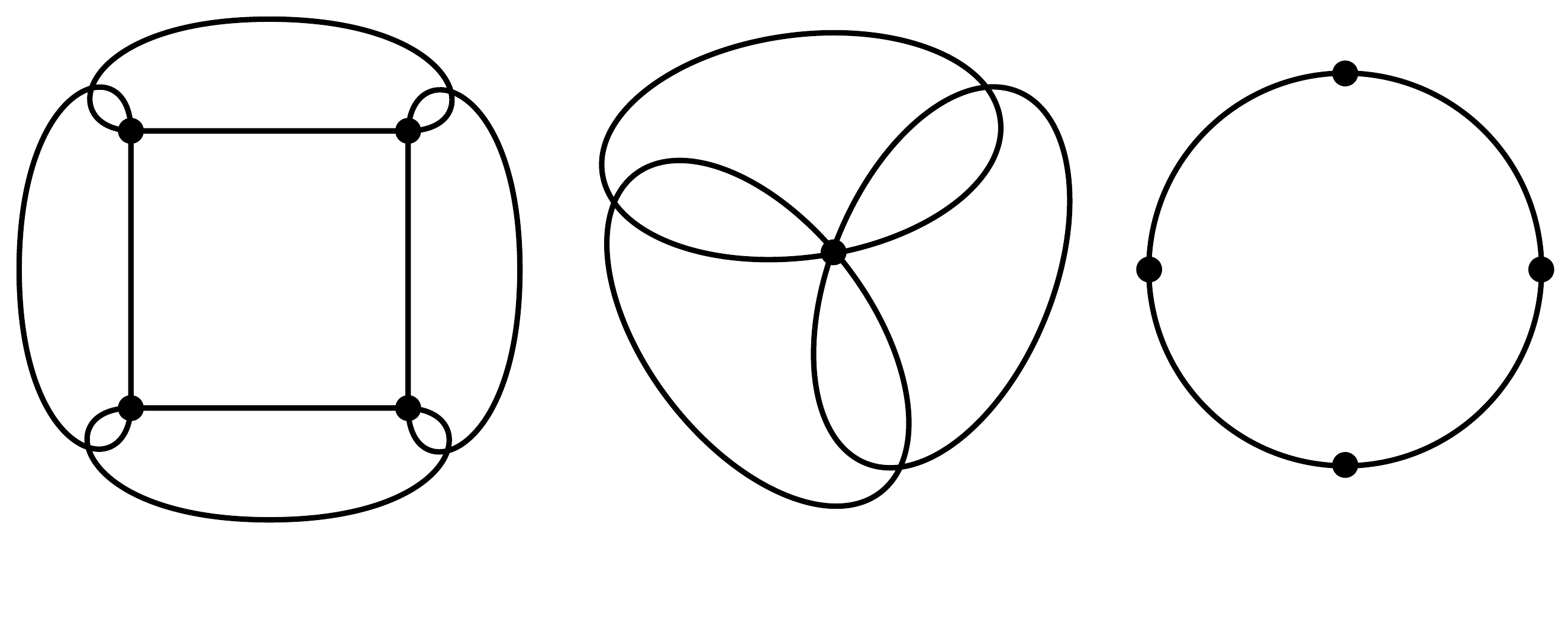
  \caption{Three more \tat\ graphs, with walk lengths $2$, $1$, and $2$.}
  \label{fig:moretat}
\end{figure}

\section{Torus link monodromies}\label{sec:introtoruslinks}
A very nice application of \tat\ twist, suggested by A'Campo, is a description
of the monodromy of torus knots and links. The theory of fibred links and
monodromies will be outlined later in Chapter \ref{chap:monodromies}.
The monodromy of a $(p,q)$-torus link is a mapping class $\phi_{p,q}$ which is
defined on a surface with $d = \gcd(p,q)$ boundary components and genus $g =
\frac{1}{2}((p-1)(q-1)-d+1).$
Its order, up to Dehn twists along the boundary, is $pq$.
$\phi_{p,q}$ can be described using the fact that a $(p,q)$-torus link is the
link of the singularity $x^p+y^q$ in $\C^2$.
\begin{figure}
\centering
\def\svgwidth{1\textwidth}
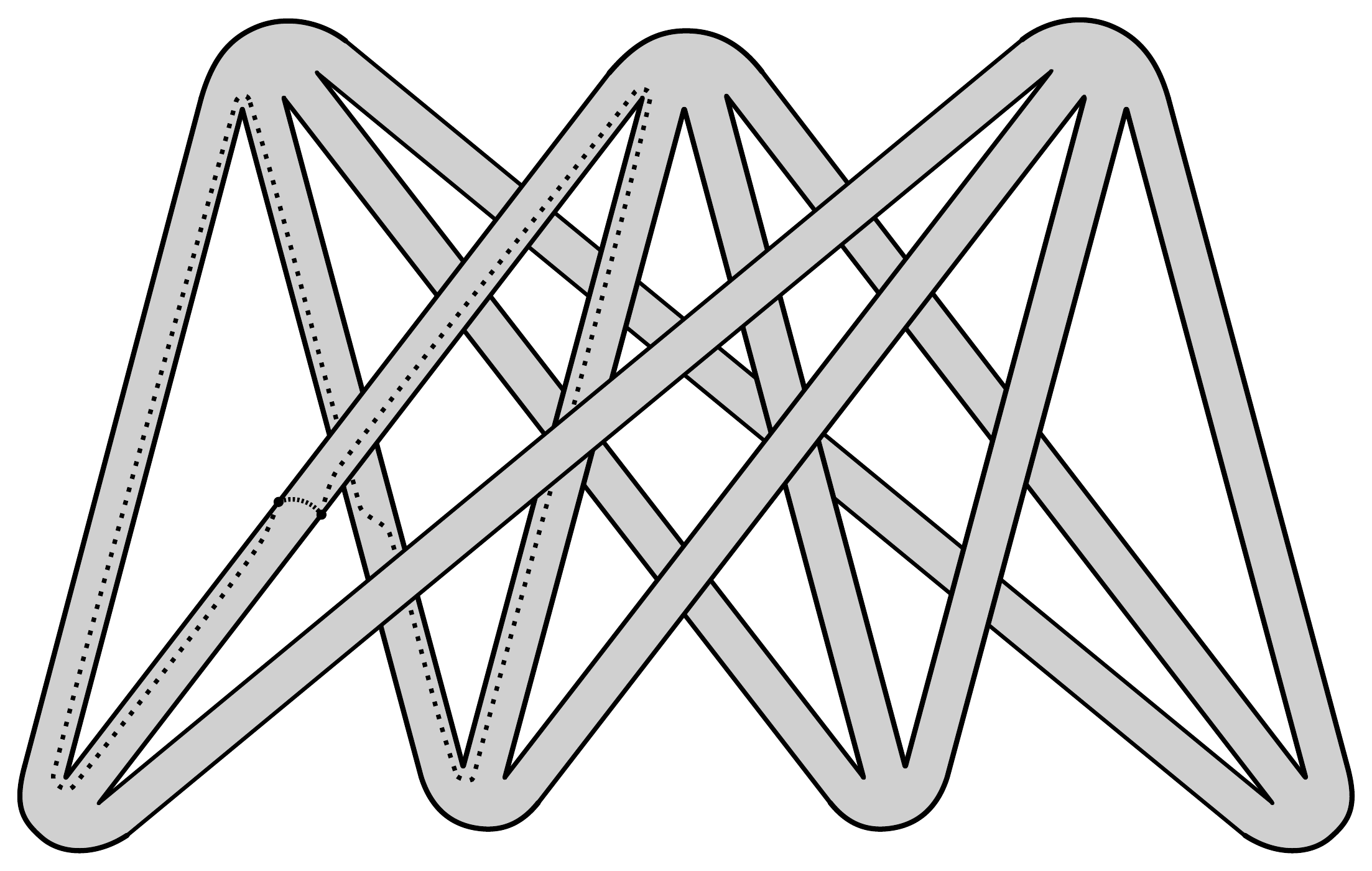
  \caption[\Tat\ twist for the $(3,4)$-torus knot]%
  {\Tat\ twist for the $(3,4)$-torus knot. The actual graph has
  been omitted as it is clear from the picture. One transverse arc is
  shown together with its image, which is composed of two safe walks of length two.}
  \label{fig:bipartite2}
\end{figure}

But \tat\ twists make the map much more explicit: $\phi_{p,q}$ is a \tat\ twists
along a complete bipartite graph $B_{p,q}$ with $p+q$ vertices.
These graphs have the \tat\ property for walk length $2$, and $\phi_{p,q} =
\T{B_{p,q},2}$.
As an example, see Figure \vref{fig:bipartite2}, where $(p,q) = (3,4)$.
Using general properties of \tat\ twist, described in the next chapter, we see
for example that $\phi_{p,q}$ permutes the $pq$ edges of the graph cyclically,  and
individually it permutes cyclically the $p$ vertices above and the $q$ vertices
below.

It was noted by Sebastian Baader that the particular embedding of the ribbon
graph that is chosen in the picture -- edges are stacked vertically according to
the number of their bottom (or top) endpoint -- actually makes it into a Seifert
surface for the link (\cite{Baader2011}); if one looks carefully, its
boundary unveils itself as the $(3,4)$-torus knot. This fact will be used in Chapter
\ref{chap:elastic} to prove that the \tat\ description is indeed correct.

\chapter{Properties and classification of \tat\ twists}\label{chap:properties}
Directly from the definition of \tat\ twists, it may seem mysterious which
graphs inside which surfaces could have the \tat\ property. But we can establish
properties for those twists that allow for a better understanding and also for a
systematic approach to listing and examining \tat\ graphs.

Later in this chapter, we will restrict ourselves to \tat\ graphs with one
boundary component and describe a notation for them, but generalizations to an
arbitrary number of boundary components are possible and often straightforward.

\section{Basic properties of \tat\ diffeomorphisms}
The following proposition establishes some properties of \tat\ twists that help
us imagine what they do.
\begin{proposition}\label{prop:basic}
A \tat\ twist $\T{G,l}$ can be represented by a diffeomorphism (which we also write
as $\T{G,l}$) such that
	\begin{enumerate-roman-packed}
		\item\label{itm:invariant}	$\T{G,l}(G)=G$,
		\item\label{itm:powers}		$\T{G,l}^n = \T{G,nl}^{}$ $(n\in\Z)$,
		\item\label{itm:finite}		$\T{G,l}$ is of finite order outside a
									tubular neighbourhood of the boundary of $G$'s
									surface.
	\end{enumerate-roman-packed}
\end{proposition}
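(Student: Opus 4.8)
The plan is to produce one concrete diffeomorphism in the isotopy class $\T{G,l}$ and read off all three properties from it. Realise $\Sigma$ as a regular neighbourhood of $G$: cut $\Sigma\smallsetminus G$ into half-open annuli $A_i$, one per boundary component $\partial_i$, and parametrise $\overline{A_i}\cong(\R/l_i\Z)\times[0,1]$ so that $\{t=0\}$ is $\partial_i$ and $\{t=1\}$ maps onto $G$ by the arc-length parametrisation of the $i$-th boundary loop; here $l_i\in\Z_{>0}$ is the length of the $i$-th boundary component and $l\in\Z_{>0}$ is the walk length. For $m\in\Z$ let $\Phi_m$ be the map that is the identity on the collar $\bigcup_i(\overline{A_i}\cap\{t\le\tfrac14\})$ of $\partial\Sigma$ and, on each $\overline{A_i}$, is the shear $(s,t)\mapsto(s+m\chi(t),t)$, where $\chi\colon[0,1]\to[0,1]$ is smooth, non-decreasing, vanishes on $[0,\tfrac14]$ and equals $1$ on $[\tfrac12,1]$. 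The point requiring an argument is that these pieces glue to a self-diffeomorphism of $\Sigma$ along $G$, and this is precisely the \tat\ property: on the two banks of an edge $e$ the shear sends the crossing point $m_e$ to the point of $G$ reached by walking distance $m$ to the right along the two adjacent boundary loops, and the \tat\ hypothesis for length $l$ — hence for every $m=nl$, since two strangers who have already met after distance $l$ walk on together — says these two points agree. By construction $\Phi_m(G)=G$, which will give (i).

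Second, I would check that $\Phi_m$ represents $\T{G,m}$ by computing its effect on crossing arcs. For the standard crossing arc $\alpha_e=\rho^{-1}(m_e)$ — two vertical segments, one in each annulus adjacent to $e$, joined at $m_e$ — the image $\Phi_m(\alpha_e)$ is a pair of monotone arcs across those two annuli joined at the reunion point of $m_e$ on $G$, and a monotone arc of this kind is, up to isotopy rel endpoints, exactly a safe walk of length $m$ from the corresponding point of $\partial\Sigma$. So $\Phi_m$ carries each crossing arc onto the union of the two safe walks that the definition of $\T{G,m}$ prescribes, and it fixes $\partial\Sigma$ pointwise, as may be assumed of $\T{G,m}$. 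Since the crossing arcs cut $\Sigma$ into disks, an ambient isotopy makes $\Phi_m$ agree with $\T{G,m}$ on their union and on $\partial\Sigma$; the two then differ by diffeomorphisms supported on the complementary disks, each isotopic to the identity rel its boundary. Hence $\Phi_m$ represents $\T{G,m}$, and in particular $\Phi_l$ is a representative of $\T{G,l}$ with $\Phi_l(G)=G$: this proves (i).

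Property (ii) is then formal: iterating the shear $\Phi_l$ $n$ times turns $l\chi(t)$ into $nl\chi(t)$, so $\Phi_l^{n}=\Phi_{nl}$ exactly (for $n<0$ this matches the convention that a negative walk length means safe walks to the left), and by the previous step both sides represent $\T{G,l}^{n}$ and $\T{G,nl}$. For (iii), note that on the sub-annulus $\overline{A_i}\cap\{t\ge\tfrac12\}$ the map $\Phi_l$ is the genuine rotation $(s,t)\mapsto(s+l,t)$, of finite order $M_i=l_i/\gcd(l_i,l)$; hence $\Phi_l^{N}$, with $N=\lcm_i M_i$, is the identity on each $\overline{A_i}\cap\{t\ge\tfrac12\}$ and therefore — these pieces together covering $G$ — on the whole regular neighbourhood $G\cup\bigcup_i(\overline{A_i}\cap\{t\ge\tfrac12\})$ of $G$, that is, on $\Sigma$ with a collar of $\partial\Sigma$ deleted. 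Thus $\T{G,l}$ has finite order outside a tubular neighbourhood of $\partial\Sigma$; on that neighbourhood $\Phi_l^{N}$ is a product of powers of Dehn twists along the boundary curves.

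I expect the identification of $\Phi_m$ with $\T{G,m}$ to be the part demanding real care: one must choose the collar, the annulus coordinates and the crossing-arc decomposition compatibly, keep track of the ribbon structure (the cyclic orderings at the vertices, which dictate how ``walking to the right'' passes from one edge to the next through the vertex polygons), and confirm that the arcs $\Phi_m(\alpha_e)$ are the prescribed safe-walk pairs and still cut $\Sigma$ into disks. The only input from outside the excerpt is the standing convention that the walk length — and hence each boundary length — is a positive integer, which is what makes the orders $M_i$ in the proof of (iii) finite.
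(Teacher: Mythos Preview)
Your proof is correct and follows essentially the same strategy as the paper: build an explicit representative by parametrising $\Sigma\smallsetminus G$ as a union of annuli and defining a shear $(s,t)\mapsto(s+l\cdot h(t),t)$ on each, then read off (i)--(iii) directly. The one place where the paper is slightly more explicit is the gluing at vertices: rather than gluing the annuli directly along $G$ and invoking the \tat\ property for consistency (as you do), the paper first replaces each vertex by a small rotationally symmetric polygon, obtaining a thickened graph $\bar G$; the annuli then meet $\bar G$ along smooth curves, and the map near a vertex is visibly a rotation or exchange of these polygons. This sidesteps the smoothness check at vertices that you flag in your last paragraph, but the content is the same.
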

\begin{proof}
\begin{figure}
\centering
\def\svgwidth{0.3\textwidth}
{\fontfamily{pplx}\selectfont
	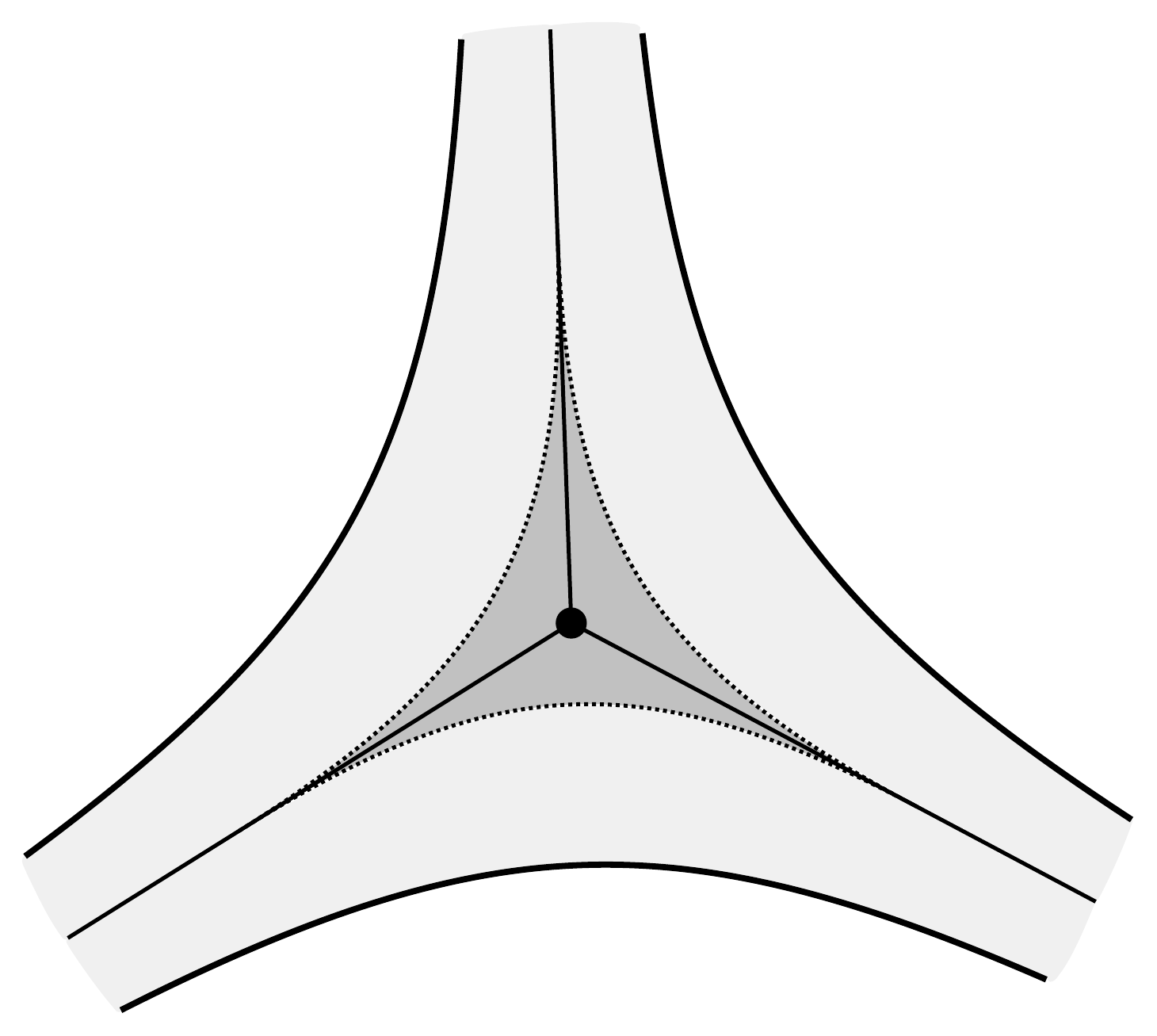
}
  \caption{Smoothing around the vertices}
  \label{fig:smoothvertex}
\end{figure}

We can see a \tat\ twist in a more explicit way, similar to a Dehn twist. To do
this, choose a Riemannian metric on the surface $\Sigma$ such that all edges of
$G$ are unit length geodesics. Around each vertex, choose a small rotationally
symmetric polygon whose vertices lie on $G$ with their adjacent edges tangent to
$G$; see Figure \ref{fig:smoothvertex}. Let $\bar{G}$ be $G$ together with these
polygons. $\Sigma\smallsetminus\bar{G}$ is a collection of annuli. Choose the
deformation retraction $\rho$ of $\Sigma$ to $G$ in such a way that it gives us
$\Sigma$ as a tubular neighbourhood of $\bar{G}$ and decomposes each component
of $\Sigma\smallsetminus\bar{G}$ as a product $S^1 \times [0,1]$, such that $S^1
\times \{0\}$ is a boundary component of $\Sigma$.
Assume that $S^1$ is parameterized as $[0,b]/{\{0,b\}}$, where $b$ is the length
of the respective boundary component, or of the cycle in $G$ around it, and
$\rho(\{m\}\times [0,1])$ is a vertex for every $m \in \N$, $0 \leq m \leq b$.

The \tat\ twist $\T{G,l}$ can now be realized as
\[
	(\theta,t) \mapsto (\theta + l\cdot h(t),t),
\]
where $h\colon [0,1] \to [0,1]$ is a smooth function which is zero on
$[0,\sfrac{1}{3}]$ and one on $[\sfrac{2}{3},1]$. At the same time, the polygons
are exchanged and/or rotated appropriately.

In this description, the first two statements of the proposition are obvious.
And when $b_i$, $1 \leq i \leq r$, is the length of the $i$th boundary
component, put $n = \frac{1}{l} \cdot
\lcm(b_1,\ldots,b_r,l)$.
Then $\T{G,l}^n$ consists of (possibly multiple) Dehn twists around the boundary
components of $\Sigma$.
\end{proof}
Trying to understand which graphs have the \tat\ property, one should note the
trivial cases:
\begin{remark}
Every ribbon graph has the \tat\ property at least for all multiples of
$l = \lcm(b_1,\ldots,b_n)$, the least common multiple of the lengths of all boundary components. The corresponding twists are compositions of Dehn
twists along the boundary.
\end{remark}

\subsection{Justifying the definition}
The simplicity of the definition of \tat\ twists that was given above lends
itself to two obvious generalizations regarding the walk length.

First, the original definition used by A'Campo assigns to each edge of the graph a
positive real length and chooses a uniform walk length of $\pi$.
Choosing $\pi$ is no restriction since we can rescale.
For the moment, call these graphs \emph{\tat\ graphs with real edge lengths}.
This definition is more general, but as it will turn out, produces the same isotopy
classes of \tat\ twists.

Second, one could also specify different walk lengths for safe walks starting at
different boundary components of the \tat\ graph. Call these graphs \emph{multi-speed \tat\ graphs}. When $G$ has
$r$ boundary components, we write them as $\T{G,\,\underline{l}} = \T{G,(l_1,\ldots,l_r)}$.
This definition is indeed more general: By assigning positive numbers to some
boundary components and negative ones to others, it allows walks in different
directions. The freedom of direction, however, is all that is generalized, as the following theorem shows.
\begin{remark}\label{rmk:nonzero}
There is a very special case which we treat first: If some $l_i$ is zero, then
all edges of $G$ adjacent to the $i$\/th boundary component are fixed pointwise.
Therefore when, say, the $j$th boundary component lies on the other side of such
an edge, $l_j$ must be a multiple of $b_j$. The same goes for all other boundary
components, provided $G$ is connected. In this case, $\phi$ is a composition of some
Dehn twists around boundary components.
Therefore, the $l_i$ are assumed to be nonzero in what follows.
\end{remark}

By ``\tat\ twist'', we will always mean the definition given in the introduction,
using a single walk length measured in number of edges. Multi-speed twists come in
handy in various situations, and their slightly greater generality will be used in
Chapter \ref{chap:periodic}.

The definitions are related in the following way:

\begin{theorem}\label{thm:definitions}
Let $\phi$ be a mapping class. The following are equivalent:
\begin{enumerate-roman-packed}
\item\label{itm:normal}    $\phi$ is a \tat\ twist,
\item\label{itm:real}      $\phi$ is a twist along a \tat\ graph with real edge lengths,
\item\label{itm:multiple}  $\phi$ is a twist along a multi-speed \tat\ graph, and all walk lengths have the same signs.
\end{enumerate-roman-packed}
Provided that $G$ is connected and not a circle, it can be chosen 
without bivalent vertices in case \ref{itm:real}, and without uni-
nor bivalent vertices in case \ref{itm:multiple}.
\end{theorem}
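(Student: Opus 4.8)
The plan is to prove the chain of implications \ref{itm:normal} $\Rightarrow$ \ref{itm:multiple} $\Rightarrow$ \ref{itm:real} $\Rightarrow$ \ref{itm:normal}, together with the stated normalizations on the graph. The implication \ref{itm:normal} $\Rightarrow$ \ref{itm:multiple} is essentially trivial: a \tat\ twist with walk length $l$ is the multi-speed twist $\T{G,(l,\ldots,l)}$, all of whose walk lengths have the same sign. Likewise \ref{itm:multiple} $\Rightarrow$ \ref{itm:real} after we discard the bivalent vertices (see below): given a multi-speed \tat\ graph with all walk lengths positive (the case of all negative is symmetric, reflecting the surface's orientation), I would produce a metric on the edges realizing it as a \tat\ graph with real edge lengths. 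The point is that for each boundary component $i$ a safe walk of $l_i$ edges must close up after traversing total edge-length equal to the chosen common walk length (say $\pi$); this gives, for each boundary cycle, a linear equation on the edge lengths, and one checks these are simultaneously solvable with strictly positive edge lengths because $G$ is connected (so the cycle space is spanned in a compatible way) — essentially a flow/potential argument on the graph.

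The substantive direction is \ref{itm:real} $\Rightarrow$ \ref{itm:normal}: a \tat\ graph with arbitrary positive real edge lengths has a twist isotopic to an honest \tat\ twist (single integer walk length). The idea is to subdivide edges and rescale. First, by the explicit model in the proof of Proposition~\ref{prop:basic} — writing the twist on each complementary annulus as $(\theta,t)\mapsto(\theta+l\cdot h(t),t)$ — the isotopy class of the twist depends only on the combinatorial ribbon structure together with the cyclic sequence of vertices encountered along each boundary cycle at the correct "speed". So the strategy is: choose a common walk length, subdivide each edge of $G$ by inserting bivalent vertices so that every edge has rational length with a common denominator $N$, then the $N$-fold cover of the parametrization makes every edge have integer length; now relabel so all edges have length exactly one, and the walk length becomes the integer (total subdivided length of a safe walk). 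One must check that this subdivision does not change the isotopy class of the twist — inserting a bivalent vertex in the middle of an edge and splitting the corresponding crossing arc is visibly an isotopy — and that the resulting graph genuinely has the \tat\ property with the new integer walk length, which is immediate from the metric computation.

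Finally, the normalization: to remove bivalent vertices in case \ref{itm:real}, simply smooth each bivalent vertex, merging its two edges into one edge whose length is the sum — this clearly preserves the \tat\ property and the twist; the only excluded case is when $G$ has *no* other vertices, i.e.\ $G$ is a circle, whence the hypothesis. To remove univalent vertices as well in case \ref{itm:multiple}: a univalent vertex together with its unique ("dead-end") edge can be deleted, since a safe walk never profitably runs into a dead end — more precisely, the surface deformation-retracts onto the smaller graph, and the twist is unchanged; a short case analysis rules out degeneration (again, $G$ not a circle handles the remaining small cases). I expect the main obstacle to be the careful bookkeeping in \ref{itm:real} $\Rightarrow$ \ref{itm:normal}: verifying that subdivision and rescaling genuinely leave the isotopy class of the diffeomorphism unchanged, rather than merely the combinatorial data, and handling the boundary annuli where the twisting "ramps up" so that the $h(t)$ profiles match after passing to the common-denominator parametrization.
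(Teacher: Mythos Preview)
There is a genuine gap in your \ref{itm:real} $\Rightarrow$ \ref{itm:normal} step. Your own observation---that the isotopy class of the twist depends only on the combinatorics of the ribbon graph together with, for each boundary cycle, the number of edges traversed by the safe walk---is exactly right, but what it actually establishes is \ref{itm:real} $\Rightarrow$ \ref{itm:multiple}, not \ref{itm:real} $\Rightarrow$ \ref{itm:normal}: different boundary components $i$ will in general yield different edge-counts $l_i$, and you are left with a multi-speed twist. The subsequent ``subdivide so that every edge has rational length with a common denominator $N$'' is both ill-posed (inserting bivalent vertices does not rationalize real edge lengths) and beside the point: even granting unit edge lengths everywhere, subdivision cannot equalize the $l_i$, since subdividing an edge raises the edge-count on \emph{both} adjacent boundary walks at once. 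So your cycle of implications never actually returns to \ref{itm:normal}.

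This is precisely the step the paper treats as the substantive one, and it orders the implications oppositely to you: \ref{itm:real} $\Rightarrow$ \ref{itm:multiple} is the easy edge-counting argument, while \ref{itm:multiple} $\Rightarrow$ \ref{itm:normal} needs a construction you are missing. One first subdivides every edge once (so $l_i \mapsto 2l_i$), and then at each new bivalent vertex sprouts a univalent \emph{twig}---a linear chain of edges---of length $s_i=\tfrac{1}{2l_i}\lcm(l_1,\dots,l_r,2)-1$ toward the adjacent boundary component $i$. Walking across one original edge now costs $2+2s_i$ new edges on the side facing boundary $i$, so the new walk length becomes $(2+2s_i)\,l_i=\lcm(l_1,\dots,l_r,2)$, the same for every $i$. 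Incidentally, your \ref{itm:multiple} $\Rightarrow$ \ref{itm:real} via a ``flow/potential argument'' on the \emph{same} graph is also unjustified as written; the paper obtains that direction for free via \ref{itm:normal}. Finally, a safe walk along the boundary cycle \emph{does} enter and exit a dead-end edge (once along each side), so removing a univalent vertex requires adjusting the $l_i$, not merely deleting the hair.
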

\begin{proof}
We show \ref{itm:real} $\simplies$ \ref{itm:multiple} and \ref{itm:multiple}
$\simplies$ \ref{itm:normal}; that \ref{itm:normal} implies the other two is
trivial. Then we show how to get rid of uni- and bivalent vertices in cases
\ref{itm:real} and \ref{itm:multiple}.
\proofstep{\ref{itm:real} $\simplies$ \ref{itm:multiple}}
Pick one boundary component and look at the cycle in $G$ that surrounds it.
$\phi$ induces a symmetry of that cycle, sending vertices to vertices. Starting
at any one vertex and counting the number of edges that are passed during
$\phi$'s safe walk, we get, for that boundary component, the correct walk length
for a multi-speed \tat\ twist (with unit-length edges). The signs of the walk
lengths are the same by assumption.
\proofstep{\ref{itm:multiple} $\simplies$ \ref{itm:normal}}
Let $G$ have $r$ boundary components of length $b_1,\ldots,b_r$ respectively,
and let $\phi$ be given as $\T{G,(l_1,\ldots,l_r)}$.

\begin{figure}
\centering
\def\svgwidth{0.5\textwidth}
{\fontfamily{pplx}\selectfont
	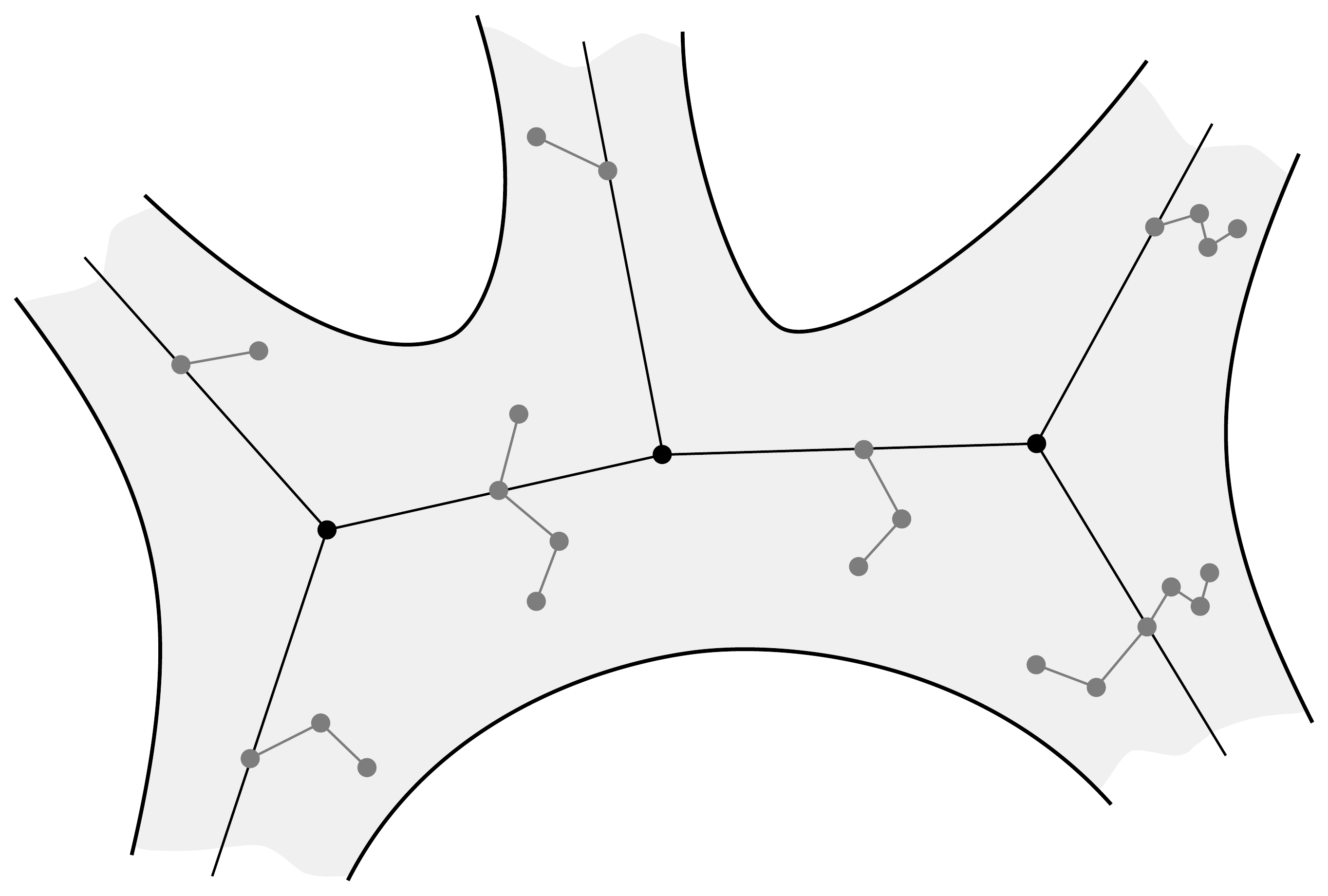
}
  \caption{A graph sprouting new twigs}
  \label{fig:twigs}
\end{figure}

Assume that no $l_i$ is zero; see the remark \vpageref{rmk:nonzero}. By assumption, all $l_i$ have the same
sign; assume that they are all positive (or change the orientation of the surface
to achieve this). We will see how one can, at
the cost of introducing uni- and bivalent vertices, modify the graph to give
back the same mapping class with one single walk length.

First, subdivide each edge once and
replace each $l_i$ by $2l_i$. The induced \tat\ twist remains the same. Each newly
introduced bivalent vertex has two nearby boundary components. Towards both of
them, we add a small linear graph which, if it goes towards the $i$th boundary
component, has length
\[
	s_i = \frac{1}{2l_i}\lcm(l_1,\ldots,l_r,2) - 1.
\]
$s_i$ can be zero, which means there is nothing to add. To walk along one edge in the original graph is the same as walking for distance
$2+2s_i$ in the changed graph. To describe the same \tat\ twist as before, replace
therefore the walk length $2l_i$ by
\[
	l = (2+2s_i)l_i = \lcm(l_1,\ldots,l_r,2),
\]
which is the uniform walk length we were looking for.

\proofstep{Removing uni- and bivalent vertices}
On a \tat\ graph with real edge lengths, a bivalent vertex can easily be removed
by giving the new combined edge a length which is the sum of the two pieces; that is,
provided the graph does not just consist of a single loop. In many cases, suitable
rescaling of edges may also eliminate the need of univalent vertices.

A multi-speed \tat\ graph $G$, if it is connected and not a circle, needs
neither uni- nor bivalent vertices.
Choose one boundary component and start a safe walk of the specified walk
length, say $l_i$, at a vertex which is not bivalent.
Let $w_i$ be the number of bivalent vertices passed by this safe walk.
$l'_i = l_i - w_i$ is the new walk length to be used at this boundary component
after all bivalent vertices have been removed from $G$.
Univalent vertices can be removed similarly.
\end{proof}
For a (standard) \tat\ twist, uni- and bivalent vertices may be necessary; see
the example \vpageref{fig:bivalentneeded}.

\subsection{Bounds for general \tat\ graphs and twists}\label{sec:generalbounds}
The Euler characteristic of a \tat\ graph with $b$ boundary components and genus $g$, $v$ vertices and $e$ edges is
	\[
		v-e = \chi(\Sigma) = 2-2g-b,
	\]
hence
	\begin{equation}\label{eq:euler}
		g = 1 + \frac{e-v-b}{2}.
	\end{equation}

Assume now that the graph has neither uni- nor bivalent vertices. As we have
seen, this can be achieved by permitting different walk lengths for different
components of the boundary, if there is more than one. Then
	\[
		 v \leq \frac{2}{3} e
	\]
and therefore, since $b$ is at least $1$,
    \begin{equation}
    	e \leq 6g-3.
    \end{equation}
\begin{figure}
\centering
\def\svgwidth{0.4\textwidth}
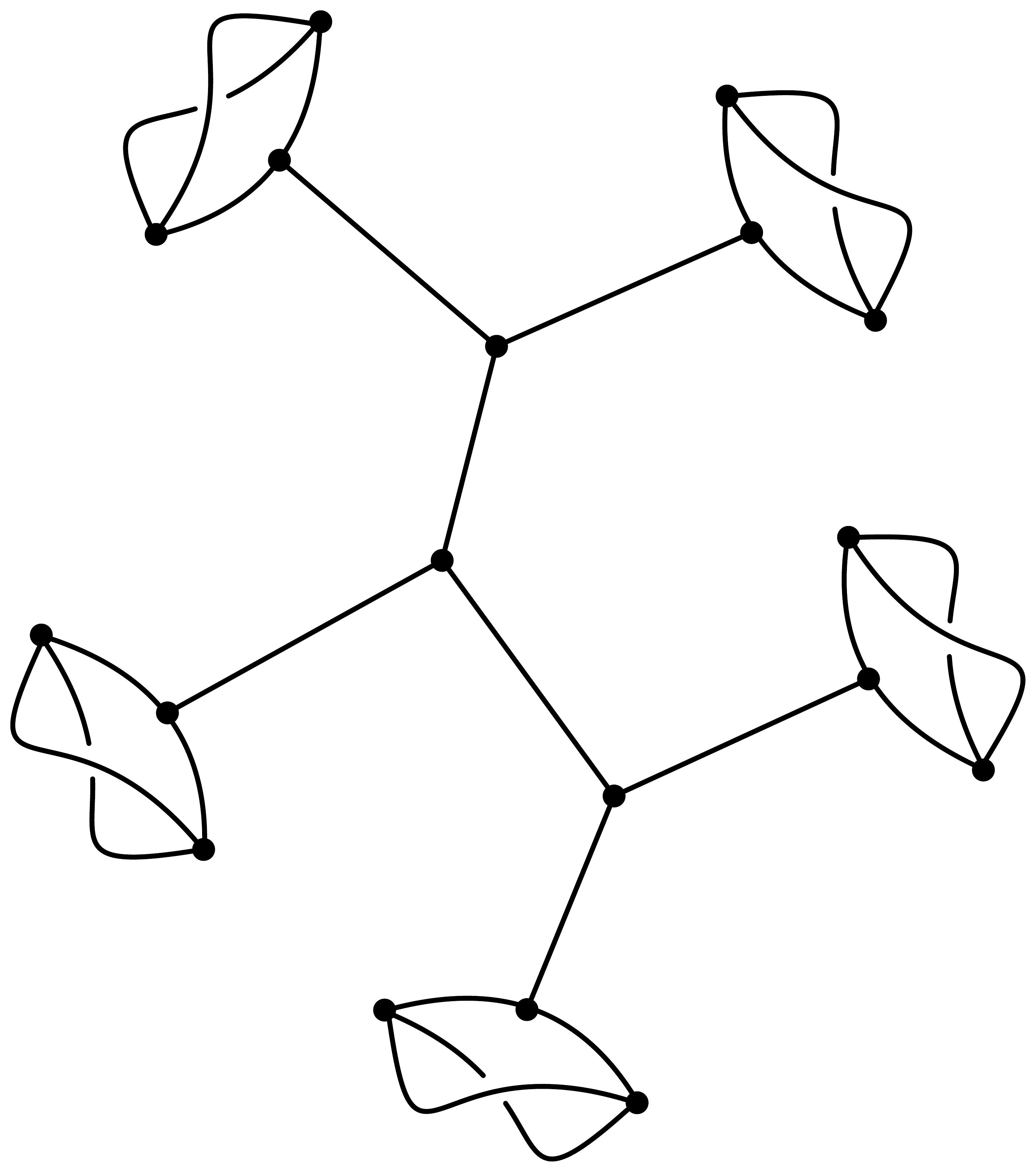
  \caption{Trivalent ribbon graph of genus $5$}
  \label{fig:bigtrivalent}
\end{figure}
Any graph with at least trivalent vertices can be made trivalent by inserting
new edges; then the inequality becomes an equality. Figure
\ref{fig:bigtrivalent} shows an example.

\begin{figure}
\centering
\def\svgwidth{0.7\textwidth}
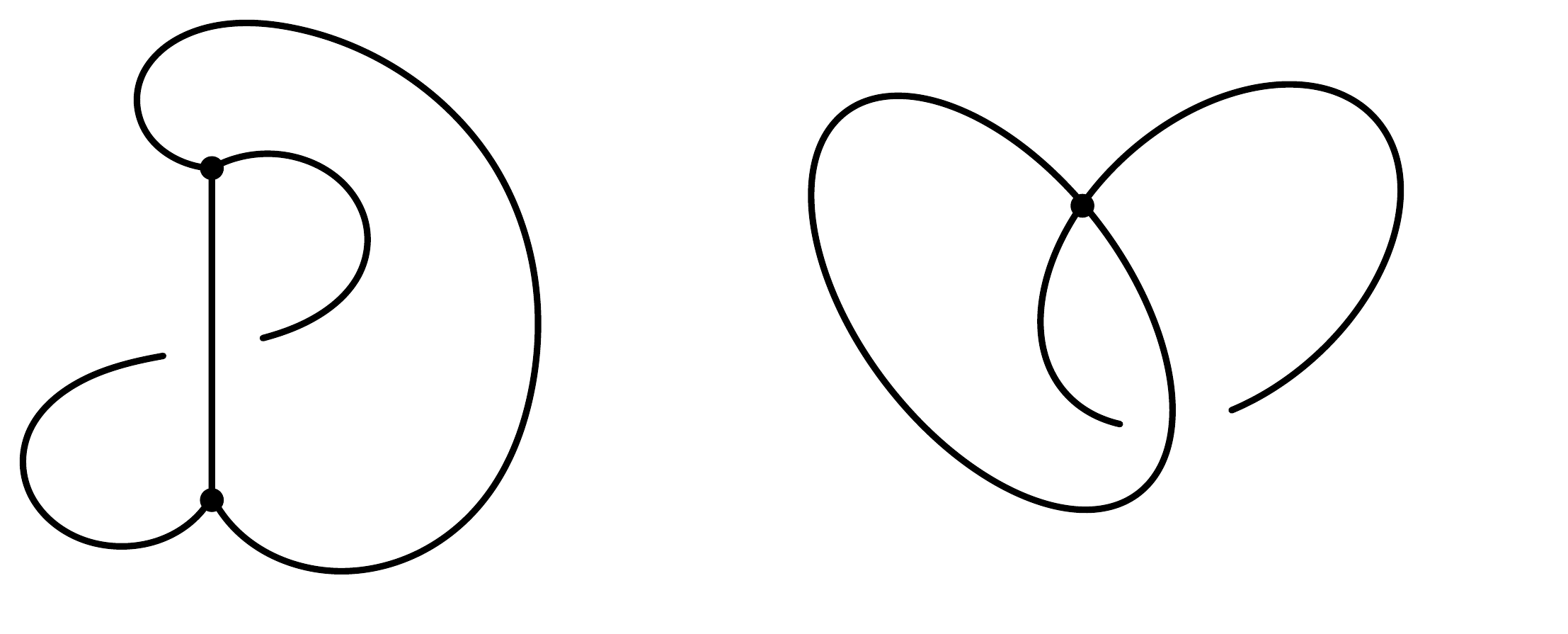
  \caption[The only \tat\ graphs of genus $1$]{The only \tat\ graphs of genus $1$: The graphs for the trefoil and the bifoil twist}
  \label{fig:bifoiltrefoil}
\end{figure}
\begin{remark}\label{rem:trefoilbifoil}
This bound shows that there is a finite number of \tat\ graphs of a given genus.
Hence, on each fixed surface there is only a finite number of \tat\ twists that
are not powers of others, up to conjugacy. For example, there are
only two \tat\ twists on the one-holed torus, and powers of them: The
\emph{trefoil twist} $\Tr$ -- the monodromy of the trefoil described above -- and the
\emph{bifoil twist} $\Bi$, depicted in Figure \ref{fig:bifoiltrefoil}.
\end{remark}

\section{Chord diagram notation}
To be able to systematically examine the zoo of \tat\ twists, or do computer
experiments, we need an appropriate notation for them. The goal is to encode a
pair of a surface $\Sigma$ with one boundary component and a \tat\ graph $G
\subset \Sigma$. Since by definition $\Sigma$ deformation retracts to $G$,
$\Sigma \smallsetminus G$ is homeomorphic to $S^1 \times \left] 0,1\right]$. So the
pair $(\Sigma,G)$ can be constructed from an annulus $S^1 \times \left[
0,1\right]$ by dividing one of its boundary components into $2e$ pieces, where
$e$ is the number of edges of $G$, and identifying them pairwise.

\begin{figure}
\centering
\def\svgwidth{0.3\textwidth}
{\fontfamily{pplx}\selectfont
	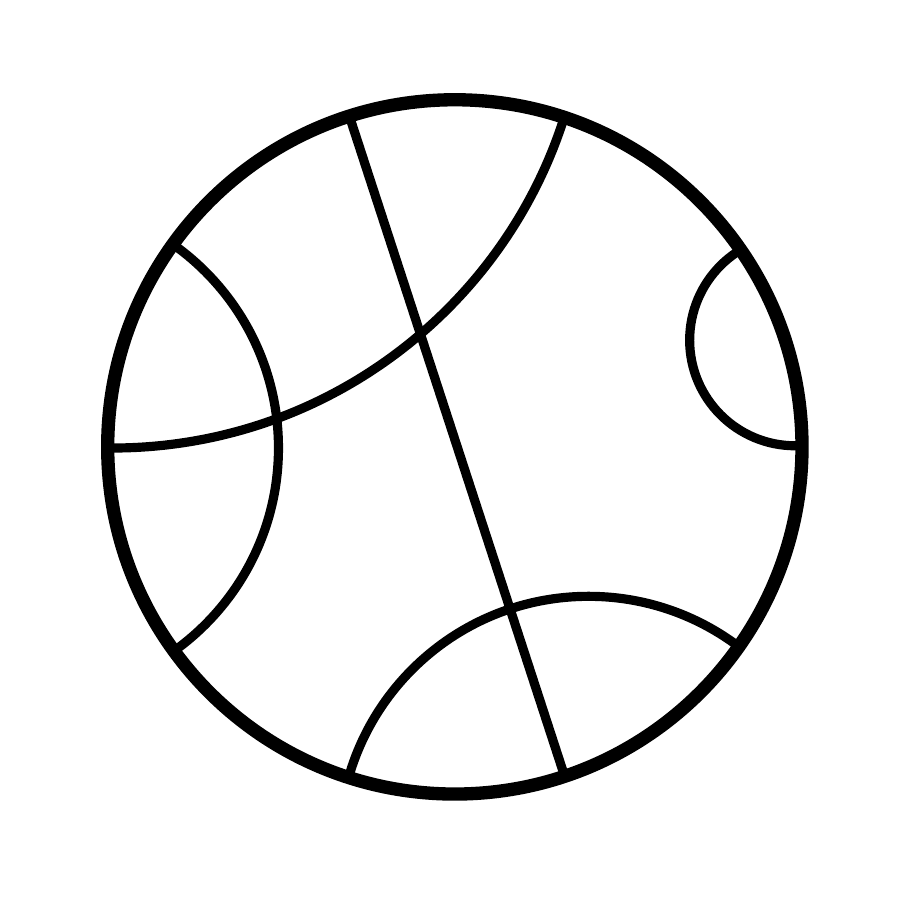
}
  \caption{A chord diagram with five chords}
  \label{fig:chorddiagram}
\end{figure}
\begin{definition}
A \emph{chord diagram} of size $n$ is a fixed-point free involution of the set
$\{1,\ldots,2n\}$, graphically represented by arcs (the \emph{chords}) that
connect labelled points on a circle.
We call two chord diagrams \emph{equivalent} if they only differ by a rotation
(keeping the labels fixed).
\end{definition}

\begin{figure}
\centering
\def\svgwidth{0.8\textwidth}
{\fontfamily{pplx}\selectfont
	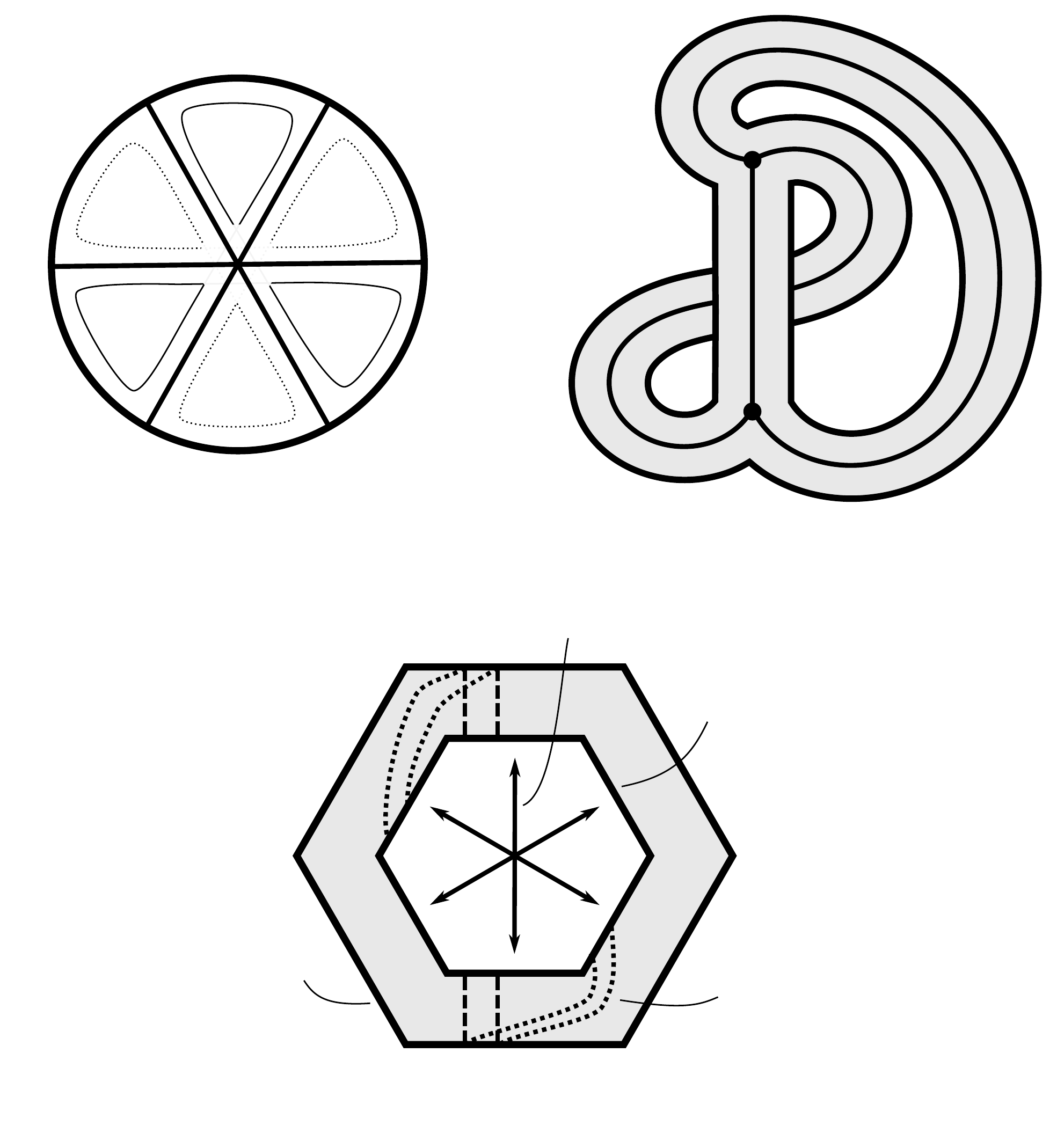
}
  \caption[Three illustrations of the relation between chord diagrams and graphs]%
  {\emph{(i)}~a chord diagram, with its two internal boundaries marked
              by one dotted and one solid line;
           \emph{(ii)}~the corresponding \tat\ graph, with the dotted internal boundary
              corresponding to the lower vertex, the solid one to the upper vertex;
           \emph{(iii)}~how the \tat\ graph is obtained from the diagram and how
              two properly embedded intervals are mapped by the \tat\ twist.}
  \label{fig:glueing}
\end{figure}

A chord whose endpoints are $r$ and $s$ will usually be given by the notation
$\{r,s\}$, and whenever convenient, $r$ and $s$ are to be understood as elements
of ${\Z}/{2n\Z}$. They may also be labelled by numbers from $0$ to $2n-1$,
for example when used for computations.

\subsubsection{Correspondence between chord diagrams and ribbon graphs}\label{sec:chorddiagramsandribbongraphs}
Equivalence classes of chord diagrams correspond to ribbon graphs with one
boundary component in a natural way. To build a ribbon graph from a chord
diagram, take a $2n$-gon and identify pairs of sides, reversing orientation, as
prescribed by the diagram, which makes a closed surface. When a similar smaller
$2n$-gon is removed, one gets a surface with boundary, and the glued edges form
an embedded graph. Alternatively, the edges to be glued can be put at the
inside, as in Figure \ref{fig:glueing}.

There is another way to construct these surfaces, which is sometimes useful even
though the graph can be seen less clearly in this way: Replace the circle of the
chord diagram by an annulus and glue bands to its inner boundary exactly as
indicated by the chords. Two such bands may cross, but whether one passes over the
other or vice-versa is not important. This makes a surface with potentially many
boundary components, but of the same genus as in the previous construction. One
boundary component is the outside of the annulus, the others we will call
\emph{internal boundaries of the chord diagram}. To get the same surface as
before, cap off all internal boundaries with disks. While the chords correspond
to the edges of the graph, the internal boundaries correspond to the vertices.

For a graph with one boundary component, where $e$ is the number of chords in its chord diagram and $v$ the number of internal boundaries, Formula \ref{eq:euler} becomes
	\begin{equation}
		g = \frac{1+e-v}{2}.
	\end{equation}

When we are given a ribbon graph with one boundary component and want to obtain
its chord diagram from it, we choose an arbitrary point on the boundary and,
moving along the boundary to the right, label each of the two sides of each edge
by consecutive numbers, as in the top half of Figure \ref{fig:glueing}.
The two numbers we see at an edge give us a chord.

\subsubsection{The \tat\ property in chord diagrams}
A chord diagram greatly helps recognizing a \tat\ property. This is illustrated
in the bottom part of Figure \ref{fig:glueing}, where the arrows indicate a
gluing. 
We see here that the $\Theta$-graph from the previous picture has the
\tat\ property with walk length $1$ because paths which have the same endpoint
on the graph again share the same endpoint when they are composed with a safe
walk of length $1$. Thus the \tat\ property or, more precisely, the possible
walk lengths, show up as a rotational symmetry of the chord diagram. $G$ having
the \tat\ property with walk length $l$ means that the gluings are invariant
under rotation by $\frac{l}{2n}\cdot 2\pi = \frac{l\pi}{n}$.

\subsection{Building steps for ribbon graphs}\label{sec:buildingribbongraphs}
Chord diagrams lead us to the following observation:
\begin{proposition}\label{prop:ribbongraphs}
Two ribbon graphs with one boundary component are related to each other by a
sequence of the following two moves and their inverses:
\begin{enumerate-roman}
  \item stretching a vertex / collapsing an edge
	\begin{center}
		\def\svgwidth{0.5\textwidth}
\begingroup%
  \makeatletter%
  \providecommand\color[2][]{%
    \errmessage{(Inkscape) Color is used for the text in Inkscape, but the package 'color.sty' is not loaded}%
    \renewcommand\color[2][]{}%
  }%
  \providecommand\transparent[1]{%
    \errmessage{(Inkscape) Transparency is used (non-zero) for the text in Inkscape, but the package 'transparent.sty' is not loaded}%
    \renewcommand\transparent[1]{}%
  }%
  \providecommand\rotatebox[2]{#2}%
  \ifx\svgwidth\undefined%
    \setlength{\unitlength}{681.52104492bp}%
    \ifx\svgscale\undefined%
      \relax%
    \else%
      \setlength{\unitlength}{\unitlength * \real{\svgscale}}%
    \fi%
  \else%
    \setlength{\unitlength}{\svgwidth}%
  \fi%
  \global\let\svgwidth\undefined%
  \global\let\svgscale\undefined%
  \makeatother%
  \begin{picture}(1,0.34605877)%
    \put(0,0){\includegraphics[width=\unitlength]{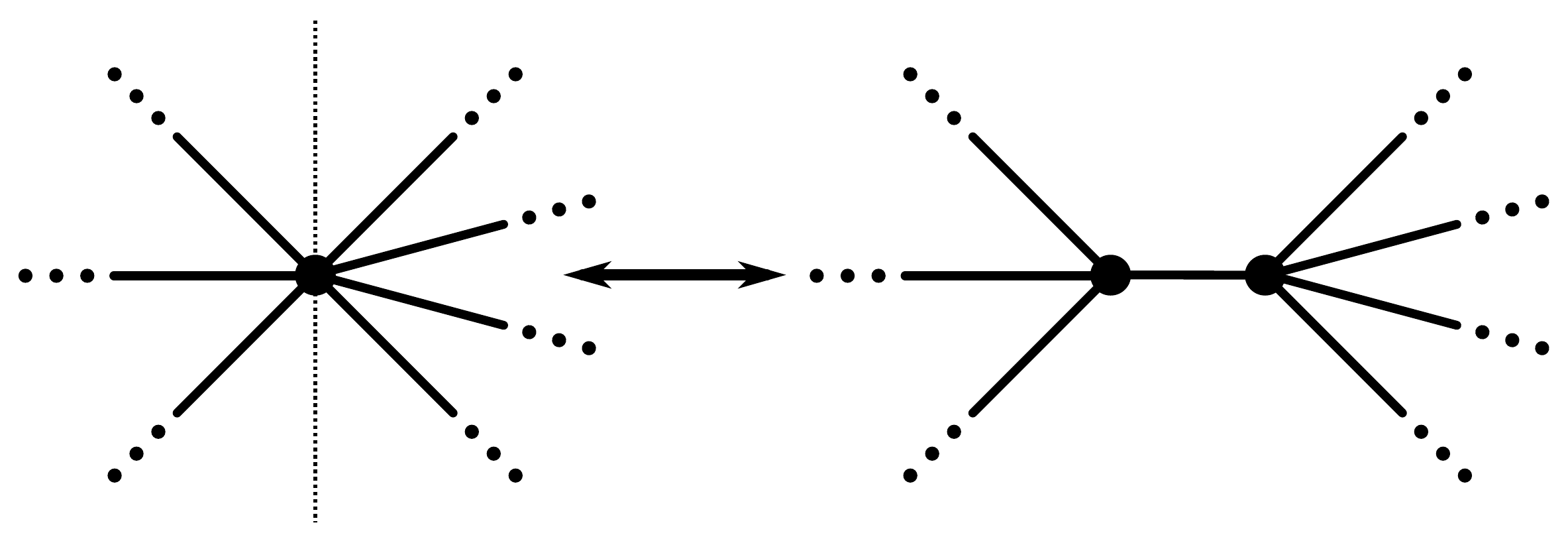}}%
  \end{picture}%
\endgroup%

		 \label{fig:ribbongraphs1}
	\end{center}
  \item hitching two vertices / unhitching a vertex
    \begin{center}
		\def\svgwidth{0.5\textwidth}
\begingroup%
  \makeatletter%
  \providecommand\color[2][]{%
    \errmessage{(Inkscape) Color is used for the text in Inkscape, but the package 'color.sty' is not loaded}%
    \renewcommand\color[2][]{}%
  }%
  \providecommand\transparent[1]{%
    \errmessage{(Inkscape) Transparency is used (non-zero) for the text in Inkscape, but the package 'transparent.sty' is not loaded}%
    \renewcommand\transparent[1]{}%
  }%
  \providecommand\rotatebox[2]{#2}%
  \ifx\svgwidth\undefined%
    \setlength{\unitlength}{681.52104492bp}%
    \ifx\svgscale\undefined%
      \relax%
    \else%
      \setlength{\unitlength}{\unitlength * \real{\svgscale}}%
    \fi%
  \else%
    \setlength{\unitlength}{\svgwidth}%
  \fi%
  \global\let\svgwidth\undefined%
  \global\let\svgscale\undefined%
  \makeatother%
  \begin{picture}(1,0.288252)%
    \put(0,0){\includegraphics[width=\unitlength]{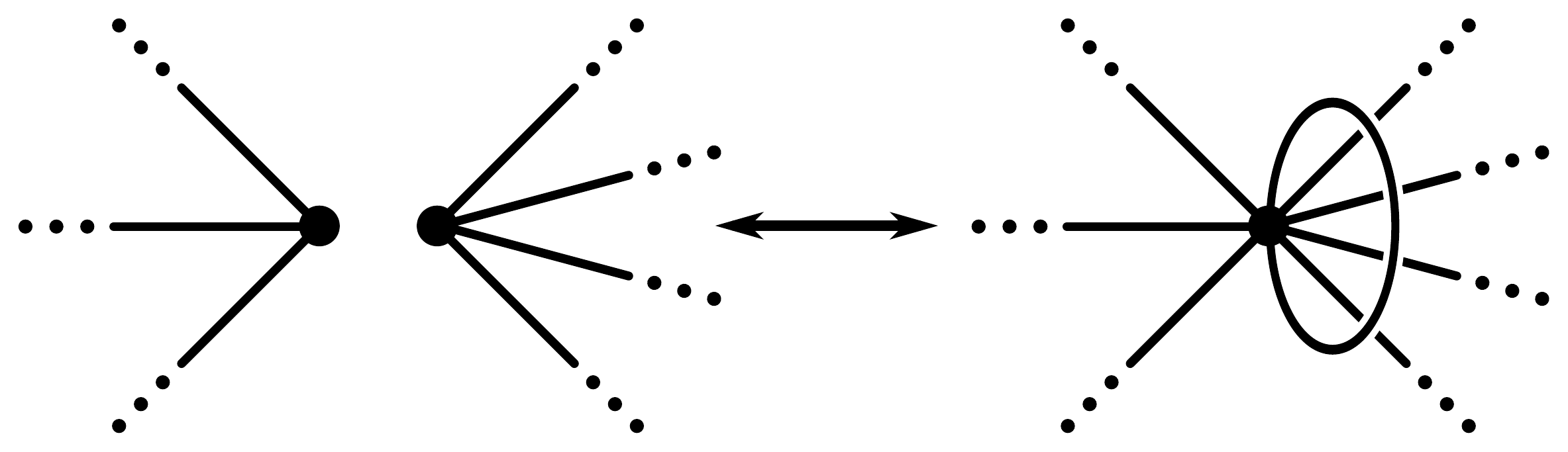}}%
  \end{picture}%
\endgroup%

		 \label{fig:ribbongraphs2}
	\end{center}
\end{enumerate-roman}
\end{proposition}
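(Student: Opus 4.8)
The plan is to recast the statement via the correspondence between ribbon graphs with one boundary component and chord diagrams from \S\ref{sec:chorddiagramsandribbongraphs}, and then to connect every such graph to a fixed normal form. Since each of the two moves is induced by a homeomorphism of the underlying surface, two ribbon graphs can be related by them only when they have the same genus~$g$ (and, as throughout this section, I take them connected); so I fix~$g$ and aim to show that every ribbon graph with one boundary component and genus~$g$ is related to one distinguished \emph{standard graph} $W_g$, namely the bouquet of $2g$ loops at a single vertex whose cyclic order spells out the surface word $a_1 b_1 a_1^{-1} b_1^{-1} \cdots a_g b_g a_g^{-1} b_g^{-1}$. As ``being related by the moves'' is an equivalence relation, connecting every graph to $W_g$ suffices.

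First I would reduce the number of vertices to one. If a ribbon graph $G$ has more than one vertex then, being connected, it has a spanning tree, and every edge of that tree is a non-loop edge; collapsing such an edge is an instance of move~(i), does not alter the surface (hence neither the genus nor the one-boundary-component property), and lowers the vertex count by one. Iterating, $G$ is related to a one-vertex ribbon graph, equivalently to a chord diagram with a single internal boundary; by~\eqref{eq:euler} this is a $4g$-gon with its $4g$ sides identified in pairs, and it already has the minimal possible number $2g$ of edges for its genus.

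Next I would bring this polygon identification to $W_g$. Two polygon words of length $4g$ describe the same genus-$g$ surface with one boundary component, and the task is to realize the passage between them by our moves. Concretely, I would run the normal-form step of the classification of surfaces on the boundary word -- cyclically rotate it, turn a linked pair $\cdots a \cdots a^{-1} \cdots$ into an adjacent handle $a b a^{-1} b^{-1}$, and collect the handles side by side -- realizing each step by move~(ii), which reshuffles how loops are attached at a vertex, after first using move~(i), if necessary, to split the vertex, modify locally, and merge back. Since we start at the minimal edge count, the reduction is a pure rearrangement of a fixed collection of letters into handle blocks, with no $a a^{-1}$ cancellation ever required.

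The step I expect to be the real obstacle is this last one: verifying honestly that moves~(i) and~(ii) together are strong enough to perform each elementary step of the classification reduction on the polygon word, with correct bookkeeping of the cyclic orderings (the ribbon structure) at every stage. A convenient way to organise it is to note that a \emph{stretch} followed by a \emph{collapse} along a different edge is a Whitehead (flip) move, and then to use -- or to reprove directly on chord diagrams, tracking the involution and its internal boundaries -- the standard fact that flips act transitively on the ribbon graphs of a fixed surface, with move~(ii) supplying exactly the rearrangements at a single vertex that flips cannot reach on their own. It is also worth recording at the outset that the proposition is intended for graphs of equal genus, precisely because the moves are surface homeomorphisms.
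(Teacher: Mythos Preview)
You have misread move~(ii). Hitching is not a rearrangement of edges at a vertex; it is the addition of a new edge joining two distinct vertices of the ribbon graph (and unhitching is the deletion of such an edge). On the level of chord diagrams this is the insertion of a chord whose two ends lie on two \emph{different} internal boundary components, and it changes the Euler characteristic by~$-1$ while keeping the number of boundary components equal to one; hence it raises the genus by one. So your opening claim that both moves are induced by homeomorphisms of the underlying surface is false for move~(ii), and the restriction to a fixed genus~$g$ is unwarranted. The proposition is really asserting that \emph{all} one-boundary ribbon graphs, of every genus, lie in a single equivalence class under these moves.

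Once this is straightened out the proof becomes much shorter than the route you sketch. The paper simply observes that any chord diagram is obtained from the empty diagram (a single vertex, no edges, the disk) by adding chords one at a time, and that each added chord either splits one internal boundary into two (a vertex stretch, move~(i)) or merges two internal boundaries into one (a hitch, move~(ii)). Thus every one-boundary ribbon graph is connected by the moves to the disk, and hence any two are connected to each other. No polygon-word normalisation, Whitehead moves, or transitivity-of-flips argument is needed.
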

Otherwise stated, every such ribbon graph can be built from the graph with just
one vertex and no edges by vertex stretching and hitching.
\begin{proof}
The graph with one vertex and no edges is represented by the empty chord
diagram. Whenever a new edge is added to some chord diagram, two things are
possible:
\begin{enumerate-roman-packed}
\item The chord separates an internal boundary component into two: This
corresponds to stretching the respective vertex.
\item The chord connects two internal boundaries: This corresponds to hitching
two vertices, and increases the genus of the surface by one.
\end{enumerate-roman-packed}
\end{proof}
\begin{remark}
A consequence from these considerations that will be used further on is: Adding new chords to
a chord diagram can only increase its genus.
\end{remark}

\subsection{Equivalence of \tat\ graphs}\label{sec:equivalence}
Some \tat\ twists are equivalent to others in the sense that they represent the
same mapping classes.
For example, we have already seen that edges can be subdivided and the walk
length adapted accordingly, if the subdivision is done equally for the entire
orbit of the edge.
On the level of chord diagrams, this corresponds to replacing a chord, as well as its
images under the given rotation, by two or more parallel ones.
If there is only one boundary component, orbits of univalent vertices can be removed
(or introduced) at will.
This corresponds to removing an orbit of chords that connect neighbouring labelled points.

These two modifications are in fact examples of a slightly more general process,
which is of course reversible:
\begin{proposition}
Let $\T{G,l}$ be a \tat\ twist with an edge orbit that consists of contractible
components. Then $\T{G',l'}$, where $G'$ is obtained from $G$ by contracting all
edges of this orbit and $l'$ is the suitably adapted walk length, defines
the same mapping class.
\end{proposition}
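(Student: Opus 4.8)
The plan is to contract the orbit not by a single topological move but through a one–parameter family of \tat\ graphs with real edge lengths, using the flexibility provided by Theorem~\ref{thm:definitions} and a continuity argument. Throughout I work in the chord–diagram setting of this section, so $G$ has one boundary component.

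First I would realise $\phi=\T{G,l}$ by the explicit diffeomorphism of Proposition~\ref{prop:basic}, so that $\phi(G)=G$ and $\phi$ cyclically permutes the edges of the orbit $O$; the \tat\ property with walk length $l$ says that the chord diagram of $G$ is invariant under the rotation $\sigma$ advancing by $l$ positions, and $\sigma$ induces the same permutation of the chords as $\phi$ does of the edges, so $O$ is a union of $\sigma$-orbits of chords. Now give every edge of $O$ a common real length $\varepsilon\in(0,1]$ and keep all other edges of length $1$. Since $O$ is $\sigma$-invariant, the cyclic pattern of lengths that the boundary of $\Sigma$ cuts out of its $2e$ intervals is still invariant under the shift by $l$ positions; that pattern is therefore periodic with period dividing $l$, so the shift by $l$ positions is realised on the re-metrised boundary circle by a rotation through a fixed arc length $d(\varepsilon)$, affine in $\varepsilon$, with $d(1)=l$. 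Hence each $G_\varepsilon$ has the \tat\ property with walk length $d(\varepsilon)$, and the realising diffeomorphism of Proposition~\ref{prop:basic} can be taken to depend continuously on $\varepsilon$.

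Since isotopy classes are locally constant along such a continuous family, $[\T{G_\varepsilon,d(\varepsilon)}]$ is constant on $(0,1]$ and equals $[\T{G,l}]$. Letting $\varepsilon\to 0^{+}$, the metric graphs $G_\varepsilon$ converge to the contracted graph $G'$ (each contractible component of $\bigcup O$ shrinks to a point, which is legitimate precisely because these components are trees), the safe walks of length $d(\varepsilon)$ converge to safe walks of length $l':=d(0)$ in $G'$, and the diffeomorphisms converge in $C^{0}$ to the \tat\ diffeomorphism of $G'$ with walk length $l'$; as $C^{0}$-close homeomorphisms of a surface are isotopic, this limit still lies in $[\T{G,l}]$. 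Finally $G'$ has only unit-length edges and $l'=d(0)$ is the nonnegative integer equal to $l$ minus the number of $O$-chords among any $l$ consecutive positions of the diagram (well defined by the periodicity above), so $\T{G',l'}$ is a genuine \tat\ twist, equal to $\T{G,l}$ as a mapping class.

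The hard part will be controlling the degeneration $\varepsilon\to0$: one must check that the \tat\ property together with the continuous choice of realising diffeomorphism really persists for every $\varepsilon\in(0,1]$, and exclude the pathological outcomes where $G'$ collapses too far — essentially the case $O=E(G)$, where $\Sigma$ is a disc — exactly as in the analogous single-loop caveats already made in this section. Once that is in place, identifying the limiting map as the \tat\ twist of $G'$ and reading off $l'=l-\#\{O\text{-edges traversed by a length-}l\text{ safe walk}\}$ is just bookkeeping with the chord diagram.
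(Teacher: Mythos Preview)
Your approach via a one-parameter family of real-length \tat\ graphs is quite different from the paper's, which dispatches the proposition in a single sentence: ``This process works because the contracted components are homeomorphic to disks and the symmetry of the graph is not destroyed.'' The implicit direct argument behind that sentence is that the \tat\ twist is determined up to isotopy by the images of a system of crossing arcs cutting $\Sigma$ into disks; if you take crossing arcs only for the edges \emph{not} in $O$, they still cut $\Sigma$ into disks (each contractible component of $O$ sits inside one such disk, being a tree), and on those arcs $\T{G,l}$ and $\T{G',l'}$ visibly agree, since a safe walk of length $l$ in $G$ projects under the collapse to a safe walk of length $l'$ in $G'$. No limits are needed.

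Your deformation argument is correct for $\varepsilon\in(0,1]$, but the passage to $\varepsilon=0$ is where all the content hides, and your sketch does not really control it. The explicit diffeomorphism of Proposition~\ref{prop:basic} depends on smoothing polygons at the vertices; as $\varepsilon\to 0$ the vertices within a single $O$-component coalesce, and for genuine $C^0$ convergence you must arrange those polygons to merge into the polygon at the corresponding vertex of $G'$. This can certainly be done, but carrying it out is essentially equivalent to checking directly that the twist is isotopically trivial on the disk-neighbourhoods of the $O$-components --- which is precisely the paper's one-line observation. So your method is viable but reimports the key point through the back door of the limit; the direct argument is both shorter and more transparent. (Your restriction to one boundary component is also unnecessary: the statement and its direct proof go through verbatim in general.)
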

This process works because the contracted components are homeomorphic to disks and
the symmetry of the graph is not destroyed.

In the special case where the twist is the identity, or a composition of Dehn twists around boundary components, one can contract every edge that is not
a loop and end up with a bouquet of circles.
One might ask whether the collapse of a contractible edge orbit this is
the only kind of equivalence that is needed:
\begin{question}\label{q:collapsible}
	If $\T{G_1,l_1}$ and $\T{G_2,l_2}$ (possibly with multiple walk lengths)
	represent the same mapping class, is there a graph $G$ that is obtained
	from both $G_1$ and $G_2$ by collapsing contractible edge orbits?
\end{question}

\subsubsection{An example obtained by ``blow-up''}
An example of such a contraction is shown in Figure \ref{fig:sebastianexample}.
On the left, we see the complete bipartite graph which (with its minimal walk length
of $2$) describes the monodromy of the $(2,3)$-torus knot, the trefoil. If one
of the two top vertices is pulled down, it looks like the surface in the middle,
where the three bands going down have received a half twist. We ``blow up'' the
two vertices by replacing them with another \tat\ graph -- in this case the
same complete $(2,3)$ bipartite graph -- and update the walk length such that it
induces the same symmetry on the original edges. This is a general construction,
suggested by A'Campo, to create more complicated \tat\ graphs.

The edges coming from the original graph are still all in one orbit, but are now
separated into three connected components which can be contracted. When we do
so, we obtain a complete $(4,3)$ bipartite graph. Note, however, that the
induced walk length is the double of the minimal walk length for this graph.
Therefore the \tat\ twist we get is the square of the monodromy of the $(4,3)$
torus knot.
\begin{figure}
\centering
\def\svgwidth{0.7\textwidth}
{\fontfamily{pplx}\selectfont
	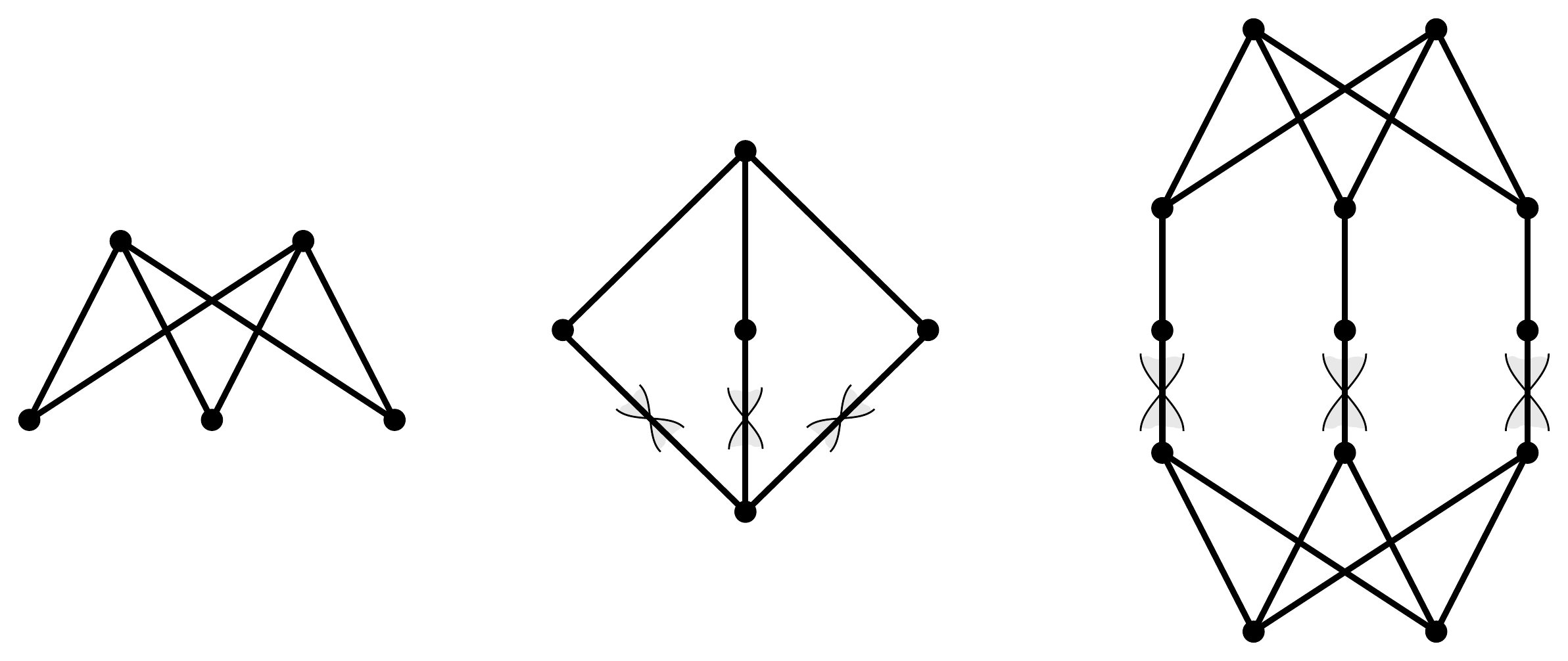
}
  \caption[A (2,3)-bipartite graph with its 2-vertex set ``blown-up'' to two
  (2,3)-bipartite graphs]%
  {A (2,3)-bipartite graph with its 2-vertex set ``blown-up'' to two
  (2,3)-bipartite graphs. The crosses on the edges represent a half twist. This
  \tat\ graph has an edge orbit with contractible components.}
  \label{fig:sebastianexample}
\end{figure}

\subsubsection{A necessary bivalent vertex}\label{sec:bivalent}
Figure \ref{fig:bivalentneeded} shows an example of a \tat\ graph with a walk
length of $3$ where the bivalent vertex cannot be removed, unless the twists
is described as a multi-speed twist with different walk lengths for the two boundary components.
\begin{figure}
\centering
\def\svgwidth{0.4\textwidth}
{\fontfamily{pplx}\selectfont
	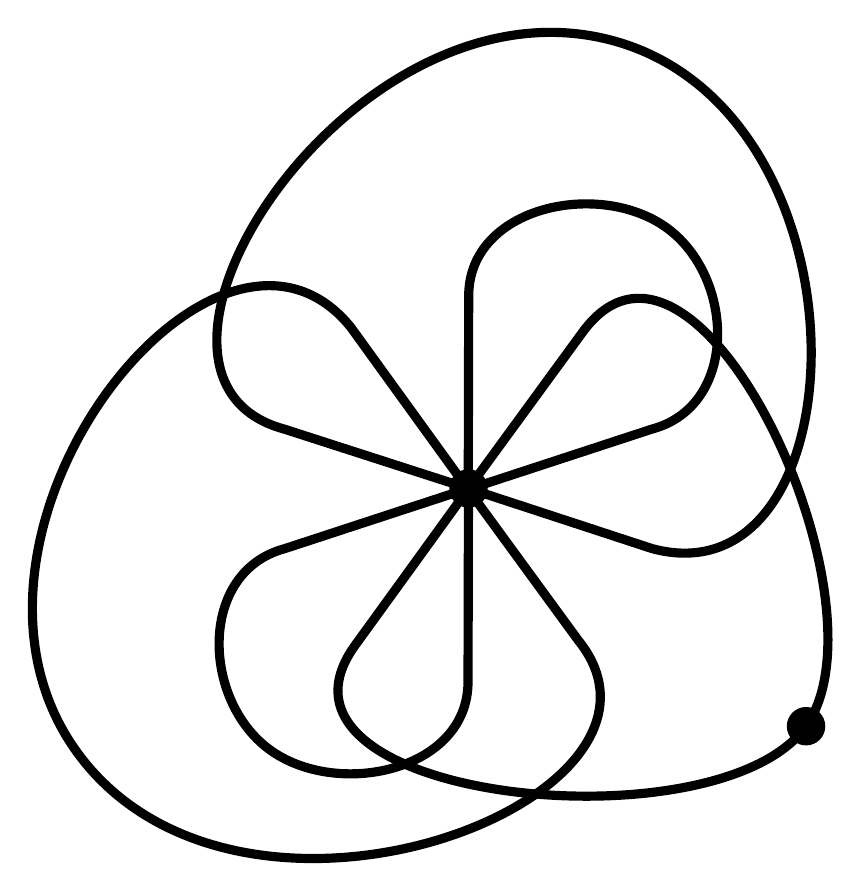
}
  \caption[A \tat\ graph that needs its bivalent vertex]%
  {A \tat\ graph with two boundary components, genus two, and a walk length of three, that needs its bivalent vertex}
  \label{fig:bivalentneeded}
\end{figure}

\section{Elementary \tat\ twists}\label{sec:elementary}
In this part we are going to study a class of \tat\ twists which can be seen as
building blocks for general \tat\ twists for surfaces with one boundary
component. Their combinatorics will also be important in Chapter \ref{chap:periodic}, where we will use them to study periodic diffeomorphisms.
\begin{definition}
A \tat\ twist $\T{G,l}$ with $\#\partial G=1$ is called \emph{elementary}
if it acts transitively on the set of edges of $G$.
\end{definition}
Most graphs we have seen up to now are of this type. For example, the twists
along $(p,q)$-bipartite graphs that represent the monodromy of $(p,q)$-torus
knots act on their $p\cdot q$ edges by cyclic permutation.

We first prove a classification for elementary twists:
\begin{theorem}
Elementary \tat\ twists $\T{G,l}$ have underlying graphs $G$ from a two-parameter
family $E_{n,a}$, $n,a\in\N$, $a\leq n$, $a$ odd if $a<n$. Its members are described by chord
diagrams with $n$ chords and constant chord length $a$, with chords of the
form $\{2k,2k+a\}$ and $\{2k+1,2k+1-a\}$. For the twists, $l=1$ or $2$ automatically.

The diffeomorphism $\T{G,l}$ can always be represented as some $\T{E_{n,a},2}$
with $a<n$.

Conversely, every \tat\ twist with walk length $1$ or $2$ and one boundary component
is elementary (and is hence of the form $E_{n,a}$).
\end{theorem}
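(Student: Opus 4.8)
The plan is to translate everything into the chord-diagram calculus of the previous section and reduce the whole statement to one short computation in $\Z/2e\Z$. Write $e=|E(G)|$; since $\#\partial G=1$, the pair $(G,\Sigma)$ is encoded (up to rotation) by a chord diagram on $2e$ labelled points, i.e.\ by the fixed-point-free involution $\pi\colon\Z/2e\to\Z/2e$ whose orbits $\{p,\pi(p)\}$ are the chords, i.e.\ the edges of $G$. Recall that $G$ has the \tat\ property with walk length $l$ exactly when the diagram is invariant under the rotation $\rho^{l}$ (where $\rho$ moves each point to the next), and that under this symmetry $\T{G,l}$ permutes the edges of $G$ by $\rho^{l}$. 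Hence $\T{G,l}$ is elementary $\iff$ $\rho^{l}$ acts transitively on the $e$ chords. This equivalence is the entire engine of the proof.

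First I would cut $l$ down to size. If $\rho^{l}$ is transitive on chords, fix one chord $\{p,q\}$: the $\rho^{l}$-images of $\{p,q\}$ are all $e$ chords, so every one of the $2e$ points lies in $\langle\rho^{l}\rangle p\cup\langle\rho^{l}\rangle q$, giving $2e\le 2\,\ord(\rho^{l})=2\cdot\tfrac{2e}{\gcd(l,2e)}$ and hence $\gcd(l,2e)\in\{1,2\}$. As $\langle\rho^{l}\rangle=\langle\rho^{\gcd(l,2e)}\rangle$, the diagram is also $\rho^{1}$- or $\rho^{2}$-invariant with the same chord-orbits, and $\T{G,l}$ is a power of $\T{G,\gcd(l,2e)}$; so for the classification it costs nothing to assume $l\in\{1,2\}$, which is already the clause ``$l=1$ or $2$ automatically''. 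In the converse direction, $l\in\{1,2\}$ is handed to us.

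Now the classification, which is just solving $\pi(x+l)=\pi(x)+l$. For $l=1$ this forces $\pi(x)=x+c$; $\pi^{2}=\id$ plus fixed-point-freeness give $c=e$, so $D$ is the diagram of $e$ diameters, i.e.\ $G=E_{e,e}$, and $\rho$ visibly cycles the $e$ diameters, so $\T{G,1}$ is elementary. For $l=2$, $\pi$ is determined by $\pi(0)$ and $\pi(1)$ through $\pi(2k)=2k+\pi(0)$ and $\pi(2k+1)=2k+\pi(1)$; imposing $\pi^{2}=\id$ leaves exactly two families: $\pi(0)=e$, forcing again $\pi(x)=x+e$ (the all-diameters graph $E_{e,e}$, with $e$ even); or $\pi(0)$ odd and $\pi(1)=1-\pi(0)$, giving the constant-(odd-)length pattern $\pi(2k)=2k+a$, $\pi(2k+1)=2k+1-a$ that by definition is $E_{e,a}$ ($a$ the common, odd chord length). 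In the second family every chord joins an even point to an odd point, so each $\rho^{2}$-orbit of chords has full size $e$; there is a single orbit and $\T{G,2}$ is elementary. In the first family $D$ is all-diameters, hence actually $\rho^{1}$-invariant, and one invokes its walk-length-$1$ description, under which it is elementary. Read forward these establish the first clause (an elementary twist has $\gcd(l,2e)\in\{1,2\}$ and underlying graph in the family $\{E_{n,a}:a\le n,\ a\text{ odd if }a<n\}$, with the stated chord description); read backward they establish the last clause (any \tat\ twist of walk length $1$ or $2$ on a one-holed surface is elementary, hence of the form $E_{n,a}$).

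Finally, the normal form $\T{E_{n,a},2}$ with $a<n$. If $a<n$ already, then by the computation above $E_{n,a}$ admits only even walk lengths, so the twist is literally $\T{E_{n,a},2}$ and there is nothing to do. If $a=n$, $G$ is all-diameters; here I would apply the subdivision move from the proof of Theorem~\ref{thm:definitions} (subdivide every edge once, doubling the walk length) to rewrite $\T{E_{n,n},1}$ as $\T{G',2}$ with $G'$ having $2n$ edges; being again elementary with walk length $2$, $G'$ is pinned down by the classification as $E_{2n,a'}$, and $a'$ odd forces $a'\ne 2n$, i.e.\ $a'<2n$, the desired form. The genuine bookkeeping obstacle I anticipate is exactly the all-diameters graph $E_{n,n}$: it is the unique \tat\ graph carrying both walk length $1$ and walk length $2$, so ``walk length $2$'' alone does not force transitivity (one must pass to its $l=1$ description), and one must check carefully that the subdivision move re-enters the family with $a<n$ rather than producing yet another diameter diagram. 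Everything else — the $\pi$-computation and the orbit counts — is short and mechanical.
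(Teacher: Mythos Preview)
Your proof is correct and follows essentially the same strategy as the paper: translate to chord diagrams and exploit rotational symmetry. The main stylistic difference is that you phrase everything through the involution $\pi$ and solve the functional equation $\pi(x+l)=\pi(x)+l$ directly, whereas the paper argues more geometrically by fixing a chord $\{r,r+a\}$ and asking which of $\{r+1,r+1\pm a\}$ can be its neighbour. Your explicit orbit-counting bound $\gcd(l,2e)\le 2$ is a clean way to justify the clause ``$l=1$ or $2$ automatically,'' which the paper leaves more implicit. For the normal form the paper computes the subdivided diagram explicitly as $E_{2n,2n-1}$, while you argue abstractly that $a'$ is odd hence $a'<2n$; both suffice.

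You are also right to flag the all-diameters diagram $E_{n,n}$ at walk length $2$ as the one genuine wrinkle: for $n$ even, $\rho^{2}$ has two chord-orbits, so $\T{E_{n,n},2}$ is \emph{not} elementary in the literal sense of the definition. The paper handles this exactly as you do---by tacitly passing to the walk-length-$1$ description---and in its proof of the converse simply writes ``the interesting case is $l=2$ and $a<n$'' without further comment. So your observation that ``walk length $2$ alone does not force transitivity'' is accurate, and the converse should really be read as a statement about the \emph{minimal} admissible walk length rather than about every $l\in\{1,2\}$.
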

Here, the \emph{length} of a chord from $r$ to $s$ is
$\min(\abs{r-s},n-\abs{r-s})$. This also corresponds to the length of the
shortest safe walk from one side of the edge represented by the chord to the
other -- in terms of the introduction: the shortest way to get to the other side
of the road without crossing it. We will use the notation $E_{n,a}$ for both the
graph and the chord diagram.
\begin{figure}
\centering
\def\svgwidth{0.3\textwidth}
{\fontfamily{pplx}\selectfont
	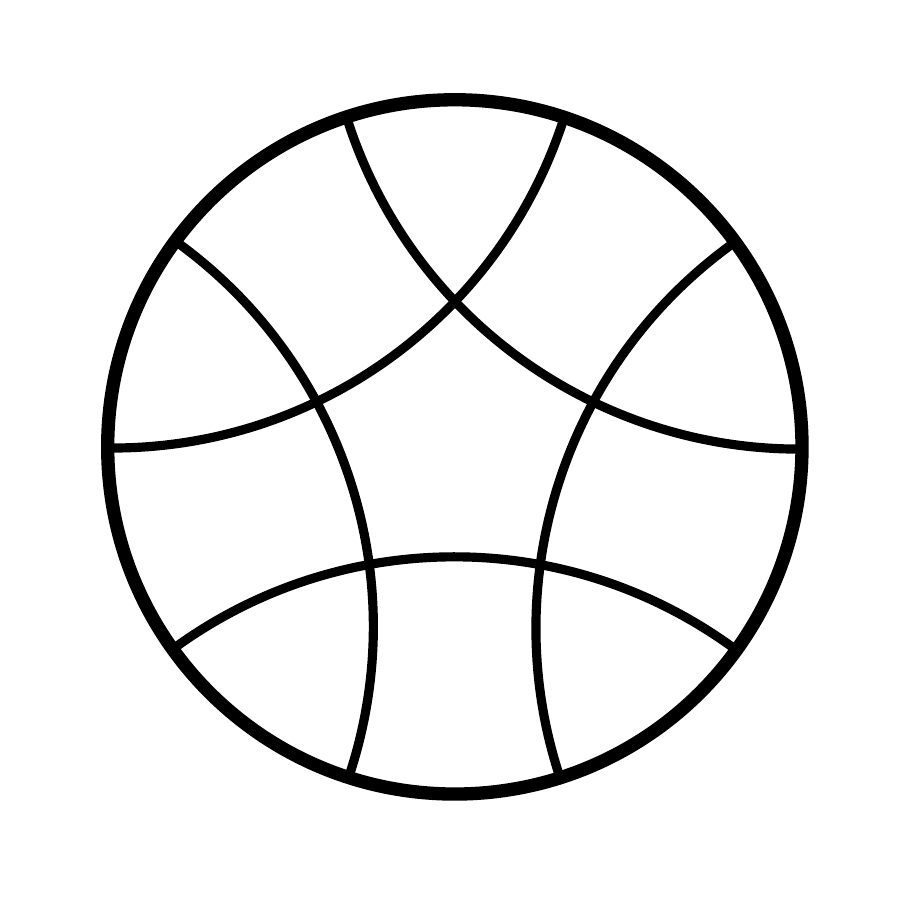
}
  \caption{The chord diagram $E_{5,3}$}
  \label{fig:chorddiagram-E-5-3}
\end{figure}
\begin{proof}
Let $\T{} = \T{G,l}$ have only one edge orbit. This means that for each pair $c_1$,
$c_2$ of chords in the chord diagram of $G$ there is a rotation bringing $c_1$
to $c_2$. The lengths of $c_1$ and $c_2$, and of all chords, are necessarily
equal. Call this length $a$, and let $\{r,r+a\}$ be one chord. Next to it, there
must be either a chord $\{r+1, r+1+a\}$, or a chord $\{r+1,r+1-a\}$.

If the former is the case, then the rotation by one step, sending $r$ to
$r+1$, must be a symmetry of the diagram. Hence also the rotation $r\mapsto r+a$
is a symmetry, so $\{r+a, r+2a\}$ must be a chord as well. We end up in the case where
$a=n$, meaning all chords are diameters and $l=1$.

Assume now that $a<n$, which implies that $\{r+1,r+1-a\}$ is a chord. The next
one, by analogous reasoning, will be $\{r+2,r+2+a\}$. We get a chord diagram
with a two-step symmetry that sends $r$ to $r+2$. $a$ must be odd in this case.
\begin{figure}
\centering
\def\svgwidth{0.75\textwidth}
{\fontfamily{pplx}\selectfont
	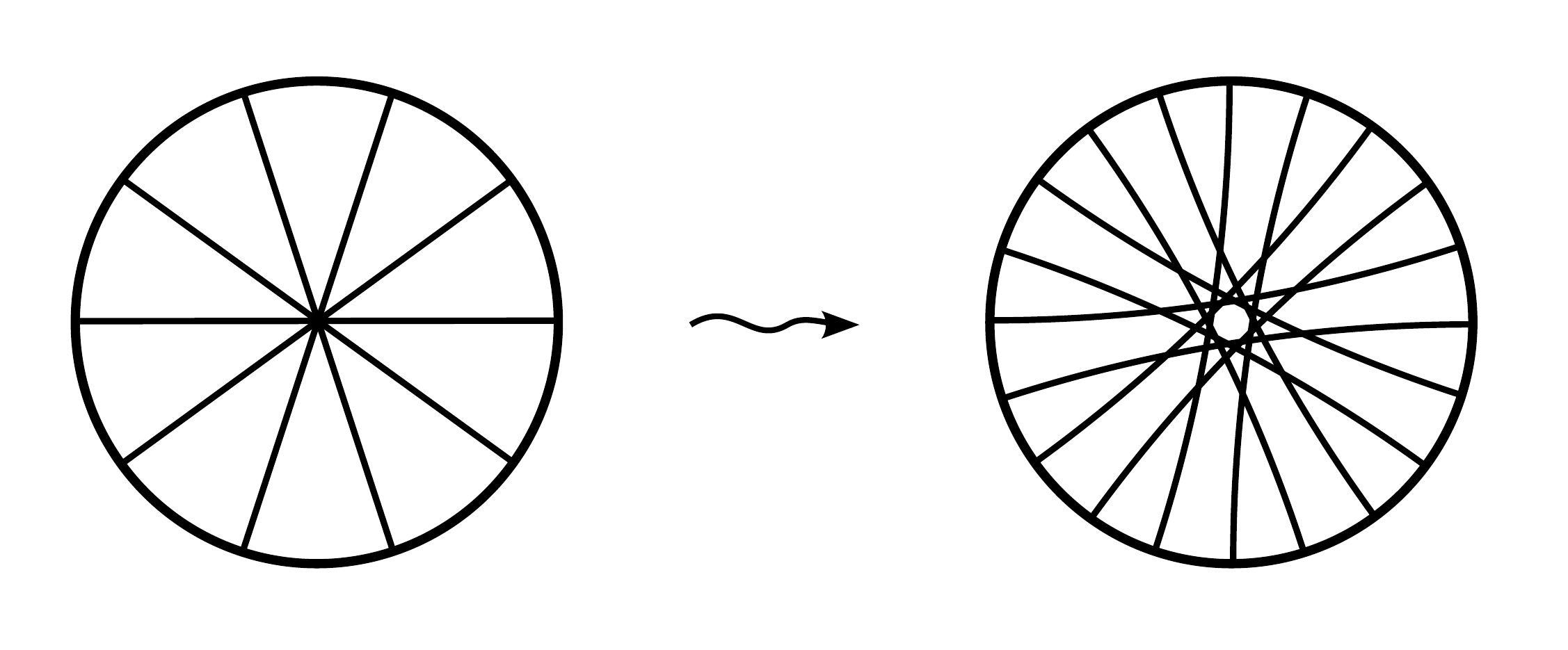
}
  \caption{Changing $E_{5,5}$ to $E_{10,9}$}
  \label{fig:chorddoubling}
\end{figure}

The case where $l=1$ is special in that powers of $\T{}$ will not only act
transitively on chords, or edges, but can also map them to themselves while
reversing orientation. This can be changed by replacing each chord by two
parallel chords, which corresponds to subdividing each edge once. Hence whenever
we like, we are free to replace $\T{E_{n,n},1}$ with $\T{E_{2n,2n-1},2}$ (see
Figure \ref{fig:chorddoubling}).

For the converse statement, remark first that it is clear that whenever a chord
diagram has a rotational symmetry sending $r$ to $r+1$, all chords must be
diameters, that is to say, of the form $\{r,r+n\}$.

The interesting case is $l=2$ and $a<n$. Now each chord has a well-defined
``first'' and ``second'' endpoint, counting clockwise. Since each of the $2n$
points of the diagram is either a first or a second endpoint, and the rotation
has two equal orbits on the level of points, it acts transitively on the set of
first endpoints, hence on chords.
\end{proof}

\subsection{Bounds for elementary \tat\ twists}
Because vertices correspond to internal boundaries of a chord diagram, we can
also observe:
\begin{remark}\label{rem:bipartite}
All elementary \tat\ graphs $E_{n,a}$ where $a<n$ are bipartite.
\end{remark}
This is because a chord divides the unit disk into two parts, the bigger of
which could be called the ``outside'', the smaller one the ``inside''. A vertex
of $E_{n,a}$ lies either on the outside or on the inside of the chords that
bound it. So it falls into one of two classes; and edges, as they correspond to
chords, connect only vertices of one class to vertices of the other.
\begin{figure}
\centering
\def\svgwidth{0.35\textwidth}
{\fontfamily{pplx}\selectfont
	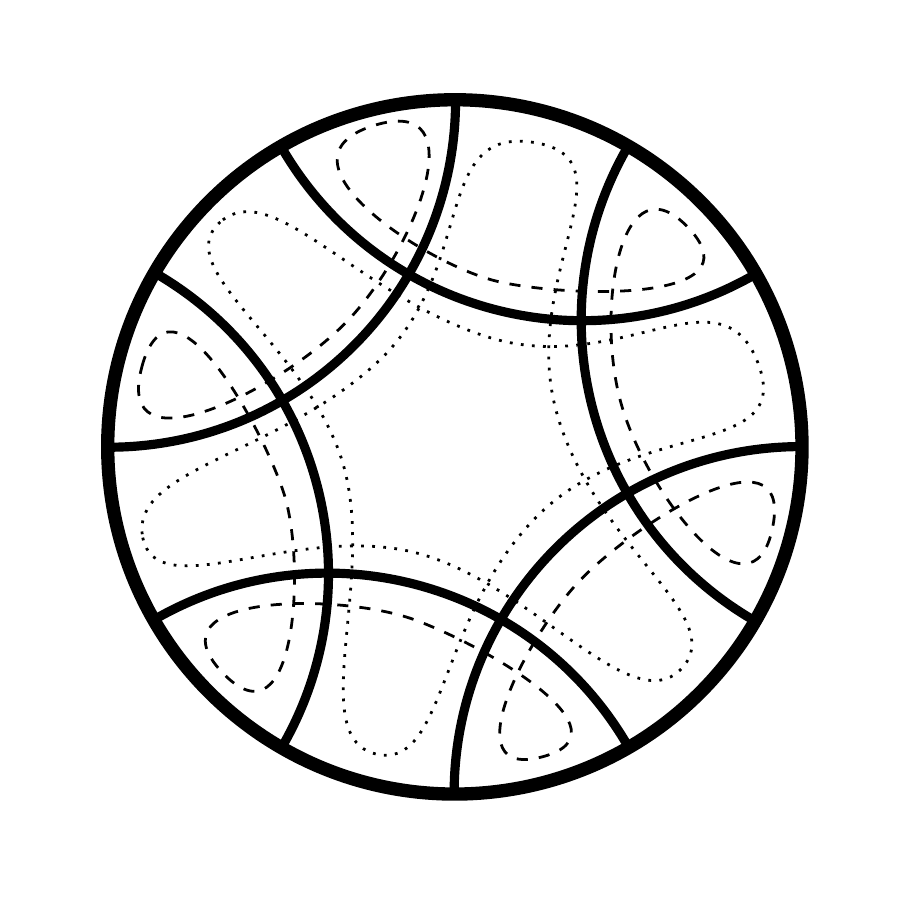
}
  \caption[$E_{6,3}$ with its two outer vertices and its one inner vertex]%
  {$E_{6,3}$ with its two outer vertices (dotted) and its one inner vertex (dashed)}
  \label{fig:chorddiagramvertices}
\end{figure}
Let us call the two classes of vertices ``outer'' and ``inner'' vertices for the
moment. We can easily count their number: Following the chords along an outer
vertex, we encounter first endpoints at a distance of $\frac{a+1}{2}$. Since
there are $n$ first endpoints, the number of outer vertices is the greatest
common divisor of $\frac{a+1}{2}$ and $n$, for which we write
$\gcd\left(\frac{a+1}{2},n\right)$.
Likewise, the number of inner vertices is $\gcd\left(\frac{a-1}{2},n\right)$. Thus
we have:
\begin{lemma}\label{lem:elementarygraphgenus}
The elementary \tat\ graph $E_{n,a}$ with $a<n$ has
	\[
		v = \gcd\left(\frac{a-1}{2},n\right) + \gcd\left(\frac{a+1}{2},n\right)
	\]
vertices. Its genus is
	\[
		g(E_{n,a}) = \frac{1 + n - \Bigl(\gcd\left(\frac{a-1}{2},n\right) + \gcd\left(\frac{a+1}{2},n\right)\Bigr)}{2}.
	\]
When $a=n$, there are just one or two vertices depending on whether $n$ is even or odd, and
$g(E_{n,n}) = \lfloor n/2 \rfloor$.
\end{lemma}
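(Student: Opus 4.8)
The plan is to read the genus off the Euler-characteristic identity $g=\frac{1+e-v}{2}$ valid for ribbon graphs with one boundary component (established above, with $e$ the number of chords and $v$ the number of internal boundaries), so the only real work is the vertex count $v$ — and most of that count is already carried out in the discussion immediately preceding the lemma. I would organise it into a clean statement and then handle the remaining case $a=n$ separately.

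For $a<n$: by Remark~\ref{rem:bipartite} the vertices come in two kinds, ``outer'' and ``inner'', and I count them through the correspondence ``vertex $=$ internal boundary of the chord diagram'' from Section~\ref{sec:chorddiagramsandribbongraphs}. An outer vertex runs along the long sides of the chords bounding it; tracing its boundary, each step from the first endpoint of one bounding chord to the first endpoint of the next advances by $\frac{a+1}{2}$ among the $n$ first endpoints (equivalently, skips $a+1$ of the $2n$ labelled points; here $a$ is odd, so $\frac{a\pm1}{2}\in\Z$). Hence the outer vertices are in bijection with the orbits of rotation by $\frac{a+1}{2}$ on $\Z/n\Z$, giving $\gcd(\frac{a+1}{2},n)$ of them; the analogous count along the short sides of the chords replaces $a+1$ by $a-1$ and yields $\gcd(\frac{a-1}{2},n)$ inner vertices. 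Adding the two gives the stated $v$, and substituting $e=n$ and this $v$ into $g=\frac{1+e-v}{2}$ gives the displayed genus formula.

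For $a=n$ every chord is a diameter $\{m,m+n\}$ and the long/short distinction collapses, so I would run the tracing argument directly: consecutive endpoints met along an internal boundary are spaced $n+1$ (equivalently $n-1$) apart on the $2n$-point circle, so the vertices are the orbits of rotation by $n+1$ on $\Z/2n\Z$, of which there are $\gcd(n+1,2n)=\gcd(n+1,2)$ — namely $2$ when $n$ is odd and $1$ when $n$ is even. Feeding $v\in\{1,2\}$ and $e=n$ into $g=\frac{1+n-v}{2}$ gives $\frac{n-1}{2}$ for $n$ odd and $\frac{n}{2}$ for $n$ even, both equal to $\lfloor n/2\rfloor$.

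The one genuinely delicate step is the bookkeeping hidden in the phrase ``each step skips $a+1$ of the labelled points'': one has to fix which endpoint of a chord is the ``first'' one, check that the increment really is $a+1$ (resp.\ $a-1$) and not off by a sign, a factor of $2$, or a $\pm1$, and confirm the parity of $a$ so that the halved quantities are integers. This is exactly a careful walk through the annulus-with-bands model of Section~\ref{sec:chorddiagramsandribbongraphs}; once it is in place the rest is a one-line gcd evaluation together with the Euler formula, so I expect no further obstacle.
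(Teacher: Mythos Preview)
Your argument is correct and matches the paper's approach: the vertex count via the $\frac{a\pm1}{2}$ step along first endpoints is exactly the reasoning given in the text just before the lemma, and the genus then drops out of the Euler formula $g=\frac{1+e-v}{2}$. For $a=n$ the paper simply observes that the surface is the standard $4g$- or $(4g{+}2)$-gon with opposite sides identified, whereas you compute $\gcd(n+1,2n)$ directly; both routes are fine and yield the same parity split.
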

The case $a=n$ with only diametral chords is easily understood; its surface is
obtained from gluing opposite edges and its \tat\ twist $\T{E_{n.n},1}$
corresponds to the rotation of a $4g$- or $4g{+}2$-gon by one click.

Assume again $a<n$. The case $a=1$ is not interesting as it just produces a disk
with a star of $n$ univalent vertices. The case $E_{n,n-1}$, for $n$ even,
corresponds to $E_{n/2,n/2}$ with all edges subdivided once. If we want to
forbit uni- and bivalent vertices, we have thus to restrict $a$ to an odd number
between $3$ and $n-2$. The order of $\T{E_{a,n},2}$ is equal to the number of
edges $n$. The following lemma restricts $n$ depending on the genus; it is much stronger
than the calculations for general graphs in Section \ref{sec:generalbounds}.
\begin{lemma}\label{lem:elementary3g+3}
Let $3\leq a\leq n-2$ in the elementary \tat\ graph $E_{n,a}$, and $g = g(E_{n,a})$ be its genus.
Then
	\[
		n \leq 
		\begin{cases}
		   3g+3, & g \equiv 0,1\\
		   3g, & g \equiv 2
 		\end{cases}
 		\pmod 3.
	\]
Moreover, both inequalities are sharp, i.\,e.\@ there exists an elementary \tat\
graph whose twist is of order $3g+3$ or $3g$, respectively.

When $g \geq 4$, this graph is unique and given by the pair
	\[
		(n,a) =
			\begin{cases}
				(3g+3,2g+1), & g \equiv 0 \\
				(3g+3,2g+3), & g \equiv 1\\
				(3g,2g-1), & g \equiv 2
			\end{cases}
		\pmod{3}.
	\]
\end{lemma}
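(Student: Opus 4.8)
The plan is to reduce the whole statement to elementary number theory about the triple $(n,p,q)$ with $p=\gcd(\tfrac{a-1}{2},n)$ and $q=\gcd(\tfrac{a+1}{2},n)$. Writing $\mu=\tfrac{a-1}{2}$, the hypothesis $3\le a\le n-2$ becomes $1\le\mu\le\tfrac{n-3}{2}$. Three facts drop out at once: since $\mu$ and $\mu+1$ are coprime, so are $p$ and $q$, and as both divide $n$ we get $pq\mid n$; since $p\le\mu<\tfrac n2$ and $q\le\mu+1<\tfrac n2$ while any divisor of $n$ below $\tfrac n2$ is at most $\tfrac n3$, we have $p,q\le\tfrac n3$; and feeding $v=p+q$ into Lemma~\ref{lem:elementarygraphgenus} gives the single working identity $n=2g-1+p+q$. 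From here the problem is to maximize $p+q$ subject to $\gcd(p,q)=1$, $pq\mid n$, and $p,q\le n/3$.

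For the inequality I would assume $p\le q$ and split on $p$. Rewriting $q\le\tfrac n3$ as $2q\le 2g-1+p$ gives $n\le 3g+1$ if $p\le 2$ and $n\le 3g+3$ if $p=3$. For $p\ge 4$ (so $q\ge p\ge 4$) I would instead use $pq\mid n\Rightarrow pq\le n\Rightarrow(p-1)(q-1)\le 2g$ together with the trivial $(p-3)(q-3)\ge 1$, which rearranges to $(p-1)(q-1)\ge 2(p+q)-7$ and hence forces $p+q\le g+3$ and $n\le 3g+2$. So $n\le 3g+3$ in every case, and equality can occur only with $p=3$, i.e.\ $\{p,q\}=\{3,g+1\}$; this in turn requires $\gcd(3,g+1)=1$, i.e.\ $g\not\equiv 2\pmod 3$. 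This proves the first line of the claimed bound and already explains why the case $g\equiv 2$ is different.

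To upgrade to $n\le 3g$ when $g\equiv 2\pmod 3$ I would rule out $n\in\{3g+1,3g+2,3g+3\}$ in turn. The value $3g+3$ is gone by the previous paragraph. For $n=3g+2$ and $n=3g+1$ the strategy is uniform: if $\min(p,q)\le 3$, then either $q\le n/3$ is violated (the cases $p\le 2$) or the identity $n=2g-1+p+q$ forces the other parameter to be $g$, $g-1$ or $g+1$ and then $pq\mid n$ fails outright (e.g.\ $3g\nmid 3g+2$); if $\min(p,q)\ge 4$, combining $pq\mid n=3(p+q)-c$ (with $c=7$ or $5$) with $pq\le n<6q$ forces $pq=n$ and the tiny Diophantine equation $(p-3)(q-3)=2$ or $4$, whose only coprime solutions are $\{p,q\}=\{4,5\}$ or $\{4,7\}$, giving $g=6$ or $g=9$, neither $\equiv 2\pmod 3$. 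For sharpness I would exhibit $(n,a)=(3g+3,2g+1)$, $(3g+3,2g+3)$, $(3g,2g-1)$ for $g\equiv 0,1,2\pmod 3$ respectively and verify directly through Lemma~\ref{lem:elementarygraphgenus}, the only input being simplifications such as $\gcd(g,3g+3)=\gcd(g,3)$, $\gcd(g+2,3g+3)=\gcd(g+2,3)$, $\gcd(g-1,3g)=\gcd(g-1,3)$ together with the residue of $g$ modulo $3$.

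For the uniqueness clause ($g\ge 4$): once $n$ equals the bound, the analysis above pins down the unordered pair $\{p,q\}$ — it is $\{3,g+1\}$ when $g\equiv 0,1$, and for $g\equiv 2$ every competitor with $\min(p,q)\le 3$ fails $pq\mid 3g$ and every one with $\min(p,q)\ge 4$ fails the size bound, leaving $\{1,g\}$. It then remains to recover $a$: divisibility of $\mu$ or $\mu+1$ by the larger element of the pair leaves only the two candidates $\mu\in\{g,g+1\}$ (for $g\equiv 0,1$) or $\mu\in\{g-1,g\}$ (for $g\equiv 2$) inside $1\le\mu\le\tfrac{n-3}{2}$, and a residue check modulo $3$ selects the one that actually produces the correct smaller gcd, yielding exactly the $(n,a)$ in the statement. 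The step I expect to be the main obstacle is the refinement in the case $g\equiv 2\pmod 3$, where one genuinely has to intertwine $pq\mid n$ with $p,q\le n/3$ to kill $n=3g+1$ and $n=3g+2$; I would also dispose of the small genera $g\le 3$ (with $g=1$ vacuous, since no admissible $E_{n,a}$ has genus $1$) by direct inspection so that the general argument may assume $g$ is not tiny.
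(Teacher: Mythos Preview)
Your argument is correct and follows the same skeleton as the paper's: both reduce to bounding the sum $v=\gcd\bigl(\tfrac{a-1}{2},n\bigr)+\gcd\bigl(\tfrac{a+1}{2},n\bigr)$ using coprimality of the two gcds and the observation that each is at most $n/3$, then read off $n\le 3g+3$ from $n=2g-1+v$ and locate the extremal pair.

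The one substantive difference is in how the refinement for $g\equiv 2\pmod 3$ is handled. The paper writes one gcd as $n/r$, bounds $v\le n/r+r\le n/3+3$, and then disposes of the possibility $r\ge 4$ by the crude estimate ``$v\le n/4+4$ and $n\ge 3g$ force $n\le 36$'', deferring those finitely many cases to a computer or hand check. Your route is cleaner: exploiting the full divisibility $pq\mid n$ (not merely $pq\le n$), the case $\min(p,q)\ge 4$ collapses to the tiny Diophantine equations $(p-3)(q-3)\in\{2,4,6\}$, whose coprime solutions land only at $g\in\{6,9,10,12\}$, none congruent to $2$. This makes the exclusion of $n\in\{3g+1,3g+2\}$ entirely self-contained, with no residual finite check. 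One cosmetic point: in your uniqueness paragraph for $g\equiv 2$, the phrase ``every one with $\min(p,q)\ge 4$ fails the size bound'' is slightly off---what actually fails is the congruence on $g$ coming from $(p-3)(q-3)=6$, not a size inequality---but the conclusion is right.
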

\begin{proof}
Set $k=\frac{a+1}{2}$, which is an integer. We then have to bound the sum $v = \gcd\left(k-1,n\right) +
\gcd\left(k,n\right)$ from above, for $k$ between $2$ and $\frac{n-1}{2}$. Since
$k-1>0$ and since $k-1$ and $k$ have no common divisor,
$\gcd\left(k-1,n\right)\cdot\gcd\left(k,n\right) \leq n$. If, say, $\gcd\left(k-1,n\right) =
\frac{n}{r}$, we get that
	\[
		v = \gcd\left(k-1,n\right) + \gcd\left(k,n\right) \leq \frac{n}{r} + {r}.
	\]
Since $k\leq\frac{n-1}{2}$, both summands are at most $\frac{n}{3}$. Therefore
$3\leq r \leq \frac{n}{3}$, and using that $\frac{n}{r} + {r} \leq \frac{n}{3} + {3}$
(with equality only if $r=3$ or $r=\frac{1}{3}$), we get that
	\[
	n = 2g + v - 1 \leq 2g + \frac{n}{3} + 3 - 1,
	\]
hence
	\[
		\frac{2}{3}n \leq 2g + 2
	\]
and the inequality is proved for $g \not\equiv 2 \pmod 3$.

Let us assume that $n = 3 g_0 + 3$ for some $g_0$ and try to actually find an $a$, or $k$, such
that $g = g(E_{n,a}) = g_0$. Either $k$ or $k-1$ must be equal to $\frac{n}{3}$, the other
must be divisible by $3$. If $g_0 \equiv 0 \pmod 3$, take
	\[
		k = \frac{n}{3} = g_0+1 \equiv 1 \pmod 3,
	\]
which means that $a = 2g_0+1$. If $g_0 \equiv 1 \pmod 3$, take
	\[
		k - 1  = \frac{n}{3} = g_0 + 1 \equiv 2 \pmod 3,
	\]
which means that $a = 2g_0+3$. Those choices are unique.

Now to the case that $g_0 \equiv 2 \pmod 3$. When $n = 3g_0$, we can take
	\[
		k = \frac{n}{3} = g_0 \equiv 2 \pmod 3,
	\]
so $k-1$ is not divisible by $3$ and $v = g_0+1$ as required in this case. We have $a = 2g-1$.

There is no way to choose $k$ such that $v = \frac{n}{3} + 3$, so $n = 3g+3$
cannot be achieved. $3g+2$ and $3g+1$ are not divisible by $3$, so in these cases
$\gcd\left(k-1,n\right)$ and $\gcd\left(k,n\right)$ will be strictly smaller than
$\frac{n}{3}$.

To prove that there are no twists of order $3g+2$ and $3g+1$ and no further twists of
order $3g$, it remains to check that no $r$ bigger than $3$ can work. An easy way to
check this is to observe that when $r \geq 4$ and $v \leq \frac{n}{4} + 4$, then $n \geq 3g$ implies that
$n \leq 36$. One can check these cases by computer or by hand.

The only collision occurs for $g=3$, where it happens that $3g+3 = 4g = 12$, and we find
both $E_{12,7}$ and $E_{6,6}$ of order $12$.
\end{proof}

Elementary \tat\ twists realize the highest possible orders among \tat\ twists with one
boundary component: As we have seen in Proposition \ref{prop:ribbongraphs}, adding chords can
only increase the genus. When a chord diagram with more than one edge orbit has a rotational
symmetry of order $n$, all its edge orbits individually have a rotational symmetry of order
$n$, or possibly $2n$ for orbits that consist of diameters. We can therefore conclude:
\begin{corollary}\label{cor:tatorders}
Let $\T{}{}$\, be a \tat\ twist whose graph is of genus $g$ with one boundary component. Then its order is
either $4g+2$, $4g$, or
	\[
		\ord(\T{}\ ) \leq 
		\begin{cases}
		   3g+3, & g \equiv 0,1\\
		   3g, & g \equiv 2
 		\end{cases}
 		\pmod 3.
	\]
In all of these cases, as soon as $g \geq 4$, there exists a unique conjugacy class
of \tat\ twists realizing the given order. This class is described by an elementary
\tat\ twist $E_{n,a}$ with $(n,a)$ as in Lemma \ref{lem:elementary3g+3}
above.\qed
\end{corollary}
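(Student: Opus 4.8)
The plan is to pass to the chord-diagram encoding of Section~\ref{sec:chorddiagramsandribbongraphs} and then treat the elementary case and the general case separately, reducing the latter to the former. By Theorem~\ref{thm:definitions} I may take $\mathcal T=\T{G,l}$ to be a standard \tat\ twist with $\#\partial G=1$, so that $G$ is the ribbon graph of a chord diagram $D$ with $e$ chords carrying the rotational symmetry $\sigma$ by $l$ steps. Specializing the proof of Proposition~\ref{prop:basic} to a single boundary component, whose length is $b=2e$, shows that $\T{G,l}^{\,n}$ is a power of the boundary Dehn twist exactly when $n$ is a multiple of $\tfrac{2e}{\gcd(2e,l)}$; hence $\ord(\mathcal T)=\tfrac{2e}{\gcd(2e,l)}$ is the order of $\sigma$ on the $2e$ marked points. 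In particular $\ord(\mathcal T)\mid 2e$ and every $\sigma$-orbit of marked points has cardinality $\ord(\mathcal T)$.

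\emph{Elementary case.} If $\mathcal T$ acts transitively on edges, the classification theorem gives $G=E_{n,a}$ with $e=n$ and $l\in\{1,2\}$. If $a=1$ the surface is a disk and $\mathcal T$ is trivial. If $a=n$ then $l=1$, Lemma~\ref{lem:elementarygraphgenus} gives $g=\lfloor n/2\rfloor$, and $\ord(\mathcal T)=2n$ equals $4g$ for $n$ even and $4g+2$ for $n$ odd; the case $a=n-1$ with $n$ even is the one-fold subdivision of $E_{n/2,n/2}$ (Figure~\ref{fig:chorddoubling}), hence reduces to the previous case. In every remaining case $3\le a\le n-2$, so Lemma~\ref{lem:elementary3g+3} gives $\ord(\mathcal T)=n\le 3g+3$ for $g\equiv0,1$ and $\le 3g$ for $g\equiv2\pmod3$, together with the sharpness and, when $g\ge4$, the uniqueness of the extremal pair $(n,a)$ asserted there. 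So all three alternatives of the corollary hold in this case, with the claimed realizing graphs.

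\emph{General case.} Suppose $D$ has $k\ge2$ edge orbits $O_1,\dots,O_k$. The marked points of $O_i$ form a union of $\sigma$-orbits, so, viewed as a chord diagram on its own $2n_i$ points, $O_i$ carries a rotation of order $\ord(\mathcal T)$ permuting its $n_i$ chords transitively; hence $O_i$ is an elementary \tat\ diagram $E_{n_i,a_i}$. Deleting all orbits but $O_i$ can only decrease the genus (the remark after Proposition~\ref{prop:ribbongraphs}), so $g(E_{n_i,a_i})\le g$, and — discarding contractible orbits of univalent vertices, which do not affect $\mathcal T$ — Lemmas~\ref{lem:elementarygraphgenus}/\ref{lem:elementary3g+3} bound each $n_i$ by $3g+3$, or by $2g+1$ if $O_i$ is diametral. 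Combining this with the constraints $\ord(\mathcal T)\mid 2n_i$ (valid for every $i$), $\sum_i n_i=e$, and the extra handles forced by interleaving chords of distinct orbits, one sees that $\ord(\mathcal T)$ — which is the least common multiple of the orders $\ord(\T{O_i})$ — stays within the bound of the corollary, and for $g\ge4$ falls strictly below each of $4g+2$, $4g$, $3g+3$, $3g$ as soon as $k\ge2$. Hence each of the three extremal alternatives is realized by a single conjugacy class, the elementary one of the previous paragraph, the orders $4g$ and $4g+2$ singling out the diametral graphs $E_{2g,2g}$ and $E_{2g+1,2g+1}$.

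The main obstacle is the last step of the general case: converting ``each orbit is individually an elementary diagram of genus at most $g$'' into a clean inequality for $\ord(\mathcal T)=\lcm_i\ord(\T{O_i})$ that strictly beats the elementary record whenever $k\ge2$. Since the genus is not additive over the orbit decomposition, one must exploit the simultaneous divisibilities $\ord(\mathcal T)\mid 2n_i$ and the handles created by cross-orbit interleaving to absorb the slack; this is also exactly where the hypothesis $g\ge4$ is needed, to exclude the low-genus coincidences (such as $E_{12,7}$ against $E_{6,6}$ in genus $3$).
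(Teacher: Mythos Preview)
Your framework is the paper's: pass to chord diagrams, handle the elementary case by Lemma~\ref{lem:elementary3g+3}, and in the general case split into edge orbits, each of which is an elementary diagram of genus at most $g$. But you then fail to close the bound, and the obstacle you describe at the end is illusory.

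You already write that on $O_i$ the rotation has order $\ord(\mathcal T)$ and permutes the $n_i$ chords transitively. That forces $n_i=\ord(\mathcal T)$ when $O_i$ is not diametral, and $2n_i=\ord(\mathcal T)$ when it is. So $\ord(\mathcal T)$ is \emph{equal} to each $n_i$ (up to the factor~$2$ in the diametral case), not merely a divisor of $2n_i$, and certainly not an $\lcm$ of possibly different orbit orders --- all the orders coincide. Consequently a \emph{single} orbit already gives the inequality: pick any $O_i$; if it is non-diametral with $3\le a_i\le n_i-2$, Lemma~\ref{lem:elementary3g+3} yields $\ord(\mathcal T)=n_i\le 3g_i+3\le 3g+3$ (with the mod~$3$ refinement, since $g_i<g$ gives $3g_i+3\le 3g$ and $g_i=g\equiv 2$ gives $n_i\le 3g$); if it is diametral, $g_i=\lfloor n_i/2\rfloor\le g$ gives $\ord(\mathcal T)=2n_i\in\{4g_i,4g_i+2\}\le 4g+2$. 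No interleaving analysis, no $\sum_i n_i=e$, no $\lcm$ is needed for the bound.

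For uniqueness the paper's argument is also simpler than what you sketch. If the extremal order is attained and $k\ge 2$, the analysis above forces some orbit $O_i$ already to have genus $g$ and to be the unique elementary graph from Lemma~\ref{lem:elementary3g+3} (or $E_{2g,2g}$, $E_{2g+1,2g+1}$ in the diametral cases). Any further orbit either raises the genus --- contradiction --- or, by the dichotomy in Proposition~\ref{prop:ribbongraphs}, amounts to stretching vertices without breaking the symmetry, which does not change the isotopy class of the twist (cf.\ Section~\ref{sec:equivalence}). The small-genus coincidences you mention are handled by the same case check as in Lemma~\ref{lem:elementary3g+3}; the hypothesis $g\ge 4$ is inherited from there, not from any multi-orbit subtlety.
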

Given $g \geq 2$, observe that when we take a twist of order $4g$ or $4g+2$, adding a second
edge orbit cannot thwart uniqueness. Those twists have chord diagrams consisting of diameters.
When we add a second orbit consisting of diameters also, we get a twist of order $2g$ or $2g+2$,
which is smaller than $3g$. Only when $g=2$, we find a second twist of order six: $\T{E_{6,6},4}
= \T{E_{6,6},2}^2$, in addition to $\T{E_{6,3},2}$. When we add a second orbit which does not consist
of diameters, the restrictions from Lemma \ref{lem:elementary3g+3} apply to that orbit.

The last thing to note is this: Even when the second edge orbit does not
increase the genus, it will not produce a new conjugacy class. Because in that
case, as seen in Proposition \ref{prop:ribbongraphs}, new edges are introduced
at vertices without breaking the symmetry of the graph. This will not change the
isotopy class of the \tat{} twist.

\subsection{Chord diagrams for torus knots}
The \tat\ graphs that describe the monodromies of torus knots were described in the
introduction; as mentioned in the beginning of this chapter they have walk length
$2$ and are therefore described by an elementary \tat\ twist.
Figure \vref{fig:bipartite2} can serve as an example.
If we want to describe such a graph in the form $E_{n,a}$ we have to calculate the
chord length, i.\,e.\@ check how long a safe walk takes to ``cross the street''.
It is an even number. A safe walk of length $2$ corresponds to the twist and exchanges
cyclically the vertices at the top as well as those at the bottom.
This leads to a simple calculation and to the following statement: 
\begin{proposition}
The monodromy of a $(p,q)$-torus knot, $p<q$, is a \tat\ twist with walk length
$2$ around the elementary \tat\ graph $E_{pq,2mp-1}$, where $m\cdot p
\equiv 1\pmod{q}$.\qed
\end{proposition}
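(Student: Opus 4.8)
The plan is to reduce the whole statement to identifying a single parameter. By the description in Section~\ref{sec:introtoruslinks}, the monodromy of the $(p,q)$-torus knot is $\phi_{p,q}=\T{B_{p,q},2}$, where $B_{p,q}$ is the complete bipartite ribbon graph on $p+q$ vertices, and $\phi_{p,q}$ permutes the $pq$ edges of $B_{p,q}$ cyclically; in particular $\T{B_{p,q},2}$ acts transitively on edges, so it is an \emph{elementary} \tat\ twist. Hence the classification of elementary twists applies: the ribbon graph $B_{p,q}$ is one of the graphs $E_{n,a}$. Since the number of edges is an invariant we get $n=pq$, and since (for $p\geq 2$) the graph $B_{p,q}$ has more than two vertices we are not in the diametral case $a=n$, so $a$ is odd with $a<pq$. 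Everything therefore comes down to determining $a$.

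To pin down $a$ I would compare bipartitions. The graph $B_{p,q}$ is connected and bipartite with parts of sizes $p$ and $q$, and the bipartition of a connected bipartite graph is unique, so it must coincide with the ``outer/inner'' bipartition of $E_{pq,a}$ from Remark~\ref{rem:bipartite}. By Lemma~\ref{lem:elementarygraphgenus} the two parts of $E_{pq,a}$ have sizes $\gcd\!\bigl(\tfrac{a-1}{2},pq\bigr)$ and $\gcd\!\bigl(\tfrac{a+1}{2},pq\bigr)$, so
\[
	\Bigl\{\,\gcd\!\bigl(\tfrac{a-1}{2},pq\bigr),\ \gcd\!\bigl(\tfrac{a+1}{2},pq\bigr)\,\Bigr\}=\{p,q\}.
\]
Taking $\gcd\!\bigl(\tfrac{a+1}{2},pq\bigr)=p$ forces $p\mid\tfrac{a+1}{2}$, say $\tfrac{a+1}{2}=mp$; then $\gcd\!\bigl(\tfrac{a-1}{2},pq\bigr)=\gcd(mp-1,pq)=q$ is equivalent to $q\mid mp-1$, i.e.\ $mp\equiv 1\pmod q$. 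This gives $a=2mp-1$, and conversely for such an $a$ one checks directly that the two gcd's are $p$ and $q$, so this value is consistent with the constraints.

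The only genuinely delicate point is to see that this determination is unambiguous: one could equally have taken $\gcd\!\bigl(\tfrac{a+1}{2},pq\bigr)=q$, which yields $a'=2m'q-1$ with $m'q\equiv 1\pmod p$. Here I would invoke the Chinese remainder theorem to get $mp+m'q\equiv 1\pmod{pq}$, hence $a'\equiv-a\pmod{2pq}$; since rotating a chord diagram by one step sends $E_{n,a}$ to $E_{n,-a}$, the diagrams $E_{pq,2mp-1}$ and $E_{pq,2m'q-1}$ are equivalent, as they must be, both being $B_{p,q}$. So $B_{p,q}=E_{pq,2mp-1}$ with $mp\equiv 1\pmod q$, and the monodromy is $\T{E_{pq,2mp-1},2}$. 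The main obstacle is thus this modular bookkeeping — $a$ is defined only modulo $2pq$ (equivalently, only the chord length matters), so one has to track which vertex class plays which role and verify the two candidates agree. A more computational route that sidesteps it is to write out the cyclic boundary word of $B_{p,q}$ in the embedding of Figure~\ref{fig:bipartite2}, where edges are stacked by the index of their bottom endpoint, and read off the chord at a fixed edge: counting how many edges a length-$2$ safe walk crosses between the two sides of that edge gives $2mp$, hence again $a=2mp-1$.
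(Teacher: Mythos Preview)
Your proof is correct, and your main argument takes a genuinely different route from the paper's. The paper does not spell out a full proof: the paragraph preceding the proposition says only that one should compute the chord length directly by following a safe walk around the boundary of the embedded bipartite graph, using that a walk of length $2$ cyclically permutes the $p$ top vertices and the $q$ bottom vertices; from this combinatorial picture one reads off that crossing to the other side of a fixed edge first happens after $2mp$ steps with $mp\equiv 1\pmod q$, whence $a=2mp-1$. You instead extract $a$ from an invariant, namely the sizes of the two vertex classes via Lemma~\ref{lem:elementarygraphgenus}, and solve the resulting gcd constraints. This is cleaner in that it never touches the specific stacked embedding of Figure~\ref{fig:bipartite2}, and your uniqueness check (that the two solutions from swapping $p\leftrightarrow q$ differ by $a\mapsto -a\pmod{2pq}$, hence give equivalent chord diagrams) is exactly the verification the paper's remark about ``the long way around'' alludes to. The direct boundary-word computation you sketch at the end is essentially the paper's intended argument, so you have in fact covered both approaches.
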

The chord length $a = 2mp-1$ in the proposition may be ``the long way around''
the surface.
In that case, it can of course be replaced by $2pq-a$.
In the example, which shows the $(3,4)$-torus knot, we have $12$ edges or
chords,
and since $3 \cdot 3 \equiv 1 \pmod{4}$, the chord length is
$2\cdot3\cdot3 - 1 = 17$, or better $24-17=7$.
One may check this by labelling the two sides of the edges as in Figure
\vref{fig:glueing}.

\section{Fixed points}
All \tat\ twists leave the boundary of the surface $\Sigma$ on which they live
pointwise fixed. But these are not ``essential'' fixed points as they can be
removed by a small perturbation, like a small translation along $\partial\Sigma$
in the direction induced by the orientation of $\Sigma$ if the walk length is positive.

Fixed points in $\Sigma\smallsetminus(\partial\Sigma \cup G)$ can likewise be removed
by composing with a diffeomorphism that pushes all such points slightly away
from $\partial\Sigma$ and towards $G$. So essential fixed points should be
searched on $G$, which is mapped to itself by \tat\ twists, as we have seen.

Both twists with fixed points as well as without occur. Most famously, a Dehn
twist has no (essential) fixed points, and neither has any power of it, although
in this case we must modify the diffeomorphism in such a way that the circle $G$
is not any more mapped to itself. But also the monodromy of the trefoil,
$\T{E_{3,3},1}$ has none: It permutes the three edges cyclically and
interchanges the two vertices.
\begin{figure}
  \centering
  \def\svgwidth{0.7\textwidth}
  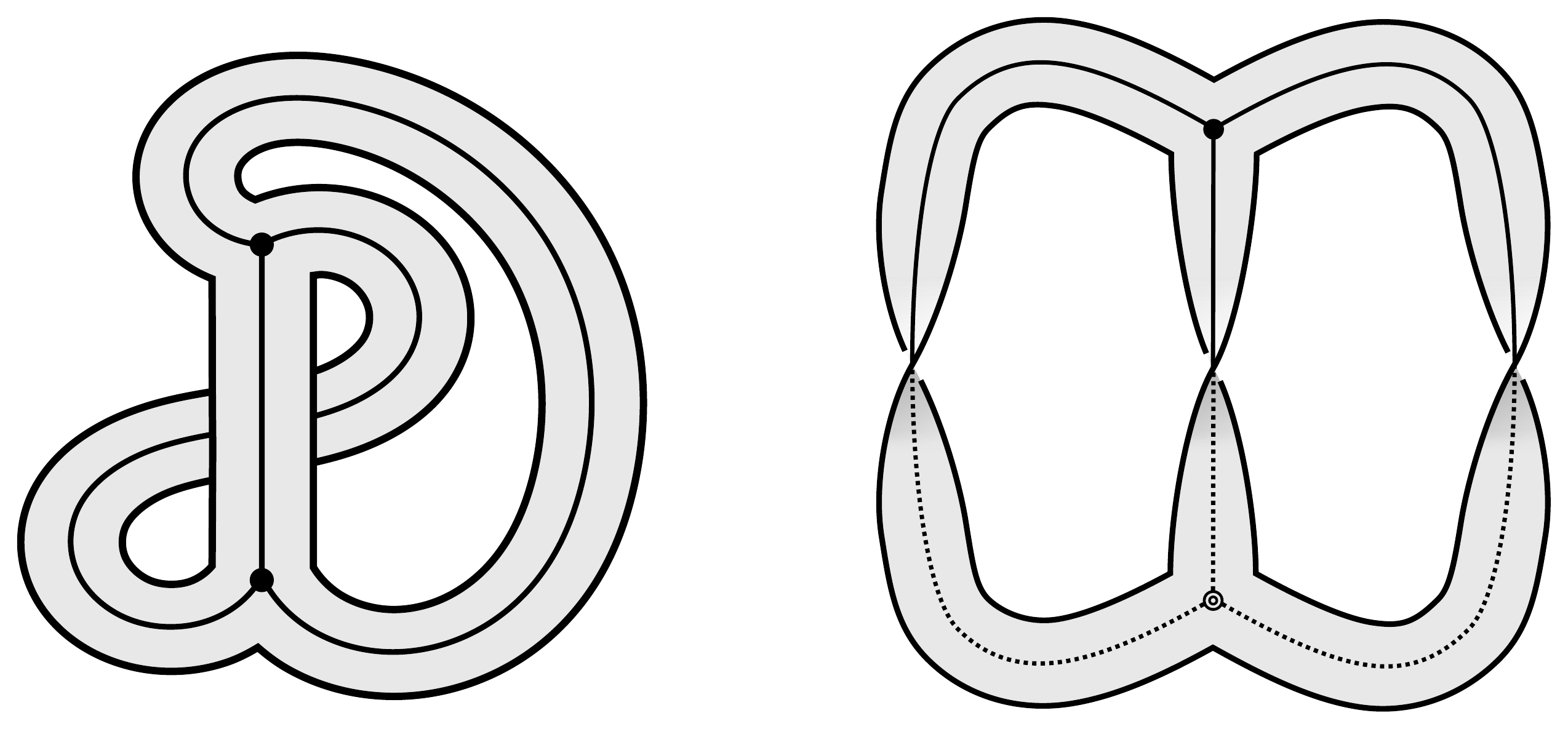
  \caption{$E_{3,3}$}
  \label{fig:E-3-3}
\end{figure}

On the other hand, $\Bi = \T{E_{2,2},1}$ maps the only vertex of $E_{2,2}$ to
itself. And this fixed point is indeed essential: The Lefschetz number of $\Bi$
is
\[
   \Lambda(\Bi) = 1 - \tr(B) = 1
\]
where $B$ denotes the induced action of $\Bi$ on $H_1(\Sigma)$, in this
case given by a matrix conjugate to
$\begin{psmallmatrix*}[r] 0 & -1 \\
                           1 &  0 \end{psmallmatrix*}$.

\chapter{\Tat\ twists and periodic diffeomorphisms}\label{chap:periodic}
A mapping class $\phi$ is called \emph{periodic} or \emph{of finite order} if
there is some $k>0$ such that $\phi^k$ is the isotopy class of the identity. Nielsen has
shown in 1942 (\cite{Nielsen1942}) that such mapping
classes contain a representative -- a diffeomorphism -- $f$ such that
$f^k$ is actually equal to the identity. Moreover, whenever the surface has
negative Euler characteristic, one can find a hyperbolic metric such that the
diffeomorphism is an isometry for this metric. This is true for closed surfaces, as
well as for surfaces with punctures and for surfaces with boundary where one
allows the boundary to rotate.

\section{Periodic diffeomorphisms on surfaces with boundary}\label{sec:periodic}
\begin{figure}
  \centering
  \def\svgwidth{0.5\textwidth}
  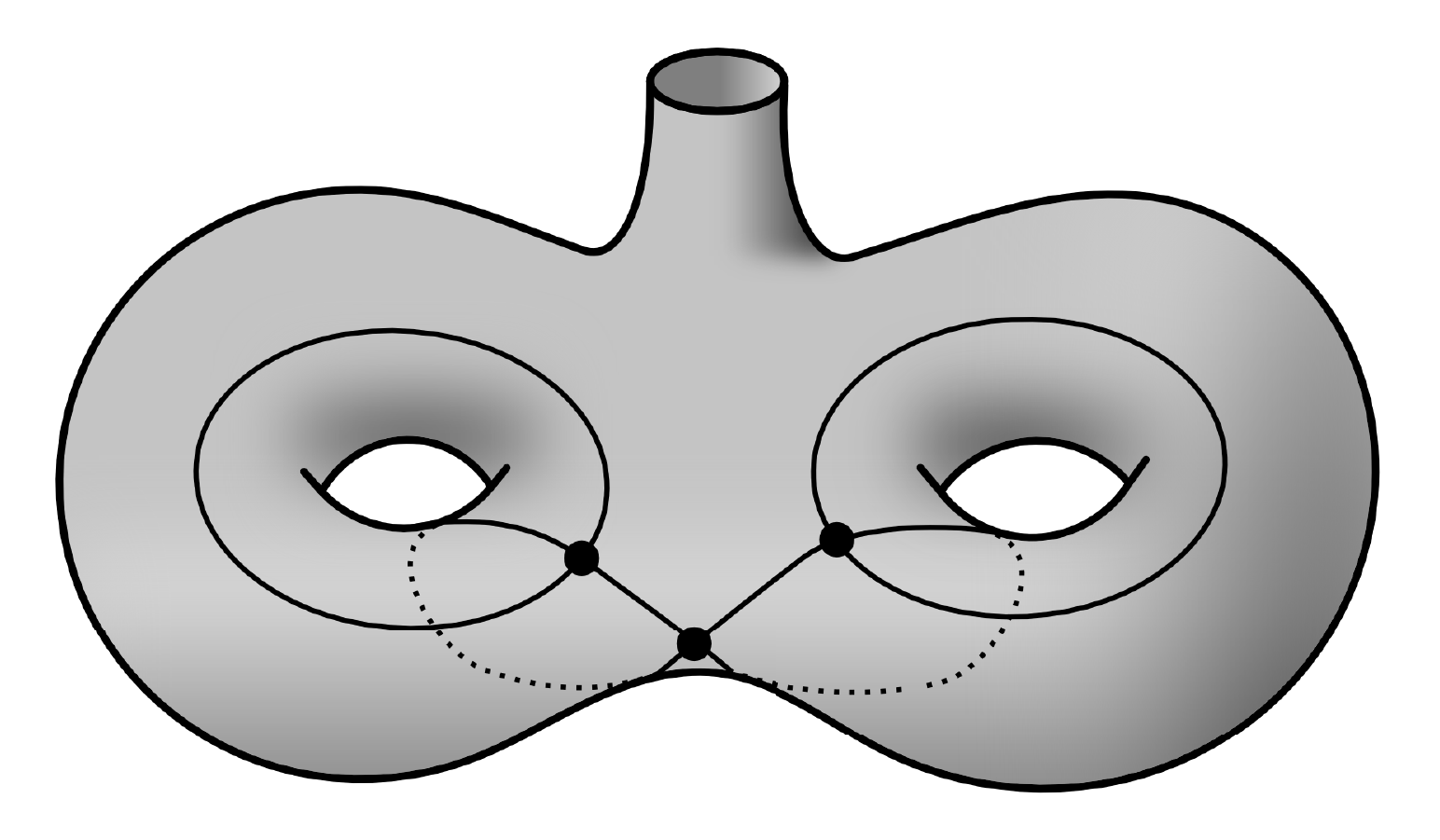
  \caption[A \tat\ graph that describes a rotation by 180 degrees]%
  {A \tat\ graph with walk length $6$, describing the map of order $2$ one gets by
   rotating the whole surface by 180 degrees along a vertical axis, while keeping the top boundary fixed}
  \label{fig:rotation180}
\end{figure}
If one requires the boundary of a surface to be pointwise fixed by the
diffeomorphisms and the isotopy, as we always do here, there are no periodic
mapping classes apart from the identity. Using Nielsen's theorem, this can be
seen geometrically: Via the exponential map, an isometry is determined by
the image of a point and a tangent vector at the point. Points on the boundary
are fixed, and if the isometry is to be periodic an inward pointing
tangent vector must be fixed as well, so the map is the identity. \Tat\ graphs
will give us another, topological, proof of this.

We can therefore use the term ``periodic'' in a more general way and call a
mapping class, or diffeomorphism, that fixes the boundary \emph{freely periodic} or simply
\emph{periodic} if, for some $k>0$, $\phi^k$ is isotopic to the identity, where
the isotopy is allowed to move the boundary.

Another way to say this: When $\Sigma$ is a surface with $b$ boundary components
and $\dot{\Sigma}$ is $\Sigma$ with its boundary collapsed to punctures, there
is a central extension
   \[
      0 \to \Z^b \to \mcg(\Sigma) \overset{c}{\to} \mcg(\dot{\Sigma}) \to 0
   \]
where the subgroup $\Z^b$ is generated by the Dehn twists along the boundary
components. Thus we call $\phi$ periodic (or of finite order) if $c(\phi)$ is
periodic in the ordinary sense. This usage is quite common in the context of
monodromies of singularities (although sometimes ``monodromy'' can refer
to the action on homology only).

We have seen in Proposition \ref{prop:basic} that \tat\ twists are periodic in
the above sense. One may now ask how one could recognize whether a periodic map
is induced by a \tat\ twist, and how to find a \tat\ graph for it in this case.
The answer is surprisingly simple: Every periodic diffeomorphism comes from a
\tat\ graph.
\begin{theorem}\label{thm:equivalentdefs}
Let $\Sigma$ be a (compact, connected, oriented) surface with nonempty boundary and $\phi\in\mcg(\Sigma)$
a mapping class. Then the following are
equivalent:
	\begin{enumerate-roman-packed}
		\item\label{itm:istat}				$\phi$ is a multi-speed \tat\ twist,
		\item\label{itm:hasinvariantgraph}  there is an embedded graph that fills $\Sigma$ and
			  								and is invariant under $\phi$,
		\item\label{itm:hasinvariantspine}  $\Sigma$ has a spine that is invariant
											under $\phi$,
		\item\label{itm:isperiodic}			$\phi$ is (freely) periodic.
	\end{enumerate-roman-packed}
\end{theorem}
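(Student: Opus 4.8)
The plan is to run the cycle $\ref{itm:istat}\Rightarrow\ref{itm:hasinvariantgraph}\Rightarrow\ref{itm:isperiodic}\Rightarrow\ref{itm:hasinvariantspine}\Rightarrow\ref{itm:istat}$. Two arrows are essentially free. For $\ref{itm:istat}\Rightarrow\ref{itm:hasinvariantgraph}$: a \tat\ graph is by definition a spine of its surface, and the explicit construction of Proposition~\ref{prop:basic} — which works verbatim for several walk lengths — gives a representative of $\phi$ that carries that spine to itself; a spine fills. For $\ref{itm:hasinvariantgraph}\Rightarrow\ref{itm:isperiodic}$: if $\Gamma$ is a $\phi$-invariant filling graph, represent $\phi$ so that it maps $\Gamma$ to itself; it then acts on the finite graph $\Gamma$ as a finite-order automorphism, so after an isotopy $\phi^m$ fixes $\Gamma$ pointwise for some $m>0$. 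Each component of $\Sigma\smallsetminus\Gamma$ is either a disk, on which $\phi^m$ is isotopic to the identity rel boundary by the Alexander trick, or a collar annulus meeting $\partial\Sigma$, on which $\phi^m$ fixes both boundary circles pointwise and is therefore a power of the boundary Dehn twist. Hence $\phi^m$ is a product of boundary twists, $c(\phi)$ has finite order, and $\phi$ is (freely) periodic.

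The substance is $\ref{itm:isperiodic}\Rightarrow\ref{itm:hasinvariantspine}$. By Nielsen's theorem (recalled at the start of this chapter; applied to $\dot\Sigma$, the cases $\chi(\dot\Sigma)\ge 0$ being trivial since there $\mcg$ is trivial or $\Z$), choose a finite-order diffeomorphism $f$ of order $k$ in the class $c(\phi)$, fixing every puncture, and extend it to a finite-order diffeomorphism of $\Sigma$ rotating each boundary circle. A nontrivial power of $f$ cannot fix a boundary circle pointwise — else it would be the identity — so $\langle f\rangle$ acts freely on $\partial\Sigma$, and the quotient orbifold $Q=\Sigma/\langle f\rangle$ has all its cone points in the interior of the underlying surface $|Q|$. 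Take a spine $S_0$ of $|Q|$ and attach to it a disjoint whisker running to each cone point; slitting a collar along a whisker does not change its homeomorphism type, so $S:=S_0\cup(\text{whiskers})$ is again a spine of $|Q|$, now containing every cone point. Over $|Q|\smallsetminus S$, a union of cone-point-free collar annuli, the branched cover $p\colon\Sigma\to Q$ is an honest covering, so each preimage component is again a collar annulus; therefore $\Gamma:=p^{-1}(S)$ is an $f$-invariant spine of $\Sigma$. Since $\Gamma$ is compact and interior, composing $f$ with boundary rotations and twists supported in a collar disjoint from $\Gamma$ produces a representative of $\phi$ itself that still preserves $\Gamma$.

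For $\ref{itm:hasinvariantspine}\Rightarrow\ref{itm:istat}$, let $\Gamma$ be a $\phi$-invariant spine and $g=\phi|_\Gamma$, a power of which is trivial. Subdivide $\Gamma$ equivariantly so that $g$ inverts no edge, and give each edge length one. As $\phi$ fixes $\partial\Sigma$ pointwise it preserves each collar annulus $A_i$ of $\Sigma\smallsetminus\Gamma$; on the boundary cycle $\gamma_i$ of $A_i$, of combinatorial length $b_i$, the automorphism $g$ is an orientation- and vertex-preserving isometry of $\R/b_i\Z$, hence the shift by an integer $l_i$ well defined modulo $b_i$. Normalize the representative of $\phi$ to be linear, $(\theta,t)\mapsto(\theta+l_i h(t),t)$, on each $A_i$; the $\Z^b$ of admissible lifts of the $l_i$ is exactly the ambiguity by boundary twists, so the $l_i$ can be pinned down so that the glued map is precisely $\phi$ (if some $l_i$ comes out zero then $\phi$ is a composition of boundary twists, which is a \tat\ twist by the remark preceding Theorem~\ref{thm:definitions}). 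Now for the crossing arc of an edge $e$, its two halves lie in the two annuli $A_i,A_j$ adjacent to $e$ and meet at the crossing point $z\in e$; under $\phi$ they become safe walks of lengths $l_i$ and $l_j$ issuing from $\partial\Sigma$, each ending at $g(z)$. Because $g$ is globally defined on $\Gamma$ the two walks reunite at $g(z)$ — this is exactly the multi-speed \tat\ property for $(\Gamma,\underline{l})$ — and, the crossing arcs cutting $\Sigma$ into disks, the induced twist is $\phi=\T{\Gamma,\underline{l}}$.

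I expect the main obstacle to be $\ref{itm:isperiodic}\Rightarrow\ref{itm:hasinvariantspine}$: producing an invariant \emph{spine}, not merely an invariant filling graph. The point is that the preimage of a spine under a branched cover need not be a spine, which forces the enlargement of the quotient spine so that it absorbs all orbifold points before pulling back; and one must keep careful track of boundary Dehn twists when descending from the finite-order model $f$ to $\phi$ itself. Once an invariant spine is available, the remaining step $\ref{itm:hasinvariantspine}\Rightarrow\ref{itm:istat}$ is little more than bookkeeping of boundary twists.
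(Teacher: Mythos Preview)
Your argument is correct, and the two nontrivial implications are handled by genuinely different methods than in the paper.

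For \ref{itm:isperiodic}$\Rightarrow$\ref{itm:hasinvariantspine}, the paper proceeds in two stages: first it produces an invariant \emph{filling} graph by brute force --- take any spine $G_0$, replace it by $\bigcup_k f^k(G_0)$ under a Nielsen finite-order representative $f$, and argue (with a small transversality lemma) that this is a finite filling graph --- and then it performs an explicit invariant collapse, pushing in boundary-adjacent faces one $f$-orbit at a time, to cut the filling graph down to a spine. Your route through the quotient orbifold $Q=\Sigma/\langle f\rangle$ bypasses both stages: by enlarging a spine of $|Q|$ with whiskers to the cone points before pulling back, you guarantee in one stroke that the preimage is already a spine. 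This is cleaner and closer in spirit to the alternative ``geometric proof'' the paper sketches afterwards (growing horoballs in a hyperbolic metric), though your version avoids geometry entirely. What the paper's approach buys is that it establishes \ref{itm:hasinvariantgraph} as an honest intermediate step and makes the construction completely explicit in pictures; what yours buys is economy.

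For the logical skeleton, the paper does not argue \ref{itm:hasinvariantgraph}$\Rightarrow$\ref{itm:isperiodic} directly; it instead shows that \ref{itm:istat} implies all the others via Proposition~\ref{prop:basic}, and then closes the loop with \ref{itm:isperiodic}$\Rightarrow$\ref{itm:hasinvariantgraph}$\Rightarrow$\ref{itm:hasinvariantspine}$\Rightarrow$\ref{itm:istat}. Your direct argument for \ref{itm:hasinvariantgraph}$\Rightarrow$\ref{itm:isperiodic} (finite graph automorphism, Alexander on disks, boundary twists on collars) is a nice self-contained addition. The step \ref{itm:hasinvariantspine}$\Rightarrow$\ref{itm:istat} is essentially the same in both writeups.
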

We say that a graph $G \subset \Sigma$ \emph{fills} $\Sigma$ if its complement
$\Sigma \smallsetminus G$ consists only of disks and boundary-parallel annuli.

During the proof, we will see how to explicitly find a \tat\ graph, given a
periodic diffeomorphism. The invariant graph in \ref{itm:hasinvariantspine}
will be the graph around which one twists. It can also be useful to view the bigger
filling graph from \ref{itm:hasinvariantgraph} as a \tat\ graph,
one whose embedding corresponds to a subsurface of $\Sigma$.

\begin{proof}
We have already seen in Proposition \ref{prop:basic} that \tat\ twists are periodic and that
they leave the defining \tat\ graph invariant, so \ref{itm:hasinvariantgraph}, \ref{itm:hasinvariantspine},
and \ref{itm:isperiodic} follow from \ref{itm:istat}.

\proofstep{\ref{itm:hasinvariantspine}$\implies$\ref{itm:istat}}
\begin{figure}
  \centering
  \def\svgwidth{0.5\textwidth}
  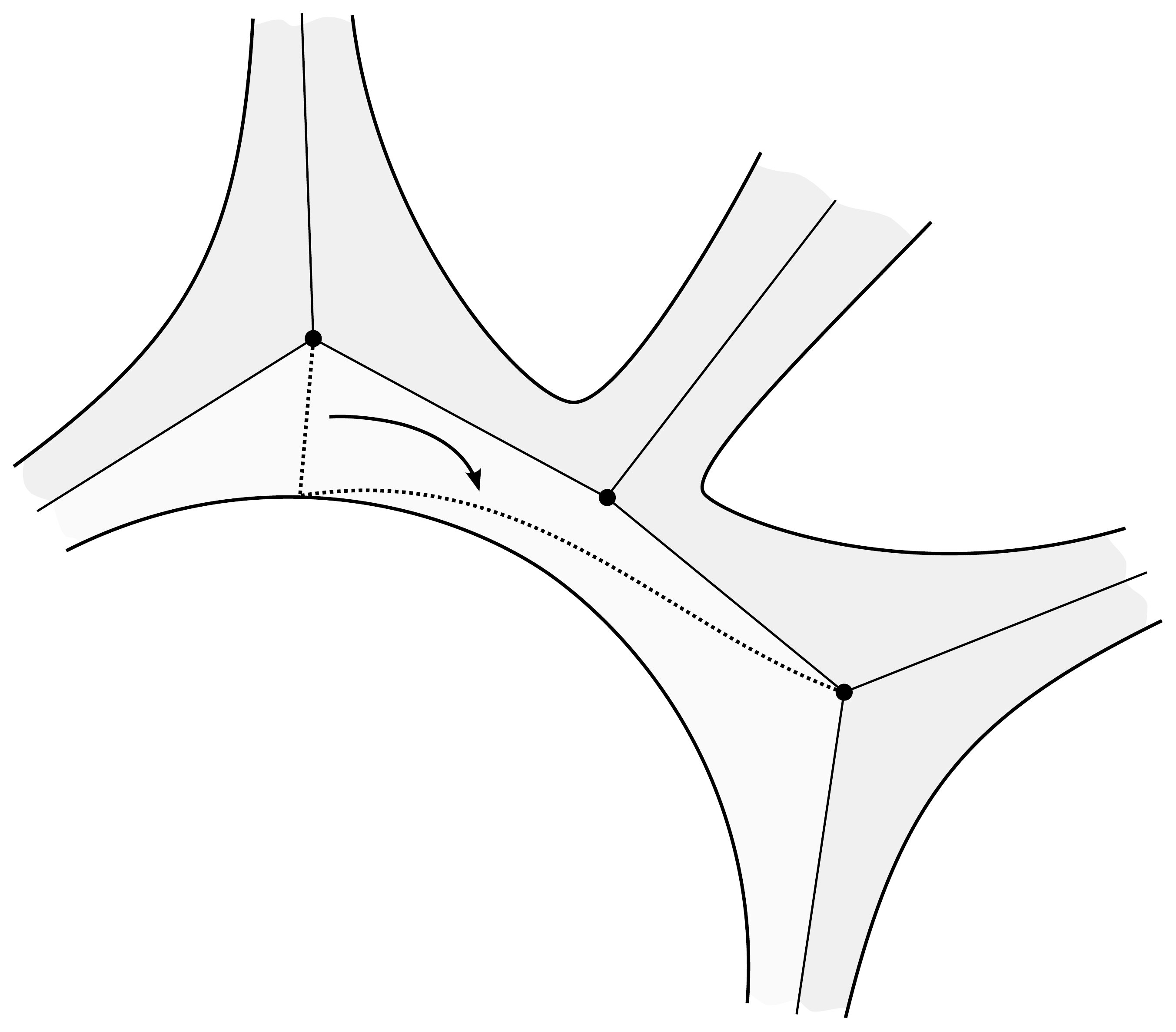
  \caption{The image of one arc determines the map on the annulus up to isotopy}
  \label{fig:onearcdetermines}
\end{figure}
We first argue why having an invariant graph as in \ref{itm:hasinvariantspine}
makes $\phi$ a \tat\ twist. To see this, assume without loss of generality that
the graph has no bivalent vertices and is contained in the interior of $\Sigma$.
Pick one boundary component; see Figure \ref{fig:onearcdetermines}.
Between that boundary component and $G$ there is an annulus $A$.
Choose a vertex of $G$ that is adjacent to $A$, and an arc going from the
boundary component to the vertex. By assumption, $\phi$ fixes the boundary of
$\Sigma$, and in particular the endpoint of the arc which lies on the boundary.
And being a diffeomorphism that leaves $G$ invariant, it sends vertices to
vertices.

Therefore it must send the second endpoint to some other adjacent vertex, or
perhaps the same, possibly winding around the annulus a few times. Up to
isotopy, the image of the arc determines the mapping class on $A$, and similarly
on all of $\Sigma$. The images of the chosen arcs are safe walks of some lengths
$l_i$, and with these walk lengths we have described $\phi$ as a multi-speed
\tat\ twist $\T{G,\underline{l}}$. Note that this process accurately recovers
the amount of ``twisting around the boundary''. If we prefer to have a \tat\
twist $\T{G',l}$ with a single walk length for all boundary components, we have
to modify $G$ as in Theorem \ref{thm:definitions}.

\proofstep{{\ref{itm:isperiodic}$\implies$\ref{itm:hasinvariantgraph}}}
Now we prove that for any periodic map, as in \ref{itm:isperiodic}, there is an
invariant filling graph.

Finding such a graph is easy once we have Nielsen's theorem cited above.
Represent the mapping class $\phi$ by a diffeomorphism $f$ which is of finite
order. $f$ will still not interchange the boundary components, but will in
general not fix them pointwise. Now choose any graph $G_0$ that fills
$\Sigma$, or is even a spine for it.
The union
	\[
		G_1 = \bigcup_{k=0}^{\ord(\phi)-1} f^k(G_0)
	\]
becomes a graph when intersection points between iterates are considered vertices.

For the sake of completeness we should ensure that $G_1$ is indeed a finite
graph. Since $f$ can be realized as an orientation-preserving isometry of some
Riemannian metric (by averaging any metric, or by using the hyperbolic metric
from Nielsen's theorem), its fixed points are isolated. Thus we can require that
$G_0$ not meet any fixed point. Likewise, points with a period smaller than the
order of $f$ are also isolated since they are fixed points of a power of $f$
which is not the identity. We choose $G_0$ to be disjoint from these as well.
In particular, the vertices of $G_0$ will be disjoint from the vertices of
$f^k(G_0)$ for all $k$ between $1$ and $\ord(f)-1$.
Therefore, possibly after a small perturbation of the edges of $G_0$, all intersections between
$G_0$ and $f^k(G_0)$ happen between interior points of edges.
Around each vertex there is a small open neighbourhood which is disjoint from
all its iterates.
Remove these neighbourhoods from $G_0$ and call the result $E_0$, a
smooth compact submanifold (with boundary) of $\Sigma$. $f^k$ leaves none of its
points fixed; therefore it is possible to move $E_0$ by a small isotopy to make
it intersect $f^k(E_0)$ transversely; see the lemma
\vpageref[below]{lem:transversality}. The edge endpoints need not be moved since
they are already disjoint from their iterates, and they can be connected back to
$E_0$ by paths that are themselves disjoint from their iterates. Since transversality is an
open condition, we can achieve transversality simultaneously for all $k$.

The new graph $G_1$ we obtain is certainly invariant under $f$, but usually not
a deformation retract.
However, it fills $\Sigma$, meaning that all its facets (the connected components of
$\Sigma \smallsetminus G_1$) that do not touch the boundary of $\Sigma$ are disks.
We see this because of two facts: First, the facets of $G_1$ are contained in
disks and boundary-parallel annuli inside $\Sigma$, namely the facets of $G_0$, which are unions
of facets of $G_1$.
And second, $G_1$ is connected.
To convince oneself of this, one can look at the edges of $G_1$ surrounding a
boundary component and see that among those one encounters edges from all
iterates since $f$ is of finite order.
Then, since $G_0$ is connected, $G_1$ is connected as well. From the two facts
we conclude that all internal facets are disks.

\proofstep{\ref{itm:hasinvariantgraph}$\implies$\ref{itm:hasinvariantspine}}
The invariant filling graph we found may be too big to be a spine, so in this
step we modify it to remove all facets apart from the boundary annuli.

\begin{figure}
  \centering
  \def\svgwidth{1\textwidth}
  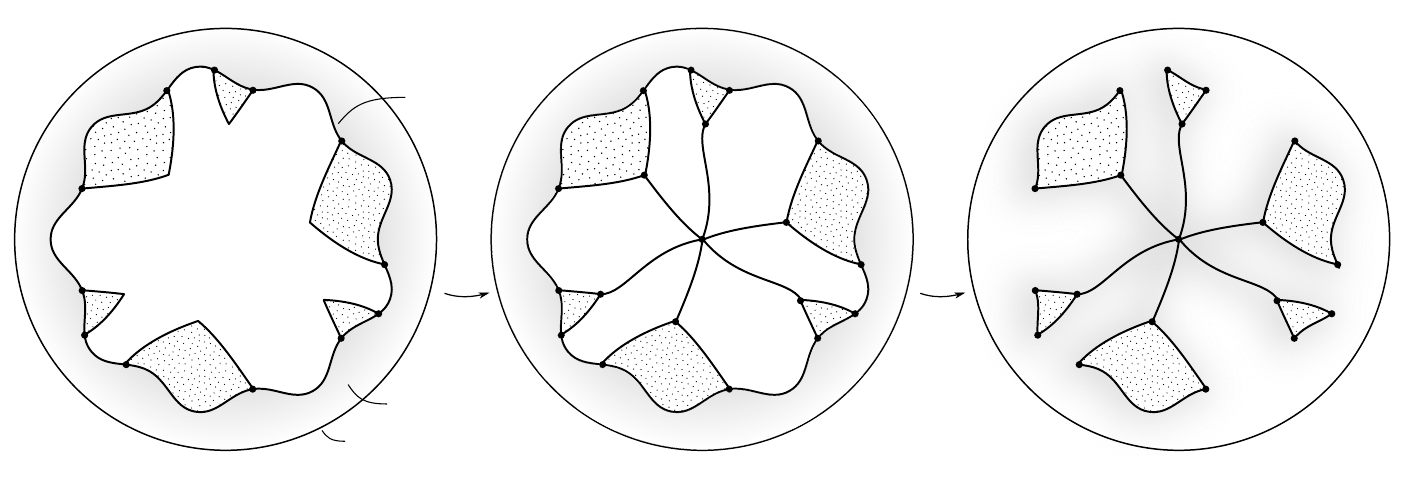
  \caption{Invariant collapse}
  \label{fig:leveebreach}
\end{figure}

The strategy is to collapse them from the boundary. This is done as in
the picture \vpageref{fig:leveebreach}, by pushing in edges that are adjacent to the
boundary annuli. The white polygon symbolizes any facet of $G_1$ that touches
one chosen boundary annulus. The rest of the graph, which may be complicated, is
symbolized by the dotted area. Call the polygon $P$. Among the edges of $P$ that
touch the boundary annulus, some may be in the same orbit under $f$, but since
$f$ leaves the boundary components invariant, the edges are not sent to any other part of $P$.
Let $i>0$ be the smallest number such that $f^i$ sends the polygon to
itself. $f^i$ acts on the boundary of $P$ as (conjugate to) a rotation. We can
modify $f$ inside $P$ by an isotopy such that $f^i|_P$ is conjugate to a
rotation (see also the note following the proof). Choose a regular Euclidean
polygon $P_E \subset \R^2$ of the same type and a diffeomorphism $\eta \colon P
\to P_E$. Assume that $\eta \circ f^i \circ \eta^{-1}$ is a Euclidean rotation.
Now we can collapse: Remove from $P$ all edges that touch the selected boundary component. Then
take a radius of $P_E$ whose preimage in $P$ goes to any of its remaining edges or
vertices. Add all its images under powers of $f$ to get a new invariant graph
with one facet less. Repeat until there are no more disk components and get an invariant
spine.
\end{proof}
\begin{note}
By Lemma \ref{lem:kerekyarto}, $f^i$ is actually already conjugate to
a rotation, so it does not even need to be modified.
However, by allowing for the modification we can avoid using the lemma if we
want and get more control on what happens on the edges of $P$.
\end{note}
We have used this transversality lemma in the proof:
\begin{lemma}\label{lem:transversality}
Let $M$ be a manifold and $A \subset M$ a compact submanifold (which may have
boundary). Let $f \colon M \to M$ be a diffeomorphism without fixed points on
$A$. Then there is diffeomorphism $h \colon M \to M$, arbitrarily close to the
identity, such that $h(A)$ and $f(h(A))$ intersect transversely. 
\end{lemma}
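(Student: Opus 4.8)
\emph{Proof idea.} The plan is to prove this by a transversality argument localized so as to exploit the hypothesis that $f$ has no fixed point on $A$. The crucial elementary observation is the following: if $U$ is an open set with compact closure and $f(\overline U)\cap\overline U=\emptyset$, then $U\cap f^{-1}(U)=\emptyset$, so that for any diffeomorphism $\psi$ supported in $U$ and any subset $X\subseteq M$ one has $f(\psi(X))\cap U=f(X)\cap U$; that is, a perturbation supported in $U$ moves the sheet $X$ inside $U$ while leaving the sheet $f(X)$ stationary there. This reduces the ``two moving sheets'' difficulty to an ordinary transversality problem against a fixed submanifold, to which Thom's transversality theorem applies.

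Concretely, I would first pass to the compactly supported setting: pick a relatively compact open $W\supseteq A$ and only use diffeomorphisms supported in $W$, so that ``close to the identity'' means $C^1$-small on $W$. Since $A$ is compact, $M$ is Hausdorff, and $f$ is fixed-point-free on $A$, I can cover $A$ by finitely many open sets $U_1,\dots,U_N\subseteq W$ with compact closures satisfying $f(\overline{U_i})\cap\overline{U_i}=\emptyset$; shrinking, I may also fix open $V_i$ with $\overline{V_i}\subseteq U_i$ that still cover $A$. Then I build $h$ inductively: having found $C^1$-small diffeomorphisms $h_1,\dots,h_{i-1}$, with $h_j$ supported in $U_j$, such that $A_{i-1}:=h_{i-1}\circ\dots\circ h_1(A)$ is transverse to $f(A_{i-1})$ at every point of $\overline{V_1}\cup\dots\cup\overline{V_{i-1}}$, I perturb once more inside $U_i$. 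By the observation above, $f(h_i(A_{i-1}))\cap U_i=f(A_{i-1})\cap U_i$ is a fixed submanifold of $U_i$, so a generic $C^1$-small $h_i$ supported in $U_i$ makes $A_i:=h_i(A_{i-1})$ transverse to it, hence transverse to $f(A_i)$ at all points of $U_i\supseteq\overline{V_i}$; and since transversality is an open condition on the compact set of intersection points lying over $\overline{V_1}\cup\dots\cup\overline{V_{i-1}}$, and $h_i$ alters $A_{i-1}$ only in $U_i$ and $f(A_{i-1})$ only in $f(U_i)$, choosing $h_i$ small enough retains the transversality obtained in earlier steps. After $N$ steps, $h:=h_N\circ\dots\circ h_1$ is $C^1$-small and $h(A)\subseteq\bigcup V_i$, so every point of $h(A)\cap f(h(A))$ lies in some $V_i$ and is a transverse intersection. (For $h$ small, $f$ remains fixed-point-free on $h(A)$, so each intersection point $x=h(a)=f(h(a'))$ really has $a\ne a'$ — this is what makes the two sheets through $x$ independently movable; when $A$ has boundary one runs the same argument keeping track of the boundary stratum.)

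The step I expect to demand the most care is the inductive bookkeeping: verifying that the perturbation installed over $U_i$ does not destroy transversality previously arranged over $U_1,\dots,U_{i-1}$. This is why I keep the inductive invariant phrased over the compact sets $\overline{V_j}$ and insist on sufficient $C^1$-smallness at each stage; together with the routine overhead of manifolds with boundary and of a non-compact $M$, this is where the actual work sits. One could instead invoke the parametric transversality theorem a single time, for the map $(\psi,a,a')\mapsto(\psi(a),f(\psi(a')))$ on the open set $\{a\ne a'\}$ with target the diagonal of $M\times M$, using $a\ne a'$ to see that it is submersive in the $\psi$-directions; but extracting from ``generic $\psi$'' a $\psi$ that is genuinely close to the identity still forces the same localization, so I favour the explicit finite-cover construction above.
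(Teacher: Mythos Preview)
Your proof is correct and follows essentially the same approach as the paper: both cover $A$ by finitely many open sets $U_i$ with $f(\overline{U_i})\cap\overline{U_i}=\emptyset$, use the key observation that a perturbation supported in $U_i$ leaves $f(A)\cap U_i$ unchanged, and then induct, choosing each perturbation small enough that the open condition of transversality is preserved on the compact pieces already handled. Your version is slightly more explicit about the $C^1$-topology and about why the inductive step does not spoil earlier work, and you add a remark on the parametric-transversality alternative, but the core argument is the same.
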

\begin{proof}
Since $f$ has no fixed points on $A$, each point of $A$ has a neighbourhood $U$
such that $f(U) \cap U = \varnothing$. Out of these neighbourhoods we choose a
finite subcover $(U_i)_{i=1}^{n}$. Furthermore, choose compact sets $K_i \subset
U_i$ such that the union of the interiors
$\mathring{K}_i$ still covers $A$. Using standard tranversality theory we can find a small
isotopy $H_1 \colon [0,1] \times A \to M$ with the following properties:
\begin{align*}
	H_1(0,\cdot) &= \id_A,\\
	H_1(t,\cdot)|_{A \smallsetminus U_1} &= \id_{A \smallsetminus U_1} &&\text{for all\ } t \in [0,1],
\intertext{such that, when we define $h_1 \colon A \to M$ as $h_1 = H_1(1,\cdot)$, we have that
$h_1(A) \cap K_1$ is transverse to $f(A) \cap K_1$. We require furthermore that}
	h_1(A \cap K_1) &\subset U_1.
\end{align*}

By assumption,
$f(U_1) \cap U_1 = \varnothing$,
which implies that
$f(A) \cap K_1 = f(h_1(A)) \cap K_1$.
Hence we have achieved the desired transversality locally.

We continue constructing maps $h_i$ in a similar way. Assume we already have
$h_1$ to $h_{i-1}$ such that $(h_{i-1} \circ \ldots \circ h_1)(A) \cap K_{r}$ is
transverse to $f((h_{i-1} \circ \ldots \circ h_1)(A)) \cap K_{r}$  for all $r$
between $1$ and $i-1$. Build a homotopy $H_i$ analogously to $H_1$, but taking
care to choose it small enough such that all $h_i((h_{i-1} \circ \ldots \circ
h_1)(A)) \cap K_r$ and $f((h_i(h_{i-1} \circ \ldots \circ h_1)(A)) \cap K_r$
remain transverse. This is possible since the sets $M \smallsetminus K_r$ are open and
because being transverse is an open condition. In the end, we obtain a diffeomorphism
$h = h_n \circ \ldots \circ h_1$ that fulfils the requirements of the lemma.
\end{proof}

The following lemma was stated by Kerékjártó in 1919 (\cite{Kerekjarto1919}),
but without satisfactory proof. See the article of Constantin and Kolev
(\cite{ConstantinKolev2003}) for a complete treatment. When $f$ is a diffeomorphism,
there is a quick geometric proof, given below.
\begin{lemma}\label{lem:kerekyarto}
Let $D$ be the unit disk in $\R^2$. Then any orientation-preserving
homeomorphism $f \colon D \to D$ of finite order is conjugate to a rotation.
\end{lemma}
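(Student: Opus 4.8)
The plan is to prove only the case where $f$ is a diffeomorphism, as stated (the genuine homeomorphism statement is the content of the cited work of Constantin and Kolev), and to run the standard averaging argument. First I would average any Riemannian metric $g_0$ over the cyclic group generated by $f$, replacing it by $g=\frac1n\sum_{k=0}^{n-1}(f^k)^\ast g_0$ with $n=\ord(f)$; then $f$ is an isometry of $(D,g)$ of order $n$. Next I would locate a fixed point in the interior. Since $f$ preserves $\partial D$ and restricts there to an orientation-preserving homeomorphism of the circle of finite order, that restriction is either the identity or fixed-point free. If it is the identity, then $f$ fixes $\partial D$ pointwise, so $df$ fixes both the boundary tangent and the inward normal along $\partial D$; hence $df=\id$ there, and an isometry agreeing with the identity to first order at a point is the identity, so $f=\id$ and we are done. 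Otherwise $f$ has no fixed point on $\partial D$, and since $D$ is contractible the Lefschetz number $\Lambda(f)=\chi(D)=1\neq 0$ forces a fixed point $p$, which is then interior.

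Then I would linearize at $p$. The differential $df_p$ is an orientation-preserving linear isometry of $T_pD$, i.e. a rotation by some angle $2\pi m/n$, and it is \emph{primitive} ($\gcd(m,n)=1$): if $(df_p)^k=\id$, then via $\exp_p$ the isometry $f^k$ equals the identity near $p$, hence (first-order rigidity again) on all of $D$, so $n\mid k$. Consequently $\exp_p$ conjugates $f$, on a small $f$-invariant metric disk $B$ around $p$, to the Euclidean rotation $R=R_{2\pi m/n}$.

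The remaining and principal task is to propagate this local model to a conjugacy on all of $D$. A naive "polar coordinates centred at $p$" argument would stumble over the cut locus of $p$, so I would proceed indirectly. An Euler-characteristic (Riemann--Hurwitz) count for the quotient orbifold $D/\langle f\rangle$ — or Smith theory — shows that this quotient is a disk with a single cone point, of order $n$, namely the image of $p$: the underlying surface has $\chi^{\mathrm{orb}}=1/n>0$ and one boundary circle, which forces it to be a disk with exactly one cone point of order $n$. Equivalently, the $\Z/n$-action is free on $D\smallsetminus\{p\}$. Then $A=D\smallsetminus\mathrm{int}\,B$ is an annulus carrying a free $\Z/n$-action that preserves each boundary circle, and any such action is the standard rotation (its quotient is again an annulus, and the connected $n$-fold cover of an annulus is unique, with deck group a rotation by a primitive $n$-th root, which must be $R_{2\pi m/n}$ to match $f|_{\partial B}$). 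Splicing the linear model on $B$ with the standard model on $A$ across a collar of $\partial B$ then yields a homeomorphism $D\to D$ conjugating $f$ to $R$.

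The hard part is exactly this last globalization: ruling out extra ramification away from $p$, identifying the free action on the complementary annulus with a genuine rotation, and gluing the two local models compatibly along the collar. (One should also keep in mind that the full homeomorphism statement is more delicate still — it requires taming finite-order embedded arcs and circles — which is why only the smooth case is carried out here.)
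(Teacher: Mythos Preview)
Your argument is correct, but it takes a genuinely different route from the paper's. The paper averages a metric on $\mathring{D}$ and then invokes the uniformization theorem: the interior, with its $f$-invariant conformal class, is biholomorphic to $\Hy$ or $\C$, and $f$ becomes a finite-order conformal automorphism of one of these, hence conjugate to a rotation. That is the whole proof---one big analytic hammer, no fixed-point hunting, no gluing.

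Your approach is more elementary and more hands-on: locate an interior fixed point via Lefschetz (after disposing of the case $f|_{\partial D}=\id$ by first-order rigidity of isometries), linearize there by $\exp_p$, use Smith theory or an orbifold Euler-characteristic count to see the action is free off $p$, model the complementary annulus by covering-space uniqueness, and splice. This avoids uniformization entirely and stays within differential and algebraic topology; the price is the bookkeeping you flag yourself---matching the two local models across $\partial B$ requires an isotopy through $\Z/n$-equivariant circle homeomorphisms, which is routine but not free. The paper's proof is shorter; yours makes the geometric structure of the conjugacy (one cone point, free on the complement) completely explicit, and would adapt more readily to settings where uniformization is unavailable.
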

\begin{proof}[Proof when $f$ is a diffeomorphism.]
Choose any Riemannian metric $g$ on the interior $\mathring{D}$ and average it by taking
$g'=g+f^*g+(f^2)^*g + \ldots + (f^{k-1})^*g$ where $k$ is the order of $f$. By
the uniformization theorem for Riemann surfaces, there is a conformally equivalent metric $h$ such that
$(\mathring{D},h)$ is either isometric to the hyperbolic disk $\Hy$ or to the complex plane $\C$.
Via this isometry, $f$ becomes an automorphism of $\Hy$ or $\C$. For both cases,
conformal automorphisms of finite order are conjugate to rotations about the origin.

The diffeomorphism which conjugates $f$ to a rotation is the composition of the isometry
and the conjugacy inside the automorphism group.
\end{proof}

An easy consequence of the theorem is the following proposition that has been
mentioned before:
\begin{corollary}
Let $\Sigma$ be a surface with boundary. Then the only mapping class of finite
order (in the strict sense) that fixes the boundary is the identity.
\end{corollary}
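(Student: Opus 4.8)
The plan is to read this off directly from Theorem~\ref{thm:equivalentdefs} together with the behaviour of \tat\ twists near the boundary recorded in Proposition~\ref{prop:basic}, plus torsion-freeness of the group of boundary Dehn twists. First I would note that if $\phi\in\mcg(\Sigma)$ satisfies $\phi^k=\id$ for some $k\ge 1$ with the isotopy fixing $\partial\Sigma$ pointwise, then $\phi$ is in particular (freely) periodic, so Theorem~\ref{thm:equivalentdefs} applies and realizes $\phi$ as a multi-speed \tat\ twist $\T{G,\underline l}$ along a spine $G$ of $\Sigma$, with walk lengths $\underline l=(l_1,\dots,l_b)$, one for each boundary component. (If $\Sigma$ is disconnected one treats each component separately; a disk has trivial mapping class group in any case, so one may assume $G$ has at least one edge and hence $b_i\ge 1$ for the length $b_i$ of the $i$th boundary cycle.)

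Next I would invoke the explicit model from the proof of Proposition~\ref{prop:basic}: near the $i$th boundary component the twist acts on a collar $S^1\times[0,1]$, with $S^1=\R/b_i\Z$, by $(\theta,t)\mapsto(\theta+l_i h(t),t)$, and it is of finite order on the complement of these collars. Consequently there is an integer $N\ge 1$ — a common multiple of the period of the ``core'' part of the map and of the integers $b_i/\gcd(b_i,l_i)$ — for which the collar twisting closes up to a whole number of turns around each boundary and the core part becomes trivial, so that
\[
   \phi^{N}=\T{G,N\underline l}=\prod_{i=1}^{b} D_{\partial_i}^{\,N l_i/b_i},
\]
a composition of Dehn twists around the boundary components of $\Sigma$. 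In particular $\phi^{N}$ lies in the free abelian subgroup $\Z^{b}\subset\mcg(\Sigma)$ generated by those boundary twists (the kernel of the collapsing map $c$ from the introduction to this chapter).

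The conclusion is then forced. Since $\phi^{k}=\id$, the element $\phi^{N}$ has finite order; but $\Z^{b}$ is torsion free, so $\phi^{N}=\id$. Reading off the displayed formula, every exponent $N l_i/b_i$ must vanish, hence $l_i=0$ for all $i$ (as $N\ge 1$). A multi-speed \tat\ twist all of whose walk lengths are zero fixes every crossing arc, hence is the identity up to isotopy (alternatively, appeal to Remark~\ref{rmk:nonzero} to see $\phi\in\Z^{b}$ and then again use torsion-freeness). Therefore $\phi=\id$.

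The only point I would take care to argue precisely, rather than gesture at, is the middle step: that some honest power $\phi^{N}$ is a \emph{pure} product of boundary Dehn twists with no residual action on the interior. This is exactly where one needs the ``finite order outside a tubular neighbourhood of the boundary'' clause of Proposition~\ref{prop:basic}, combined with a correct choice of $N$ so that the collar rotation by $l_i$ iterates to a multiple of $b_i$. Once that is in place, the rest is just the observation that $\Z^{b}$ has no torsion.
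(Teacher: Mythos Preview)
Your argument is correct and follows essentially the same route as the paper's: realize $\phi$ as a multi-speed \tat\ twist via Theorem~\ref{thm:equivalentdefs}, pass to a power that is a pure product of boundary Dehn twists (the multi-speed analogue of Proposition~\ref{prop:basic}\ref{itm:finite}), and use that the subgroup these twists generate is torsion free. The one place where the paper is more careful is the cylinder: there the two boundary Dehn twists are isotopic, so the subgroup they generate is $\Z$ rather than $\Z^2$, and your step ``every exponent $Nl_i/b_i$ must vanish'' does not follow from $\phi^N=\id$ alone. The paper handles the cylinder separately (its mapping class group is already $\Z$, hence torsion free); you should add this case distinction. Apart from that, the two proofs coincide.
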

\begin{proof}
Assume $\Sigma$ is neither a disk nor a cylinder.
Since the mapping class is given by a multi-speed \tat\ twist
with nonzero walk length around a spine of the surface, some power of it
consists of Dehn twists around the boundary components.

By basic facts about Dehn twists (see e.\,g.\@ \cite[Chapter~3]{FarbMargalit2012}),
one sees that this product is of infinite order. For example, one can study its effect
on curves that live on the double of $\Sigma$, which is obtained from two copies of $\Sigma$ by
identifying the corresponding boundary components.

The cylinder is different because it has two isotopic boundary components, but
there the statement is clear since the only \tat\ twists on a cylinder are
powers of Dehn twists, which have infinite order.
\end{proof}

Looking at the proof of Theorem \ref{thm:equivalentdefs}, one sees that one
can in fact construct an invariant spine not only for the powers of a finite
order diffeomorphism, but also for any finite subgroup of the diffeomorphism
group. From this we can conclude the following (well-known) fact:
\begin{corollary}
On a connected (oriented, compact) surface with boundary or punctures, any finite subgroup of the
orientation-preserving diffeomorphism group is cyclic.
\end{corollary}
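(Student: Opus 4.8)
The plan is to run the construction just indicated: given a finite subgroup $H\le\mathrm{Diff}^+(\Sigma)$ whose elements fix every boundary component (resp.\ every puncture) setwise — as is automatic for the freely periodic diffeomorphisms considered in this chapter — carry out the proof of Theorem~\ref{thm:equivalentdefs} with $H$ in place of a cyclic group to produce an $H$-invariant spine. Concretely, realise $H$ by isometries of an averaged Riemannian metric, so that non-identity elements and their powers have isolated fixed points; choose an auxiliary spine $G_0$ avoiding all points of non-maximal period; form $G_1=\bigcup_{h\in H}h(G_0)$ after making the iterates mutually transverse by Lemma~\ref{lem:transversality}; and then collapse the non-annular facets equivariantly exactly as in the step \ref{itm:hasinvariantgraph}$\implies$\ref{itm:hasinvariantspine}. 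The result is a spine $G$ with $H(G)=G$, and since the elements of $H$ are orientation-preserving, $H$ acts on the ribbon graph $G$ preserving the cyclic orderings at the vertices.

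Next I would linearise this action along one boundary component $C$. In the ribbon graph $G$ the component $C$ is described by a boundary cycle, that is, a cyclic sequence of darts (sides of edges) of some length $\ell$; because every $h\in H$ fixes $C$ and preserves cyclic orders, it permutes this sequence by a cyclic rotation. This yields a homomorphism $\rho\colon H\to\Z/\ell$ whose image is automatically cyclic, so it suffices to show $\rho$ is injective. If $h\in\ker\rho$ then $h$ fixes a dart of $C$, hence sends the underlying edge $e$ to itself with the same orientation; being of finite order, $h$ therefore restricts to the identity on $e$. Then $h$ is a finite-order isometry whose fixed set contains the arc $e$, so its differential is the identity at points of $e$, and by the rigidity of isometries (the exponential-map argument used at the start of Section~\ref{sec:periodic}) $h=\mathrm{id}$. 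Hence $H\cong\rho(H)$ is cyclic.

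For a surface with punctures the same argument works verbatim once one notes that a puncture fixed by $H$ has an $H$-invariant punctured-disk facet, whose boundary cycle plays the role of $C$; alternatively one encircles the puncture by a small $H$-invariant loop and argues directly. The only configuration with no edges at all is the disk, where instead $H$ acts faithfully on $\partial\Sigma$ by orientation-preserving homeomorphisms of a circle (faithfulness again by the rigidity argument, or by Lemma~\ref{lem:kerekyarto}), and a finite subgroup of $\mathrm{Homeo}^+(S^1)$ is cyclic; equivalently, an averaged metric makes $H$ a finite group of isometries of $\Hy$ or $\C$ fixing a point, hence a finite rotation group. I expect the only real content to be the injectivity of $\rho$ — i.e.\ the claim that a non-trivial finite-order diffeomorphism cannot fix an entire edge of an invariant spine — and this is precisely where one uses the isometric realisation; everything else is bookkeeping already done in the proof of Theorem~\ref{thm:equivalentdefs}.
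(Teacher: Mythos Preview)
Your proposal is correct and follows essentially the same route as the paper: build an $H$-invariant spine via the equivariant version of Theorem~\ref{thm:equivalentdefs}, then read off a homomorphism from $H$ into a cyclic group by looking at the action on one boundary cycle. The paper phrases this last step in the language of multi-speed \tat\ twists—each $h\in H$ is determined by its walk lengths $(l_1,\ldots,l_r)$ modulo the boundary lengths, and connectedness of $\Sigma$ forces $l_1$ to determine the rest—whereas you phrase it as rotation of the cyclic dart sequence and argue injectivity via isometry rigidity; these are the same map and the same injectivity, just expressed differently.
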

\begin{proof}
All elements of the subgroup are multi-speed \tat\ twists (with free boundary) along the same
graph. Since Dehn twists along the boundary are trivial when the isotopy can move the
boundary, they are described by walk lengths $(l_1, \ldots, l_r)$ with natural numbers $l_i$ defined
modulo $b_i$, the length of the $i$-th boundary component. When the surface is
connected, $l_2$ up to $l_r$ are already determined by $l_1$. The statement follows.
\end{proof}

Nielsen's theorem has a more general and more difficult version that was
proved by Kerckhoff using, among other things, ``earthquake maps''. It says that
in all finite subgroups of the mapping class group, we can represent mapping
classes by concrete diffeomorphisms. Nielsen's theorem says the same for
finite cyclic subgroups.
\begin{theorem}[Kerckhoff, \cite{Kerckhoff1983}]
Let $\kappa \colon \mathrm{Diff}(\Sigma) \to \mcg(\Sigma)$ be the canonical
quotient map from the diffeomorphism group of a surface $\Sigma$ to its
mapping class group. Let $G \subset \mcg(\Sigma)$ be a finite subgroup. In
that case, the restricted map $\kappa_| \colon \kappa^{-1}(G) \to G$ has
a section.
\end{theorem}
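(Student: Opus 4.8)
The statement is the solution of the Nielsen realization problem in the form proved by Kerckhoff, so the plan is to follow his argument via Teichmüller space. Everything below is written for a closed or punctured surface; for a surface with boundary one either passes to the punctured surface $\dot{\Sigma}$ and lifts the resulting section back along the central extension $0\to\Z^b\to\mcg(\Sigma)\to\mcg(\dot\Sigma)\to 0$ displayed earlier, or works directly with hyperbolic structures with totally geodesic boundary. The only case needing separate, elementary treatment is $\chi(\Sigma)\geq 0$, where $\mcg(\Sigma)$ is itself finite and a section can be written down by hand.

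So assume $\chi(\Sigma)<0$ and let $\mathcal{T}(\Sigma)$ be its Teichmüller space of complete finite-area hyperbolic structures, on which $\mcg(\Sigma)$ acts. The key reduction is that it suffices to find a point $X\in\mathcal{T}(\Sigma)$ fixed by the finite subgroup $G$. Indeed, if $g\cdot X=X$ for every $g\in G$, then each $g$ is realized by an orientation-preserving isometry $\phi_g$ of $X$ with $\kappa(\phi_g)=g$; since $\chi(\Sigma)<0$, any isometry of $X$ isotopic to the identity \emph{is} the identity, so $\phi_g$ is the unique such isometry, and uniqueness forces $g\mapsto\phi_g$ to be a homomorphism $G\to\mathrm{Diff}(\Sigma)$ splitting $\kappa$ over $G$. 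Thus the whole problem becomes: produce a $G$-fixed point in $\mathcal{T}(\Sigma)$.

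To do this I would use Kerckhoff's convexity machinery. Pick a multicurve $\gamma$ that \emph{fills} $\Sigma$ (for instance one carried by an invariant spine as produced earlier in the text) and form the $G$-invariant geodesic-length function
\[
    F\colon\mathcal{T}(\Sigma)\to\R,\qquad F(X)=\sum_{g\in G}\ell_X(g\cdot\gamma).
\]
Because $\gamma$ fills, $F$ is proper: if a sequence in $\mathcal{T}(\Sigma)$ leaves every compact set, some essential curve is pinched, and a curve meeting it essentially — here some $g\cdot\gamma$ — has length tending to infinity. The substantive input is that each $\ell_X(\cdot)$, and hence $F$, is strictly convex along Thurston's earthquake paths, while any two points of $\mathcal{T}(\Sigma)$ are joined by such a path (the earthquake theorem). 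A proper, strictly convex function on a space so connected has a unique minimizer $X_0$; $G$-invariance of $F$ together with uniqueness gives $G\cdot X_0=X_0$, completing the argument.

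The main obstacle is precisely the strict convexity of geodesic length functions along earthquake paths — Kerckhoff's theorem — which is not formal and relies on his analysis of how lengths vary under shearing deformations; in a self-contained write-up this would have to be either reproved or imported as a black box. The only other point I would expect to have to check carefully is the bookkeeping in the noncompact or bounded setting: that $F$ is still proper on the relevant Teichmüller space and that "isometric and isotopic to the identity implies the identity" still holds there, so that the homomorphism $g\mapsto\phi_g$ is well defined.
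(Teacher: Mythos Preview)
The paper does not actually prove this theorem; it is quoted as Kerckhoff's result with a citation to \cite{Kerckhoff1983} and the remark that the proof uses ``earthquake maps''. So there is no proof in the paper to compare against. Your outline is, in fact, a faithful sketch of Kerckhoff's own argument: reduce to finding a $G$-fixed point in Teichm\"uller space, build a $G$-invariant filling length function, and use properness together with strict convexity along earthquake paths plus the earthquake theorem to get a unique minimum, hence a fixed point. The reduction from a fixed hyperbolic structure to a group homomorphism via uniqueness of isometries in the isotopy class is also standard and correctly stated.

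One slip worth flagging: in the case $\chi(\Sigma)\geq 0$ you write that $\mcg(\Sigma)$ ``is itself finite'', which is false for the torus, where $\mcg(T^2)\cong\mathrm{SL}(2,\Z)$. The Nielsen realization problem for the torus is still elementary (finite subgroups of $\mathrm{SL}(2,\Z)$ are cyclic and are realized by linear rotations of $\R^2/\Z^2$), but the reason is not the one you gave. Otherwise your identification of the hard inputs --- convexity of length functions along earthquakes and transitivity of earthquakes --- is exactly right; these are the parts that cannot be made formal without reproducing Kerckhoff's analysis.
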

Therefore, all finite subgroups of the mapping class group of a surface
with punctures are cyclic.

As an alternative to the above proof of Theorem \ref{thm:equivalentdefs}, one
can use geometry to find an invariant graph. I owe the idea for such a proof to
Marc Lackenby.
\begin{proof}[Geometric proof]
Start with a finite subgroup of the diffeomorphism group of a surface with boundary.
If the surface is neither the disk nor the cylinder, it has negative Euler
characteristic.
By averaging and using the uniformization theorem, we are able to find a
complete hyperbolic metric such that all members of the subgroup act
as isometries, as in the proof of \ref{lem:kerekyarto}.
The boundary components become cusps of the surface.
Lift each cusp to the universal cover of the surface, where it will
be a point in the boundary the hyperbolic disk.

Around each of these lifts, choose a horoball which is small in the sense
that its projection down to the surface is still embedded and such that it does
not touch any other horoball.
Then we let all horoballs grow at constant speed.
Think of (projections of the) the horoballs as paint, one colour for each cusp,
that is poured into the white surface and spreads out smoothly.
As time passes, more of the surface is painted; the rest is still white.
At some point, one of two accidents will happen: The projection of a horoball will
fail to be embedded, or two horoballs will touch.
We assume that at places where paint arrives from two sides, it does not continue
further and does not mix.
These places will form the graph in the end.

More precisely, starting at the time of the accident, there will be a self-inter\-section
of a horocycle, or an intersection of two horocycles, respectively.
We mark all such intersections of horocycles, as long as they occur on the boundary between painted and white regions of the surface.
These markings will form a graph of the surface.
It is obvious from the construction that the surface deformation retracts to it.
Moreover, since it only depended on the hyperbolic metric (and a choice of initial
horoballs), it is invariant under the chosen subgroup.
\end{proof}
Note that resorting to geometry in the proof allows for some additional
statements to be made about the graph.
For example, we can select one of the cusps, choose a horoball there, and then
choose very small horoballs around all of the other cusps.
In that case, the paint from the selected cusp will fill the entire surface
except for the tips of the other cusps, where it meets their respective paint.
Thus we get the following corollary:
\begin{corollary}\label{cor:tatcusps}
Every (multi-speed) \tat\ twist is equivalent to (i.\,e.\@ produces the same mapping class as)
one of the following form:

There is one special boundary component which we call \emph{central}. The graph
only contains edges that meet the central boundary component on at least one side.
That is, the other boundary components meet nowhere. The cycles that surround the noncentral boundary components are
embedded polygons with bi- or trivalent vertices.\qed
\end{corollary}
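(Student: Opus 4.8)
The plan is to obtain this as an immediate consequence of the geometric (horoball-spreading) proof of Theorem~\ref{thm:equivalentdefs}, run with a deliberately unbalanced choice of initial horoballs. Starting from a multi-speed \tat\ twist $\phi$, the implication \ref{itm:istat}$\implies$\ref{itm:isperiodic} of that theorem (together with Nielsen's theorem) lets me realize $\phi$ by a finite-order diffeomorphism $f$; when $\Sigma$ is neither a disk nor a cylinder I then put, by averaging and uniformization as in the proof of Lemma~\ref{lem:kerekyarto}, a complete hyperbolic metric on $\Sigma$ for which every power of $f$ acts isometrically and the boundary components are cusps. The disk and cylinder are degenerate and checked directly: on the disk any tree works, and on the cylinder the core circle with one bivalent vertex already has the asserted form.

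Next I would choose one cusp, call it \emph{central}, and give it the largest horoball with embedded projection, while giving every other cusp an arbitrarily tiny horoball; then I let all horoballs grow at constant speed and record the collision graph $G$ exactly as in the geometric proof. That proof already shows that $G$ is an $f$-invariant spine and that $\Sigma\smallsetminus G$ is a disjoint union of open annuli, one around each cusp. The only new feature, forced by the size asymmetry, is that the central paint covers all of $\Sigma$ except tiny embedded annular tips $W_1,\dots,W_{b-1}$ around the noncentral cusps, so that the central facet $C$ is everything else.

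It then remains to read off the combinatorics of $G$. Each edge of $G$ lies between two of the facets $C,W_1,\dots,W_{b-1}$; since the $W_i$ are pairwise disjoint and each is surrounded by $C$, no edge can have a single $W_i$ on both sides nor two distinct $W_i,W_j$ as its two sides, so every edge has $C$ on at least one side --- i.e.\ it meets the central boundary component --- and the noncentral components border nothing else. For each $i$, the frontier of $W_i$ is the projection of a tiny, hence embedded, horocycle, so the cycle $\gamma_i$ surrounding that noncentral component is an embedded circle; its vertices are exactly the points where a self-intersection ``ridge'' of the central front lands on $\gamma_i$, and at such a point one ridge-edge meets the two arcs of $\gamma_i$, a trivalent vertex --- unless no ridge meets $\gamma_i$, in which case $\gamma_i$ is a bare loop and a single bivalent vertex is inserted. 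Either way $\gamma_i$ is an embedded polygon with bi- or trivalent vertices. Applying \ref{itm:hasinvariantspine}$\implies$\ref{itm:istat} of Theorem~\ref{thm:equivalentdefs} to this invariant spine produces a multi-speed \tat\ twist equal to $\phi$ and of the stated form.

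The step I expect to be the real obstacle is the local general-position analysis near a noncentral cusp: showing that the frontier of $W_i$ is embedded and that along it the only vertices come from transverse arrivals of the central front's self-intersection ridges, with no tangential or higher-order degeneracies, so that the vertices are genuinely bi- or trivalent. This is a transversality statement about the growing horocycles, to be handled in the spirit of Lemma~\ref{lem:transversality} and the general-position remarks already invoked in the geometric proof; everything else is bookkeeping.
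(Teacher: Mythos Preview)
Your proposal is correct and follows essentially the same approach as the paper: the corollary is obtained directly from the geometric (horoball-growing) proof of Theorem~\ref{thm:equivalentdefs} by choosing one distinguished cusp with a large horoball and arbitrarily small horoballs at all the others, so that the central paint fills everything except small tips around the noncentral cusps. The paper states this in a single sentence and marks the corollary with \qed; you have simply fleshed out the combinatorial reading of the resulting graph (why every edge touches the central facet, why the noncentral boundary cycles are embedded with bi- or trivalent vertices), and your transversality worry, while reasonable to flag, is not something the paper treats as an obstacle.
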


We can therefore think that a general \tat\ twist is derived from a \tat\ graph
with a single boundary component, with some of its vertices, which are fixed points
for the twist, blown up to a circle; see Figure \ref{fig:thetatorus2} for an example of such a graph.

\begin{figure}
  \centering
  \def\svgwidth{0.37\textwidth}
  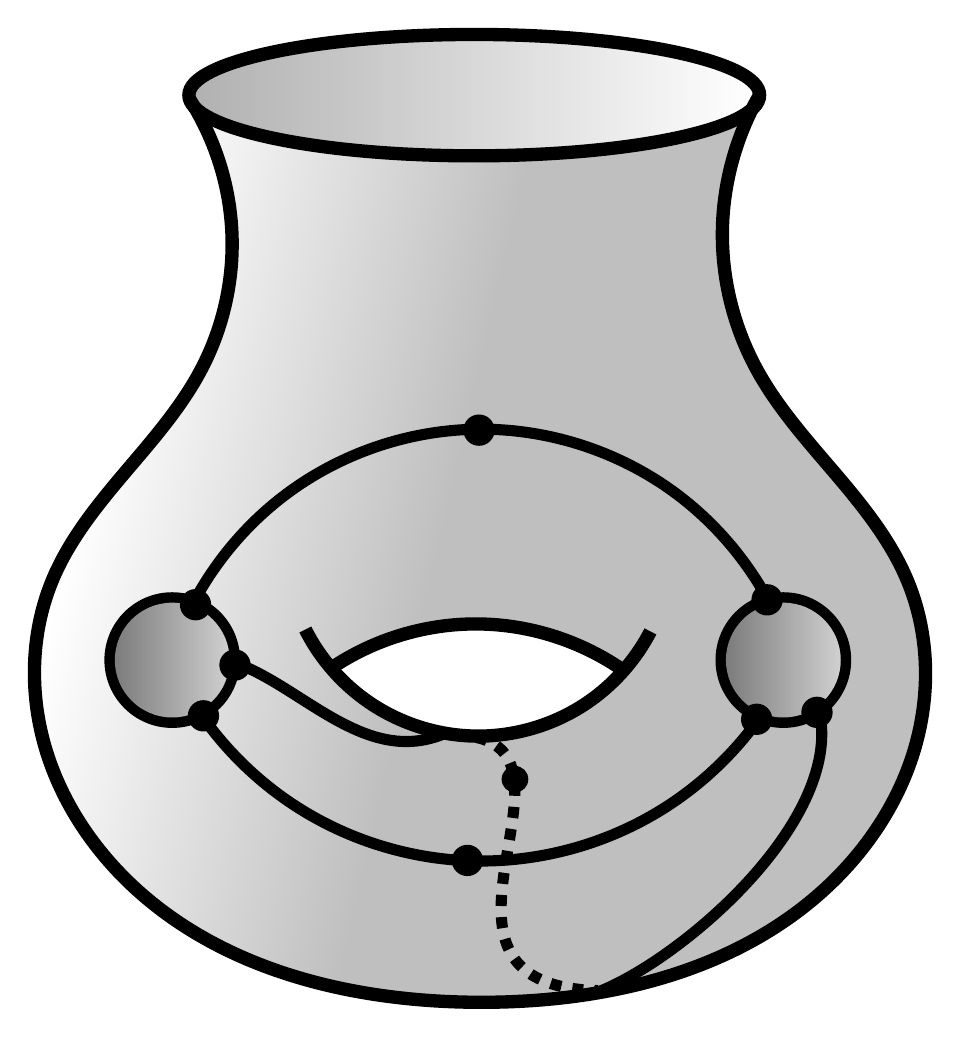
  \caption{\Tat\ graph on a 3-holed torus}
  \label{fig:thetatorus2}
\end{figure}

\section{Periodic diffeomorphisms on closed surfaces}
\Tat\ graphs (possibly multi-speed) can be embedded into closed surfaces; see also Chapter \ref{chap:mcg}.
If the \tat\ graph fills the surface, meaning that the surface is obtained
by capping off boundary components with disks, it induces a map of finite order.
In this case, we see a fixed point in each of these disks.
Vice versa, when a finite-order diffeomorphism has a fixed point, we can
remove an invariant disk around the fixed point to get a surface with boundary
(see e.\,g.\@ \cite{ConstantinKolev2003} for why such a disk exists, even for homeomorphisms), and in that
case it is described by a \tat\ twist.

A map of finite order can also appear when two boundary components of the \tat\ graph are glued
together.
For this to happen, the Dehn twists along the glued boundary components 
that appear in some power of the \tat\ twist must cancel themselves.

\begin{figure}
  \centering
  \def\svgwidth{0.5\textwidth}
  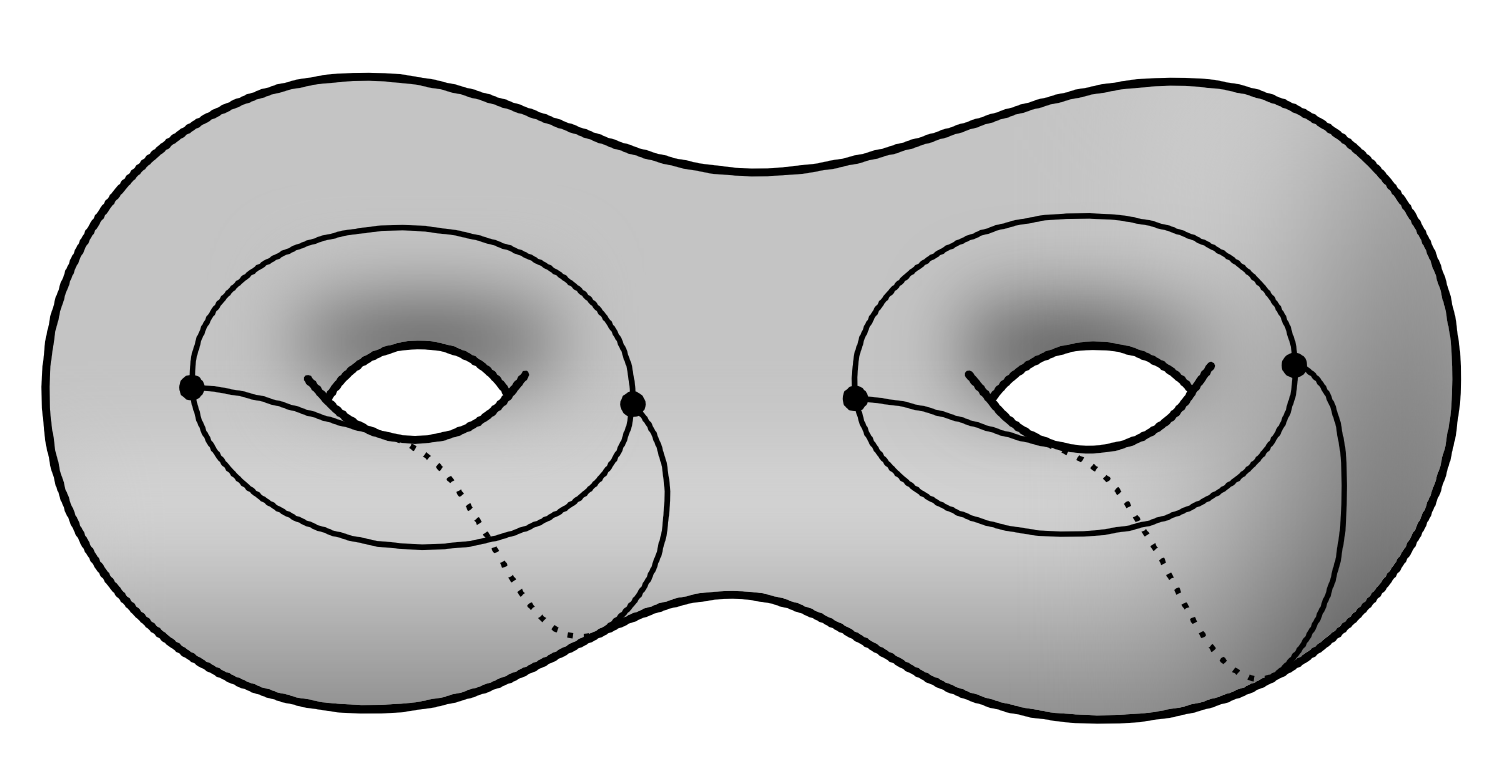
  \caption{Periodic map of order $6$, without fixed points, on a surface of genus $2$}
  \label{fig:nofixedpoint2}
\end{figure}
The same is possible when two or more \tat\ graphs are embedded disjointly such that
their boundary twists cancel;
see Figure \ref{fig:nofixedpoint2} for an example.
In that and in the former case, the diffeomorphism will not necessarily have fixed points,
but will have an invariant circle.
Again, vice versa, whenever a finite-order diffeomorphism has an invariant
circle whose two sides are not interchanged
we can cut along the circle, and the induced diffeomorphism will be described completely
by two or one \tat\ twists, depending on whether the invariant curve is
separating and essential or not.
If it is essential, the diffeomorphism is reducible according to the Nielsen-Thurston classification.

Note that when a circle is invariant, but its two sides are interchanged, then the circle itself
undergoes a reflection and there are actually two fixed points on it. 

However, we are left with the finite-order maps without invariant circles,
and it is not clear how to apply \tat\ twists to describe those as well.

\section{Bounds for periodic diffeomorphisms}
In 1895, Anders Wiman proved the so-called ``$4g$+$2$ theorem'': On a surface of
genus at least $2$, the order of a periodic diffeomorphism is at most $4g+2$
(\cite{Wiman1895}).
To be precise, Wiman proved the statement for automorphisms of algebraic curves
and used the branched covering structure coming from the polynomial equation.
Using the theorem above and the results about orders from Corollary
\ref{cor:tatorders}, we get a topological proof for Wiman's theorem for the case
of surfaces with at least one boundary component.

\begin{remark}
As noted in the previous section, the situation is a bit different for closed surfaces
and the proof does not apply for all diffeomorphisms.
It does apply when the diffeomorphism has a fixed point, or more generally an
invariant circle, in which case the map can be described by \tat\ twists.
When the action of the diffeomorphism is free, meaning that none of its iterates apart
from the identity has a fixed point, the order of $f$ is even smaller; see Lemma
\vref{lem:nofixedpoint}.
\end{remark}

We can copy the corollary about \tat\ twists with its more precise information
about the highest and second-highest orders and get the following ``$3g$+$3$
theorem'':
\begin{corollary}\label{cor:periodicorders}
On an orientable surface with boundary which is neither a disk, a sphere, or a torus, let $f$
be a (freely) periodic orientation-preserving diffeomorphism. Then its order is
either $4g+2$, $4g$, or
	\[
		\ord(f) \leq 
		\begin{cases}
		   3g+3, & g \equiv 0,1\\
		   3g, & g \equiv 2
 		\end{cases}
 		\pmod 3.
	\]
In all of these cases, as soon as $g \geq 4$, there exists a unique conjugacy class
of diffeomorphisms of the given order.

This conjugacy class is described by an elementary \tat\
twist $E_{n,a}$ with $(n,a)$ as in Lemma \ref{lem:elementary3g+3}.
\end{corollary}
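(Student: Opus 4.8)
The plan is to obtain this as a corollary of three results already established: Theorem~\ref{thm:equivalentdefs}, which says that a freely periodic diffeomorphism of a surface with boundary is (represented by) a multi-speed \tat\ twist; Corollary~\ref{cor:tatcusps}, which normalises such a twist so that all but one boundary component is a blown-up fixed point; and Corollary~\ref{cor:tatorders}, which already contains exactly the order count we want, but phrased for \tat\ twists on a surface with a \emph{single} boundary component. The only genuine work is the reduction from arbitrarily many boundary components to one, carried out so that neither the genus nor the order changes.

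Concretely, I would take a finite-order representative $f$ of the given mapping class (Nielsen's theorem) and apply Theorem~\ref{thm:equivalentdefs} to write $f$ as a multi-speed \tat\ twist $\T{G,\underline{l}}$ with $G$ a spine of $\Sigma$. By Corollary~\ref{cor:tatcusps} I may assume $G$ is in central normal form, so that every boundary component of $\Sigma$ other than the central one is an annular neighbourhood of an embedded polygon in $G$, fixed up to isotopy by $f$ --- that is, a vertex of an underlying one-boundary-component \tat\ graph that has been blown up to a circle. Collapsing each of these polygons back to a single vertex yields a \tat\ graph $\hat G$ with one boundary component. Since replacing a disk neighbourhood of a vertex by an annulus drops $\chi$ by one and adds one boundary component, $\hat G$ has the same genus $g$ as $\Sigma$; and because blowing down at a fixed point leaves the rotation data at every special point of the quotient untouched, $\T{\hat G}$ has the same order as $f$.

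Corollary~\ref{cor:tatorders} now applies verbatim to $\T{\hat G}$, giving that the order of $f$ is $4g+2$, $4g$, or at most $3g+3$ (respectively $3g$) according to $g$ modulo $3$, and, for $g\ge 4$, that each extremal value is realised by a unique conjugacy class of \tat\ twists, an elementary one $E_{n,a}$ with $(n,a)$ as in Lemma~\ref{lem:elementary3g+3}. That settles the inequality and existence. For uniqueness up to conjugacy in the diffeomorphism group I would invoke the standard rigidity fact that two finite-order diffeomorphisms representing conjugate mapping classes are conjugate by a diffeomorphism: realise the conjugating mapping class by a diffeomorphism $h$, note that $hfh^{-1}$ and a second finite-order representative are two finite-order representatives of one mapping class, and use that any two such are conjugate by a diffeomorphism isotopic to the identity (both are isometries of hyperbolic metrics, and those metrics can be joined by a path). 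Re-inflating $E_{n,a}$ along the prescribed fixed points then exhibits the conjugacy class over $\Sigma$; the surfaces barred by the hypothesis are exactly the low-complexity ones for which the statement is vacuous, plus the one-holed torus, where the bound degenerates and Remark~\ref{rem:trefoilbifoil} already lists the twists.

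The step I expect to be the main obstacle is making the reduction airtight. The genus claim is a one-line Euler-characteristic computation, but that the order does not drop under blow-down needs care precisely when a blown-up vertex is already fixed by a proper power of $f$ (degenerate local rotation); and, more delicately, one has to check that re-inflating $E_{n,a}$ to the prescribed homeomorphism type of $\Sigma$ yields only one conjugacy class --- equivalently, that for extremal order the central normal form of Corollary~\ref{cor:tatcusps} is essentially unique once $\Sigma$ is fixed (or, failing that, that ``unique conjugacy class'' is to be read up to the blow-up/blow-down equivalence of the previous section). Everything downstream of that point is bookkeeping layered on the earlier corollaries.
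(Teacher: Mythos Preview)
Your reduction to a single boundary component via Corollary~\ref{cor:tatcusps} is correct and in fact makes explicit a step the paper leaves tacit; the genus is preserved by the Euler-characteristic count you indicate, and the combinatorial order of the twist survives the blow-down.

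The gap is elsewhere, and it is precisely the point the paper isolates as the entire content of the proof. You take a Nielsen representative $f$ of finite order $k$, find an invariant spine $G$, and then assert that the resulting \tat\ twist has order $k$. But the order of the \tat\ twist is the order of the induced automorphism of the spine, and nothing you have said rules out $f$ acting on $G$ with order $m$ strictly dividing $k$ --- equivalently, $f^m$ being freely isotopic to the identity while $f^m\neq\id$. Your phrase ``blowing down leaves the rotation data untouched'' addresses only the passage from $G$ to $\hat G$, not this prior step. Excluding $m<k$ is exactly Lemma~\ref{lem:invariantorder} and the Lefschetz argument following it: on a surface with $\chi<0$, a nontrivial finite-order orientation-preserving diffeomorphism has isolated fixed points each of index $+1$, hence cannot be isotopic to the identity, whose Lefschetz number is $\chi<0$. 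The paper's proof consists of little more than stating and proving this lemma; once it is in hand, Corollary~\ref{cor:tatorders} is invoked directly. Your worry about uniqueness after re-inflation is, by contrast, a fair one that the paper does not address; the uniqueness assertion should be read as uniqueness of the underlying one-boundary twist, which is what Corollary~\ref{cor:tatorders} actually delivers.
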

For a closed surface, this elementary twist corresponds to the rotation of a
polygon by two clicks, with its sides glued as specified by the chord diagram.
\begin{proof}
All that is left to prove is a subtle point shown in Lemma
\vref{lem:invariantorder}: On a surface of negative Euler characteristic, as in
the corollary, two periodic diffeomorphisms that are isotopic have the same
order. This is of course wrong on the disk, the sphere and the torus where, for example, a
rotation by one third and a rotation by one quarter are isotopic. But apart
from these cases, we can start with a diffeomorphism, find an invariant spine
for it, conclude that the isotopy class of the diffeomorphism is a \tat\ twist
around that spine, and that its order is the same as the order of the \tat\
twist.
\end{proof}
\begin{lemma}\label{lem:invariantorder}
On an orientable surface which is neither the disk, the sphere, or the torus,
let $f$ and $g$ be two periodic orientation-preserving diffeomorphism which are isotopic. Then their
order is the same, meaning: If $k$,$l > 0$ are minimal such that $f^k = g^l = \id$, then $k=l$.
\end{lemma}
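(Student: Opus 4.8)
The plan is to reduce the statement to the classical rigidity of isometries of hyperbolic surfaces. Since $\Sigma$ is not the disk, the sphere, or the torus, I may assume $\chi(\Sigma)<0$. Collapsing every boundary component of $\Sigma$ to a puncture produces the punctured surface $\dot\Sigma$, and a freely periodic diffeomorphism of $\Sigma$ descends to a periodic diffeomorphism of $\dot\Sigma$, with freely isotopic maps descending to isotopic ones. Moreover $\ord(f)=\ord(\dot f)$ and $\ord(g)=\ord(\dot g)$: on each invariant boundary circle $f$ acts as a finite-order rotation, so $f^{k}=\mathrm{id}$ if and only if $\dot f^{k}=\mathrm{id}$ on the interior and the induced rotation near each boundary circle, respectively near each puncture, is trivial — the same congruence in both cases. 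Hence it suffices to prove: if $\dot f$ and $\dot g$ are periodic diffeomorphisms of $\dot\Sigma$ with $[\dot f]=[\dot g]$ in $\mcg(\dot\Sigma)$, then $\ord(\dot f)=\ord(\dot g)$.

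For this I would first show that for \emph{any} periodic diffeomorphism $\psi$ of $\dot\Sigma$ one has $\ord(\psi)=N$, where $N$ is the order of $[\psi]$ in $\mcg(\dot\Sigma)$ (finite, since $[\psi]^{\ord(\psi)}=\mathrm{id}$). The inequality $N\mid\ord(\psi)$ is immediate. For the other direction, $\psi^{N}$ is a periodic diffeomorphism that is isotopic to the identity, and the key claim is that such a map must in fact equal the identity; granting this, $\ord(\psi)\mid N$. Applying $\ord(\psi)=N$ to $\psi=\dot f$ and $\psi=\dot g$ and using $[\dot f]=[\dot g]$ gives $\ord(\dot f)=\ord(\dot g)$, and then $\ord(f)=\ord(g)$ by the first paragraph.

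It remains to establish the claim: a periodic diffeomorphism $\psi$ of $\dot\Sigma$ isotopic to the identity is the identity. I would argue exactly as in the proof of Lemma~\ref{lem:kerekyarto}. Averaging a Riemannian metric over the finite cyclic group $\langle\psi\rangle$ and applying the uniformization theorem, $\psi$ becomes an orientation-preserving isometry of the unique complete finite-area hyperbolic metric $h$ in that conformal class (uniqueness of $h$ in its conformal class forces $\psi^{*}h=h$). Then I invoke the classical fact that the isometry group of a hyperbolic surface of finite type injects into its mapping class group; concretely, an orientation-preserving isometry isotopic to the identity fixes the free homotopy class of every closed curve, hence maps each simple closed geodesic onto itself respecting orientation, so acts on it as a rotation, and a finite family of simple closed geodesics filling $\dot\Sigma$ then pins the isometry down to the identity. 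This can also be quoted from \cite{FarbMargalit2012}; the existence of a finite-order isometry representative goes back to Nielsen \cite{Nielsen1942}.

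The step I expect to require the most care is the passage from $\Sigma$ to $\dot\Sigma$ in the first paragraph — one must check cleanly that the order of the diffeomorphism is unaffected, i.e.\ that a boundary rotation by angle $2\pi r$ imposes exactly the same constraint on $\ord(f)$ as the matching rotation about a puncture imposes on $\ord(\dot f)$. This is where $\chi(\Sigma)<0$ is genuinely used (on the annulus the conclusion already fails: the rotation by $2\pi/3$ is freely isotopic to the identity but has order $3$), and it is also why one cannot simply work on $\Sigma$ with a free boundary: a nontrivial freely periodic map never fixes $\partial\Sigma$ pointwise, so its class naturally lives in $\mcg(\dot\Sigma)$ rather than in $\mcg(\Sigma)$.
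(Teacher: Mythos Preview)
Your proof is correct and arrives at the same key reduction as the paper: both arguments boil down to showing that on a surface with $\chi<0$, a nontrivial finite-order orientation-preserving diffeomorphism cannot be isotopic to the identity. From there, the deduction of the lemma is straightforward (the paper does it by setting $h=f^{k-l}$ directly rather than going through the order of $[\psi]$ in $\mcg$, but this is cosmetic).

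Where you genuinely diverge is in how you establish that key fact. You make $\psi$ an isometry of a complete hyperbolic metric (by averaging and uniformizing) and then invoke the rigidity statement that hyperbolic isometries inject into the mapping class group. The paper instead gives a purely topological argument via the Lefschetz fixed-point theorem: using Kerékjártó's lemma it shows that fixed points of a nontrivial finite-order map are isolated and each has index $+1$, while $\Lambda(\psi)=\Lambda(\id)=\chi(\Sigma)<0$ forces a fixed point of negative index --- contradiction. The Lefschetz route is more self-contained (no uniformization, no hyperbolic geometry) and works directly on the compact surface $\Sigma$, boundary and all; your route is the standard textbook one and connects to Nielsen realization, but carries the overhead of passing to $\dot\Sigma$ and setting up the geometry.

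Two small remarks on your write-up. First, your reduction paragraph is more elaborate than necessary: $\dot f$ is just $f$ restricted to the interior, and $\ord(f)=\ord(\dot f)$ follows immediately by continuity (if $f^k=\id$ on a dense set, then $f^k=\id$); the boundary-rotation bookkeeping is not needed. Second, the metric you get by averaging and uniformizing the interior need not have finite area (the conformal structure extends to $\partial\Sigma$, so you typically get funnels rather than cusps), but your geodesic-filling argument for injectivity of isometries into $\mcg$ does not actually require finite area, so this does not damage the proof. You are right to flag the annulus: the lemma needs $\chi(\Sigma)<0$, and both proofs use this in an essential way.
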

Note that, unlike stated in \cite[p.~200]{FarbMargalit2012}, this does not follow
from the fact that nontrivial elements of the mapping class group act nontrivially
on homology. The latter is true on the torus, but on the torus there are many
diffeomorphisms of finite order which are isotopically trivial.

\begin{remark}
Also, the following very similar statement, where ``equal'' is replaced by ``isotopic'', is trivial:

Let $f$ and $g$ be as above. Let $k$ and $l$ be minimal such that $f^k$ and $g^l$ are
isotopic to the identity. Then $k=l$.
\end{remark}
Before we prove the lemma, we prove some preliminary facts about diffeomorphisms
of finite order. In the three following statements, let $f$ be an
orientation-preserving finite-order diffeomorphism of an orientable surface.

\begin{lemma}[\cite{ConstantinKolev2003}]
Let $x$ be a fixed point of $f$ and $N$ a neighbourhood of $x$. Then there
exists a disk $D$ that contains $x$ in its interior, is contained in $N$, and is
mapped to itself: $f(D)=D$.
\end{lemma}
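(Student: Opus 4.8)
The plan is to promote $f$ to an isometry and then take a small geodesic ball around $x$. Let $k$ be the order of $f$. First I would choose any Riemannian metric $g_0$ on the surface and replace it by the averaged metric $g=\sum_{i=0}^{k-1}(f^i)^{\ast}g_0$; since the diffeomorphisms $f^i$ merely permute the summands, $f$ becomes an isometry of $(\Sigma,g)$. (Only a neighbourhood of $x$ matters, and $x$ has a neighbourhood in $\Sigma$ diffeomorphic to $\R^2$ — fixed points of a nontrivial finite-order map that rotates the boundary are interior — so we may shrink $N$ and work inside such a chart, avoiding any issue with $\partial\Sigma$ or with incompleteness of $g$.)

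Next, observe that because $f(x)=x$ and $f$ is an isometry, $f$ preserves every metric ball centred at $x$: for $y$ close to $x$ one has $d_g(f(y),x)=d_g(f(y),f(x))=d_g(y,x)$. Choose $\rho>0$ small enough that $\exp_x$ restricts to a diffeomorphism from the closed $\rho$-ball in $T_x\Sigma$ onto its image and small enough that this image, the closed geodesic ball $D:=\overline{B}_g(x,\rho)$, is contained in $N$. Then $D$ is a smoothly embedded closed disk with $x=\exp_x(0)$ in its interior, and the displayed distance identity (together with the fact that $f$ carries geodesics from $x$ to geodesics from $x$, i.e.\ $f(\exp_x v)=\exp_x(df_x v)$ with $df_x$ a linear isometry) gives $f(D)=D$. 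This is the required disk.

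The only genuinely geometric input is that a geodesic ball of sufficiently small radius is an embedded disk, which is standard once one has a metric making $f$ an isometry; so the single (and minor) obstacle is producing that metric, and the averaging construction does exactly this. If one prefers to sidestep the normal-neighbourhood bookkeeping entirely, an alternative is Bochner's linearization theorem: the finite cyclic group $\langle f\rangle$ acting near $x$ is smoothly conjugate in a neighbourhood of $x$ to its linearization, a finite group of linear maps of $\R^2$ fixing the origin, and one then takes $D$ to be the preimage of a small round disk about the origin, which is invariant by construction. Either route yields the statement, and I would present the metric-averaging version as the main proof.
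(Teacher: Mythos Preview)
Your argument is correct and entirely adequate in the paper's setting, where $f$ is assumed to be a finite-order \emph{diffeomorphism}. The paper, however, does not prove the lemma itself: it simply refers to Constantin--Kolev, whose argument is purely topological and relies on the Jordan--Schoenflies theorem. The essential difference is that their proof applies to arbitrary finite-order \emph{homeomorphisms} of a surface, where no invariant Riemannian metric is available and your averaging trick (and Bochner linearization) breaks down. What you gain is brevity and transparency in the smooth category---and indeed your averaging step is exactly the device the paper uses elsewhere, for instance in its proof of Ker\'ekj\'art\'o's lemma---while what the cited approach gains is the full generality for homeomorphisms, which is the point of the Constantin--Kolev paper. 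For the purposes of this thesis, where only diffeomorphisms are needed, your proof would serve just as well.
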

\begin{proof}
See \cite{ConstantinKolev2003} for the proof, which uses the Jordan-Schoenflies theorem.
\end{proof}

\begin{lemma}
If $f \neq \id$, its fixed points are isolated.
\end{lemma}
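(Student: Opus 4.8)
The plan is to put a Riemannian metric on $\Sigma$ for which $f$ is an isometry and then exploit the rigidity of isometries: an isometry fixing a point is completely determined near that point by its derivative there.

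First I would average a metric. If $k$ is the order of $f$ and $g_0$ is any Riemannian metric, then $g = \sum_{i=0}^{k-1}(f^i)^*g_0$ is a metric for which $f$ is an isometry, even when $\Sigma$ has boundary. Fix a point $x$ with $f(x)=x$. Since $f$ is an isometry, it intertwines with the exponential map: $f\circ\exp_x = \exp_x\circ\, df_x$ wherever $\exp_x$ is defined (on a half-disk, if $x\in\partial\Sigma$). In particular, $f$ is determined on a neighbourhood of $x$ by the linear isometry $df_x$ of $T_x\Sigma$.

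The key step is then: if $df_x=\id$ at some fixed point $x$, then $f=\id$. Indeed, by the identity above $f$ is the identity on a neighbourhood of $x$, so the set $\{\,y : f(y)=y \text{ and } df_y=\id\,\}$ is nonempty; it is plainly closed, and by the same exponential-map argument applied at each of its points it is also open. Since $\Sigma$ is connected, this set is all of $\Sigma$, so $f=\id$. Hence, once we assume $f\neq\id$, at every fixed point $x$ the derivative $df_x$ is an orientation-preserving linear isometry of $T_x\Sigma\cong\R^2$ distinct from the identity, i.e.\ a nontrivial rotation.

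It remains only to read off the conclusion. If $x$ lies in the interior of $\Sigma$, then in the geodesic coordinates supplied by $\exp_x$ the map $f$ is exactly that nontrivial rotation of $\R^2$, whose unique fixed point is the origin, so $x$ is isolated among the fixed points. If instead $x\in\partial\Sigma$, then $df_x$ preserves the inward half-plane of $T_x\Sigma$; but an orientation-preserving linear isometry preserving a half-plane must be the identity (the only other isometry preserving it is the reflection across the normal line, which reverses orientation), contradicting $df_x\neq\id$. Thus, when $f\neq\id$, there are no fixed points on $\partial\Sigma$ at all, and every fixed point is an isolated interior point. The only nonroutine ingredient is the rigidity fact used in the middle step; the rest is the elementary linear algebra of rotations of $\R^2$.
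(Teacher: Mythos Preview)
Your proof is correct. It is, however, a genuinely different route from the paper's. The paper invokes the previous lemma to find an $f$-invariant disk around the fixed point and then applies Ker\'ekj\'art\'o's lemma to conclude that $f$ acts on this disk as a (necessarily nontrivial) rotation. You instead average a metric, use the naturality of the exponential map for isometries, and argue directly that $df_x$ must be a nontrivial rotation whenever $f\neq\id$; this is more self-contained since it avoids both the invariant-disk lemma and Ker\'ekj\'art\'o (and hence the uniformization theorem used in the paper's proof of the latter). In fact, the paper itself uses exactly your averaging-and-isometry argument earlier, in the proof of Theorem~\ref{thm:equivalentdefs}, to justify that fixed points are isolated. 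Your approach also yields the local rotation picture needed for the next lemma on fixed-point indices, and as a bonus you observe that there are no boundary fixed points, which the paper does not address here.
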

\begin{proof}
Around a fixed point, choose an invariant disk $D$ as in the previous lemma. By
the lemma of Kerékjártó (Lemma \ref{lem:kerekyarto}), $f$ acts on $D$ by
rotation.
\end{proof}

\begin{lemma}
The fixed-point index of $f$ at every fixed point is $+1$.
\end{lemma}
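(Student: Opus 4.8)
The plan is to reduce immediately to a local model via the two preceding lemmas and then compute the index of that model directly. By the previous lemma the fixed points of $f$ are isolated, so the fixed-point index is defined at each of them (we tacitly assume $f\neq\id$, since otherwise no fixed point is isolated). Fix a fixed point $x$. By the lemma on invariant disks together with the lemma of Kerékjártó (Lemma \ref{lem:kerekyarto}), there is an $f$-invariant disk $D$ around $x$ and a homeomorphism $\eta\colon D\to D$ with $\eta(x)=0$ such that $\eta\circ f|_D\circ\eta^{-1}$ is a Euclidean rotation $R_\vartheta$ about the origin (the sign of $\vartheta$ depending on whether $\eta$ preserves or reverses orientation). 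Here $\vartheta\not\equiv 0\pmod{2\pi}$, for otherwise $f|_D=\id$, contradicting that $x$ is isolated. Since the fixed-point index at an isolated fixed point is invariant under conjugation by a homeomorphism, it suffices to prove that $R_\vartheta$ has index $+1$ at the origin for every $\vartheta\not\equiv 0$.

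For this I would compute directly. The map $R_\vartheta\colon\C\to\C$, $z\mapsto e^{i\vartheta}z$, is smooth, and its only fixed point is $0$; the linear map $R_\vartheta - I$ has determinant $(\cos\vartheta-1)^2+\sin^2\vartheta = 2-2\cos\vartheta > 0$, so $0$ is a nondegenerate fixed point whose index is the sign of $\det(R_\vartheta-I)$, namely $+1$. Equivalently, unwinding the topological definition: on a small circle of radius $r$ the normalized displacement is
\[
	u\longmapsto \frac{R_\vartheta(ru)-ru}{\lvert R_\vartheta(ru)-ru\rvert}=\frac{e^{i\vartheta}-1}{\lvert e^{i\vartheta}-1\rvert}\cdot u ,
\]
which is $u$ rotated by the fixed angle $\arg(e^{i\vartheta}-1)$; as $u$ runs once around $S^1$ so does its image, so the degree, and hence the index, is $+1$. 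Transporting back through $\eta$ gives $\mathrm{ind}(f,x)=+1$.

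There is essentially no real obstacle here; the only point deserving a word of care is the orientation behaviour of the conjugating homeomorphism $\eta$, and this is harmless since an orientation-reversing $\eta$ merely turns $R_\vartheta$ into $R_{-\vartheta}$, which has the same index $+1$ by the same computation (alternatively, one simply cites that the fixed-point index is a topological invariant under homeomorphism conjugacy). Everything else is the elementary index computation for a planar rotation recorded above.
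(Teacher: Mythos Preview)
Your proof is correct and follows essentially the same route as the paper's first line, which simply says ``This follows from the previous lemma'' (i.e.\ from the local rotation model obtained via Kerékjártó); you have made that implicit argument explicit with a clean determinant/degree computation.

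The paper also sketches a slightly different alternative worth noting: rather than invoking Kerékjártó to conjugate $f|_D$ to an actual rotation, it uses only that $f(\partial D)=\partial D$ with no fixed points on $\partial D$, and argues directly that the displacement vector $y-f(y)$ then winds once around as $y$ traverses $\partial D$. This avoids the conjugation step entirely, at the price of being a bit terser about why the winding number is exactly $+1$. Your version, by contrast, is more self-contained and leaves no doubt about the sign.
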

\begin{proof}
This follows from the previous lemma. Alternatively, do the following: Around a
fixed point $x$, choose again a small invariant disk $D \ni x$ which we imagine
inside a local chart. The fixed-point index measures the rotation of the vector
$y - f(y)$ while $y$ moves along a small simple closed curve around $x$, for
which we take $\partial D$. Since $f(\partial D) = \partial D$, without fixed
points on $\partial D$, the fixed-point index is $+1$.
\end{proof}

Lemma \ref{lem:invariantorder} is a consequence of the following statement:
\begin{lemma}
Let $h$ be an orientation-preserving finite-order diffeomorphism of a compact
surface $\Sigma$ of negative Euler characteristic. Assume that $h \simeq \id$.
Then $h = \id$.
\end{lemma}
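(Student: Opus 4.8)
The plan is to run the Lefschetz fixed point theorem against the homological hypothesis, exploiting the three lemmas just proved. Assume $h\neq\id$ and derive a contradiction.

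Step one is to push all fixed points into the interior. Averaging a Riemannian metric over the finite cyclic group generated by $h$, we may take $h$ to be an isometry. If $h$ fixed a point $p\in\partial\Sigma$, then $dh_p$ would be an orthogonal endomorphism of $T_p\Sigma$ preserving the line $T_p\partial\Sigma$ and carrying the inward unit normal to an inward-pointing unit vector; since $h$ is orientation-preserving, the only such map is $\id$, and an isometry of the connected surface $\Sigma$ fixing a point with identity derivative is the identity, contradicting $h\neq\id$. Hence $h$ has no fixed point on $\partial\Sigma$, and the Lefschetz--Hopf theorem for a compact manifold with boundary gives $\Lambda(h)=\sum_{x\in\operatorname{Fix}(h)}\operatorname{ind}_x(h)$, the sum taken over the interior fixed points.

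Step two compares the two expressions for $\Lambda(h)$. By the lemma that the fixed points of a nontrivial finite-order diffeomorphism are isolated, $\operatorname{Fix}(h)$ is finite; by the lemma that each fixed-point index equals $+1$, we get $\Lambda(h)=\#\operatorname{Fix}(h)\ge 0$. On the other hand $h\simeq\id$ makes $h_*$ the identity on $H_*(\Sigma;\Q)$, so $\Lambda(h)=\sum_i(-1)^i\dim_\Q H_i(\Sigma;\Q)=\chi(\Sigma)<0$. These are incompatible, so $h=\id$.

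I expect the only real friction to be the appeal to the Lefschetz--Hopf theorem on a surface with boundary, which is exactly why step one is needed to clear boundary fixed points; everything else is immediate from the lemmas in hand. If one prefers to sidestep this, one can instead pass to the double $D\Sigma=\Sigma\cup_{\partial\Sigma}\Sigma$: the map $h$ and its mirror copy glue to a finite-order orientation-preserving diffeomorphism $Dh$ of the closed surface $D\Sigma$, with $\chi(D\Sigma)=2\chi(\Sigma)<0$; a Mayer--Vietoris computation shows $(Dh)_*=\id$ on $H_*(D\Sigma;\Q)$ (note $h\simeq\id$ forces $h$ to fix each boundary component, so it acts trivially on $H_*(\partial\Sigma)$), and the same Lefschetz argument on $D\Sigma$ finishes: either $Dh=\id$, whence $h=\id$, or $Dh\neq\id$ and then $0\le\#\operatorname{Fix}(Dh)=\Lambda(Dh)=2\chi(\Sigma)<0$, a contradiction.
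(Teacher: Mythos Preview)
Your argument is correct and follows essentially the same route as the paper: assume $h\neq\id$, use that fixed points are isolated with index $+1$, and derive a contradiction between $\Lambda(h)\ge 0$ and $\Lambda(h)=\chi(\Sigma)<0$. You are more careful than the paper in explicitly clearing boundary fixed points before invoking Lefschetz--Hopf (the paper silently applies the fixed point formula), and your alternative via the double is a nice bonus, but the core idea is identical.
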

\begin{proof}
Assume that $f \neq \id$. Because the fixed points of $f$ are isolated, there
are only finitely many of them. Since $h \simeq \id$, the Lefschetz number
$\Lambda(f)$ satisfies $\Lambda(f) = \Lambda(\id) = \chi(\Sigma) <0$.
By the Lefschetz fixed point formula, $f$ would have a fixed point of negative
index, which is a contradiction.
\end{proof}

\begin{proof}[Proof of Lemma \ref{lem:invariantorder}.]
Assume that $k > l$ and let $h = f^{k-l}$. Then, by the remark, $h = f^k f^{-l}
\simeq f^k g^{-l} = f^k = \id$, but $h \neq \id$. And also $h^k = (f^{k-l})^k =
(f^k)^{k-l} = \id$. By the previous lemma, this is impossible.
\end{proof}

\subsubsection{Diffeomorphisms on closed surfaces that act freely}
As mentioned above, the corollary applies to all diffeomorphisms that
have an invariant circle, even if they have no fixed points.
An example of such a diffeomorphism is
drawn in Figure \ref{fig:nofixedpoint}: Take the depicted surface of genus $g$
and rotate it around the central hole to get a diffeomorphism without fixed
points of order $g-1$.
\begin{figure}
  \centering
  \def\svgwidth{0.4\textwidth}
  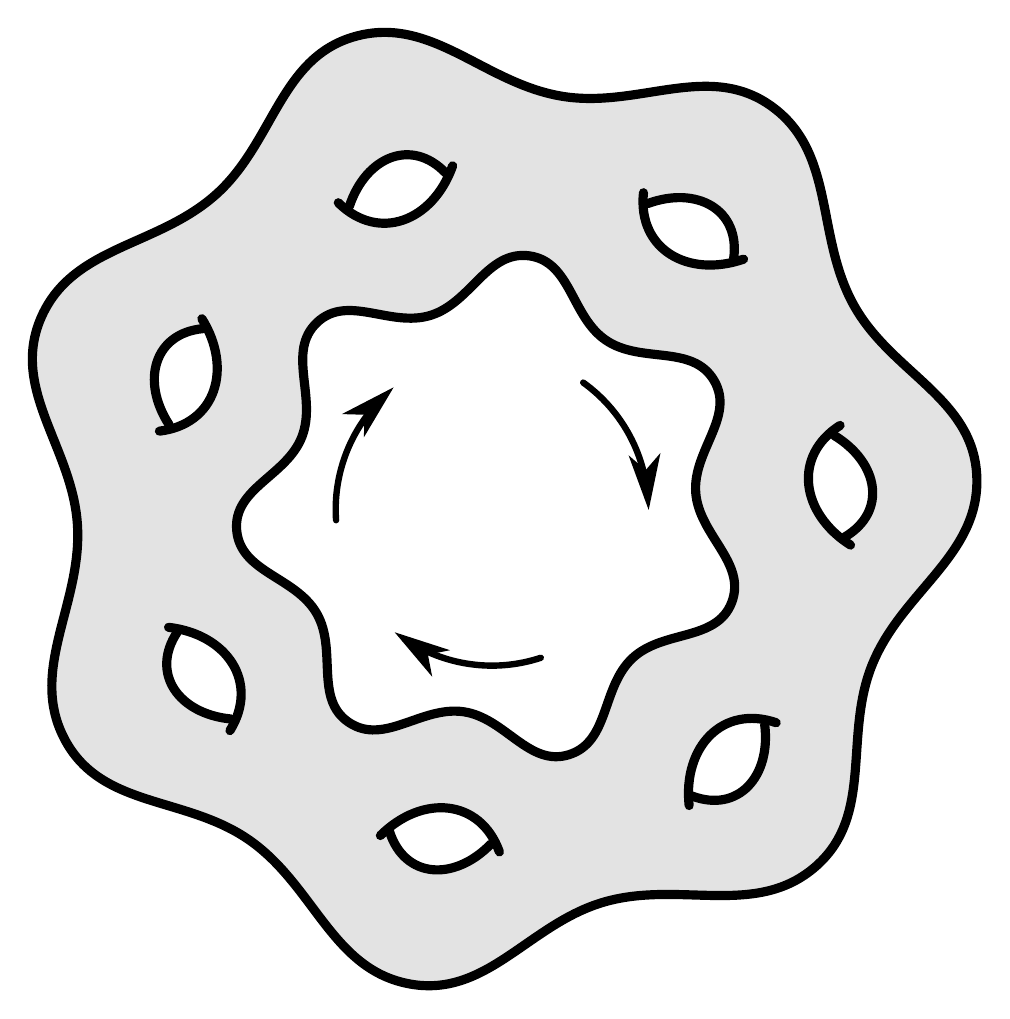
  \caption{A diffeomorphism (of order 7) without fixed points}
  \label{fig:nofixedpoint}
\end{figure}
This is actually the highest possible order if $f$ has no points whose orbit
is smaller than the order of $f$:
\begin{lemma}\label{lem:nofixedpoint}
Let $\Sigma$ be a closed orientable surface of genus $g \geq 2$ and let $f$ be
an orientation-preserving periodic diffeomorphism that acts freely on the surface.
Then the order of $f$ is at most $g-1$.
\end{lemma}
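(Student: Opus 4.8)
The plan is to use the elementary but powerful observation that a free action of a finite cyclic group on a closed surface is a covering space action. Write $n = \ord(f)$. Saying that $f$ acts freely — no iterate other than the identity has a fixed point — is exactly saying that the cyclic group $\langle f\rangle\cong\Z/n\Z$ acts freely on $\Sigma$. Since the group is finite and the action is free, it is properly discontinuous, so the orbit map $\pi\colon\Sigma\to\Sigma/\langle f\rangle$ is an $n$-sheeted covering and the quotient $\Sigma'=\Sigma/\langle f\rangle$ is again a closed surface; it is orientable because every deck transformation $f^k$ is orientation-preserving, and it is connected because $\Sigma$ is. Let $h$ be the genus of $\Sigma'$.

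The key step is multiplicativity of the Euler characteristic under a finite covering (equivalently, Riemann--Hurwitz with no ramification, since the action is free): $\chi(\Sigma)=n\cdot\chi(\Sigma')$, i.e.\ $2-2g=n(2-2h)$, which rearranges to
\[
    g-1 = n\,(h-1).
\]
Now the conclusion is immediate. Because $g\geq 2$, the left-hand side is a positive integer, and $n>0$, so $h-1$ must be a positive integer; in particular $h\geq 2$ and $h-1\geq 1$. Therefore $g-1 = n(h-1)\geq n$, that is, $\ord(f)=n\leq g-1$, as claimed.

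I do not expect a genuine obstacle here: the entire content of the lemma is the single Euler-characteristic identity above, and the only facts that deserve a sentence of justification — that $\Sigma'$ is a closed orientable (connected) surface and that $\pi$ is an honest $n$-fold covering — are the standard consequences of a free finite group action on a manifold. The bound is moreover optimal, as the rotation of order $g-1$ about the central hole in Figure \ref{fig:nofixedpoint} already realizes it, so no sharpening is possible.
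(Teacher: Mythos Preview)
Your argument is correct and is essentially the same as the paper's: both pass to the quotient covering $\Sigma\to\Sigma/\langle f\rangle$, use multiplicativity of the Euler characteristic, and conclude from $\chi(\Sigma/\langle f\rangle)\leq -2$ that the order divides (and hence is at most) $g-1$. You are a bit more explicit about why the quotient is a closed connected orientable surface, but the substance is identical.
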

\begin{proof}
When $f$ is periodic of order $k$ and acting freely, $f$ induces a covering
\[
	\pi \colon \Sigma \to \faktor{\Sigma}{f}.
\]
For the Euler characteristic, we have
\[
	\chi\left(\faktor{\Sigma}{f}\right) = \frac{1}{k} \, \chi(\Sigma).
\]
Since the Euler characteristic of $\Sigma$ is negative by assumption, i.\,e.\@
smaller than $-2$, the same is true for $\faktor{\Sigma}{f}$ and hence
\[
	2 \leq \abs[\Big]{\chi\left(\faktor{\Sigma}{f}\right)} = \abs[\Big]{\frac{1}{k} \, \chi(\Sigma)},
\]
which implies that $k \leq \frac{1}{2}\abs{\chi(\Sigma)} = g-1$.
\end{proof}

\chapter{Tête-à-tête twists as monodromies}\label{chap:monodromies}
This chapter shows \tat\ twists in action as monodromies of fibred knots, both in
the 3-sphere as well as in other manifolds.

\section{Open books, fibred links and monodromy}\label{sec:openbooks}
We start with some definitions that are standard in low-dimensional topology; first
that of an open book decomposition. It provides a very fruitful connection
between mapping classes that fix the boundary of a surface and 3-manifolds.
\begin{definition}
Let $M$ be a manifold. An \emph{open book decomposition} of $M$ is a pair $(L,
\pi)$, where $L \subset M$ is a link in $M$ called the binding, and $\pi\colon M
\smallsetminus L \to S^1$ is a fibre bundle map. The fibres (usually called
\emph{pages} of the open book) are open orientable surfaces. Their closures are
homeomorphic to a fixed compact surface $\Sigma$ and have $L$ as their boundary.
\end{definition}

An open book decomposition comes with a mapping class $\phi \in \mcg(\Sigma)$,
the \emph{monodromy (diffeomorphism)}, that fixes the boundary of $\Sigma$. It
can be constructed by choosing a smooth vector field on M, transverse to the
pages, that on $L$ is zero and on $M \smallsetminus L$ projects to the vector
$\partial\theta$ on $S^1$, which here denotes the unit circle in $\C$. When we identify
$\Sigma$ with the closure of the fibre $\pi^{-1}(1)$ and follow the flow of the vector field
for time $2\pi$, we get a diffeomorphism of $\Sigma$. Any two such vector fields
are isotopic, therefore the monodromy $\phi$ is well-defined up to isotopy.

A link which is the binding of some open book decomposition is called a
\emph{fibred link}, especially if the manifold is the 3-sphere.

We will need some notation for the rest of the fibres as well: For $\theta \in S^1$,
denote by $\Sigma_\theta = \overline{\pi^{-1}(\theta)}$ the closure of the fibre over $\theta$, and for
$t \in [0,1]$ such that $\theta = \exp(2 \pi i t)$, denote by $\Phi_t \colon
\Sigma = \Sigma_1 \to \Sigma_\theta$ the diffeomorphism given by the flow of the vector field for time $t$.

\subsection{Bookbinding}
There is a tendency to speak of ``open book decompositions'' in the above sense,
but just of \emph{open books} when the same object is described by different
data. Namely, instead of the triple $(M, L, \pi)$, we specify the compact
orientable surface $\Sigma$ together with a mapping class $\phi \in
\mcg(\Sigma)$. From this, we can construct a 3-manifold $M = M_{(\Sigma,\phi)}$ with an
open book decomposition whose fibre is $\Sigma$ and whose monodromy is $\phi$:

Take $\Sigma \times [0,1]$ and identify $\Sigma \times \{1\}$ with $\Sigma
\times \{0\}$ by sending $\{p,1\}$ to $\{\phi(p),0\}$. This produces the
\emph{mapping torus} of $\phi$, whose boundary is given the structure of a
trivial circle bundle over $\partial\Sigma$ by this construction since $\phi$ is
the identity on $\partial\Sigma$. Now collapse this boundary to circles by
identifying all $\{q,s\}$ with $\{q,s'\}$ where $q$ is in $\partial\Sigma$ and
$s$, $s'$ are in $S^1$. Alternatively, one can fill the boundary components with
full tori such that the fibre circles are contractible. In any way, we get a
closed 3-manifold $M$ together with a link $L$ -- the contracted boundary or the
souls of the glued tori, respectively -- and a fibration of $M \smallsetminus L$ over
$S^1$ with fibre $\Sigma$, in other words an open book with monodromy $\phi$,
illustrated by Figure \ref{fig:openbookconstruction}.
\begin{remark}
If the binding is connected, it is always a homologically trivial knot since
it bounds a surface.
\end{remark}
\begin{figure}
\centering
\def\svgwidth{0.5\textwidth}
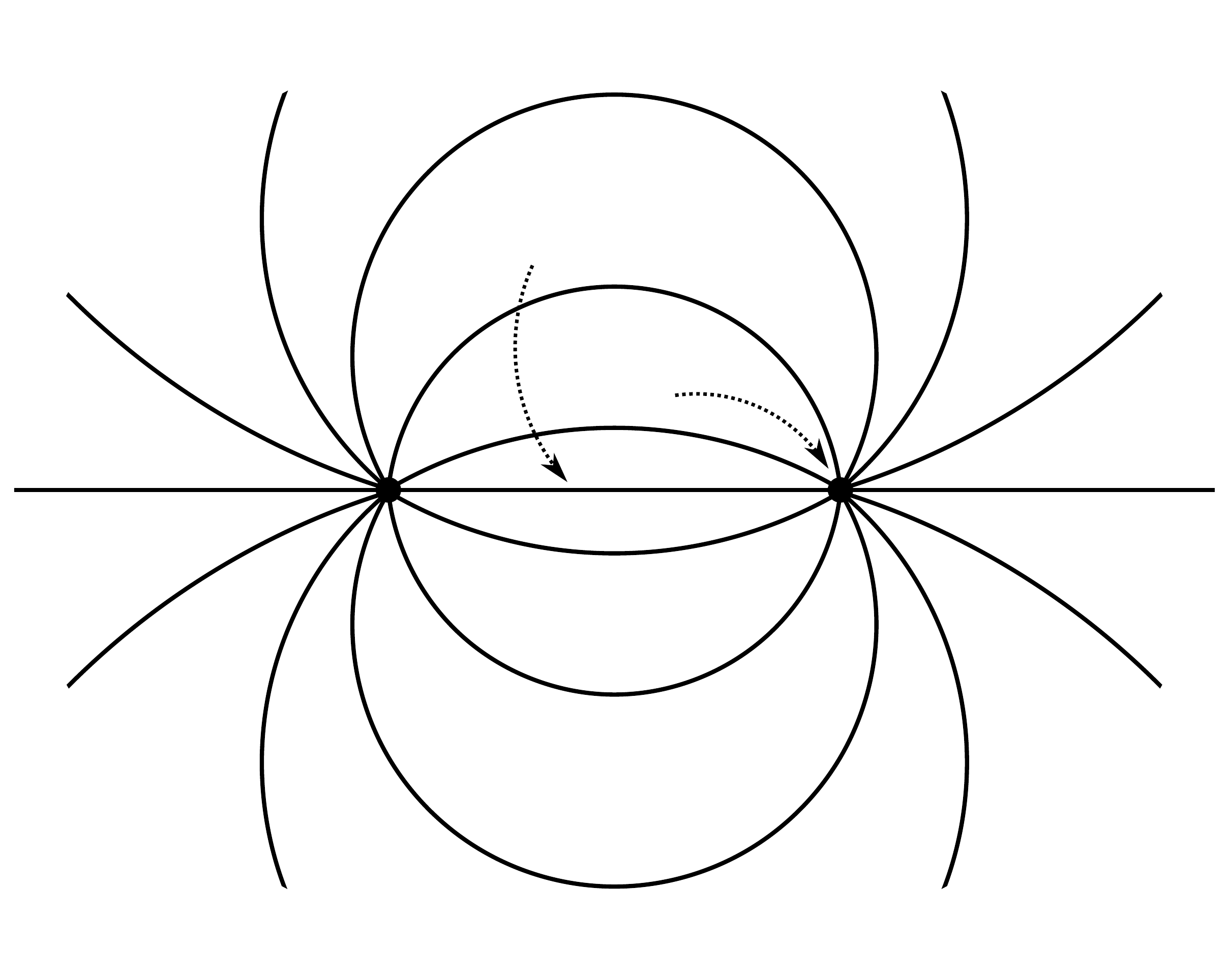
  \caption[The fibres of an open book decomposition of the 2-sphere]%
  {The fibres of an open book decomposition of the 2-sphere.
  For a 3-manifold, the lines represent surfaces and $L$ is a link.}
  \label{fig:openbookconstruction}
\end{figure}

\section{Seifert manifolds}
A closed \emph{Seifert manifold} is a closed 3-manifold that is foliated by
circles, with the additional requirement that every leaf of the foliation has a
neighbourhood which is leaves-preserving diffeomorphic to a \emph{standard
fibred torus}. A Seifert fibration need not be a true fibration in the usual
sense; the name is used nevertheless and the leaves are usually called fibres.

A \emph{standard fibred torus} is obtained from a solid cylinder $D^2 \times
\left[0,1\right]$ by gluing top to bottom by some rational rotation. That is to
say, given two coprime integers $a$ and $b$, $a \geq 1$, we identify $(z,1)$
with $(e^{\frac{2 \pi i b}{a}}z,0)$. The vertical lines of the cylinder,
$\left\{(z,t) \mid t \in \left[0,1\right] \right\}$ where $z \in D^2$ is fixed,
become circles under this gluing. If $a$ is not one, the middle fibre
$\left\{(0,t) \mid t \in \left[0,1\right] \right\}$ (which is somehow
``shorter'') is called a \emph{singular fibre}.

Seifert manifolds are allowed to have boundary, also fibred by circles, hence
consisting of tori.

\subsection{\Tat\ twists produce Seifert manifolds}
As we have seen, a \tat\ twist is of finite order, in the sense that some
$k^\textrm{th}$ power of it is isotopic to a Dehn twist along the boundary of
the surface. We have also seen how one can represent this periodic map by an
actual diffeomorphism which is periodic on the Seifert surface minus a small
neighbourhood of the boundary. This implies:
\begin{proposition}\label{prop:seifertcomplement}
The open book produced by a \tat\ twist with nonzero walk length is a Seifert
manifold.
\end{proposition}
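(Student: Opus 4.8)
The plan is to use the periodic representative of $\T{G,l}$ provided by Proposition \ref{prop:basic} to split the mapping torus of $\phi:=\T{G,l}$ into a genuinely Seifert‑fibered piece and a collection of collar pieces, and then to observe that closing up the open book amounts to Dehn filling along slopes that meet the Seifert fibre transversely precisely because the walk length $l$ does not vanish.

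First I would fix the representative $\phi=\T{G,l}$ ($l\neq 0$) built in the proof of Proposition \ref{prop:basic}. It comes with a decomposition $\Sigma=\Sigma_0\cup\bigcup_{i=1}^{r}C_i$, where $\Sigma_0$ (the graph $\bar G$ together with the parts $S^1_i\times[\tfrac23,1]$ of the annuli $\Sigma\smallsetminus\bar G$) is $\phi$‑invariant and $\phi|_{\Sigma_0}$ is periodic of some order $k$, while $C_i\cong S^1_i\times[0,\tfrac23]$ is a collar of the $i$‑th boundary component, $S^1_i=\R/b_i\Z$, on which $\phi$ has the form $(\theta,t)\mapsto(\theta+l\,h(t),t)$ with $h\equiv 0$ near $t=0$ and $h\equiv 1$ near $t=\tfrac23$. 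The manifold $M=M_{(\Sigma,\phi)}$ of the open book is then obtained from the mapping torus $M_\phi$ (which is exactly $M$ minus the binding) by collapsing, on each boundary torus, the circles $\{q\}\times S^1$ with $q\in\partial\Sigma$ — equivalently, by gluing in a solid torus whose meridian is that circle. Since each $C_i$ is a collar of $\partial\Sigma_0$, the complement $M_\phi\smallsetminus M_{\phi|_{\Sigma_0}}$ is a disjoint union of copies of $T^2\times[0,1]$, so $M=M_{\phi|_{\Sigma_0}}\cup\bigcup_i V_i$ with each $V_i$ a solid torus glued along $\partial_i M_{\phi|_{\Sigma_0}}$.

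Next I would use that $M_{\phi|_{\Sigma_0}}$ is Seifert fibered, being the mapping torus of a finite‑order diffeomorphism: the orbits of the suspension flow are all closed and form the fibration, with only finitely many exceptional fibres, sitting over the points of $\Sigma_0$ whose $\phi$‑orbit is shorter than $k$ (these are isolated, as $\phi|_{\Sigma_0}$ may be taken to be an isometry). The fibration restricts to a circle fibration of each $\partial_i M_{\phi|_{\Sigma_0}}$, whose regular fibre $\sigma_i$ is the suspension orbit through a point of $S^1_i\times\{\tfrac23\}$, where $\phi$ is the rotation by $l$. The key computation is to pin down, on this torus, both $\sigma_i$ and the meridian $\delta_i$ of $V_i$: trivialising the collar piece of $M_\phi$ (the mapping torus of $\phi|_{C_i}$) as $T^2\times[0,1]$ by the fibrewise shear $(\theta,s)\mapsto(\theta+l\,h(t)s,\,s)$, one finds that $\sigma_i$ is the $(l',m_i)$‑curve and $\delta_i$ the $(0,1)$‑curve on the $\theta$‑/$s$‑torus, where $m_i=b_i/\gcd(b_i,l)$ and $l'=l/\gcd(b_i,l)$. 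Hence the geometric intersection number $\delta_i\cdot\sigma_i=l/\gcd(b_i,l)$ (up to sign), which is nonzero exactly because $l\neq 0$. Since $\delta_i$ is therefore not the fibre slope, the Seifert fibration extends over $V_i$ with the core of $V_i$ — a component of the binding — as a fibre, exceptional of multiplicity $|l|/\gcd(b_i,l)$ (regular if this is $1$, as happens for torus knots). Carrying this out at every boundary component exhibits $M$ as a closed Seifert manifold.

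The main obstacle is the bookkeeping in the last paragraph: realising the meridian slope of the glued‑in solid torus and the regular‑fibre slope of $M_{\phi|_{\Sigma_0}}$ on one and the same torus and computing their intersection number. This is the step where the hypothesis is genuinely used; if $l$ were $0$ (or, for a multi‑speed twist, if some $l_i$ vanished, cf.\ Remark \ref{rmk:nonzero}), the filling would be along the fibre slope and $M$ would be a connected sum rather than a Seifert manifold. Everything else — that a mapping torus of a periodic surface diffeomorphism is Seifert fibered, and that a Seifert fibration extends across a Dehn filling along a non‑fibre slope — is standard.
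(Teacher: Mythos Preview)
Your proof is correct and follows essentially the same approach as the paper: use the finite-order representative from Proposition~\ref{prop:basic} to Seifert-fibre the mapping torus of the interior part, then extend across the binding solid tori using that $l\neq 0$. You are more explicit than the paper in the extension step---the paper simply asserts that the Seifert structure extends because the walk length is nonzero, whereas you actually compute the intersection number between the regular fibre and the filling meridian.
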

\begin{proof}
Represent the \tat{} twist by an actual finite-order diffeomorphism $f$ defined on
$\mathring\Sigma$, which is $\Sigma$ minus a small tubular neighbourhood of its
boundary.
Then for every point $p \in \mathring\Sigma$, $\{p\} \times [0,1]$ is glued to $\{f(p)\}
\times [0,1]$ in the open book $M_\phi$, then to $\{f^2(p)\} \times [0,1]$, and
so on, until it closes up to a circle inside $M_\phi$.
The only points on $\mathring\Sigma$ which are possibly of lower order are the
vertices of the \tat{} graph, where we get singular fibres.

This makes the complement of the binding a Seifert manifold. Since the walk length
is not zero, it is possible to extend the Seifert structure to the solid tori around
the binding. If the walk length were zero, however, the circles of the Seifert
fibration would bound disks in these solid tori, which would make the extension
impossible.
\end{proof}
Later in this chapter (see Section \ref{sec:tatswithopenbooks}), we will study
the manifolds that arise as open books for \tat\ twists a little further.

\section{Fibred knots with \tat\ monodromies}
In this section, we study fibred knots and links in $S^3$.

\subsection{Trivial monodromies}\label{sec:trivialmonodromy}
Let $M$ be an irreducible 3-manifold, $L \subset M$ a fibred link and, as
before, $(\Sigma_\theta)_{\theta \in S^1}$ a family of (closures of) fibres, $L = \partial
\Sigma_\theta$, and
	\[
		\left(\Phi_t \colon \Sigma_0 \to \Sigma_{\exp(2 \pi i t)}\right)_{t \in
[0,1]}
	\]
a smooth family of diffeomorphisms such that the mapping class $\phi$ of
$\Phi_1$ is the monodromy. We abbreviate $\Sigma_1$ as $\Sigma$. Then we have
the following:

\begin{lemma}
Let $\gamma \subset \Sigma$ be a properly embedded arc such that $\phi(\gamma)$
is isotopic to $\gamma$. Then $\gamma$ is separating.
\end{lemma}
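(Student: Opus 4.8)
The plan is to build a $2$-sphere in $M$ out of the arc $\gamma$ by sweeping it once around the open book, and then to use irreducibility of $M$ to conclude that $\gamma$ must cut the page into two pieces. First I would normalise the picture. The monodromy fixes $\partial\Sigma$ pointwise, hence fixes the two endpoints $p_0,p_1\in L$ of $\gamma$; since $\phi(\gamma)$ is isotopic to $\gamma$ (as properly embedded arcs with these fixed endpoints), I can modify the family $(\Phi_t)$ near $t=1$, without changing the monodromy class, so that the representative $\Phi_1=\phi$ actually satisfies $\phi(\gamma)=\gamma$, in fact $\phi|_\gamma=\id_\gamma$ (a homeomorphism of an interval fixing both endpoints is isotopic to the identity rel boundary). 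I would also arrange that $\gamma$ meets $L$ "radially", so that the sphere constructed below is genuinely smooth.

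Next I would sweep: put $A=\bigcup_{t\in[0,1]}\Phi_t(\gamma)\subset M\smallsetminus L$. The pages $\overline{\pi^{-1}(\exp(2\pi i t))}$ are pairwise disjoint for distinct $t\in[0,1)$, and $\Phi_0(\gamma)=\Phi_1(\gamma)=\gamma$ agree, so $A$ is an embedded annulus --- the mapping torus of the orientation-preserving interval map $\phi|_\gamma$. Since the monodromy vector field vanishes on $L$, the two ends of $A$ limit onto $p_0$ and $p_1$ respectively, so the closure $S=\overline A$ is an embedded $2$-sphere in $M$ meeting $L$ exactly in $\{p_0,p_1\}$ (a small pushoff makes $S$ disjoint from $L$ if one prefers). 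Because $M$ is irreducible, $S$ bounds a ball $B\subset M$; in particular $S$ separates $M$.

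Finally I would intersect everything with one page $\Sigma=\overline{\pi^{-1}(1)}$. As $\Phi_t(\gamma)$ lies in a page disjoint from $\Sigma$ for every $t\in(0,1)$, we get $A\cap\Sigma=\Phi_0(\gamma)\cup\Phi_1(\gamma)=\gamma$, so $S\cap\Sigma$ is exactly the closed arc $\overline\gamma$; moreover this intersection is transverse along $\gamma$, because $T_xA$ contains the monodromy vector field, which is transverse to the pages. Hence $\Sigma\cap B$ and $\Sigma\smallsetminus\mathring B$ are subsurfaces of $\Sigma$ that together cover $\Sigma$, meet only along $\overline\gamma$, and --- by transversality along $\gamma$ --- are each nonempty in every neighbourhood of $\overline\gamma$. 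Therefore $\Sigma\smallsetminus\gamma$ is disconnected, i.e.\ $\gamma$ is separating.

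The step I expect to be the main obstacle is the behaviour near the binding: checking that the swept annulus really closes up to an \emph{embedded} (or at least locally flat, hence ball-bounding) sphere, with each ideal end collapsing to a single point of $L$, rather than acquiring a bad singularity or a self-intersection along $L$. Pinning down the standard local model near $L$ --- pages as half-planes, monodromy flow as rotation, $\gamma$ as a radial arc --- makes this routine; everything after $S$ is produced is elementary.
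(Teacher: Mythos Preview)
Your argument is correct and is essentially the paper's own proof: sweep $\gamma$ once around the open book to produce an embedded $2$-sphere meeting $\Sigma$ precisely in $\gamma$, then invoke irreducibility of $M$ to conclude the sphere (and hence $\gamma$) separates. The paper phrases the swept surface first as a disk with boundary $\gamma\cup\phi(\gamma)$ and then closes it to a sphere once $\phi(\gamma)$ is isotoped back to $\gamma$, whereas you phrase it as an annulus in $M\smallsetminus L$ whose ends collapse at the binding; these are the same object, and your extra care about the local model near $L$ and about transversality along $\gamma$ only makes the argument more explicit.
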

\begin{proof}
Regardless of the assumption, the monodromy family gives rise to a disk $D =
\bigcup_{t \in [0,1]} \Phi_t(\gamma)$ whose interior is embedded
in $M \smallsetminus \Sigma$. Assuming now that $\phi(\gamma)$ is isotopic to
$\gamma$, we can arrange $D$ to be an embedded sphere that intersects $\Sigma$
in $\gamma$ only. Since $M$ is irreducible, the sphere, and hence $\gamma$,
separate $\Sigma$.
\end{proof}
\begin{corollary}
The only link in $S^3$ with trivial monodromy is the unknot.
\end{corollary}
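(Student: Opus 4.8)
The plan is to feed the preceding lemma one well-chosen family of arcs and conclude that the fibre surface must be a disk.

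First I would translate the hypothesis. If $L\subset S^3$ is a fibred link whose monodromy $\phi$ is trivial, then $\phi$ is isotopic to the identity, so for \emph{every} properly embedded arc $\gamma$ in the fibre surface $\Sigma$ the arc $\phi(\gamma)$ is isotopic to $\gamma$. Note also that, since $S^3\smallsetminus L$ is connected and trivial monodromy makes it diffeomorphic to the product $\Sigma\times S^1$, the surface $\Sigma$ is connected.

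Next I would apply the lemma: since $\phi(\gamma)\simeq\gamma$ for every properly embedded arc $\gamma\subset\Sigma$, each such $\gamma$ is separating. The step after that is the purely topological observation that a compact connected surface with nonempty boundary in which every properly embedded arc separates is a disk. Indeed, if $\partial\Sigma$ had two or more components, an arc joining two distinct components would be non-separating; and if $\partial\Sigma$ were connected but $\Sigma$ had positive genus, an arc dual to a non-separating simple closed curve (one running over a handle) would be non-separating. Equivalently, a properly embedded arc separates exactly when its class in $H_1(\Sigma,\partial\Sigma;\Z/2)$ vanishes, and that group is trivial only for the disk. Hence $\Sigma$ is a disk.

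Finally, the fibre surface is an embedded Seifert surface for $L$; being a disk, it exhibits $L$ as the boundary of an embedded disk in $S^3$, so $L$ is the unknot, and conversely the unknot is fibred with disk page and identity monodromy. The only point I expect to require care is the claim that every non-disk surface with boundary carries a non-separating properly embedded arc, which the two-case argument (or the $H_1(\Sigma,\partial\Sigma;\Z/2)$ remark) settles; the connectedness of the page comes for free from triviality of the monodromy.
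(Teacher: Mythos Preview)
Your proposal is correct and follows exactly the route the paper intends: the corollary is stated immediately after the lemma with no separate proof, and your argument---apply the lemma to every arc, then observe that a connected surface with boundary in which every properly embedded arc separates must be a disk---is precisely the deduction the paper leaves implicit. Your added care about connectedness of the page and the $H_1(\Sigma,\partial\Sigma;\Z/2)$ justification are reasonable elaborations of what the paper takes for granted.
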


Like always, it is important to be aware of the type of monodromy we study. For
example, Eisenbud and Neumann give us in their book~\cite{EisenbudNeumann1985} a
list of ``links with trivial geometric monodromy''. One example is in the
introduction, its fibre surface is a knotted three-holed sphere. However,
constructing the open book over the three-holed sphere with trivial monodromy
produces the manifold $S^2 \times S^1 \# S^2 \times S^1$. We can see this by
looking at the surface cross $S^1$ and first collapsing one of the boundaries to
a circle. The resulting manifold is the 3-sphere with two full tori removed
along a two-component unlink. Collapsing these two boundaries as well is
equivalent to gluing in two full tori whose meridian goes along the canonical
longitudes (a $0$-Dehn filling).

So why is this not a contradiction? The point here is simply that the monodromy
in these examples is only isotopically trivial if we do not require the boundary
to be fixed during the isotopy. This determines the link complement, but does
not say much about the open book as a whole. In fact the Hopf link would be the
simplest nontrivial example of this kind as its monodromy is a Dehn twist which
is trivial in that sense.

\subsection{Knots with \tat\ monodromy}
The monodromy of torus knots were the first examples of \tat\ twists that A'Campo
considered. Since there are various modifications to \tat\ graphs which produce
new, more complicated, \tat\ graphs, one can wonder what other knot monodromies
can be described by them. But as it turns out, fibred knots with \tat\ monodromies
are precisely the torus knots:
\begin{theorem*}
Let\/ $K$ be a fibred knot whose monodromy is represented by a \tat\ twist.
Then\/ $K$ is a torus knot.
\end{theorem*}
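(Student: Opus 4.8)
The plan is to reduce the statement to two ingredients that are already at hand: Proposition~\ref{prop:seifertcomplement}, which says that the open book produced by a tête-à-tête twist with nonzero walk length is a Seifert manifold, and the classical classification of Seifert fibrations of the $3$-sphere (equivalently, of Seifert-fibred knot exteriors in $S^3$). So the theorem will be essentially a corollary of Proposition~\ref{prop:seifertcomplement} together with a standard fact.

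First I would set up the open book. A fibred link is determined by its open book data, so if $K \subset S^3$ is a fibred knot with fibre $\Sigma$ and monodromy a mapping class $\phi$, then $(S^3,K)$ is exactly the open book $M_{(\Sigma,\phi)}$ with its canonical binding. By hypothesis $\phi$ is a tête-à-tête twist $\T{G,l}$, and by our standing convention the walk length $l$ is a nonzero integer, so Proposition~\ref{prop:seifertcomplement} applies. It yields a Seifert fibration of $S^3 \smallsetminus K$; moreover, as in the proof of that proposition, this fibration extends over a tubular neighbourhood of $K$. Hence all of $S^3$ is Seifert fibred, and $K$ appears as a fibre of this fibration, namely the core of the solid torus filled back in (possibly an exceptional fibre).

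Second I would invoke the classification of Seifert fibrations of $S^3$. Up to fibre-preserving diffeomorphism, every such fibration arises from a circle action $(z_1,z_2) \mapsto (t^{p}z_1, t^{q}z_2)$ on $S^3 \subset \C^2$ with $\gcd(p,q)=1$; its generic fibre is a $(p,q)$-torus knot, and its at most two exceptional fibres are the cores of the two solid tori of a genus-one Heegaard splitting, hence unknots. In either case the fibre $K$ is a torus knot, the unknot being the trivial case. Alternatively, one may argue directly from the complement: $S^3 \smallsetminus K$ is irreducible (knot exteriors in $S^3$ are) and is a Seifert-fibred manifold with a single torus boundary component, and the only such manifolds that embed in $S^3$ as knot exteriors are the solid torus — giving the unknot — and the exteriors of nontrivial torus knots.

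With Proposition~\ref{prop:seifertcomplement} available the argument is therefore short; the points requiring care are the identification of the open book $M_{(\Sigma,\phi)}$ with $(S^3,K)$ so that the Seifert structure actually lives on $S^3$ with $K$ as a fibre, the (automatic) check that the walk length is nonzero — if it were not, $\phi$ would be a product of boundary Dehn twists, $K$ would bound a disk, and we would again land on the unknot — and the precise form of the classification being cited. I expect the last of these, pinning down the exact classification statement with matching hypotheses, to be the only genuine obstacle, and it is entirely standard.
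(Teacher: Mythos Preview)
Your proposal is correct and follows essentially the same route as the paper: invoke Proposition~\ref{prop:seifertcomplement} to put a Seifert fibration on $S^3$ with $K$ as a fibre, then cite Seifert's classification of Seifert fibrations of $S^3$ to conclude that $K$ is a torus knot. The only cosmetic difference is that the paper first restates the result in the more general form of Theorem~\ref{thm:finmon} (finite-order monodromy rather than \tat\ monodromy) before appealing to Seifert's theorem, but the underlying argument is identical.
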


In fact one can say:
\begin{theorem}\label{thm:finmon}
Let\/ $K$ be a fibred knot whose monodromy is of finite order, i.\,e.\@ has a power
which is a product of Dehn twists along the boundary of the fibre surface.
Then\/ $K$ is a torus knot.
\end{theorem}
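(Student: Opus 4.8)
The plan is to reduce the statement, using the machinery of this chapter, to the classical fact that a knot in $S^3$ whose complement is Seifert fibered must be a torus knot.

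First I would dispose of the degenerate case: if the fibre surface $\Sigma$ is a disk then $K$ is the unknot, a (trivial) torus knot, so from now on $\Sigma$ is connected with a single boundary component $\partial\Sigma=K$ and negative Euler characteristic. The hypothesis says that some power $\phi^k$ of the monodromy is a product of Dehn twists along $\partial\Sigma$; since such twists lie in the kernel of the collapsing map $c\colon\mcg(\Sigma)\to\mcg(\dot\Sigma)$, the class $c(\phi)$ is periodic, i.e.\ $\phi$ is (freely) periodic. By Theorem~\ref{thm:equivalentdefs}, $\phi$ is then a multi-speed \tat\ twist, and since $\Sigma$ has just one boundary component Theorem~\ref{thm:definitions} lets me take it to be an ordinary \tat\ twist $\T{G,l}$. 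By Proposition~\ref{prop:basic} I may represent $\phi$ by a diffeomorphism $f$ whose restriction to $\mathring\Sigma=\Sigma\smallsetminus N(\partial\Sigma)$ is of finite order, say of order $d$.

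Next I would show that the knot exterior is Seifert fibered. In the open book $M_{(\Sigma,\phi)}=S^3$ with binding $K$, all the twisting inserted near the binding is supported in $N(K)$, so $S^3\smallsetminus N(K)$ is diffeomorphic to the mapping torus of $f|_{\mathring\Sigma}$. The mapping torus of a finite-order diffeomorphism of a compact surface is a Seifert fibered space: the flow circles of the suspension are the regular fibres, and the (finitely many) points of $\mathring\Sigma$ whose $f$-orbit has length $<d$ --- the vertices of $G$ --- give the exceptional fibres. This is exactly the argument of Proposition~\ref{prop:seifertcomplement}, whose conclusion about the complement of the binding holds irrespective of the walk length (if the walk length were $0$ the Seifert structure simply would not extend across $K$, but that extension is not needed here). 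Hence the exterior of $K$ admits a Seifert fibration.

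Finally I would invoke the classification of knots with Seifert fibered exterior: in $S^3$ such a knot is either the unknot (solid-torus exterior) or a torus knot $T(p,q)$ (exterior Seifert fibered over a disk with two exceptional fibres), and no others; this is classical (cf.\ Moser, and Burde--Zieschang). Either way $K$ is a torus knot. The starred theorem follows at once, since a \tat\ twist is of finite order by Proposition~\ref{prop:basic}. The genuinely three-dimensional content is concentrated in this last classification, but it is an established theorem; what actually needs care in writing the proof is the reduction --- in particular checking that the exterior is Seifert fibered even when $\phi$ itself (and not merely a power of it) is a product of boundary twists, the case in which the Seifert fibration of the complement does not extend over $K$.
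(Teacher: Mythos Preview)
Your proposal is correct and follows the same strategy as the paper: reduce to the fact that the complement is Seifert fibred, then quote a classical classification result. The paper phrases the last step slightly differently --- it uses Proposition~\ref{prop:seifertcomplement} to extend the Seifert fibration over the binding, so that $K$ becomes an honest fibre of a Seifert fibration of $S^3$, and then invokes Seifert's theorem that every fibre of a Seifert fibration of $S^3$ is a torus knot --- whereas you stop at the exterior and cite the (equivalent) Burde--Zieschang/Moser classification of knots with Seifert fibred complement. Your version has the small advantage that you need not worry about whether the fibration extends over $K$, so the caveat in your last paragraph about the walk-length-zero case is in fact unnecessary for your own argument (and for the paper's argument it is covered by the unknot case and Remark~\ref{rmk:nonzero}).
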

It was not obvious where to find this result in the literature; but it was, for example, stated by Burde and Zieschang (\cite{BurdeZieschang1966}).
The theorem is a direct consequence of the following theorem by Seifert, which he
proved in his second dissertation (\foreignlang[Topology
of three-dimensional fibred spaces]{Topologie dreidimensionaler gefaserter
Räume}{german}, \cite{Seifert1933}) where he founded the theory of Seifert fibred spaces:
\begin{theorem}
Any fibre of a Seifert fibration of the 3-sphere is a torus knot.
\end{theorem}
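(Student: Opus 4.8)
The plan is to push the classification of Seifert fibred spaces just far enough to recover a standard genus-one Heegaard torus of $S^3$, on which the fibres then appear as torus knots. First I would control the base orbifold $B$ of the given fibration. There is an exact sequence $\Z \to \pi_1(S^3) \to \pi_1^{\mathrm{orb}}(B) \to 1$, the left map sending a generator to the class of a regular fibre; since $\pi_1(S^3)$ is trivial, so is $\pi_1^{\mathrm{orb}}(B)$. By the classification of closed $2$-orbifolds, the only ones with trivial orbifold fundamental group are the sphere $S^2$ and the football orbifolds $S^2(p,q)$ with $\gcd(p,q)=1$ (a single cone point occurring as the degenerate case $q=1$). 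So $B$ is a sphere carrying at most two cone points, of coprime orders $p$ and $q$, and the fibration has at most two exceptional fibres, of those multiplicities.

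Next I would cut $S^3$ open along the fibration. Delete the interiors of two disjoint fibred solid tori, one around each exceptional fibre --- and, if there are fewer than two exceptional fibres, regular fibred solid tori to bring the count to two. What is left is an orientable circle bundle over $S^2$ with two open disks removed, that is, over an annulus; orientable circle bundles over a surface with boundary are trivial, so this middle piece is $S^1 \times (\text{annulus}) \cong T^2 \times [0,1]$. Reabsorbing the two product collars into the solid tori, we obtain $S^3 = V_1 \cup_T V_2$, a union of two solid tori along a torus $T$ --- a genus-one Heegaard splitting --- in which the cores $c_1, c_2$ of $V_1$ and $V_2$ are the (possibly exceptional) fibres, while every regular fibre is isotopic to one fixed essential simple closed curve $\gamma$ lying on $T$. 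From the structure of a fibred solid torus, $\gamma$ meets a meridian of $V_1$ in exactly $p$ points and a meridian of $V_2$ in exactly $q$ points, and $\gamma$ is connected because it is a single fibre.

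Finally, every genus-one Heegaard splitting of $S^3$ is unique up to isotopy (classical; elementary in this genus), so I may assume $T$ is the standard Heegaard torus of $S^3$. Then $c_1$ and $c_2$ are unknots, hence torus knots, and $\gamma$ is a connected simple closed curve on the standard torus meeting a meridian in $p$ points --- which is, by the very definition, a torus knot (of type $(p,c)$ for a suitable $c$ coprime to $p$, with $c \equiv \pm q \pmod p$). Hence every fibre of the Seifert fibration, exceptional or regular, is a torus knot, as claimed.

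The delicate step is the first one, the control of the base orbifold. It relies on the Seifert-fibred form of the fundamental-group exact sequence together with the classification of $2$-orbifolds having trivial orbifold fundamental group --- precisely the body of results that is here credited to Seifert; equivalently, one is proving that every Seifert fibration of $S^3$ is fibre-preservingly conjugate to a linear circle action $(z_1,z_2) \mapsto (e^{2\pi i p t} z_1, e^{2\pi i q t} z_2)$ on $S^3 \subset \C^2$ with $\gcd(p,q)=1$. A secondary but routine point is the bookkeeping with Seifert invariants: identifying the exceptional multiplicities with the cone orders $p, q$, verifying that the meridians of $V_1$ and $V_2$ form a dual pair on $T$ so that $V_1 \cup_T V_2$ really is $S^3$, and extracting the coprimality $\gcd(p,c)=1$ from connectedness of a fibre.
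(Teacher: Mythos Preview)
Your argument is correct, and its first half is essentially the paper's argument in modern dress: where Seifert shows the orbit surface is simply connected and then uses his polygon-group quotient of $\pi_1$ to force at most two exceptional fibres, you use the exact sequence $\Z\to\pi_1(S^3)\to\pi_1^{\mathrm{orb}}(B)\to 1$ together with the classification of closed $2$-orbifolds with trivial $\pi_1^{\mathrm{orb}}$. These are the same computation---Seifert's polygon group \emph{is} the orbifold fundamental group of $S^2(\alpha_1,\dots,\alpha_n)$---so the ``delicate step'' you flag is precisely the one the paper credits to Seifert.

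Where you genuinely diverge is the endgame. The paper finishes by invoking Seifert's explicit classification: every Seifert fibration of $S^3$ with base $S^2$ and at most two exceptional fibres is fibre-preservingly isomorphic to the orbit decomposition of the linear $S^1$-action $(z_1,z_2)\mapsto(e^{imt}z_1,e^{int}z_2)$, from which the conclusion is read off. You instead excise two fibred solid tori, recognise the complement as $T^2\times[0,1]$, obtain a genus-one Heegaard splitting, and appeal to its uniqueness up to isotopy. This trades one nontrivial external input (Seifert's classification of small Seifert fibrations) for another (Waldhausen/Alexander-type uniqueness of the genus-one splitting of $S^3$). Your route is a bit more hands-on and avoids writing down Seifert invariants, at the cost of the bookkeeping you mention about meridians and coprimality; the paper's route is cleaner once one accepts the classification, since the linear model makes the torus-knot conclusion immediate.
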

\begin{proof}[Sketch of the proof]
In chapter 11, Seifert classifies all possible Seifert fibrations of the
3-sphere. The result follows from this classification.

First, one can prove that the space of fibres of a Seifert fibred 3-manifold
is a surface (\foreignlang[decomposition surface]{Zerlegungsfläche}{german}),
the orbit surface, which is closed if the manifold is closed. It comes equipped
with a projection map, the continuous map which maps a point of the manifold to
the point representing the fibre it lies on.
Note that, in general, an orbit surface cannot be seen as a surface that
lies inside the 3-manifold.

We can lift any path on the orbit surface to a path in the manifold.
A homotopy of the lifted path projects to a homotopy on the orbit surface.
Therefore the orbit surface of a simply-connected manifold is also simply
connected, hence a sphere in the case of the 3-sphere.

An important part of Seifert's text is the classification of Seifert fibred spaces.
He defines a set of invariants (up to elementary modifications)
\[\label{eq:Seifert}
	(\textrm{\textsl{type}}; \textrm{\textsl{surface}} \mid b;
	(\alpha_1, \beta_1); \ldots ; (\alpha_r, \beta_r))
\]
where \textsl{surface} is the genus or number of cross-caps of the orbit surface and
\textsl{type} is some information about orientation that can take one of six
possible values. The pairs $(\alpha_i, \beta_i)$ describe the exceptional
fibres. If one bores out all exceptional fibres and replaces them by regular ones,
one gets a circle bundle over the orbit surface. The invariant $b$, like the Euler class,
distinguishes between the different circle bundles over the given surface. Seifert shows
that these invariants completely determine the Seifert fibre space up to isomorphism.

In chapter 10, Seifert derives a presentation for the fundamental group of a
closed Seifert fibred 3-manifold from the invariants. Hence in the case of the
sphere with $n$ exceptional fibres, everything is encoded in $b$ plus $n$
rational numbers.

Using his presentation, Seifert notes that some quotient of the fundamental
group is a polygon group, that is to say, the symmetry group of a
black-and-white tessellation by $n$-gons. If $n\geq 4$, this tessellation
necessarily lives in Euclidean or hyperbolic space, hence the group is infinite.
For $n=3$, there is a finite number of families of finite groups corresponding
to tessellations of the sphere. But if the group is to be trivial, $n$ must be at
most $2$.

So there are at most two exceptional fibres, and Seifert describes such
fibrations in chapter 3. They are given by the orbits of the rotation
\[
 \begin{pmatrix}
   \cos(mt)  & \sin(mt) &           &          \\
   -\sin(mt) & \cos(mt) &           &          \\
             &          & \cos(nt)  & \sin(nt) \\
             &          & -\sin(nt) & \cos(nt)
  \end{pmatrix},
  \quad\gcd(m,n)=1,
\]
of $S^3$, seen as the unit sphere in $R^4$. Since the data described above
completely determines the manifold up to fibre-preserving homeomorphism, we now
know all possible Seifert fibrations of the 3-sphere. The exceptional fibres
are always unknotted here, therefore the regular fibres are torus knots.
\end{proof}

The following example shows that Theorem \ref{thm:finmon} does not hold for
links:
\begin{example}
The link of the singularity $x(y^2-x^4)$ is not a torus link, but its monodromy
is of finite order.
\end{example}
The link in question is
\[
   L = \left\{ (x,y)\in\C^2 \mid x(y^2-x^4)=0, \abs{x}^2+\abs{y}^2=1 \right\}.
\]
Its genus is $2$ and it has $3$ components. The genus of the $(p,q)$-torus link
$T(p,q)$ is calculated by
\(
   g = \frac{1}{2}((p-1)(q-1)-d+1),
\)
where $d = \gcd(p,q)$, which must be $3$ in this case. Therefore, $L$ cannot be a
torus link because the genus of $T(3,3)$ is $1$ and the genus of $T(3,6)$ is
already $4$. $L$ does, however, contain a $(2,4)$-torus link due to the factor
$y^2-x^4$, in fact it has a diagram as in Figure \ref{fig:finiteorderlink}.
\begin{figure}
\centering
\def\svgwidth{0.3\textwidth}
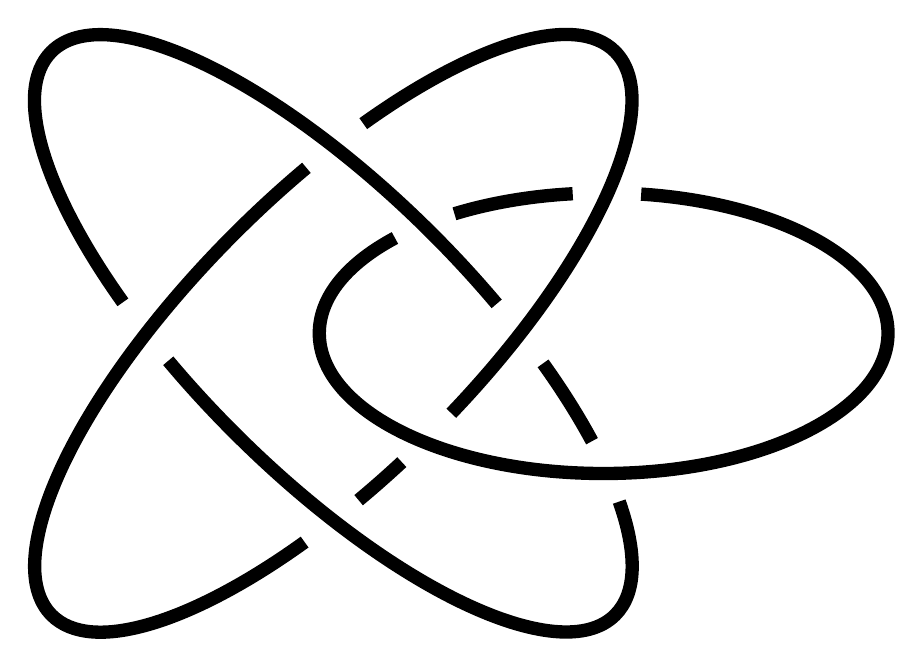
  \caption{The link of the singularity $x(y^2-x^4)$}
  \label{fig:finiteorderlink}
\end{figure}

Setting
\[
F(x,y) = \frac{f(x,y)}{\abs{f(x,y)}}
\]
with $f(x,y) = x(y^2-x^4)$, we get the projection of a fibration
${S^3 \smallsetminus L \xrightarrow{F} S^1}$.  
Let us describe the monodromy of L as a diffeomorphism of $S = F^{-1}(1)$. First note that
$f\left(e^{\sfrac{2 \pi it}{5}}x, e^{\sfrac{4 \pi it}{5}}y\right) = e^{2\pi it} f(x,y)$,
so whenever a point $(x_0,y_0)$ is in $S$, then
$f\left(e^{\sfrac{2 \pi it}{5}}x_0, e^{\sfrac{4 \pi it}{5}}y_0\right) = e^{2\pi it}$
for all $t \in \R$. Thus we get an isotopy
$h\colon S \times \left[0,1\right] \to S^3 \smallsetminus L$
with $F(h_t(x_0,y_0)) = e^{2\pi it}$, and a self-map $h_1\colon S \to S$.

\begin{figure}
\centering
\def\svgwidth{0.5\textwidth}
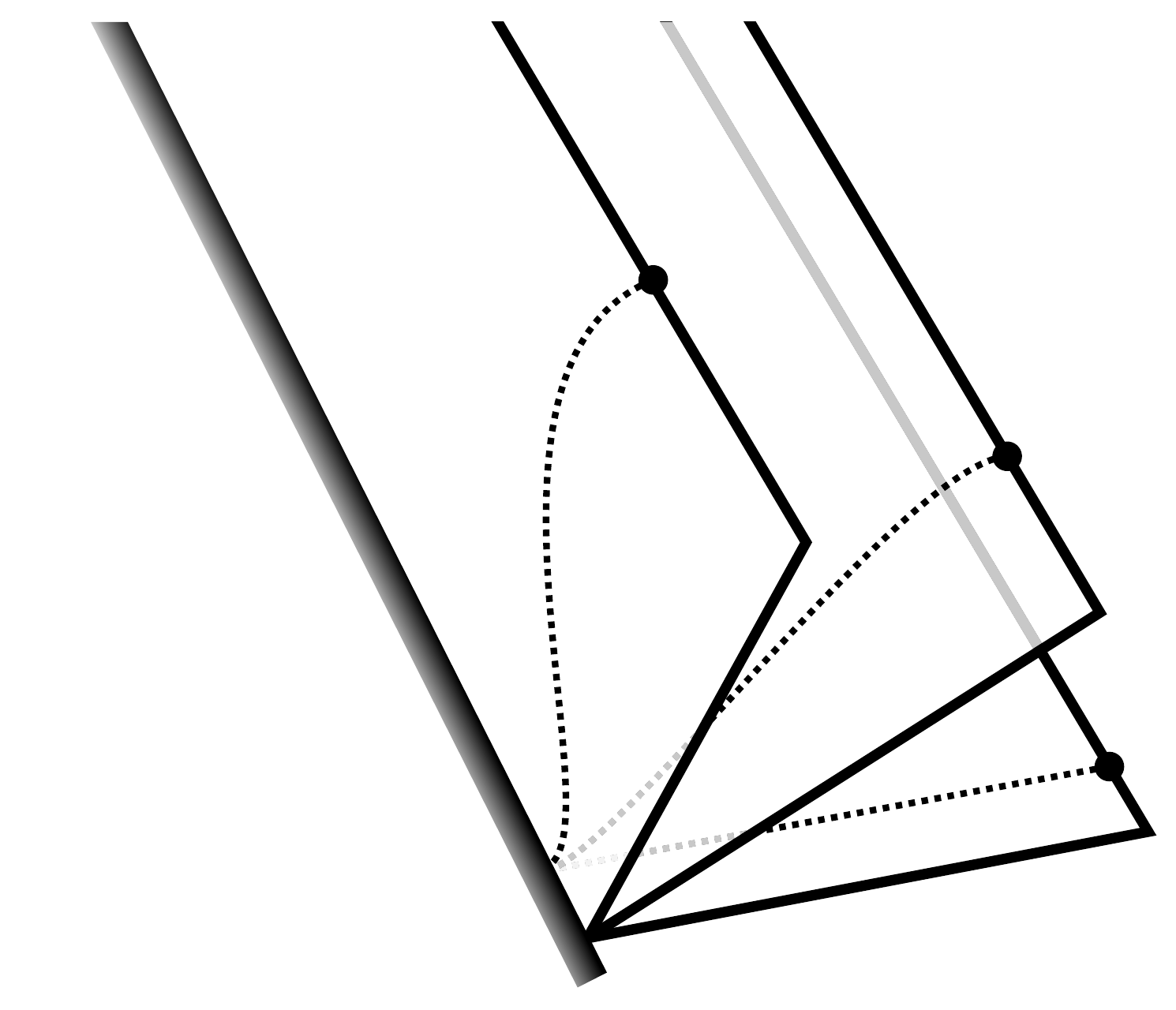
  \caption[How to slow down a monodromy near the boundary]%
  {How to slow down a monodromy near the boundary using a smooth family
           of intervals that connect to a fixed point of the boundary}
  \label{fig:openbookslowdown}
\end{figure}
$h_1$ is not exactly the monodromy because it does not extend to the identity on
$\partial \bar{S}$. To fix this, we can modify the isotopy in a small collar
neighbourhood of the boundary and smoothly interpolate it to the identity on
$\partial \bar{S}$. By this modification, we get the monodromy $\phi$ from
$t=1$. Because $h_1^5 = \id$, $\phi^5$ is the product of some Dehn
twists along the boundary of $\bar{S}$.

$f$ is an example of a \emph{quasihomogeneous polynomial}. Being
quasihomogeneous is exactly the property we used above, namely that there
are weights $\omega_1$, $\omega_2$ and $\omega$ in $\Z$ such that for every
$\lambda \in \C$ we have
	\[
		f(\lambda^{\omega_1} x, \lambda^{\omega_2} y) = \lambda^{\omega} f(x,y).
	\]
Since all of these have periodic monodromy, the result from Section \ref{sec:periodic}
applies and we get:
\begin{corollary}
Let $\phi$ be the monodromy of a quasihomogeneous polynomial in $\C[x,y]$. Then
$\phi$ is given by a \tat\ twist.
\end{corollary}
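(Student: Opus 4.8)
The plan is to reduce to Theorem~\ref{thm:equivalentdefs}: it suffices to show that the monodromy $\phi$ of a quasihomogeneous polynomial $f\in\C[x,y]$ is freely periodic, for then $\phi$ is a multi-speed \tat\ twist, and one further short step upgrades it to an ordinary \tat\ twist.

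First I would recall the fibration. Assuming $f$ is not a unit and setting $L=V(f)\cap S^3$, the Milnor fibration gives a fibre bundle $f/\abs{f}\colon S^3\smallsetminus L\to S^1$ — for quasihomogeneous $f$ this already follows from the scaling action and does not require $f$ to have an isolated singularity — whose fibre $S=(f/\abs{f})^{-1}(1)$ has closure $\bar S$ with $\partial\bar S=L$, and whose monodromy mapping class is $\phi$ in the sense of Section~\ref{sec:openbooks}.

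Next I would argue periodicity exactly as in the worked example of $x(y^2-x^4)$. Quasihomogeneity means there are positive integers $\omega_1,\omega_2,\omega$ with $f(\lambda^{\omega_1}x,\lambda^{\omega_2}y)=\lambda^\omega f(x,y)$ for all $\lambda\in\C$. Hence the one-parameter family $\Psi_t\colon(x,y)\mapsto(e^{2\pi i t\omega_1/\omega}x,\,e^{2\pi i t\omega_2/\omega}y)$ carries $S$ to $(f/\abs{f})^{-1}(e^{2\pi i t})$ (after rescaling back onto $S^3$, which does not affect the mapping class), so its time-one map $h_1=\Psi_1$ represents the geometric monodromy and, by the flow property, satisfies $h_1^{\omega}=\Psi_\omega=\id$. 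As in the example, $h_1$ need not fix $\partial\bar S$ pointwise; a modification of the isotopy in a collar of the boundary interpolates it to the identity there and yields a representative of $\phi$, and since $h_1^{\omega}=\id$ the class $\phi^{\omega}$ is then a product of Dehn twists along the components of $\partial\bar S$. Thus $\phi$ is freely periodic.

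Finally, implication \ref{itm:isperiodic}$\simplies$\ref{itm:istat} of Theorem~\ref{thm:equivalentdefs} shows $\phi$ is a multi-speed \tat\ twist $\T{G,\,\underline{l}}$, and Theorem~\ref{thm:definitions} turns this into an ordinary \tat\ twist once all walk lengths $l_i$ have the same sign. This holds because the flow $\Psi_t$ rotates every fibre in the same direction, so the safe walks read off from the $h_1$-orbit structure (as in the proof of Theorem~\ref{thm:equivalentdefs}) can all be taken positive — reversing the orientation of $\Sigma$, i.e.\ replacing $f$ by $\bar f$, if they come out all negative. I expect the only genuine work to lie in this last paragraph together with the boundary collaring: one must check that collaring $h_1$ to the identity changes $\phi$ only by boundary Dehn twists (so that free periodicity really is preserved) and that the recovered multi-speed walk lengths all point the same way, which is exactly what separates an honest \tat\ twist from a multi-speed one; everything else is a direct appeal to the two theorems already proved.
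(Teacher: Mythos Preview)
Your proof is correct and follows exactly the approach the paper takes: its entire justification is the sentence ``Since all of these have periodic monodromy, the result from Section~\ref{sec:periodic} applies,'' i.e.\ the computation from the worked example shows free periodicity and Theorem~\ref{thm:equivalentdefs} does the rest. You are in fact more careful than the paper in spelling out the passage from multi-speed to single-speed via the sign condition of Theorem~\ref{thm:definitions}; the paper leaves that step implicit.
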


\section{{\Tat}s with open books}\label{sec:tatswithopenbooks}
As we have seen in the previous section, only few \tat\ twists are monodromies
of knots or links, by which we meant knots or links in $S^3$. But one can of
course ask whether other \tat\ twists could be monodromies of knots or links in
different 3-manifolds, and if yes, in which.

The answer to the first question is trivially ``yes'', as we have seen in the beginning
of this chapter: To every mapping class fixing the boundary of a surface one can construct the open book for it,
making it into a monodromy. However, asking what manifolds arise leads to plenty of
interesting examples.

Alexander has shown in 1923 (\cite{Alexander1923}) that every closed orientable
3-manifold can be equipped with an open-book decomposition and can thus be
obtained by constructing the open book corresponding to a diffeomorphism of a
surface; so a priori this is no restriction on the type of manifolds produced
by \tat\ twists. The open book can even be chosen to have connected boundary, as
shown by González Acuña (\cite{GonzalezAcuna1975}) and Myers (\cite{Myers1978}).

\subsection{Seifert symbols}
We have, however, the strong restriction of Proposition
\ref{prop:seifertcomplement} above:
The open book produced by a \tat{} twist (with nonzero walk length) is a
Seifert manifold.
One could also write down the Seifert symbol (as shown \vpageref{eq:Seifert}) from
the \tat\ graph:
There can be one exceptional fibre for each boundary component, where different
coefficients correspond to different Dehn fillings of the mapping torus of the
diffeomorphism.
Also at each vertex an exceptional fibre can occur.
Their coefficients depend on the amount of rotation that occurs when the vertex
it is mapped to itself by a (minimal) power of the diffeomorphism. We will not
pursue this course in the following section, but rather study the fundamental
group of the open book that we construct.
However, note the following:
\begin{proposition}
The open book belonging to an elementary \tat\ twist $\T{E_{n,a},2}$, for $a<n$,
is a Seifert fibred manifold with base $S^2$ and at most 3 exceptional fibres.
\end{proposition}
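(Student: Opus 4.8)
The plan is to read off the Seifert structure from the proof of Proposition~\ref{prop:seifertcomplement} and then pin down the orbit surface by an Euler‑characteristic bookkeeping. Write $G=E_{n,a}$ (so $a<n$) and $\Sigma=\Sigma(G)$, a genus‑$g$ surface with one boundary component, where $g=g(E_{n,a})$; then $\chi(\mathring\Sigma)=\chi(\Sigma)=1-2g$. I would represent $\phi=\T{E_{n,a},2}$ by a finite‑order diffeomorphism $f$ of $\mathring\Sigma$ of order $n$ (this is the order of the twist, which permutes the $n$ edges of $G$ cyclically). By Proposition~\ref{prop:seifertcomplement}, $M$ minus a tubular neighbourhood of the binding $L$ is the mapping torus of $f$, hence Seifert fibred over the quotient $B'=\mathring\Sigma/\langle f\rangle$; and the Dehn filling that produces $M$ caps off the one boundary circle of $B'$ with a disc, giving the orbit surface $B=B'\cup D^2$ and making $L$ a (possibly exceptional) fibre. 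So it remains to bound the number of exceptional fibres by $3$ and to show $B\cong S^2$.

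For the first point: since $a<n$, the chord diagram $E_{n,a}$ yields a graph $G$ that is a genuine spine of $\Sigma$, so $\Sigma\smallsetminus G$ is a single boundary‑parallel annulus, with no interior disc faces. On that collar $f$ acts as a nontrivial rotation and hence freely, because a power of $f$ fixing a point there would be the identity on an open set, hence (being of finite order, cf.\ the arguments of this chapter) the identity on $\mathring\Sigma$. Likewise $f$ acts freely on $\partial\mathring\Sigma$. Therefore every point with a non‑full $f$‑orbit lies on a vertex of $G$. By Remark~\ref{rem:bipartite} and the vertex count preceding Lemma~\ref{lem:elementarygraphgenus}, the vertices of $E_{n,a}$ fall into exactly two $f$‑orbits — the $v_{+}=\gcd(\tfrac{a+1}{2},n)$ ``outer'' vertices and the $v_{-}=\gcd(\tfrac{a-1}{2},n)$ ``inner'' vertices, each class permuted cyclically. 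Hence the Seifert fibration of $M$ has at most two exceptional fibres over vertices (of orders $n/v_{+}$ and $n/v_{-}$) plus at most one more over the single binding component $L$: at most three in all.

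For the orbit surface, the quotient map $q\colon\mathring\Sigma\to B'$ is a degree‑$n$ branched cover, unramified except over the two images $\bar p_{\pm}$ of the vertex orbits, where it has $v_{\pm}$ preimages with ramification index $n/v_{\pm}$. Riemann–Hurwitz (for covers of compact surfaces with boundary, branched only at interior points) gives
\[
  \chi(\mathring\Sigma)=n\,\chi(B')-v_{+}\!\left(\tfrac{n}{v_{+}}-1\right)-v_{-}\!\left(\tfrac{n}{v_{-}}-1\right)=n\,\chi(B')-2n+v ,
\]
where $v=v_{+}+v_{-}$ is the number of vertices. Now $B$ is $B'$ with its one boundary circle (the image of $\partial\mathring\Sigma$) capped off, so $\chi(B')=\chi(B)-1$; and by Lemma~\ref{lem:elementarygraphgenus} (equivalently formula~\eqref{eq:euler} specialised to one boundary component) one has $v=1+n-2g$, that is $\chi(\mathring\Sigma)=1-2g=v-n$. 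Substituting, $v-n=n(\chi(B)-1)-2n+v$, which collapses to $n\,\chi(B)=2n$, so $\chi(B)=2$. Since $f$ is orientation‑preserving, $B$ is closed and orientable, hence $B\cong S^2$.

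The step I expect to be the main obstacle is not the computation but the bookkeeping that feeds it: one must argue carefully, using that $E_{n,a}$ with $a<n$ is really a spine (no hidden disc faces) and that $f$ acts freely on both the collar and $\partial\mathring\Sigma$, so that the only exceptional fibres are those listed; and one must correctly match the boundary circle of $B'$ with the disc glued in to form $B$, so that the Riemann–Hurwitz identity and the graph‑genus formula line up to give $\chi(B)=2$. Once the Seifert structure of Proposition~\ref{prop:seifertcomplement} is taken at face value, everything else is the short count above.
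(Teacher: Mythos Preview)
Your argument is correct. The identification of the exceptional fibres (two vertex orbits plus the binding) matches the paper exactly, and your Riemann--Hurwitz computation for $\chi(B)=2$ is clean and accurate.

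The paper reaches the same conclusion by a more direct, graph-theoretic observation: since the elementary twist acts transitively on the $n$ edges of $E_{n,a}$, the quotient $G/\langle f\rangle$ is a single edge joining two vertices (the images of the inner and outer vertex orbits). Hence the quotient surface $B'=\mathring\Sigma/\langle f\rangle$ deformation retracts to an interval and is therefore a disc; capping off with the Dehn-filling disc gives $S^2$ immediately, with the two possible exceptional fibres at the edge's endpoints and a third at the centre of the filling disc. This bypasses the Euler-characteristic bookkeeping entirely. Your Riemann--Hurwitz route has the advantage that it would adapt to less symmetric situations where the quotient graph is not so easily read off, but for this specific statement the paper's argument is shorter and more transparent.
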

\begin{proof}
Since the \tat\ twist acts transitively on edges, the quotient of the graph by
the twist consists of a single edge that connects two vertices, one corresponding
to ``inner'' and one to ``outer'' vertices of the chord diagram (see the remark
\vpageref{rem:bipartite}).
The base manifold then consists of a disk which is the thickening of this edge, with possibly
exceptional fibres at the two vertices, and a disk that represents the Dehn filling,
with another possible exceptional fibre in its middle.
\end{proof}

\subsection{A presentation of the fundamental group}\label{sec:fundamentalgroup}
In this section we will see how to find a presentation for the fundamental group
of open books coming from \tat\ twists and how to use this to recognize some of
the manifolds constructed in this way.

When we build a 3-manifold $M_\phi$ from a surface $\Sigma$ and a diffeomorphism
$\phi$ as an open book, we can use the theorem of Seifert and van Kampen to
calculate its fundamental group. To do this, we split $M_\phi$ into two parts by
removing a closed surface, the double of $\Sigma$, given by
	\[
		D\Sigma =
			\pi^{-1}(0) \,\cup\, \pi^{-1}(\sfrac{1}{2}) \,\cup\, L,
	\]
where $L$ is the binding of the open book decomposition of $M_\phi$ and
$\pi\colon M_\phi\smallsetminus L \to S^1=\R/\Z$ is its projection map.
$M_\phi \smallsetminus D\Sigma$ then consists of the two parts
$\pi^{-1}\big((0,\sfrac{1}{2})\big)$ and
$\pi^{-1}\big((\sfrac{1}{2},1)\big)$, whose closure is in each case
homeomorphic to $\Sigma \times [0,1]$ and whose fundamental group is
thus just $\pi_1(\Sigma)$.
The theorem of Seifert and van Kampen now gives us $\pi_1(M_\phi)$ as an
amalgamated product of the form $\pi_1(\Sigma) *_{\pi_1(D\Sigma)}
\pi_1(\Sigma)$.

When the boundary of $\Sigma$ is connected, we can explicitly write down generators and
relations for the fundamental group.

The presentation we use here depends on the choice of a basepoint on the boundary,
or a choice of a ``first'' -- or rather ``zeroth'' -- endpoint in the chord diagram.
We then label the endpoints from $0$ to $2n-1$.
Recall the construction of the ribbon graph from a chord diagram described in
the second paragraph of Section \ref{sec:chorddiagramsandribbongraphs}:
Starting with an annulus, we glue a band for every chord,
and a disk for every internal boundary component of the diagram.

We use  one generator for each chord, which
corresponds to starting at the basepoint, walking anticlockwise to the first endpoint of the chord, traversing it, and walking back clockwise to the basepoint.
When the chord from endpoint $i$ to endpoint $j$, $j>i$, is traversed, we call this generator $c_i$.
There is one additional generator, $\omega$, which corresponds to walking once around the whole annulus in anticlockwise direction.

Each internal boundary component provides one relation, namely the
product of $c_i$'s and $c_i^{-1}$'s that gets trivial by gluing in the corresponding disk.
When, as we walk along one internal boundary, we pass by the basepoint, we
add $\omega$ to the product.
Using these relations, we get a presentation of $\pi_1(\Sigma)$.

The group $\pi_1(M)$ has the additional relations that come from gluing the two
copies of $\Sigma \times [0,1]$. Those actually have their boundaries pinched so
they look like in Figure \ref{fig:lens_thickening} on page \pageref{fig:lens_thickening} in the next chapter. \checkhere
We glue using $\phi$ on one side and the identity on the other.
The additional relations just state that a generator $c_i$ becomes equal to its
image under $\phi$.
When $l$ is the walk length of the twist, let
$i' = i + l \mod 2n$
and
$j' = j + l \mod 2n$,
understood as numbers in $\{0, \ldots, 2n-1\}$.
Let
$r_i = \big\lfloor\tfrac{i + l}{2n}\big\rfloor$
and
$r_j = \big\lfloor\tfrac{j + l}{2n}\big\rfloor$,
which denote the number of times the endpoints $i$ and $j$ are rotated past the
basepoint.
The image of $c_i$ is then $\omega^{r_i}c_{i'}\omega^{r_j}$ if $i' < j'$,
and $\omega^{r_i}c_{j'}^{-1}\omega^{r_j}$ otherwise.
\begin{example}
\begin{figure}
\centering
\def\svgwidth{0.4\textwidth}
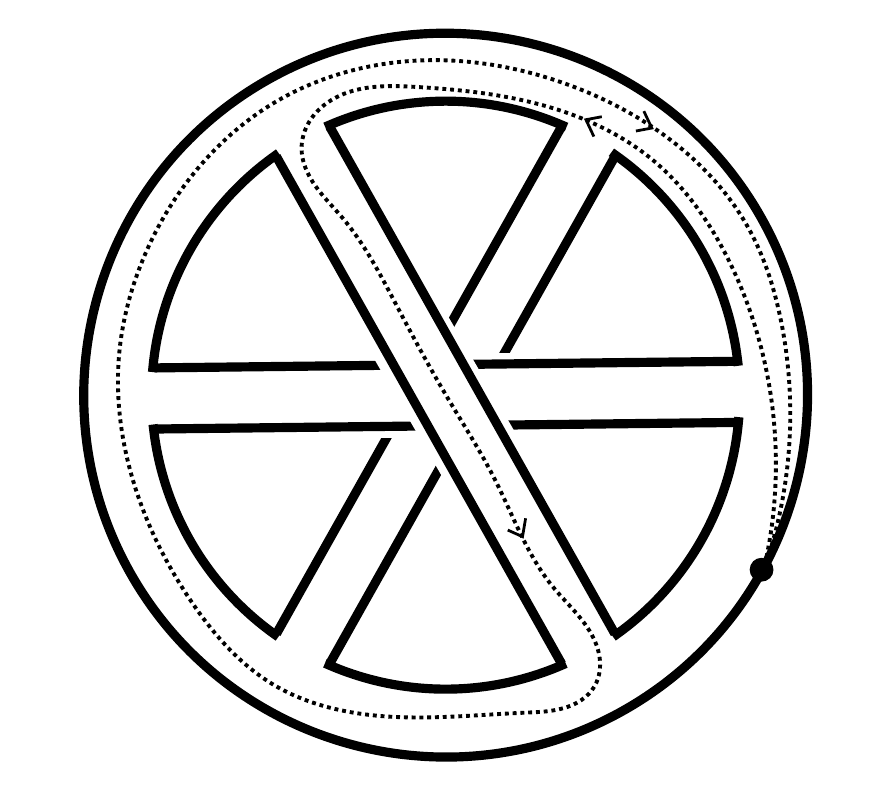
  \caption{The generator $c_2$ in the chord diagram $E_{3,3}$}
  \label{fig:fundamentalgroup}
\end{figure}
We calculate the fundamental group for the now familiar example of $E_{3,3}$
with walk length $1$, which should be trivial since this twist represents the
monodromy of the trefoil. Figure \ref{fig:fundamentalgroup} shows the chord
diagram, with chords replaced by bands, together with the basepoint and one
generator.

Writing down the generators and relations as described before, with
$\Tr = \T{E_{3,3},1}$,
we get that
\[
	\pi_1(M_{\Tr}) = \langle c_0, c_1, c_2, \omega \mid R_b \cup R_m \rangle,
\]
where $R_b$, the relations given by the internal boundaries of the chord diagram
(or the vertices of the graph), is
\begin{align*}
	R_b &= \left\{c_1^{} c_2^{-1} c_0^{-1}, c_0^{} c_1^{-1} c_2^{} \omega \right\},\\
	\intertext{and $R_m$, the relations given by the gluing map, is}
	R_m &= \left\{c_0 = c_1, c_1 = c_2, c_2 = c_0^{-1} \omega^{-1}\right\},
\end{align*}
with the obvious abuse of notation that $c_0 = c_1$ actually means $c_0 c_1^{-1}$.

From the relations in $R_m$ we immediately see that all $c_i$ are equal
to $c_1$, hence trivial by the first relation in $R_b$, and $\omega$ is
trivial as well. We therefore obtain the trivial group and, by the Poincaré
conjecture, the manifold is the 3-sphere, as we already knew.
\end{example}

\subsubsection{Homology of open books}
Using the Mayer-Vietoris sequence for the same pair that we used in the
calculation of the fundamental group --
$A = \Sigma \times \left[0,\sfrac{1}{2}\right]$
and
$B = \Sigma \times \left[\sfrac{1}{2},1\right]$
with intersection $D\Sigma$
-- we get that
\[
 H_1(M_\phi)
 	\cong
 \faktor{H_1(A)\oplus H_1(B)}{\ker(\iota_*-\kappa_*)}
 	=
 \faktor{H_1(A) \oplus H_1(B)}{\im(\mu_*,\nu_*)}
\]
where
$\iota\colon A \hookrightarrow M_\phi$
and
$\kappa\colon B \hookrightarrow M_\phi$
as well as
$\mu\colon D\Sigma \hookrightarrow A$
and
$\nu\colon D\Sigma \hookrightarrow B$
are the respective inclusion maps.
This is standard and corresponds to the abelianized version of the theorem of
Seifert and van Kampen.

In our case, we also get the isomorphism
\[
   H_1(M_\phi) \cong \ker(\mu_*,\nu_*).
\]
This map works as follows: If a cycle in $H_1(D\Sigma)$ is trivial in both
$H_1(A)$ and $H_1(B)$, it is in each one the boundary of a two-chain, which
combine to a two-cycle in $M_\phi$. Poincaré duality gives us then an element in
$H_1(M_\phi)$.

\subsection{Examples of open books given by \tat\ twists}
Using the above presentation of the fundamental group, we can calculate
some examples using the computer algebra system GAP.\footnote{\code{http://www.gap-system.org}}
Chapter~\ref{chap:software} will treat a computer program written in Java that produces
ready-made GAP code that describes the fundamental group for a given
\tat\ twist with one boundary component.
GAP provides many commands to examine the group.
Sometimes, the commands \code{IdGroup} or \code{StructureDescription}
will be able to identify the group in a list and output a description.
All the examples in this section (apart from three which have more than one boundary
component) have been found using a computer;
when there is a theoretical justification for the outcome, this
confirms the validity of the program.

\subsubsection{Spherical manifolds}
The GAP command \code{IsFinite} is sometimes able to tell whether the group
is finite.
If it is, we know that the manifold is a so-called
$\emph{spherical manifold}$, that is, that the fundamental group acts faithfully
by isometry on the 3-sphere and that the manifold is homeomorphic
to the quotient of the 3-sphere by this action. 
This statement was Thurston's \emph{elliptization conjecture}, a consequence
of the geometrization conjecture now proved by Perelman.
Unless the fundamental group is cyclic, there is a unique such quotient.

If it is cyclic, however, the manifold is a lens space $L(p;q)$ and there is a
family of them that share the same fundamental group.
But it is known which lens spaces are homeomorphic.
For example, all lens spaces of the form $L(2;q)$ are homeomorphic to
real projective space $\R\P^3$, hence whenever we see a manifold whose
fundamental group is $\Z/2\Z$, we know that it is $\R\P^3$.
The simplest \tat\ twist that produces $\R\P^3$ is the bifoil twist
$\Bi = \T{E_{2,2},1}$.

The book of Orlik about Seifert manifolds (\cite{Orlik1972}) gives a description
of all possible fundamental groups of spherical manifolds.
Powers of $\Bi$ also produce interesting examples:
The fundamental group of the open book of $\Bi^2$ is the binary dihedral group
$\DD$ of order 8, also called the dicyclic group $\mathrm{Dic}_2$ or
the quaternion group $\mathrm{Q}$.
From $\Bi^3$ we obtain the binary octahedral group, a group of order 48.

\subsubsection{Powers and branched covers}
Constructing the open book $M_{\phi^k}$ of the power $\phi^k$ of a
diffeomorphism $\phi$ corresponds to taking a cyclic branched cover of the manifold
$M_\phi$.
In particular, if $\phi$ is the monodromy of a knot, $M_{\phi^k}$
is a $k$-fold branched cover of $S^3$ branched over the knot.

For example, the twist with walk length $2$ around the elementary \tat\ graph
$E_{30,29}$ corresponds to the $(2,15)$-torus knot monodromy.
Its square produces a manifold whose fundamental group is $\Z/15\Z$, and which
therefore is a lens space $L$.
The double cover branched over the $(4,15)$-torus knot, call it $L'$, is itself
a double branched cover of $L$ because of the symmetry of the $(4,15)$-torus knot.
GAP can calculate its homology, which is still $\Z/15\Z$, but is not able to
calculate its fundamental group.
In 1983, Hodgson and Rubinstein have shown (\cite{HodgsonRubinstein1983}) that a
lens space occurs as the double branched cover of a unique knot in $S^3$, which
is in fact a 2-bridge knot, therefore $L'$ cannot be a lens space.
(Only the $(2,n)$-torus knots are also 2-bridge knots.
The bridge number of a $(p,q)$-torus knot is $\min(p,q)$; see
\cite{Schultens2007}.)
GAP can also calculate the homologies of the double branched covers over
the torus knots $(8,15)$ and $(16,15)$, which remain $\Z/15\Z$.

In the bifoil twist example above, we have seen ``quaternion space'' as a double,
and ``binary octahedral space'' as a triple branched cover over real projective
space.

There is an obvious symmetry for any diffeomorphism $\phi$:
\[
	M_{\phi^{-k}} = -M_{\phi^{\,k}},
\]
where $-M$ denotes $M$ with reversed orientation.
Any calculation with a negative walk length $-l$ will therefore result in a
fundamental group which is isomorphic to the one coming from the positive
walk length $l$.

\subsubsection{Homology spheres}
Using the GAP command \code{AbelianInvariants}, one finds the abelianization of the
fundamental group, that is, the first homology group $H_1(M_\phi)$.
If the fundamental group is perfect, meaning that $H_1(M_\phi)$ is trivial,
$M_\phi$ has the same homology as the 3-sphere and is called a homology sphere.
See the book of Saveliev (\cite{Saveliev2002}) and the introduction of
\cite{FintushelStern1991} for overviews.

We know that all $M_\phi$ are Seifert fibred.
Examples of Seifert fibred homology spheres are the so-called \emph{Brieskorn spheres} $\Sigma(p,q,r)$,
the links of the singularities $x^p+y^q+z^r = 0$, where $p$, $q$, and $r$ are
pairwise coprime positive integers.

The only perfect fundamental group of a 3-manifold that is finite is the binary icosahedral
group, the fundamental group of the Poincaré sphere.
Therefore, due to the elliptization
conjecture, the Poincaré sphere is the only homology sphere with finite fundamental group.
It arises as the 5-fold cyclic branched covering of the trefoil, that is, as
the open book of $\Tr^5$, the fifth power of the trefoil twist $\Tr = \T{E_{3,3},1}$.
The Poincaré sphere is also a Brieskorn sphere, namely $\Sigma(2,3,5)$.
More generally, the $k$-fold cyclic branched covering of the $(p,q)$-torus knot
is the Brieskorn sphere $\Sigma(p,q,k)$.
Indeed we find another homology sphere, $\Sigma(2,3,7)$ by looking at the open
book that belongs to $\Tr^7$, or $\Sigma(2,3,11)$ from
$\Tr^{11}$.

\subsubsection{Dehn surgery}
A \tat\ twist has a power which is isotopic to a composition of
Dehn twists around its boundary.
Two open books obtained from diffeomorphisms which differ by
boundary twists are related by some Dehn surgery along the boundary link.
In the special case where a \tat\ twist $\T{}$ is a torus knot monodromy, hence $M_{\T{}} \cong S^3$,
and $k$ is its order, $M_{\T{}^{1+nk}}$ is the result of $1/n$-surgery
on the knot.
In the general case, the framing would be determined by the algebraic intersection
number with the surface.

Take the example of the left-handed trefoil, which is the binding in the open
book for $\Tr^{-1}$.
The open book of $\Tr^5 = \Tr^6 \Tr^{-1}$ is the result of $+1$-surgery on the left-handed
trefoil, which is known to be the Poincaré sphere -- another way to see why
the calculation for $\Tr^5$ mentioned above is correct.

Like in this example, the homology never changes under this kind of surgery; this
can also be seen without referring to surgery from the fact that boundary twists
act trivially on the homology of the surface.
Therefore, and since the homologies of $M$ and $-M$ are the same, all the
$H_1(M_{\T{}^{r+nk}})$
as well as all the
$H_1(M_{\T{}^{-r+nk}})$
are isomorphic, where $\T{}$ is a twist of order $k$ and $n$ is an arbitrary integer.

When $\T{}$ is a \tat\ twist on a surface $\Sigma$ we get in particular that
\[
H_1(M_{\T{}^{nk}}) = H_1(M_{\id_\Sigma}) = H_1(\Sigma),
\]
which is free abelian.
In these cases, unless $\Sigma$ is a disk, the fundamental group is infinite.

\subsubsection{Selection of examples}
The following table contains a rather arbitrary selection of calculations.
The columns are the \tat\ graph with its genus and number of boundary components,
the walk length, which power of the twist with
minimal walk length is considered, the fundamental group of the open book,
its homology, and some remarks.

The symbol $\DD$, as before, denotes the binary dihedral group with 8 elements, or the quaternion group;
$\TT$ the binary tetrahedral group, which is of order 24 and isomorphic to $\mathrm{SL}(2,3)$;
$\OO$ the binary octahedral group, which is of order 48;
$\II$ the binary icosahedral group, which is of order 120 and isomorphic to $\mathrm{SL}(2,5)$.
$\infty$ denotes any infinite group, and $\Z_n$ is short for $\Z/n\Z$.
$1$, as well as $0$, denote the trivial group.
Where the fundamental group is not indicated, GAP was not able to tell
whether it is finite or not, most likely because it is not.

Graphs \textit{(i)}, \textit{(ii)}, and \textit{(iii)} are the ones from Figure \vref{fig:moretat}.

\newcommand \ZZ[2] {{\Z_{#1}}^{\mkern-1.5mu#2}} 
\vspace{2ex}
{\fontfamily{pplx}\selectfont     
{\renewcommand{\arraystretch}{1.1} 
\begin{longtable}{lrrlll}
\toprule
\multicolumn{3}{c}{twist} & \multicolumn{2}{c}{open book}\\
\cmidrule(r){1-3} \cmidrule(r){4-5}
graph       & $l$ &power& $\pi_1$   & $H_1$         & remark      \\
\ $(g,b)$   &     &       &         &               &             \\
\midrule
\endhead
$E_{2,2}$   & 1   & 1   & $\Z_2$    & $\Z_2$      & $\R\P^3$    \\
\ $(1,1)$   & 2   & 2   & $\DD$     & $\ZZ{2}{2}$ &             \\
            & 3   & 3   & $\OO$     & $\Z_2$      &             \\
            & 4   & 4   & $\infty$  & $\Z^2$      &             \\
            & 5   & 5   &           & $\Z_2$      &             \\
$E_{3,3}$   & 1   & 1   & $1$       & $0$         & $S^3$, trefoil \\
\ $(1,1)$   & 2   & 2   & $\Z_3$    & $\Z_3$      & lens space $L(3,1)$ \\
            & 3   & 3   & $\DD$     & $\ZZ{2}{2}$ &             \\
            & 4   & 4   & $\TT$     & $\Z_3$      &             \\
            & 5   & 5   & $\II$     & $0$         & Poincaré sphere \\
            & 6   & 6   & $\infty$  & $\Z^2$      &             \\
$E_{5,3}$   & 2   & 1   & $\Z_5$    & $\Z_5$      & lens space  \\
\ $(2,1)$   & 4   & 2   &           & $\Z_5$      &             \\
            & 6   & 3   &           & $\Z_5$      &             \\
            & 8   & 4   &           & $\Z_5$      &             \\
            & 10  & 5   & $\infty$  & $\Z^4$      &             \\
$E_{6,3}$   & 2   & 1   & $\Z_3$    & $\Z_3$      & lens space  \\
\ $(2,1)$   & 4   & 2   &           & $\ZZ{3}{2}$ &             \\
            & 6   & 3   & $\infty$  & $\Z^{2}$    &             \\
            & 8   & 4   &           & $\ZZ{3}{2}$ &             \\
            & 10  & 5   &           & $\Z_3$      &             \\
            & 12  & 6   & $\infty$  & $\Z^4$      &             \\
$E_{7,3}$   & 2   & 1   & $\Z/7\Z$  & $\Z_7$      & lens space  \\
\ $(3,1)$   & 4   & 2   &           & $\Z_7$      & also for powers 3,4,5,6 \\
            & 14  & 7   & $\infty$  & $\Z^6$      &             \\
$E_{7,5}$   &     &     &           &             & like for $E_{7,3}$ \\
\ $(3,1)$   &     &     &           &             &             \\
$E_{12,7}$  & 2   & 1   & 1         & 0           & $S^3$, $(3,4)$-torus knot \\
\ $(3,1)$   & 4   & 2   & $\DD$     & $\Z_3$      &             \\
            & 6   & 3   &           & $\ZZ{4}{2}$ &             \\
            & 8   & 4   &           & $\ZZ{3}{3}$ &             \\
            & 10  & 5   &           & 0           & homology sphere \\
            & 12  & 6   & $\infty$  & $\Z^2+\ZZ{2}{2}$ &       \\
            & 14  & 7   &			& 0           & homology sphere \\
            & 16  & 8   &           & $\ZZ{3}{3}$ &             \\
            & 18  & 9   &           & $\ZZ{4}{2}$ &             \\
            & 20  & 10  &           & $\Z_3$      &             \\
            & 22  & 11  &           & 0           & homology sphere \\
            & 24  & 12  & $\infty$  & $\Z^6$      &              \\
$E_{12,11}$ & 2   & 1   & $\Z_2$    & $\Z_2$      & $\R\P^3$     \\
\ $(3,1)$   & 4   & 2   & $\mathrm{D}^*_6$ & $\ZZ{2}{2}$ & order 24 \\
$E_{12,5}$  & 2   & 1   & $\Z_2$    & $\Z_2$      & $\R\P^3$     \\
\ $(4,1)$   & 4   & 2   &           & $\ZZ{2}{2}+\Z_3$ &         \\
$E_{12,3}$  & 2   & 1   & $\Z_6$    & $\Z_6$      &              \\
\ $(5,1)$   &     &     &           &             &              \\
$E_{60,19}$   & 2   & 1   & $\Z_2$    & $\Z_2$      & $\R\P^3$     \\
\ $(24,1)$    & 4   & 2   &           & $\ZZ{2}{2}+\Z_3$ &         \\
$E_{52,13}$   & 2   & 1   & $\Z_{26}$ & $\Z_{26}$   & lens space   \\
\ $(25,1)$    & 4   & 2   &           & $\ZZ{2}{2}+\ZZ{13}{2}$ &   \\
$E_{120,119}$ & 2   & 1   & $\Z_2$    & $\Z_2$      & $\R\P^3$     \\
\ $(30,1)$    & 4   & 2   & $\mathrm{D}^*_{60}$ & $\ZZ{2}{2}$ & order 240 \\
              & 6   & 3   &           & $\ZZ{2}{3}$ &              \\
$E_{120,59}$  & 2   & 1   & $\Z_4$    & $\Z_4$      & lens space   \\
\ $(45,1)$    &     &     &           &             &              \\
$E_{120,5}$   & 2   & 1   & $\Z_{20}$ & $\Z_{20}$   & lens space   \\
\ $(58,1)$    & 4   & 2   &           & $\Z_3\!+\!\ZZ{4}{2}\!+\!\ZZ{5}{2}$ & \\
\textit{(i)}  & 2   & 1   & $\Z_2$    & $\Z_2$      & $\R\P^3$     \\
\ $(1,4)$     &     &     &           &             &              \\
\textit{(ii)} & 1   & 1   & $\Z_4$    & $\Z_4$      & lens space   \\
\ $(1,2)$     &     &     &           &             &              \\
\textit{(iii)}& 2   & 1   & 1         & 0           & $S^3$        \\
\ $(0,2)$     & 4   & 2   & $\Z_2$    & $\Z_2$      & $\R\P^3$     \\
              & $2n$& $n$ & $\Z_{2n}$ & $\Z_{2n}$   &              \\

\end{longtable}
}
}

All the elementary \tat\ graphs of the form $E_{2n,2n}$ with walk length $1$,
or equivalently $E_{4n,4n-1}$ with walk length $2$ can easily be shown to
produce $\R\P^3$ as its open book, as suggested by the list.
From their powers we therefore get branched covers of $\R\P^3$.

\chapter{Fibred knots in $\R^3$}\label{chap:elastic}
\section{Introduction}
\checkhere
This chapter digresses from the study of \tat\ twists and treats the subject
of fibred links in the 3-sphere, which already appeared in the previous one.
Its main result is a criterion for fibredness.
Whereas fibredness is a property of links and surfaces in $S^3$, it is
very natural to use the criterion in $\R^3$.

Recall that a knot or link $K$ in $S^3$ is called \emph{fibred} if the link complement $S^3
\smallsetminus K$ admits the structure of a fibration over $S^1$, and moreover, the closures
of the fibres are compact surfaces that intersect exactly in $K$.
The closures of fibres, called \emph{fibre surfaces}, are Seifert surfaces for the link.
When $K$ is oriented, the fibers are required to induce the correct orientation on it.

Many examples come from plane curve singularities: If a polynomial function
$f\colon \C^2 \to \C$ has an isolated singularity at $0$, we can define $K$ to
be the intersection of $f$'s zero set with a sufficiently small transverse
sphere, \[K=\left\{p \in \C^2 \mid f(p)=0 \textrm{ and } \abs{x(p)}^2 +
\abs{y(p)}^2 = \varepsilon \right\},\] and the projection $F\colon S^3\smallsetminus K
\to S^1$ to be the quotient $f/\abs{f}$; see Milnor's book \cite{Milnor1968}. The simplest examples of such
\emph{algebraic links} are the trivial knot, described by the function
$f(p)=x(p)$; the Hopf link, described by $f(p)=x(p)^2+y(p)^2$; and the trefoil,
described by $f(p)=x(p)^2+y(p)^3$. However, there are many knots which are
fibred but not algebraic, for instance the figure-eight knot. And not every knot
is fibred: A fibred knot has a monic Alexander polynomial,
therefore knots like $5_2$ or $6_1$ (in Rolfsen's notation) are not fibred.

The converse of this criterion is false, but others exist. In 1962, John
Stallings showed that a link is fibred if and only if the commutator subgroup of
its fundamental group is finitely generated (\cite{Stallings1962}). In 1986,
David Gabai presented his theory of sutured manifolds (\cite{Gabai1986}), which
in many cases allows to decide whether or not a link is fibred. More recently,
Yi Ni has shown that Knot Floer homology detects whether a knot is fibred
(\cite{Ni2007,Ni2009}).

If we look at the fibre $\Sigma$ over 1, that is
$\Sigma = \overline{F^{-1}(1)} = F^{-1}(1) \cup K$,
and then follow the points of $\Sigma$
as it is moved through the fibration, we get a diffeomorphism of $\Sigma$ called
the \emph{monodromy}. This can be done by choosing a vector field on $S^3$
that is zero on $K$ and otherwise projects
to a field of unit tangent vectors on $S^1$. The monodromy is unique up to isotopy. In particular,
we can look at the image of a properly embedded arc under the monodromy.
The interior of such an arc is moved by the flow of the vector field
through the complement of $\Sigma$ and, in general, ends up in a different
position.

\begin{figure}
\centering
\def\svgwidth{0.5\textwidth}
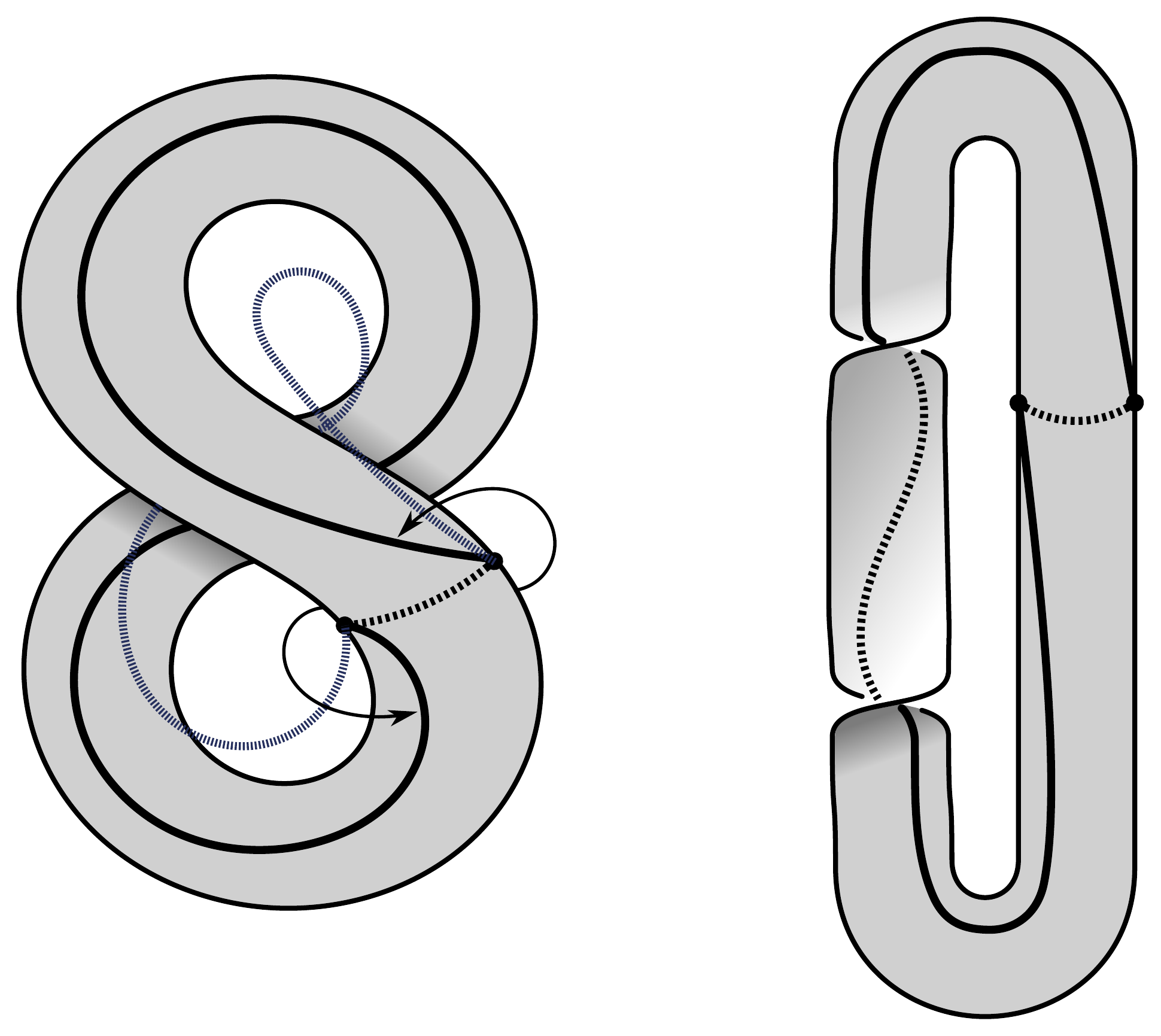
  \caption{Two views of the same Hopf band with the action of the monodromy on an elastic cord}
  \label{fig:hopfband}
\end{figure}

\checkhere
In this chapter we will see how and why studying such arcs is sufficient to determine
whether a knot is fibred as well as to describe the monodromy. It turns out that
all one needs to study fibre surfaces are a bunch of those elastic luggage
cords with hooks. The main statement is: \emph{If every elastic cord, attached to
the boundary of the surface, can be dragged to the other side of the surface,
the knot is fibred; and the monodromy is determined by where the elastic cords
end up.}

The results cited \checkhere in this chapter are formulated in the PL category,
hence all statements about surfaces and elastic cords are to be understood in the piecewise
linear sense as well, even if not explicitly stated, or drawn.
Of course, every continuous movement of an elastic cord can be piecewise linearly
approximated.

\section{Elastic cords}
From now on, let $\Sigma \subset S^3$ be an embedded connected compact
oriented surface with boundary. We will often have to thicken $\Sigma$ in a
specific way, illustrated by Figure \ref{fig:lens_thickening}, which we call
a ``lens thickening'' and is natural in the context of fibred links. This
thickening can be imagined to be very thin and is mainly used to distinguish
the two sides of the surface $\Sigma$.

\begin{definition}
Let $\NN(\Sigma)$ be a closed tubular neighbourhood ``with boundary'' of $\Sigma$,
parameterized by $\tau \colon \Sigma \times [-1,1] \to \NN(\Sigma)$. Let $h
\colon\Sigma \to [0,1]$ be a smooth function which is zero on the boundary of
$\Sigma$ and positive on its interior. The image $\mathcal{L}(\Sigma)$ of the map $(p,t)
\mapsto \tau(p,h(p)\cdot t)$, together with its structure as a fibration 
$\LL(\Sigma)\smallsetminus\partial\Sigma \to [-1,1]$ given by the parameter $t$, is
called a \emph{lens thickening} of $\Sigma$.
\end{definition}

Some further terminology: The image of $\Sigma \times ]0,1]$ lies \emph{above},
the image of $\Sigma \times [-1,0[$ \emph{below $\Sigma$}. The part of the
boundary of $\LL(\Sigma)$ which lies above $\Sigma$ will be denoted by
$\partial^+\LL(\Sigma)$, the part below by $\partial^-\LL(\Sigma)$. Finally, let
$\EE(\Sigma) = \overline{S^3 \smallsetminus \LL(\Sigma)}$, the exterior of $\Sigma$. We
will also tacitly remember the projection to $\Sigma$ induced by the tubular
neighbourhood structure, but this projection could be reconstructed up to
isotopy from the fibration structure.

\begin{figure}
\centering
\def\svgwidth{0.5\textwidth}
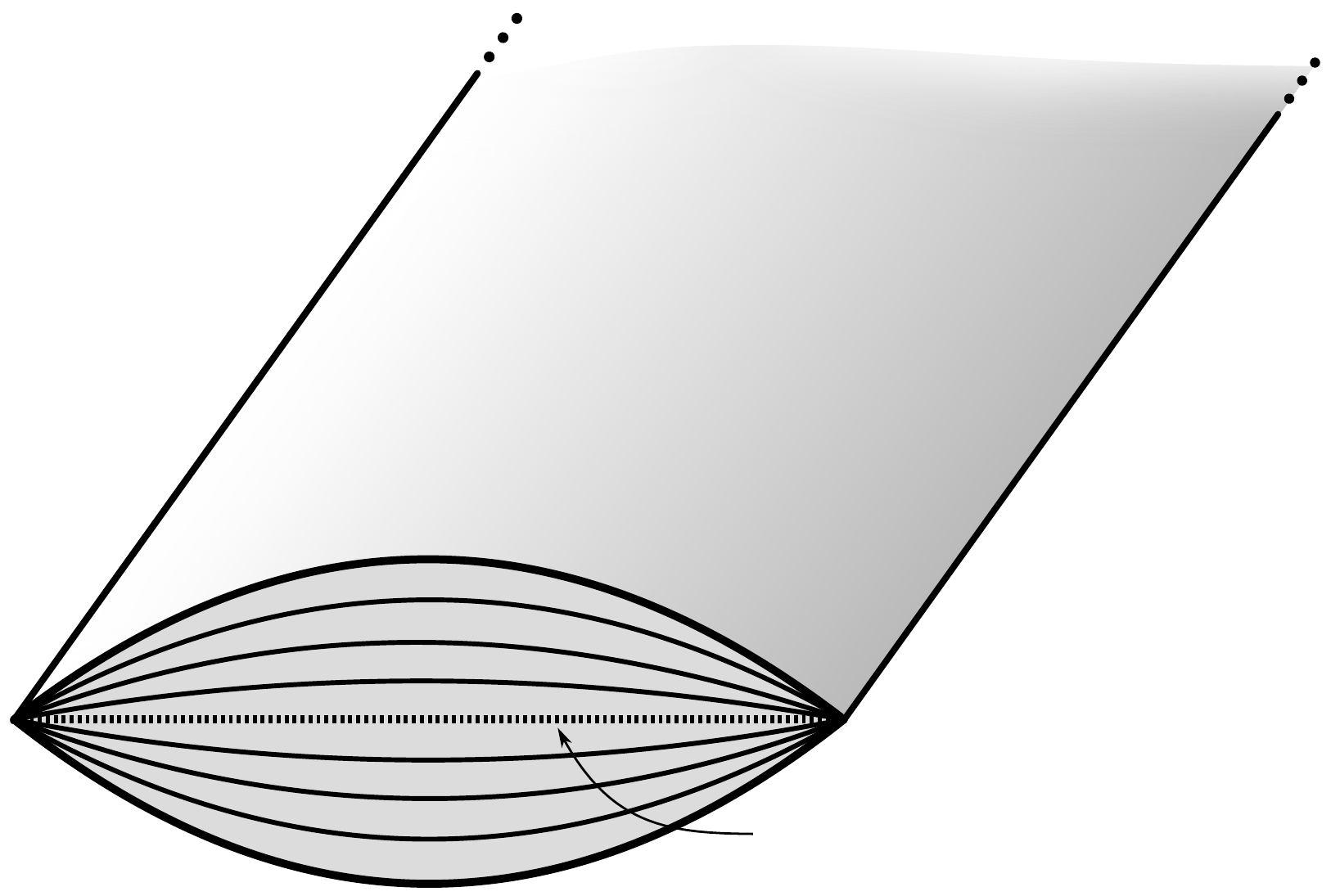
  \caption{Slice through a lens thickening of a band}
  \label{fig:lens_thickening}
\end{figure}

\begin{definition}
Choose a fixed lens thickening $\LL(\Sigma)$. An \emph{elastic cord} attached to
$\Sigma$ is an embedded arc in $\EE(\Sigma)$ whose
endpoints lie on $\partial\Sigma$.

We say that an elastic cord is spanned above $\Sigma$ when its interior is
contained in $\partial^+\LL(\Sigma)$, spanned below $\Sigma$ when its interior is
contained in $\partial^-\LL(\Sigma)$. A cord spanned below $\Sigma$ can be
\emph{dragged to the other side} of $\Sigma$ if there is an isotopy of elastic cords moving
it to a cord above, while keeping its endpoints fixed.
\end{definition}

\begin{theorem}[existence of a fibration]
\label{thm:existence}
If every elastic cord on $\Sigma$ can be dragged to the other side, $\Sigma$ is a
fibre surface.
\end{theorem}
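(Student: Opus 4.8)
The plan is to build the fibration by hand, starting from a single elastic cord and using the hypothesis to spread the construction over all of $\Sigma$. First I would pick a collection of properly embedded arcs $\gamma_1,\ldots,\gamma_k$ in $\Sigma$ that cut it into a single disk; this is possible since $\Sigma$ has nonempty boundary. For each $\gamma_i$, push it slightly below $\Sigma$ to get an elastic cord $\gamma_i^-$ spanned below; by hypothesis each such cord can be dragged to the other side, producing an isotopy $\Gamma_i \colon [0,1]\times\gamma_i \to \EE(\Sigma)$ from $\gamma_i^-$ to a cord $\gamma_i^+$ spanned above $\Sigma$, keeping endpoints on $\partial\Sigma$ fixed throughout. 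The image of each $\Gamma_i$ is a disk $D_i$ whose interior sits in $\EE(\Sigma)$ and which meets $\LL(\Sigma)$ only in $\gamma_i^\pm$ and the arc of $\partial\Sigma$ between the two endpoints.

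Next I would sweep these disks around. The union $\bigcup_i D_i$ together with $\Sigma$ itself cuts $S^3$ into pieces; the key claim is that each complementary piece is a ball, namely (the closure of) a copy of the disk $\Sigma\smallsetminus(\gamma_1\cup\cdots\cup\gamma_k)$ crossed with an interval. To see this, I would argue that $\EE(\Sigma)$ cut along the $D_i$ is simply connected — the $D_i$ realize a full system of ``dual'' disks to the cores of the bands of $\Sigma$ — and then use the fact that a simply connected piece of $S^3$ bounded by a sphere (made of the two copies of the cut-open $\Sigma$ and the faces of the $D_i$) is a ball, by the Schoenflies/Alexander theorem (Alexander's Theorem is available to us). Reassembling the balls gives a product structure $\Sigma\times[0,1]$ on $\EE(\Sigma)$ glued to the lens thickening $\LL(\Sigma)$, and the two structures fit together along $\partial^\pm\LL(\Sigma)$ by construction. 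Concatenating the $[-1,1]$-parameter of $\LL(\Sigma)$ with the $[0,1]$-parameter of this product and closing up yields a fibration $S^3\smallsetminus\partial\Sigma \to S^1$ whose fiber closures are copies of $\Sigma$ and whose binding is $\partial\Sigma$; the monodromy is read off from how the arcs $\gamma_i^+$ sit relative to $\gamma_i$, exactly as the elastic-cord picture predicts.

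The main obstacle I anticipate is the ball-recognition step: making precise that after cutting $\EE(\Sigma)$ along all the $D_i$ one genuinely obtains a ball, rather than merely a simply connected homology ball, and doing so uniformly enough that the pieces glue back to an honest product. One has to be careful that the dragging isotopies for different $\gamma_i$ can be chosen with disjoint (or at least controllably intersecting) images, so that the $D_i$ are disjoint disks; a priori the hypothesis only gives each cord separately, so I would need to perturb and use an innermost-disk/cut-and-paste argument to disjoin them, possibly replacing the $\gamma_i$ one at a time and re-cutting $\Sigma$ at each stage. Once disjointness is arranged, the complementary regions are $3$-manifolds bounded by $2$-spheres inside $S^3$, and irreducibility of $S^3$ (equivalently Alexander's theorem) finishes the identification. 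The remaining verifications — that the product structures on the two halves match along $\partial^{\pm}\LL(\Sigma)$, that the resulting map to $S^1$ is a submersion, and that the fiber over $1$ is isotopic to $\Sigma$ — are then routine.
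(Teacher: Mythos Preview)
Your approach is essentially the paper's: pick a cut system of arcs, realize each dragging as a disk in $\EE(\Sigma)$, make the disks pairwise disjoint by innermost-circle/arc surgery, and then recognize the complement of $\LL(\Sigma)$ together with the thickened disks as a ball via Alexander's theorem, after which the fibration extends by hand.

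There is one genuine gap you should close. You write that the image of the isotopy $\Gamma_i$ ``is a disk $D_i$'', but the trace of an isotopy of embedded arcs need not be embedded: different time slices can cross each other, so a priori you only have a singular disk. The paper handles this with Dehn's lemma. Near $\partial\LL(\Sigma)$ the isotopy can be made injective (the cord starts on $\partial^-\LL(\Sigma)$ and ends on $\partial^+\LL(\Sigma)$, and its interior stays away from $\Sigma$), so the singular disk has an embedded collar of its boundary; Dehn's lemma then replaces it by an embedded disk with the same boundary circle $\gamma_i^- \cup \gamma_i^+$. Only after this step do you have honest disks to which the innermost-surgery disjointing argument applies.

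For the ball-recognition step, the paper does exactly what you propose but makes the sphere check concrete: rather than arguing simple connectivity, it computes the Euler characteristic of $\partial\bigl(\LL(\Sigma)\cup\bigcup_i \tilde D_i\bigr)$ directly from the polygon obtained by cutting $\Sigma$ along the $\gamma_i$, gets $2$, and then invokes Alexander. Your version would work too, but the Euler-characteristic count is cleaner than chasing $\pi_1$.
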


In fact, it suffices to study a collection of disjoint cords whose projections generate
$H_1(\Sigma,\partial\Sigma)$, or equivalently, cut the surface into one disk.
Moreover, this existence statement can even be slightly strengthened to allow
for the dragged cords to cross over themselves, as in Theorem \ref{thm:stronger}
below. We also have:

\begin{theorem}[uniqueness of the monodromy]
\label{thm:uniqueness}
Monodromies are unique up to isotopy.
More precisely:
If $\Sigma$ is a fibre surface, there is only one way to drag a cord spanned below it to a cord
spanned above it, up to isotopy.
The position of the dragged cords determines the monodromy.
\end{theorem}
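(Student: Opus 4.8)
The first assertion — that monodromies are well defined up to isotopy — is the classical fact already recalled in Section~\ref{sec:openbooks}: any two vector fields on $S^3\smallsetminus K$ transverse to the pages and projecting to $\partial\theta$ differ by a section of an affine subbundle of $T(S^3\smallsetminus K)$, so their straight-line homotopy is a path of such vector fields and induces an isotopy between the two return maps. The real content is the refined, cord-theoretic statement, and the whole argument will run through one structural observation about the exterior $\EE(\Sigma)$ of a fibre surface.

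\emph{Step 1: the product model.} Since $\Sigma$ is a fibre surface, isotope it so that it is the page $\Sigma=\overline{\pi^{-1}(1)}$ of a fibration $\pi\colon S^3\smallsetminus K\to S^1$, and isotope the lens thickening so that $\LL(\Sigma)=\overline{\pi^{-1}(I)}$ for a small closed arc $I\subset S^1$ about $1$ (the theorem is stated up to isotopy, so nothing is lost). Then $\EE(\Sigma)=\overline{\pi^{-1}(S^1\smallsetminus\mathring I)}$ is the restriction of a surface bundle over an arc, hence a product: trivialising it by a monodromy vector field gives a diffeomorphism $\Psi\colon\Sigma\times[0,1]\to\EE(\Sigma)$ carrying $\Sigma\times\{0\}$ to $\partial^-\LL(\Sigma)$ and $\Sigma\times\{1\}$ to $\partial^+\LL(\Sigma)$, and respecting the pinching along $\partial\Sigma$ (both faces $\partial^{\pm}\LL(\Sigma)$ have the same boundary $\partial\Sigma$, which is where elastic cords attach). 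The one piece of bookkeeping that must be done carefully is this: chasing the definition of the monodromy through the cut-open mapping torus shows that the tubular-neighbourhood projection $p\colon\EE(\Sigma)\to\Sigma$ restricts on $\partial^-\LL(\Sigma)$ to something isotopic to the first-coordinate projection $\Sigma\times\{0\}\to\Sigma$, while on $\partial^+\LL(\Sigma)$ it restricts to $\phi$ composed with the first-coordinate projection (or the symmetric statement with $\phi^{-1}$, according to the orientation conventions of Section~\ref{sec:openbooks}); keeping the endpoints on $\partial\Sigma$ fixed throughout removes any boundary-twist ambiguity.

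\emph{Step 2: existence and uniqueness of the drag.} Under $\Psi^{-1}$ a cord $\gamma_-$ spanned below $\Sigma$ is an arc $\alpha\times\{0\}$ with $\alpha\subset\Sigma$ properly embedded; pushing it up the product exhibits a dragging to the cord $\gamma_+:=\Psi(\alpha\times\{1\})$, which is spanned above, so a fibre surface does satisfy the dragging property (the easy converse of Theorem~\ref{thm:existence}). If $\gamma_+'$ is any other cord spanned above obtained by dragging $\gamma_-$, then $\gamma_+$ and $\gamma_+'$ are isotopic through elastic cords (transitivity of that relation), hence homotopic rel endpoints in $\EE(\Sigma)\cong\Sigma\times[0,1]$; applying the deformation retraction to $\Sigma\times\{1\}$ and then a standard fact about surfaces (homotopic properly embedded arcs are isotopic) gives $\gamma_+'\simeq\gamma_+$ through cords spanned above. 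Thus the dragged cord is well defined up to isotopy; as the remark after Theorem~\ref{thm:existence} permits, it is enough to carry this out for a disjoint family of cords whose projections cut $\Sigma$ into a disk.

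\emph{Step 3: recovering the monodromy, and the main difficulty.} By Step~1, if $\gamma_-$ projects to $\alpha$ then the dragged cord $\gamma_+$ projects to $\phi(\alpha)$ up to isotopy rel endpoints. Choose disjoint properly embedded arcs $\alpha_1,\dots,\alpha_m$ cutting $\Sigma$ into a disk and corresponding cords $\gamma_1,\dots,\gamma_m$ spanned below; the isotopy classes of their dragged partners are exactly $\phi(\alpha_1),\dots,\phi(\alpha_m)$. A mapping class of a surface with boundary that fixes the boundary pointwise is determined by its effect on such a cutting system: if $\psi$ agrees with $\phi$ on each $\alpha_i$ up to isotopy rel endpoints, then $\phi^{-1}\psi$ fixes every $\alpha_i$ up to isotopy, hence after an ambient isotopy fixes $\bigcup_i\alpha_i$ pointwise, hence equals the identity by Alexander's lemma applied to the complementary disk. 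Therefore the positions of the dragged cords determine the monodromy in $\mcg(\Sigma)$, which completes the proof. The step I expect to be the genuine obstacle is Step~1: making the identification $\EE(\Sigma)\cong\Sigma\times[0,1]$ honest near $\partial\Sigma$, where the product is pinched and the two faces share their boundary, and — above all — pinning down the two boundary projections precisely enough that one reads off $\phi$ itself, and not $\phi$ postcomposed with a power of the boundary Dehn twist, nor $\phi^{-1}$. Once the product model with its two projections is in hand, Steps~2 and 3 are routine surface and $3$-manifold topology.
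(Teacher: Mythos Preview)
Your proof is correct, but it follows a genuinely different route from the paper's.

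The paper first proves as a separate proposition that a fibre surface is incompressible in the link complement, and then argues the uniqueness of dragging by contradiction: if a single cord $\alpha$ below could be dragged to two nonisotopic cords $\alpha_1',\alpha_2'$ above, the disk-surgery machinery already developed in the proof of Theorem~\ref{thm:existence} produces two embedded disks meeting only along $\alpha$; their union is a compressing disk for $\Sigma$ with boundary $\alpha_1''\cup\alpha_2''$, contradicting incompressibility. The last clause of the theorem is then dispatched in a line (disks go to disks).

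You instead exploit directly that $\EE(\Sigma)\cong\Sigma\times[0,1]$ for a fibre surface, so any two draggings are homotopic rel endpoints, retract to homotopic arcs in $\Sigma\times\{1\}$, and are therefore isotopic by the standard surface fact; the monodromy is then read off via the Alexander method on a cutting system. This bypasses both the incompressibility proposition and the innermost-disk surgery entirely, at the cost of the bookkeeping you flag in Step~1 (matching the product trivialisation near $\partial\Sigma$ so that one really sees $\phi$ rather than $\phi$ up to a boundary twist). The paper's approach has the virtue of staying inside the cut-and-paste idiom it has already set up and of recycling the existence machinery; yours is more structural, makes Step~3 cleaner, and would generalise more readily to other ambient $3$-manifolds once the product model is in place.
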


These two theorems have been obtained in collaboration with
Sebastian Baader.
\checkhere
In their proofs, we will mainly be concerned with resolving
singularities of images of disks, a standard problem in 3-manifold topology. We
will use, to some extent, four important classical theorems, namely Dehn's
lemma, the sphere theorem, Alexander's theorem, and later the loop theorem.
The first three are stated right below.
References for those statements are the book of Bing (\cite{Bing1983}) and the book
of Hempel (\cite{Hempel1976}), both with proofs. PL manifolds and maps are used,
which will be implicit in the statements below. The unpublished book fragment of Hatcher
(\cite{Hatcher2007}) states the theorems for continuous maps.

\begin{DEHNSLEMMA}[Papakyriakopoulos, 1957, \cite{Papakyriakopoulos1957}; see {\cite[p.~39]{Hempel1976}}]
Let $M$ be a 3-manifold and $f\colon D^2 \to M$ a map
such that for some open neighbourhood $A$ of $\partial D^2$ the restriction
$f|_A$ is an embedding and $f^{-1}(f(A)) = A$. Then there is an embedding $g\colon D^2 \to M$ such that $\partial(f(D^2)) = \partial(g(D^2))$.\qed
\end{DEHNSLEMMA}
\begin{SPHERE}[Papakyriakopoulos, 1957, \cite{Papakyriakopoulos1957}; see {\cite[p.~40]{Hempel1976}}]
Let $M$ be an orientable 3-manifold with nontrivial $\pi_2(M)$.
Then there exists an embedding of the 2-sphere which is nontrivial in $\pi_2(M)$.\qed
\end{SPHERE}
\begin{ALEXANDER}[Alexander, 1924, \cite{Alexander1924}]
A 2-sphere that is embedded in $S^3$ bounds a 3-ball on both sides.\qed
\end{ALEXANDER}

\begin{proof}[Proof of existence (Theorem \ref{thm:existence})]
We prove the statement in three steps:

\begin{enumerate-packed}
  \item Each cord can be moved along an embedded disk;
  \item for cords which do not intersect, those disks can be chosen to be
  disjoint;
  \item the fibration structure on the union of $\Sigma$ and the right amount of
  such disks can be extended (uniquely) to the complement, which is a ball.
\end{enumerate-packed}

The first claim follows from Dehn's lemma. Choose one cord $\alpha$ and drag it
to the other side. Since the movement of its interior happens away from the
surface, we can choose a small neighbourhood $\NN_\varepsilon$ of $\LL(\Sigma)$ and
modify the isotopy $H\!\colon [0,1]\times\alpha \to \EE(\Sigma)$ to
make it injective on $H^{-1}(\NN_\varepsilon)$. Dehn's lemma now says that there
is an embedded disk whose boundary is the original one, namely the union of the
two cords $H(\{0\}\times\alpha) \cup H(\{1\}\times\alpha)$.

Now take two disjoint cords $\alpha$ and $\beta$ and find two embedded disks
$D_\alpha$ and $D_\beta$ along which they can be dragged to cords $\alpha'$ and
$\beta'$. The disks, as well as $\alpha'$ and $\beta'$ themselves, might
intersect each other. To make them disjoint, start by perturbing one of the
disks slightly to make it transverse to the other one. Now, they intersect in
some disjoint embedded circles and arcs. Since the disks do not intersect the
surface and the four cords lie on a lens thickening of it, we can ask
furthermore that the boundary of each disk do not intersect the interior of the
other disk. The arc components of the intersection now have their endpoints on
the boundary, more precisely on $\alpha' \cap \beta'$.

The goal is to successively remove \emph{innermost} circles and arcs. A circle
is called innermost for a disk if it contains no other circles or arcs.
An arc divides the disk into two parts, only one of which touches $\alpha$ or
$\beta$. We will call the other part its ``inner disk'' and say that the arc is
innermost if its inner disk contains no other arcs or circles.

Here is what we do with circles: Choose one which is innermost for $D_\beta$. In
$D_\beta$, this circle bounds a disk $D_\beta'$. The part of $D_\alpha$ which
lies inside the circle is also a disk, call it $D_\alpha'$, but one that
possibly intersects $D_\beta$ many times. We can now do surgery to modify
$D_\alpha$ and reduce the number of intersections by at least one: Remove
$D_\alpha'$, together with a small annulus around it, from $D_\alpha$ and
replace it by a disk parallel to $D_\beta'$. Because the circle was innermost
for $D_\beta$, we do not introduce any self-intersections.

Arcs are treated in the same way. When an arc is innermost for $D_\beta$, the
inner disk in $D_\alpha$ is removed, together with a small band around it, and
replaced with a disk parallel to the inner disk bounded in $D_\beta$.

Repeating these steps produces two disjoint disks in the end. Because we
have only ever modified $D_\alpha$, we can continue this process to make $D_\alpha$
disjoint from as many other disks as we like, and by induction we can choose
all disks to be disjoint.

As a remark:
$\pi_2(S^3\smallsetminus\Sigma)$ is trivial, for if it were not, there would exist an
essential sphere (by the sphere theorem) which would bound two balls in $S^3$
(by Alexander's theorem). Since $\Sigma$ is connected, only one of the balls can
contain $\Sigma$, so the sphere was not essential after all. This means that our
disk $D_\alpha$ could indeed have been moved to the surgered disk by a homotopy.

To build the fibration, we choose a maximal nonseparating collection of
disjoint properly embedded arcs $\gamma_i$ in $\Sigma$, push them to
$\partial^-\LL(\Sigma)$, and construct a disk $D_i$ for each of them as before,
which we thicken slightly to a two-handle $\tilde{D_i}$.
There will be $b_1=2g+r-1$ of them, where $g$ is the genus and $r$ the number of
boundary components of $\Sigma$, and $b_1$ is the rank of
$H_1(\Sigma,\partial\Sigma)$ and $H_1(\Sigma)$. The fibration is already defined
on $\LL(\Sigma)\smallsetminus\partial\Sigma$; it can be extended to the (thickened)
disks $\tilde{D_i}$ where it reflects the movement of the cords through them,
and we would like to extend it to the rest of $S^3\smallsetminus\partial\Sigma$.

\begin{figure}
\centering
\def\svgwidth{0.4\textwidth}
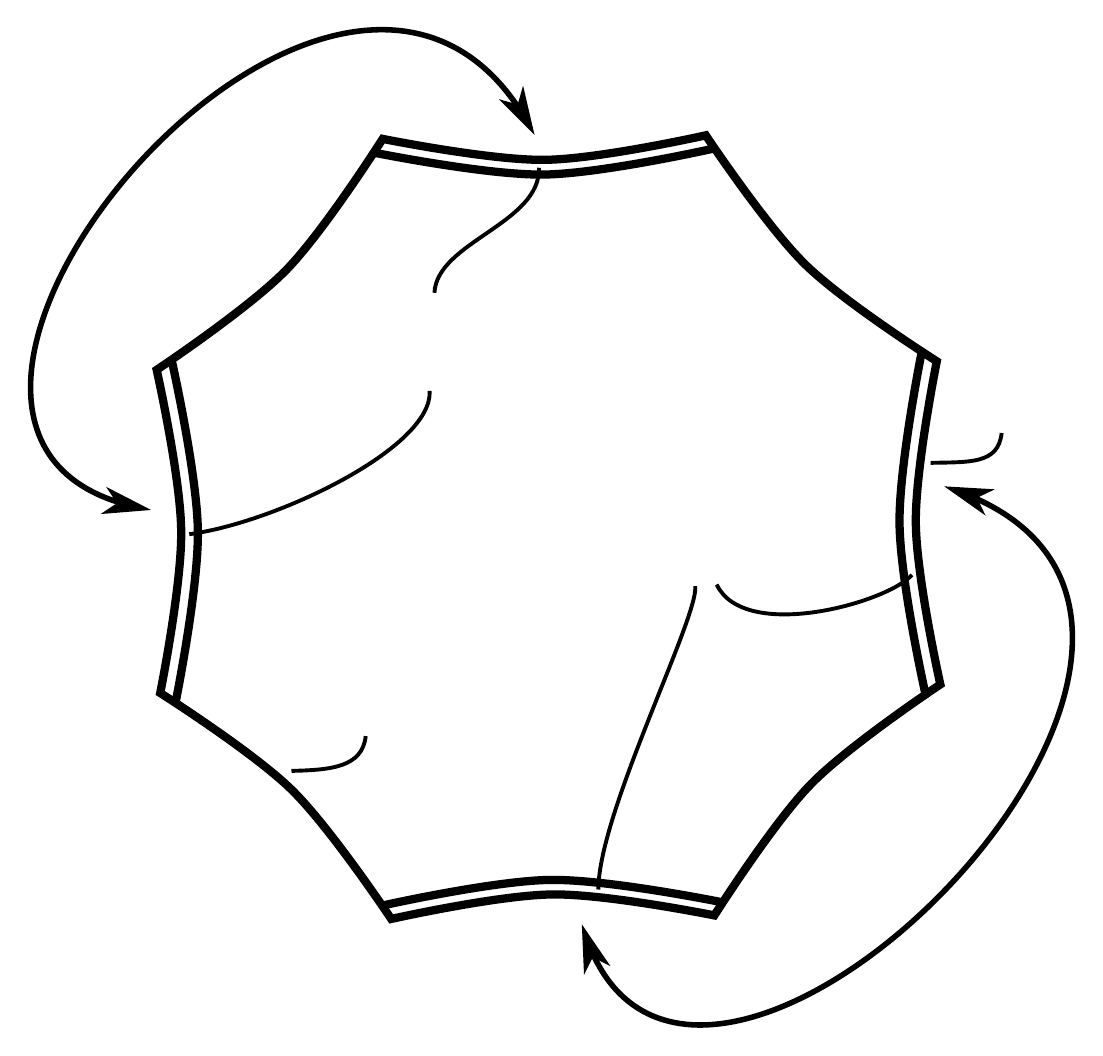
  \caption{An octagon obtained by cutting a three-holed sphere, for which $b_1=2$}
  \label{fig:polygon}
\end{figure}

The boundary of $\LL(\Sigma) \cup \bigcup_{i=1}^{b_1}\tilde{D_i}$ is a sphere. We
can prove this by simply calculating its Euler characteristic: Cutting $\Sigma$
along the $b_1$ arcs produces a $4b_1$-gon whose edges alternatingly belong to
the boundary of $\Sigma$ and to the cutting arcs. By pushing the arcs down, we
likewise cut $\partial^-(\Sigma)$ to a polygon like in Figure \ref{fig:polygon}.
$\partial^+(\Sigma)$, cut along the dragged arcs, will also look like Figure
\ref{fig:polygon} since on $\Sigma$ any two choices of a nonseparating
collection of disjoint embedded arcs with the same endpoints as the $\gamma_i$
are related by a diffeomorphism of $\Sigma$.
The sphere will consist of the cut $\partial^-(\Sigma)$ and
$\partial^+(\Sigma)$, glued together along the boundary parts, and of $2b_1$
disks attached at the cuts; we end up with $2 + 2b_1$ disks, $2b_1 + 4b_1$
edges, and $4b_1$ vertices.

By Alexander's theorem, the sphere bounds a ball on
both sides. The ball on the outside looks like in Figure
\ref{fig:flying_saucer}, and it is easy to extend the fibration to it. Another
way to look at it is to reglue the parts of its boundary which border the
thickened disks (the windows of the flying saucer in the picture), respecting their
fibration induced by the disks, and get a handlebody to which we can give the
structure of a lens thickening.

This completes the fibration of the link complement and proves the theorem.
\end{proof}

\begin{figure}
\centering
\def\svgwidth{0.51\textwidth}
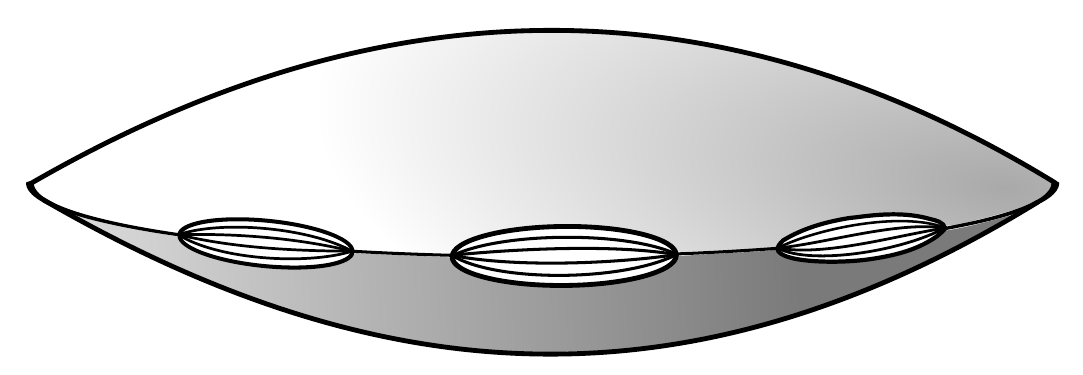
  \caption[The remaining ball with a prescribed fibration on its boundary]%
  {The remaining ball with a prescribed fibration on its boundary. Top and bottom are one fibre each; the rim, not including the windows, is $\partial\Sigma$.}
  \label{fig:flying_saucer}
\end{figure}

To prove the statement of Theorem \ref{thm:uniqueness}, uniqueness of the monodromy, we need the
following well-known proposition:
\begin{proposition}
A fibre surface is incompressible in the link complement.
\end{proposition}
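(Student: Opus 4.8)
The plan is to derive incompressibility from the bundle structure of the link exterior together with the asphericity of surfaces with boundary. Write $M$ for the link exterior, i.e.\ the complement in $S^3$ of an open tubular neighbourhood of $K$. Being a fibre surface, $\Sigma$ (compact, connected, with $\partial\Sigma\subset\partial M$) realizes $M$ as a surface bundle over $S^1$ with fibre $\Sigma$ — the closed-up version of the fibration $S^3\smallsetminus K\to S^1$, obtained by pushing the fibres out to $\partial M$ — so that $M$ is the mapping torus of the monodromy $\phi\colon\Sigma\to\Sigma$.

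First I would invoke the homotopy exact sequence of the bundle $\Sigma\hookrightarrow M\to S^1$. Since $\Sigma$ is a compact surface with nonempty boundary it is homotopy equivalent to a finite wedge of circles; in particular it is connected and aspherical, so $\pi_0(\Sigma)=0=\pi_2(\Sigma)$. Combined with $\pi_2(S^1)=0$, the exact sequence degenerates to the short exact sequence
\[
	1 \longrightarrow \pi_1(\Sigma) \longrightarrow \pi_1(M) \longrightarrow \pi_1(S^1)=\Z \longrightarrow 1,
\]
whence the inclusion $\Sigma\hookrightarrow M$ is $\pi_1$-injective. Equivalently, one may pass to the infinite cyclic cover $\widetilde M\cong\Sigma\times\R$ associated to the fibration and observe that it deformation retracts onto a copy of $\Sigma$, which gives the same conclusion without the exact sequence.

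Next I would translate this into the absence of compressing disks. Suppose, for contradiction, that $\Sigma$ admits a compressing disk: an embedded disk $D\subset M$ with $D\cap\Sigma=\partial D$ and $\partial D$ not bounding a disk in $\Sigma$. Then $\partial D$ is null-homotopic in $M$, so by the injectivity just established it is null-homotopic in $\Sigma$; but a simple closed curve on a surface that is null-homotopic bounds a disk, contradicting the choice of $D$. Hence $\Sigma$ is incompressible. (If one starts instead from the algebraic definition of incompressibility, the reverse implication — that a failure of $\pi_1$-injectivity yields an \emph{embedded} compressing disk — is precisely the loop theorem, which is at our disposal; but only the elementary direction is needed for the application to Theorem \ref{thm:uniqueness}.)

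I do not anticipate any real obstacle here: the statement is classical and the argument is short. The only points deserving a little care are bookkeeping ones — fixing ``the link complement'' to mean the compact exterior so that the fibration has compact fibre meeting $\partial M$ in $\partial\Sigma$, using that $\Sigma$ is connected as assumed throughout, and recalling the elementary fact that a null-homotopic simple closed curve on a surface bounds a disk. The argument could also be run entirely inside the lens-thickening picture of this chapter using $\EE(\Sigma)$, but the bundle formulation is the most economical.
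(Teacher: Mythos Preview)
Your proof is correct and is in fact the standard textbook argument. The paper, however, takes a different and more hands-on route that stays inside the elastic-cord framework of the chapter: assuming a compressing disk $D$ exists, it exhibits a concrete elastic cord that cannot be dragged to the other side of $\Sigma$. If some boundary component of $\Sigma$ lies on each side of $\partial D$, one chooses a cord meeting $D$ once and argues by parity of intersection number; otherwise $D$ caps off part of $\Sigma$ to a closed positive-genus surface, and a cord homotopic to an essential loop trapped inside cannot escape. Either way the cord is stuck, contradicting the (easy, flow-along-the-fibration) fact that on a genuine fibre surface every cord can be dragged across.

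Your approach is cleaner, more general (it works verbatim for surface bundles in any $3$-manifold), and uses only the asphericity of $\Sigma$ and the homotopy exact sequence. The paper's approach buys consistency with the chapter's theme---everything is phrased in terms of cords and disks rather than $\pi_1$---and produces an explicit geometric obstruction, which is in the spirit of the surrounding proofs of Theorems~\ref{thm:existence} and~\ref{thm:uniqueness}. Both are short; yours is the one most readers would recognize.
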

Also some converse of this statement would be true: A Seifert surface of a fibred
link is a fibre surface if and only if it is incompressible in the knot
complement. We prove only the ``only-if'' part:

\begin{proof}
Assume the surface, which we call again $\Sigma$, is compressible. Let $D$
be a compressing disk whose boundary we assume to lie in the interior of the
surface.

There may or may not be boundary components of $\Sigma$ on either side of
$\partial D$ (possibly the same one on both sides if $\partial D$ is
nonseparating). If there are, choose a cord that lies on the same side of $\Sigma$
as $D$ and intersects $D$ in exactly one point. An isotopy that fixes the chord's endpoints
can only change the number of intersection points by an even number, so this
cord cannot be brought to the other side.

We are left with the case that one component of $\Sigma\smallsetminus\partial D$ is
capped off by $D$ to a closed two-sided surface whose genus is at least one.
Hence its inside, which we call $I$, contains an essential loop with basepoint
on $D$. Prolong this on the other side of $D$ to an essential loop $\gamma$ with
basepoint on $\partial\Sigma$. Choose now an elastic cord $\alpha$ attached to
$\partial\Sigma$ near $\gamma$'s basepoint and isotopic to $\gamma$ in
$S^3\smallsetminus\Sigma$. $\alpha$ can be laid down on one side of $\Sigma$,
but since it cannot leave $I$ it cannot be brought to the other side.
\end{proof}

\begin{proof}[Proof of uniqueness (Theorem \ref{thm:uniqueness})]
Let us assume that there exists a cord
$\alpha$ below which can be dragged to two nonisotopic cords $\alpha_1'$ and
$\alpha_2'$ above. As in the proof of Theorem \ref{thm:existence}, this movement can be
thought to happen along two embedded disks. Then, we can use the same surgery
techniques to find two embedded disks, one between $\alpha$ and $\alpha_1''$,
and one between $\alpha$ and $\alpha_2''$, that only intersect along $\alpha$.
For example, one could move $\alpha$ slightly away from itself, find two
disjoint disks and then undo the movement.

But then, provided $\alpha_1''$ and $\alpha_2''$ are still not isotopic, we can
combine the two disks to find a compressing disk whose boundary is
$\alpha_1''\cup\alpha_2''$, in contradiction to the assumption that $\Sigma$ was
a fibre surface. If it should happen (and it can) that $\alpha_1''$ and
$\alpha_2''$ are isotopic, then necessarily one of the disks we used for surgery
had a boundary which was an essential loop in the surface and can be used as a
compressing disk.

The second statement of the theorem is clear, since the disks are mapped to disks again, and
there is only one (orientation-preserving) way to do this, up to isotopy.
Therefore the monodromy is completely determined by the images of the arcs.
\end{proof}

With a little bit more work, we can allow elastic cords to be immersed instead of
embedded, or even to be just arbitrary continuous images of an interval, as well
when we put them down on the surface as during the movement. All that is needed
is a homotopy keeping the endpoints fixed and moving the interior of the cord
from the negative to the positive boundary part of a lens neighbourhood.

\begin{theorem}
\label{thm:stronger}
If every (embedded) elastic cord on $\Sigma$ can be moved to the
other side of the surface, not necessarily remaining embedded,
then $\Sigma$ is a fibre surface.
\end{theorem}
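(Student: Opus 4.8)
The plan is to reduce Theorem~\ref{thm:stronger} to Theorem~\ref{thm:existence}. The key observation is that the hypothesis of Theorem~\ref{thm:existence} enters its proof only through the first of the three steps, where Dehn's lemma converts the movement of a cord into an embedded disk; the surgery that makes these disks disjoint and the extension of the fibration across the complementary ball never refer to \emph{how} the cords were moved. And Dehn's lemma asks only for a \emph{map} of a disk whose restriction to a collar of the boundary is an embedding satisfying the usual preimage condition --- so a homotopy of a cord does just as well as an isotopy. So first I would note that it is enough to prove: every embedded cord spanned below $\Sigma$ can in fact be \emph{dragged}, through embedded cords, to a cord spanned above; then Theorem~\ref{thm:existence} applies with no change.

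To establish this, I would start with an embedded cord $\alpha$ spanned below $\Sigma$ and a homotopy $H\colon[0,1]\times\alpha\to\EE(\Sigma)$ fixing $\partial\alpha$ and carrying the interior of $\alpha$ into $\partial^+\LL(\Sigma)$. First I would arrange that the terminal arc $\alpha' := H(1,\cdot)$ is embedded: since $\partial^+\LL(\Sigma)$ is a surface, a small homotopy of $\alpha'$ within $\partial^+\LL(\Sigma)$ rel endpoints removes its self-intersections and prolongs $H$, so that $\alpha'$ becomes an embedded cord spanned above. Collapsing the two constant sides of the square $[0,1]\times\alpha$, the homotopy becomes a map $f\colon D^2\to\EE(\Sigma)$ with $f(\partial D^2)=\alpha\cup\alpha'$, an embedded circle, since the interiors of $\alpha$ and $\alpha'$ are disjoint, lying on $\partial^-\LL(\Sigma)$ and $\partial^+\LL(\Sigma)$ respectively. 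Exactly as in the proof of Theorem~\ref{thm:existence} I would then push the interior of $H$ off a thin neighbourhood $\NN_\varepsilon$ of $\LL(\Sigma)$ and make it injective on $H^{-1}(\NN_\varepsilon)$, so that $f$ restricts to an embedding on a collar $A$ of $\partial D^2$ with $f^{-1}(f(A))=A$. Dehn's lemma then yields an embedded disk $g(D^2)$ with $\partial\bigl(g(D^2)\bigr)=\alpha\cup\alpha'$, and the innermost-disk argument using $\pi_2(S^3\smallsetminus\Sigma)=0$ (the remark following Theorem~\ref{thm:existence}) lets me take $g(D^2)\subset\EE(\Sigma)$.

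Finally, an embedded disk $D\subset\EE(\Sigma)$ bounded by the two arcs $\alpha$ (below) and $\alpha'$ (above), which meet only at their shared endpoints, carries a foliation by arcs running between those endpoints, with $\alpha$ and $\alpha'$ as the extreme leaves; sweeping a cord across $D$ along this foliation is an isotopy through embedded cords in $\EE(\Sigma)$ from $\alpha$ to $\alpha'$. So every embedded cord spanned below $\Sigma$ can be dragged to the other side in the strong sense demanded by Theorem~\ref{thm:existence}; applying that theorem --- to a collection of disjoint cords cutting $\Sigma$ into a disk, with its steps~2 and~3 unchanged --- shows that $\Sigma$ is a fibre surface. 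The main obstacle is not conceptual: it lies, as already in Theorem~\ref{thm:existence}, in arranging the Dehn's-lemma hypothesis in a collar of $\partial D^2$ and in keeping the resulting embedded disk inside $\EE(\Sigma)$ instead of letting it pierce $\Sigma$; both are handled by the same standard transversality and innermost-disk moves used there.
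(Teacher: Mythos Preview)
Your argument is correct and takes a genuinely different route from the paper's. The paper does \emph{not} first straighten the terminal arc $\alpha'$ to an embedding; instead it works with the singular disk as it is, approximates it by a normal singular disk in the sense of Bing, and applies the \emph{Loop Theorem} in $\EE(\Sigma)$. Because the cut--paste--discard process of the Loop Theorem may alter the boundary, the paper has to supply a normal subgroup $N\lhd\pi_1(\partial\LL(\Sigma))$ (the normal closure of $\pi_1(\partial^+\LL(\Sigma))$) and, using that the original cord is nonseparating, an intersection-number argument to show $\partial D\notin N$; this guarantees that the embedded disk produced still meets $\partial^-\LL(\Sigma)$ in exactly the original cord.

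Your approach sidesteps all of that: by first homotoping $\alpha'$ rel endpoints inside the surface $\partial^+\LL(\Sigma)$ to an embedded arc (a standard fact about arcs in surfaces), you obtain an \emph{embedded} boundary circle $\alpha\cup\alpha'$, and then Dehn's lemma --- which, unlike the Loop Theorem, preserves the boundary exactly --- immediately hands you the embedded disk you want. This is more elementary and does not require the nonseparating hypothesis or the choice of $N$. One small remark: once you apply Dehn's lemma in the manifold-with-boundary $\EE(\Sigma)$, the resulting disk already lies in $\EE(\Sigma)$, so the innermost-disk appeal to $\pi_2(S^3\smallsetminus\Sigma)=0$ is not needed there (though it does no harm); the only residual care is to arrange, by general position, that the interior of the disk avoids $\partial\EE(\Sigma)$ so that the sweep stays inside $\EE(\Sigma)$.
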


\begin{proof}
We repeat the first step of the proof of Theorem \ref{thm:existence} under
this weaker assumption.
The rest of the proof remains the same.
Since we never needed to consider elastic cords whose projection separates $\Sigma$,
we can assume that the cord which is to be moved is nonseparating, without loss of generality. 

First of all, the topological disk swept out by the elastic cord can be
approximated by a piecewise-linear disk $D$ that only has singularities of a certain
kind, namely double lines, triple points and branch points, see for example the
book of Bing (\cite[Chapter XVII.1.\ and p.~205]{Bing1983}). Bing calls this a
``normal singular disk''.

Now, we should find an embedded disk whose boundary is still contained in
$\partial \LL(\Sigma)$ and whose intersection with $\partial^-\LL(\Sigma)$ is the
original embedded elastic cord. We use the loop theorem, first proved by
Papakyriakopoulos (also in \cite{Papakyriakopoulos1957}), in a version which corresponds to Theorem XVII.1.E in Bing's
book. It says that, using local modifications of the singular disk near the
singularity set called ``cut, paste and discard'', there exists an embedded disk
whose boundary is a part of the original boundary with smoothed crossings. For
this new boundary, one can furthermore choose a forbidden normal subgroup $N$ of
the fundamental group of the surface, of course provided that $N$ does not
contain the original boundary. We use $N$ to ensure that the original elastic
cord is not discarded. The manifold $M$ in the theorem will be $S^3\smallsetminus
\LL(\Sigma)$.

\begin{LOOP}
Suppose $D$ is a normal singular disk in a PL 3-manifold-with-boundary $M$ and
$B$ is a boundary component of $\partial M$. Let $N$ be a normal subgroup of
$\pi_1(B)$ that does not contain the representatives of the conjugacy class of
$\partial D$. Then $D$ can be changed by cut, paste and discard to a nonsingular
disk $E$ such that $\partial E \subset B$ and the representatives of the conjugacy
class of $\partial E$ do not belong to $N$.\qed
\end{LOOP}

For $N \lhd \pi_1(\partial \LL(\Sigma))$, choose the normal subgroup generated by
the subgroup $\pi_1(\partial^+\LL(\Sigma))$. Since the elastic cord spanned below
$\Sigma$ does not separate $\partial^-\LL(\Sigma)$ by assumption, we can choose an
oriented simple closed curve $c$ in $\partial^-\LL(\Sigma)$ whose intersection
number with $\partial D$ is $1$. But the intersection number of any element of
$N$ with $c$ is $0$, so the representatives of $\partial D$ are not contained
in $N$.

The loop theorem provides us with an embedded disk $E$ such that $\partial E$ is
contained in $\partial D$ away from the intersection points of $\partial D$. And
$\partial E \cap \partial^-\LL(\Sigma)$ is the original elastic cord, because if
$\partial E$ did not pass at all through $\partial^-\LL(\Sigma)$, it would be
contained in $N$.

This disk allows to move the elastic cord to the other side of $\Sigma$ through
embedded elastic cords and thus the theorem is proved.
\end{proof}

\subsection{Decision problems}
Visualizing the moving elastic cords can be difficult. On a bad day, the
following modification can be easier to handle:
\begin{corollary}
A cord $\rho$ in upper position can be dragged to a cord $\rho'$ in lower
position (with the same endpoints) if and only if $\rho \cup \rho'$ is unknotted
and unlinked with $\Sigma$.
\end{corollary}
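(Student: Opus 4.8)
The plan is to pin down the corollary as a two-sided translation of a single intermediate fact: \emph{the curve $\rho\cup\rho'$ bounds an embedded disk contained in $\EE(\Sigma)$.} First I would record the trivial observation that $\rho\cup\rho'$ is in fact an embedded circle: $\rho$ has interior in $\partial^+\LL(\Sigma)$, $\rho'$ has interior in $\partial^-\LL(\Sigma)$, and $\partial^+\LL(\Sigma)\cap\partial^-\LL(\Sigma)=\partial\Sigma$ contains exactly the two common endpoints of $\rho$ and $\rho'$. So it makes sense to ask whether it is unknotted and whether it bounds a disk avoiding $\Sigma$.

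For the forward implication, suppose $\rho$ can be dragged to $\rho'$. By definition this is an isotopy $H\colon[0,1]\times\rho\to\EE(\Sigma)$ of embedded arcs with fixed endpoints, whose trace is a singular disk in $\EE(\Sigma)$ with boundary $\rho\cup\rho'$ (the two constant edges of the square collapsing to the endpoints). Exactly as in the first step of the proof of Theorem~\ref{thm:existence}, after modifying $H$ to be injective on the preimage of a small neighbourhood of $\LL(\Sigma)$ the hypotheses of Dehn's lemma are met, and one obtains an embedded disk $D\subset\EE(\Sigma)$ with $\partial D=\rho\cup\rho'$. An embedded spanning disk shows $\rho\cup\rho'$ is the unknot, and $D\subset\EE(\Sigma)$ forces its interior to avoid $\mathring\Sigma$ (since $\mathring\Sigma$ lies in the interior of $\LL(\Sigma)$, while $D$ meets $\LL(\Sigma)$ only along $\partial\LL(\Sigma)$); thus $\rho\cup\rho'$ is also unlinked with $\Sigma$.

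For the converse, I would read ``unknotted and unlinked with $\Sigma$'' as: $\rho\cup\rho'$ bounds an embedded disk $D_0\subset S^3$ whose interior is disjoint from $\Sigma$. The pieces of $\mathring D_0$ lying inside $\LL(\Sigma)$ miss $\Sigma=\Sigma\times\{0\}$, hence sit in the product collars $\Sigma\times(0,1]$ and $\Sigma\times[-1,0)$ of the lens thickening, and can be pushed out to $\partial^\pm\LL(\Sigma)$ by an isotopy fixing $\partial D_0$ — the standard move for removing intersections of a disk with a product neighbourhood — yielding an embedded disk $D\subset\EE(\Sigma)$ with $\partial D=\rho\cup\rho'$. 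Finally, viewing $D$ as a bigon with vertices the two common endpoints of $\rho$ and $\rho'$, foliate it by embedded arcs joining those two vertices, with $\rho$ and $\rho'$ as the extreme leaves; this foliation is precisely an isotopy of elastic cords through $\EE(\Sigma)$, with fixed endpoints, carrying $\rho$ to $\rho'$. Hence $\rho$ can be dragged to $\rho'$.

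The step I expect to be the only real obstacle is the general-position argument in the converse: one must genuinely check that a disk in $S^3$ disjoint from $\Sigma$ can be isotoped \emph{rel its boundary} off the entire lens thickening, not merely off $\Sigma$. (If instead one takes ``$\rho\cup\rho'$ bounds a disk in $\EE(\Sigma)$'' as the meaning of ``unknotted and unlinked with $\Sigma$'', this step disappears.) Everything else is formal — the forward direction is a verbatim reuse of the Dehn's-lemma argument of Theorem~\ref{thm:existence}, and the bigon foliation is elementary.
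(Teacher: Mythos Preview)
Your proposal is correct and follows essentially the same route as the paper: both directions go through the intermediate fact that $\rho\cup\rho'$ bounds an embedded disk in $\EE(\Sigma)$, with the forward direction reusing the Dehn's-lemma step from Theorem~\ref{thm:existence} and the converse sweeping the disk by arcs. You simply spell out what the paper compresses into two sentences, and your flagged general-position step (pushing the disk off the lens thickening rel boundary) is exactly the content the paper suppresses with the phrase ``always bounds such a disk''.
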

\begin{proof}
As we have seen in the proof of Theorem \ref{thm:existence}, the movement
from $\rho$ to $\rho'$ can be performed along a disk whose interior is embedded
and disjoint from $\Sigma$. $\rho \cup \rho'$ bounds this disk, so it is
unknotted and not linked with $\Sigma$.

On the other hand, a trivial knot that is not linked to $\Sigma$ always bounds
such a disk, so $\rho$ can be pushed along it.
\end{proof}
Since there exist algorithms to decide whether a surface is a fibre surface,
one may ask:
\begin{question}
How can this criterion be made into an algorithm?
\end{question}
Here, we should clarify the relation between fibredness of a link and fibredness
of its Seifert surfaces: When we are given only the link, we can use Haken's normal
surface theory to find a Seifert surface of minimal genus for it (and giving it its
proper orientation). If the link is fibred, it is well known that this surface must
be the fibre surface. Therefore Haken's algorithm together with a criterion to decide
whether a surface is fibred are sufficient to decide whether a link is fibred.

\section{Examples and applications}
A standard example in the theory of fibre surfaces is the twisted unknotted
annulus.
\begin{example}
An unknotted annulus in $S^3$ with $n$ full twists, $n \neq 0$, is (when we disregard orientation) a Seifert
surface for the $(2,2n)$-torus link, which is fibred. Trivially, an annulus is a
Seifert surface of minimal genus. For fibred \emph{knots} this would imply that
it would also be a fibre surface, but unless $\abs{n} = 1$, this is not the case
here. If it were, it would be possible to complete the cord $\rho$ in Figure
\ref{fig:torusknotband}, which lies on the back of the surface, with another
cord $\rho'$ on top of the surface, to a trivial knot that is unlinked with the
annulus. But if we choose $\rho'$ to have the same projection as $\rho$, the
linking number of $\rho \cup \rho'$ with the annulus is $1$, and if $\rho'$ goes
$k$ times around the annulus, the linking number is changed by $k \cdot n$.
\end{example}

\begin{figure}
\centering
\def\svgwidth{0.5\textwidth}
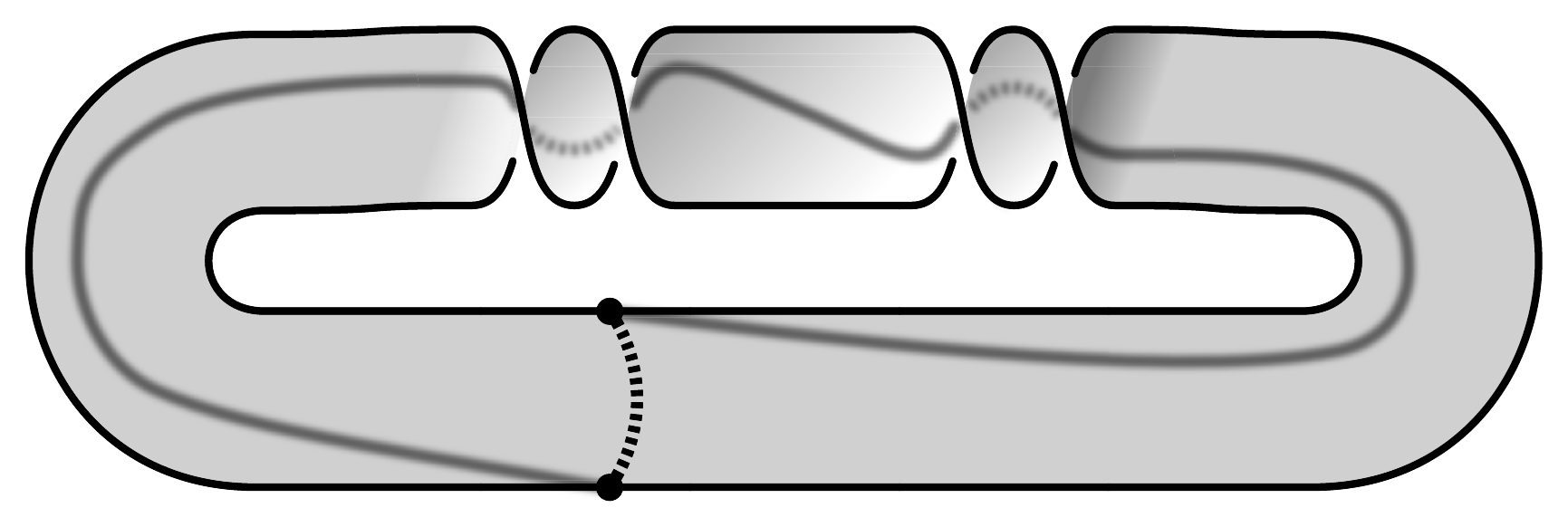
  \caption{A Seifert surface for the $(4,2)$-torus link which is not a fibre surface}
  \label{fig:torusknotband}
\end{figure}

\begin{example}
A complete bipartite graph with $p+q$ vertices, embedded as in Figure
\ref{fig:bipartite}, and with blackboard framing, is a fibre surface for the
$(p,q)$-torus link. Its monodromy is given by the \tat\ map of walk length 2.

\checkhere This is the example mentioned \vpageref{sec:introtoruslinks} in the
introduction. Norbert A'Campo has mentioned \checkhere to me that the monodromy
of torus links has been described (in the language of singularity theory) by
Frédéric Pham in 1965 (\cite{Pham1965}); he proved that the fibre surface retracts
to the join of $p + q$ roots of unity which are cyclically permuted.

To see why the statement is true, note first that the same surface can be drawn
in a more symmetric way as seen in Figure \ref{fig:bipartite-stereographic},
which shows the stereographic projection of a thickened complete bipartite graph whose
vertices lie on two Hopf circles in the 3-sphere.
Knowing this, the proof of the statement can be seen in Figure
\ref{fig:bipartite}:
A cord spanned below the surface is dragged to one spanned above it. Its
projection is given precisely by the application of a \tat\ twist to the
projection of the original cord.
\end{example}

\begin{figure}
\centering
\def\svgwidth{0.8\textwidth}
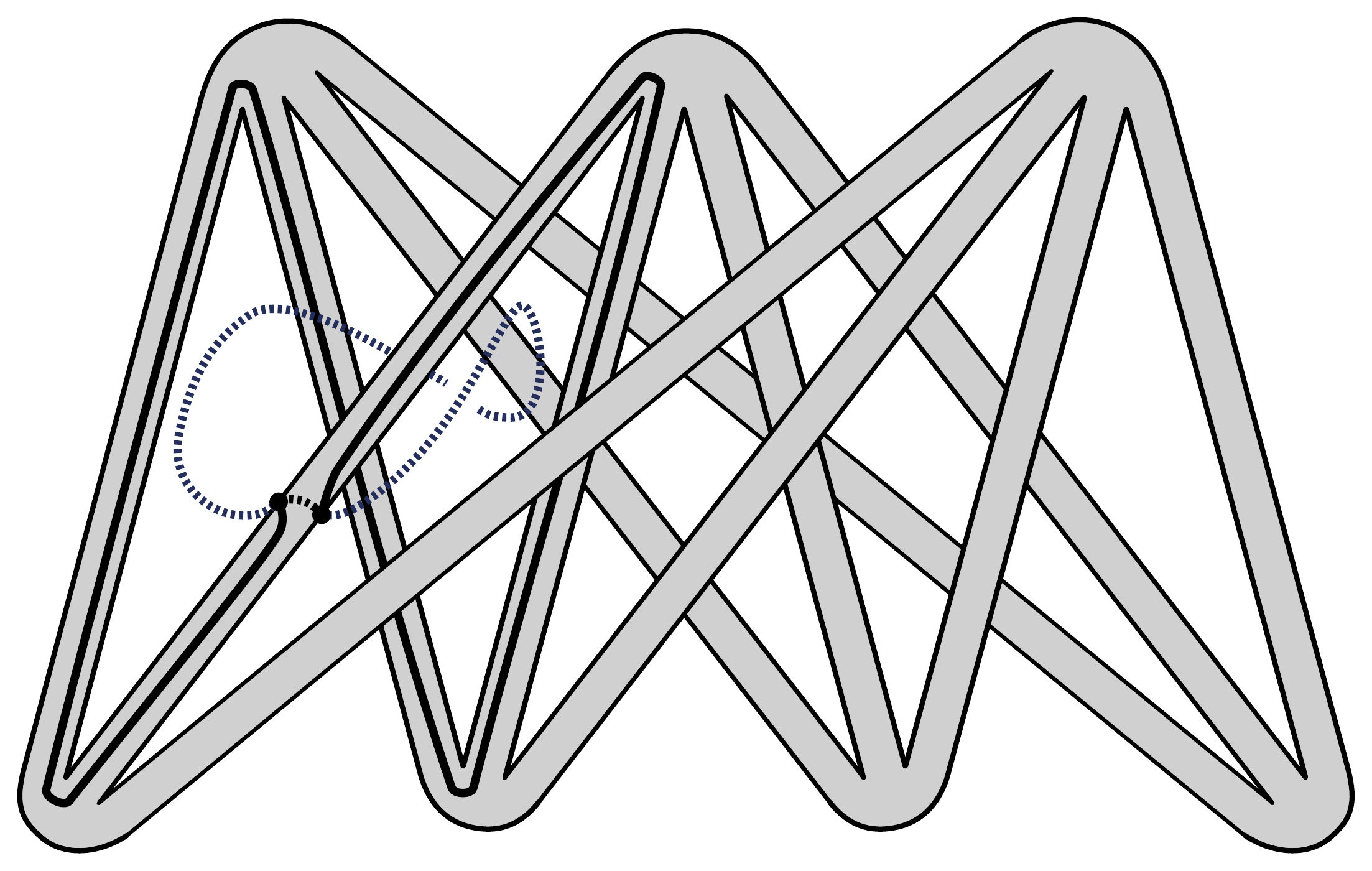
  \caption{A complete bipartite graph with $3+4$ vertices and blackboard framing, giving a fibre surface for the $(3,4)$-torus knot}
  \label{fig:bipartite}
\end{figure}

\begin{figure}
\centering
\includegraphics[width=1\textwidth]{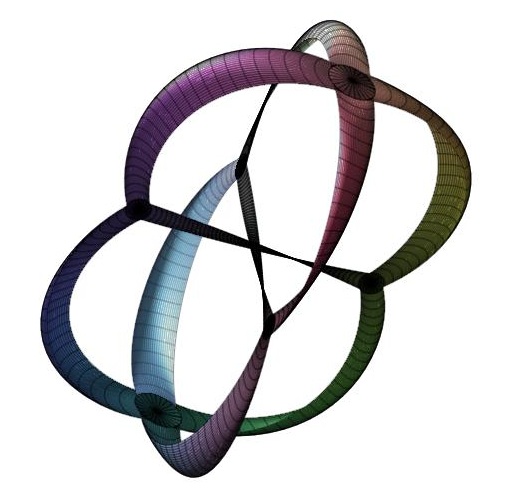}
  \caption{The same complete bipartite graph, drawn with vertices on two great circles of $S^3$ instead of two skew lines}
  \label{fig:bipartite-stereographic}
\end{figure}

\subsection{Murasugi sums}
From the main theorem, we can also deduce the following known result whose ``if"
part was proved by Stallings in 1978 (\cite{Stallings1978}), and the whole theorem later by Gabai (\cite{Gabai1983}).

\begin{corollary}
Let $\Sigma$ be a Murasugi sum of $\Sigma_1$ and $\Sigma_2$. Then $\Sigma$ is a
fibre surface if and only if both $\Sigma_1$ and $\Sigma_2$ are fibre surfaces.
\end{corollary}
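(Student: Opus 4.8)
The plan is to deduce this corollary directly from the elastic-cord criterion, Theorems~\ref{thm:existence} and~\ref{thm:uniqueness}, using the decomposing sphere of the Murasugi sum to shuttle dragging disks between the two sides. Recall the standard picture: there is an embedded $2$-sphere $S\subset S^3$ bounding balls $B_1,B_2$, with $\Sigma_k\subset B_k$ and $\Sigma_1\cap\Sigma_2=\Sigma_1\cap S=\Sigma_2\cap S=P$, where $P$ is the $2n$-gon along which one plumbs; in particular $\Sigma\cap B_k=\Sigma_k$, so inside $B_k$ the surfaces $\Sigma$ and $\Sigma_k$ literally coincide, as do (for a compatible choice) their lens thickenings. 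The one geometric device I would use repeatedly is: if $D$ is an embedded disk disjoint from $\Sigma_k$ whose boundary already lies in $B_k$, then after making $D$ transverse to $S$ and removing the circles and arcs of $D\cap S$ by innermost-disk surgeries --- legitimate because $B_1$ and $B_2$ are irreducible by Alexander's theorem, exactly as in the remark on the triviality of $\pi_2(S^3\smallsetminus\Sigma)$ inside the proof of Theorem~\ref{thm:existence} --- one may assume $D\subset B_k$, hence also disjoint from $\Sigma_{3-k}$ (whose intersection with $B_k$ is $P\subset\Sigma_k$), and therefore disjoint from $\Sigma$.

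For the implication ``$\Sigma_1,\Sigma_2$ fibre surfaces $\Rightarrow$ $\Sigma$ fibre surface'', by the remark after Theorem~\ref{thm:existence} it suffices to exhibit one disjoint system of cords on $\Sigma$ whose projections cut $\Sigma$ into a disk, each of which can be dragged to the other side. I would take a system of disjoint properly embedded arcs cutting $\Sigma_1$ into a disk and one cutting $\Sigma_2$ into a disk, all arcs disjoint from $P$ and with endpoints on $\partial\Sigma_1\cap\partial\Sigma$, resp.\ $\partial\Sigma_2\cap\partial\Sigma$; their union cuts $\Sigma$ into a disk, since Murasugi-summing a disk onto a surface along $P$ recovers the surface up to homeomorphism, so cutting $\Sigma$ first along the $\Sigma_1$-system and then along the $\Sigma_2$-system leaves a disk. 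Push each arc slightly to the negative side to get a disjoint family of cords. A cord coming from $\Sigma_1$ lies in $B_1$, where it is a cord below $\Sigma_1=\Sigma|_{B_1}$; since $\Sigma_1$ is a fibre surface it drags to the other side of $\Sigma_1$ along an embedded disk in $\EE(\Sigma_1)$ disjoint from $\Sigma_1$, and the device above confines that disk to $B_1$, where it becomes a disk in $\EE(\Sigma)$ realizing the dragging across $\Sigma$; the dragged cord, being above $\Sigma_1$ inside $B_1$, is above $\Sigma$. Symmetrically for the cords coming from $\Sigma_2$, and Theorem~\ref{thm:existence} then gives that $\Sigma$ is a fibre surface.

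For the converse, ``$\Sigma$ fibre surface $\Rightarrow$ $\Sigma_1,\Sigma_2$ fibre surfaces'', I treat $\Sigma_1$ (the other case being symmetric). Pick a disjoint system of properly embedded arcs cutting $\Sigma_1$ into a disk, again with arcs off $P$ and endpoints on $\partial\Sigma_1\cap\partial\Sigma$, and push each to the negative side of $\Sigma_1$ inside $B_1$; each is then a cord below $\Sigma$. Since $\Sigma$ is a fibre surface each such cord drags to the other side of $\Sigma$ (push it into a nearby page and carry it around by the monodromy), so by Dehn's lemma there is an embedded disk $D\subset\EE(\Sigma)$, disjoint from $\Sigma$ hence from $\Sigma_1$, with boundary the original cord and its image. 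The device confines $D$ to $B_1$ (here we may freely cross $\Sigma_2$, since the conclusion is only needed in $\EE(\Sigma_1)$), and inside $B_1$ ``the other side of $\Sigma$'' and ``the other side of $\Sigma_1$'' are the same notion, so $D$ exhibits the dragging of the cord across $\Sigma_1$. Hence every member of a cut system of cords on $\Sigma_1$ drags to the other side, and Theorem~\ref{thm:existence} shows $\Sigma_1$ is a fibre surface.

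The main obstacle is the ``device'' paragraph: making the innermost-disk surgery along $S$ rigorous when $D\cap S$ contains \emph{arcs} --- which happens exactly when the dragged cord makes excursions into the wrong ball --- and then checking that afterwards the dragged cord really lies on the positive side of $\Sigma_k$, not merely off it. Both points rest on the fact that the two cords share endpoints on $\partial\Sigma_1\cap\partial\Sigma$, where $\Sigma$ and $\Sigma_1$ agree and the words ``above'' and ``below'' are unambiguous, together with the irreducibility of $B_1$ and $B_2$; this is in effect where one re-proves the classical fact that the fibration of a Murasugi sum can be made compatible with the decomposing sphere. A smaller point to verify carefully along the way is the claim that the two cut systems, chosen off $P$, really do cut $\Sigma$ into a single disk.
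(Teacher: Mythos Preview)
Your overall strategy --- reduce to the elastic-cord criterion and exploit the decomposing sphere $S$ --- is exactly the paper's. The execution differs: you propose a uniform innermost-surgery ``device'' along $S$, whereas the paper uses two separate tricks, and the reason is that the situation is genuinely \emph{not} symmetric in $\Sigma_1$ and $\Sigma_2$.

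Here is the gap behind your word ``symmetrically''. With any compatible choice of lens thickenings one has a sign flip: if $\partial^+\LL(\Sigma_1)\subset B_1$ (so that your $\Sigma_1$-case works), then positive of $\Sigma$ equals \emph{negative} of $\Sigma_2$, and $\partial^-\LL(\Sigma_2)$ is \emph{not} contained in $B_2$ --- over $P$ it pokes into $B_1$. So when you drag a $\Sigma_2$-cord across $\Sigma_2$, the image $\beta'$ may cross $S$ wherever its projection meets $P$, and your device's hypothesis ``$\partial D\subset B_k$'' fails. The innermost-\emph{arc} surgery you allude to then has nowhere to go: the arcs of $D\cap S$ have their endpoints in the interior of the disk $S\setminus P$ (on $\beta'\cap S$), not on $\partial P$, so there is no subdisk of $S\setminus P$ to cap with. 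The same obstruction hits your converse step: the cord dragged across $\Sigma$ lands on $\partial^+\LL(\Sigma)$, which over $\Sigma_2\setminus P$ lies in $B_2$, so again $\partial D\not\subset B_1$ and ``confine $D$ to $B_1$'' cannot be done by circle surgery alone.

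The paper sidesteps rather than confronts this. For the forward direction it keeps your $\Sigma_1$-argument but replaces the $\Sigma_2$-system $\boldsymbol\beta$ by $\phi_2^{-1}(\boldsymbol\beta)$, so that the \emph{images} under the drag are the arcs off $P$ and the offending overlap never occurs; the starting cords may project over $P$, but that is harmless since $B_1$ lies on the far side. For the converse it does not try to keep things embedded: it retracts $B_2$ onto $P$, accepts that the resulting cord on $\Sigma_1$'s side may be immersed, and invokes Theorem~\ref{thm:stronger}. Your final paragraph correctly identifies this as the crux; what is missing is one of these two moves (or an equally explicit substitute) to actually close it.
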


\begin{figure}
\centering
\def\svgwidth{0.8\textwidth}
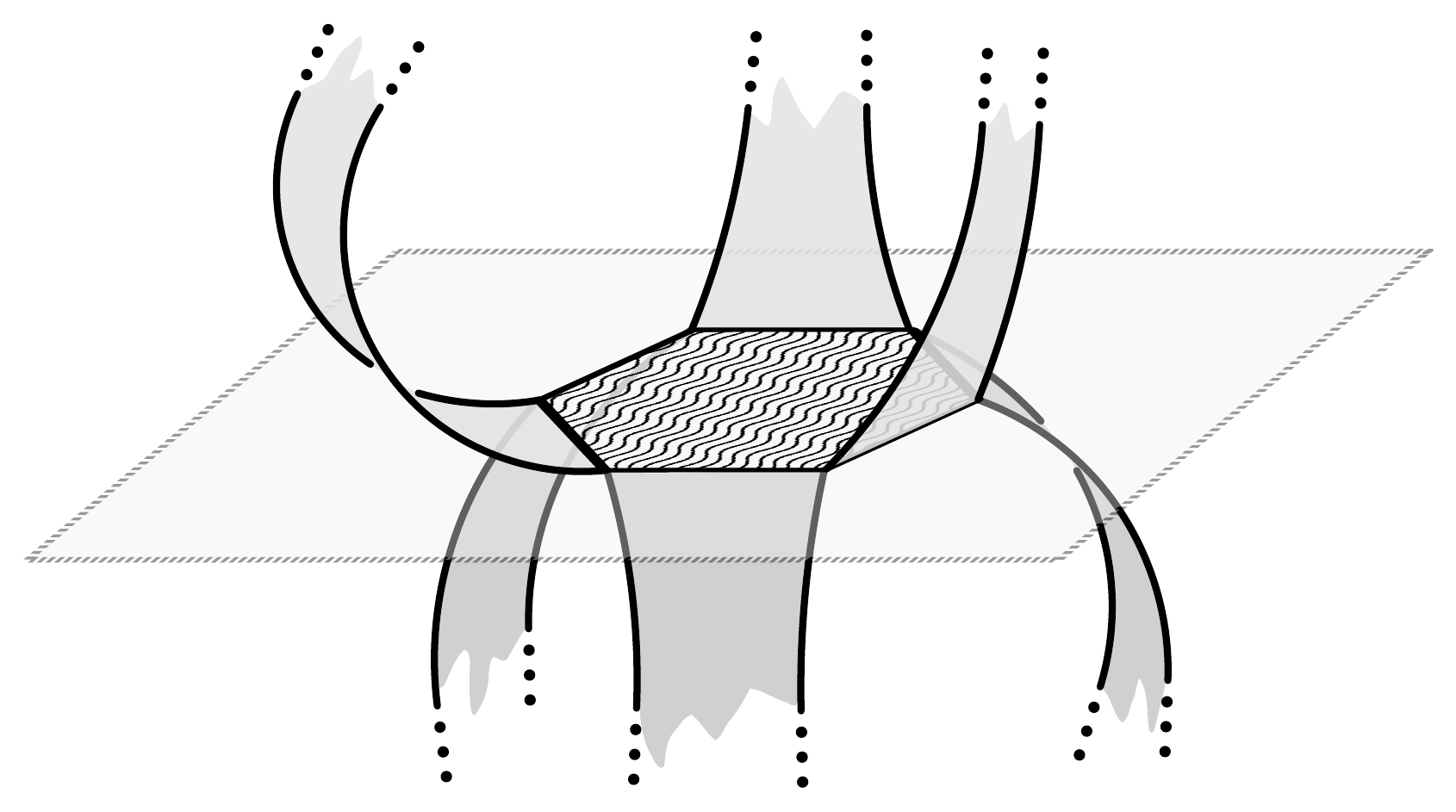
  \caption{A Murasugi sum along a hexagon}
  \label{fig:murasugi}
\end{figure}

$\Sigma$ is called a \emph{Murasugi sum} of two subsurfaces $\Sigma_1$ and
$\Sigma_2$ if $\Sigma_1 \cup \Sigma_2 = \Sigma$, their intersection is a polygon
$D$ whose edges alternatingly belong to $\Sigma_1$ and $\Sigma_2$, and there are
two balls $B_1$ and $B_2$ containing $\Sigma_1$ and $\Sigma_2$, respectively.
One usually requires that $B_1 \cup B_2 = S^3$ and $\partial B_1 \cap \Sigma_1 = \partial B_2 \cap
\Sigma_2 = D$.

This is a powerful theorem, as it permits in many cases to decompose the surface
into a Murasugi sum and then check fibredness for the simpler surfaces.

For the proof, we choose small lens thickenings $\LL(\Sigma_1)$ and $\LL(\Sigma_2)$
for the two surfaces in such a way that their upper boundaries lie inside the
respective ball, i.\,e.\@ $\partial^+\LL(\Sigma_1) \subset B_1$ and
$\partial^+\LL(\Sigma_2) \subset B_2$. We choose them in a compatible way such
that $\LL(\Sigma_1)\cup\overline{\LL(\Sigma_2)}$ fits together to a lens thickening
$\LL(\Sigma)$ of $\Sigma$, where $\overline{\LL(\Sigma_2)}$ is $\LL(\Sigma_2)$ with
the roles of ``up'' and ``down'' reversed.
\begin{proof}

It is easy to see that there exists a collection $\boldsymbol\alpha$ of disjoint
properly embedded arcs in $\Sigma_1$ that are disjoint from $D$, such that
$\Sigma_1\smallsetminus\bigcup\boldsymbol\alpha$ is a disk; likewise there is such a
collection $\boldsymbol\beta$ for $\Sigma_2$.

Assume now that $\Sigma_1$ and $\Sigma_2$ are both fibre surfaces, with
monodromies $\phi_1$ and $\phi_2$. Push the curves of $\boldsymbol\alpha$ down
to get elastic cords spanned below $\Sigma_1 \subset \Sigma$. Each of them can be
dragged to the other side of $\Sigma_1$ in $S^3\smallsetminus\Sigma_1$. Since
$\Sigma_2$ is contained in a ball which is disjoint from the elastic cord and can
be contracted to a part of $\partial^-(\Sigma)$ near $D$, the cord can clearly
still be dragged to $\partial^+(\Sigma)$ inside $S^3\smallsetminus\Sigma$.

Now take the arcs in $\boldsymbol\beta$ and push them up to get elastic cords
spanned above $\Sigma_2$, that is, below $\Sigma$. Now, $\Sigma_1$ might get in
the way of dragging these to the other side since the monodromy can well map
them to cords whose projections intersect $D$. But this can be avoided by
considering the collection $\phi_2^{-1}(\boldsymbol\beta)$ instead. The
(projections of the) corresponding cords below $\Sigma$ might intersect $D$, but
$B_1$ lies on the other side of the cords, so this is no problem and they can be
dragged. $\Sigma_2\smallsetminus\bigcup\phi_2^{-1}(\boldsymbol\beta)$ is still a
disk, $\Sigma_1\smallsetminus\left(\bigcup\boldsymbol\alpha\cup D\right)$ is a
collection of disks attached to it, therefore
$\Sigma\smallsetminus\left(\bigcup\boldsymbol{\alpha}
\cup\bigcup\phi_2^{-1}(\boldsymbol\beta)\right)$ is a disk and we have enough
elastic cords to prove that $\Sigma$ is a fibre surface.

To prove the converse, assume that $\Sigma$ is a fibre surface. Of course, this
also means, equivalently, that every elastic cord spanned above it can be dragged
to one below. Take the arcs of $\boldsymbol\alpha$, push them up to elastic cords
spanned above $\Sigma_1$ and drag them to the other side of $\Sigma$. They might
pass over $\Sigma_2$ as well, so use a retraction of $B_2$ to $D$ to get a
(possibly nonembedded) cord spanned below $\Sigma_1$.

The arcs in $\boldsymbol\beta$ are pushed down with respect to $\Sigma$, or up
with respect to $\Sigma_2$, and treated analogously.
\end{proof}

\chapter{Rendezvous with the mapping class group}\label{chap:mcg}
Max Dehn showed in the 1930s that the mapping class group of a surface is
generated by the set of all Dehn twists along simple closed curves (\cite{Dehn1938}). This paragraph will
relate this fact to \tat\ twists in two ways: Firstly by some considerations how a \tat\
twist can be written as the product of Dehn twists, and secondly by showing
that, vice-versa, the mapping class group can itself be generated by certain \tat\
twists.

\section{Bifoil and trefoil twists}\label{sec:bifoilandtrefoiltwists}
\begin{figure}
\centering
\def\svgwidth{0.75\textwidth}
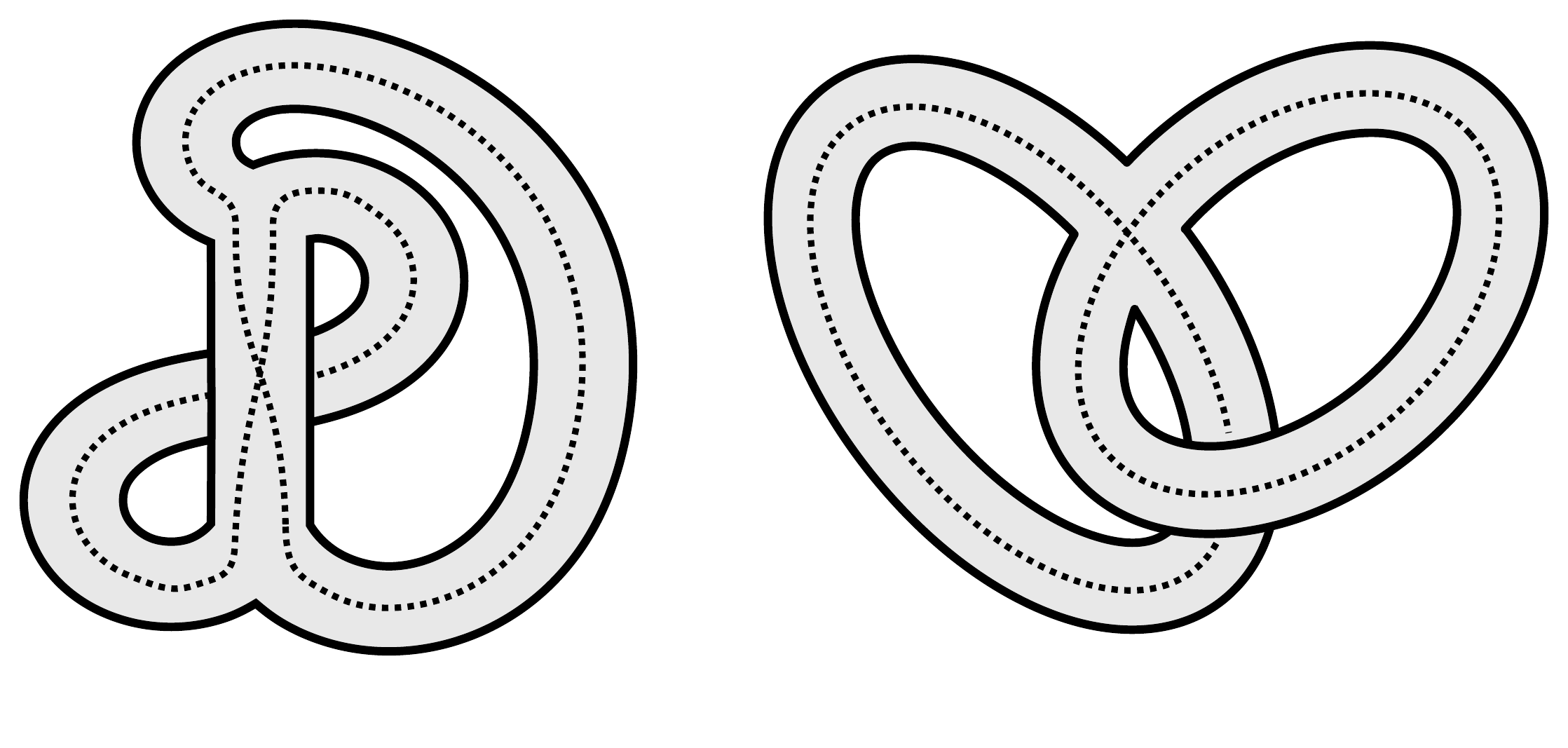
  \caption{How the tre- and bifoil twist can be factorized into Dehn twists}
  \label{fig:bifoiltrefoilfactorized}
\end{figure}
The trefoil and bifoil twists depicted in Figure \vref{fig:bifoiltrefoil} are the only
\tat\ twists on the torus. As elementary twists, their notation is $\T{E_{3,3},1}$ and
$\T{E_{2,2},1}$, respectively.
Both of these twists have a simple presentation as a product of Dehn twists
along two curves that generate the homology of the torus.
Those curves are drawn in Figure \ref{fig:bifoiltrefoilfactorized}; on the left
for the trefoil, on the right for the bifoil twist; the graphs are omitted.
One can verify the following statements by studying the images of two crossing
arcs, or any two nonseparating properly embedded arcs.
\begin{proposition}
Let $\Tr = \T{E_{3,3},1}$ and $\Bi = \T{E_{2,2},1}$ be the trefoil and bifoil twist.
Then
\begin{align}
	\Tr &= t_\alpha t_\beta = t_\gamma t_\alpha = t_\beta t_\gamma\\
		\shortintertext{and}
	\Bi &= t_\alpha t_\beta t_\alpha = t_\beta t_\alpha t_\beta,
\end{align}
for curves $\alpha$ and $\beta$ as indicated in the left and right part of Figure \ref{fig:bifoiltrefoilfactorized}, respectively, and $\gamma = t_\alpha(\beta)$.
\end{proposition}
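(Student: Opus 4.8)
The plan is to reduce everything to the two basic identities $\Tr = t_\alpha t_\beta$ and $\Bi = t_\alpha t_\beta t_\alpha$, each of which I would check directly on a pair of properly embedded arcs. On the one-holed torus, a mapping class fixing the boundary pointwise is determined, up to isotopy rel boundary, by the images of two disjoint properly embedded arcs whose union cuts the surface into a single disk --- this is the Alexander method (see \cite{FarbMargalit2012}), since a homeomorphism of a disk that fixes its boundary pointwise is isotopic rel boundary to the identity. For $\Tr = \T{E_{3,3},1}$ I would take two of the three crossing arcs of the $\Theta$-graph $E_{3,3}$ (which can be chosen disjoint; their union cuts the one-holed torus into a disk, as one checks from an Euler-characteristic count together with connectedness of the complement of the remaining single edge's neighbourhood). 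By the very definition of a \tat\ twist, each half of such a crossing arc is mapped to a safe walk of length $1$ along $E_{3,3}$, which can be read off directly from Figure \vref{fig:bifoiltrefoil}. Comparing this with the action of $t_\alpha t_\beta$ on the same two arcs, which is drawn from the left part of Figure \ref{fig:bifoiltrefoilfactorized}, yields $\Tr = t_\alpha t_\beta$. The same argument with $E_{2,2}$ (the one-vertex, two-loop graph), again with walk length $1$, and with the curves in the right part of Figure \ref{fig:bifoiltrefoilfactorized}, gives $\Bi = t_\alpha t_\beta t_\alpha$.

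Once these are established, the remaining equalities are purely algebraic and need no further pictures. Since $\alpha$ and $\beta$ meet exactly once, their Dehn twists satisfy the braid relation $t_\alpha t_\beta t_\alpha = t_\beta t_\alpha t_\beta$ (see \cite{FarbMargalit2012}), which immediately gives the second expression $\Bi = t_\beta t_\alpha t_\beta$. For the trefoil twist, put $\gamma = t_\alpha(\beta)$; the change-of-coordinates formula $t_{f(c)} = f\,t_c\,f^{-1}$ gives $t_\gamma = t_\alpha t_\beta t_\alpha^{-1}$, hence
\[
   t_\gamma t_\alpha = t_\alpha t_\beta t_\alpha^{-1} t_\alpha = t_\alpha t_\beta,
   \qquad
   t_\beta t_\gamma = t_\beta t_\alpha t_\beta t_\alpha^{-1} = t_\alpha t_\beta t_\alpha t_\alpha^{-1} = t_\alpha t_\beta,
\]
where the braid relation is used in the last chain. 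This proves all three expressions for $\Tr$. (As a consistency remark one may note that the mapping class group of the one-holed torus is generated by $t_\alpha$ and $t_\beta$ subject to the braid relation, so the identities above are mutually compatible.)

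The main obstacle is bookkeeping of orientation conventions rather than anything conceptual: one must fix the sign of the walk length, the meaning of ``to the right'' for safe walks, and the handedness of the Dehn twists so that the comparison comes out exactly as $\Tr = t_\alpha t_\beta$ and $\Bi = t_\alpha t_\beta t_\alpha$ with no stray inverses, and one must choose the two crossing arcs so that their \tat\ images and their Dehn-twist images are visible in the same picture. There is also the minor point of verifying that the two chosen crossing arcs really do cut the one-holed torus into a single disk, so that the Alexander method applies; this is immediate from the diagram.
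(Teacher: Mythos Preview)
Your proposal is correct and follows the same approach as the paper, which simply says ``one can verify the following statements by studying the images of two crossing arcs, or any two nonseparating properly embedded arcs'' and then gives the sanity check $\Tr(a) = t_\alpha t_\beta(a) = t_\alpha(a)$ for an arc $a$ meeting $\alpha$ but not $\beta$. Your algebraic derivation of the remaining equalities from the braid relation and the conjugation formula $t_{f(c)} = f\,t_c\,f^{-1}$ is a clean addition; the paper leaves all three expressions for $\Tr$ and both for $\Bi$ to be checked individually on arcs, whereas you reduce the verification to the two base cases.
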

Note that the roles of $\alpha$ and $\beta$ are symmetric for the bifoil twist, but
not so for the trefoil twist. In that case, when traversing the middle, vertical,
band of the graph, $\alpha$ meets $\beta$ coming from the right.
Also, the order of $t_\alpha$ and $t_\beta$ does matter.
When in doubt, its correctness can be checked easily:
For a crossing arc $a$ that intersects $\alpha$
but not $\beta$, $\Tr(a) = t_\alpha t_\beta(a) = t_\alpha (a)$.

\section{Positivity and veer of \tat\ twists}
On an oriented surface, Dehn twists come in two flavours: left and right,
depending on whether an arc which is transverse to the twisting curve is mapped to the left or to the right.
\begin{figure}
\centering
\def\svgwidth{0.6\textwidth}
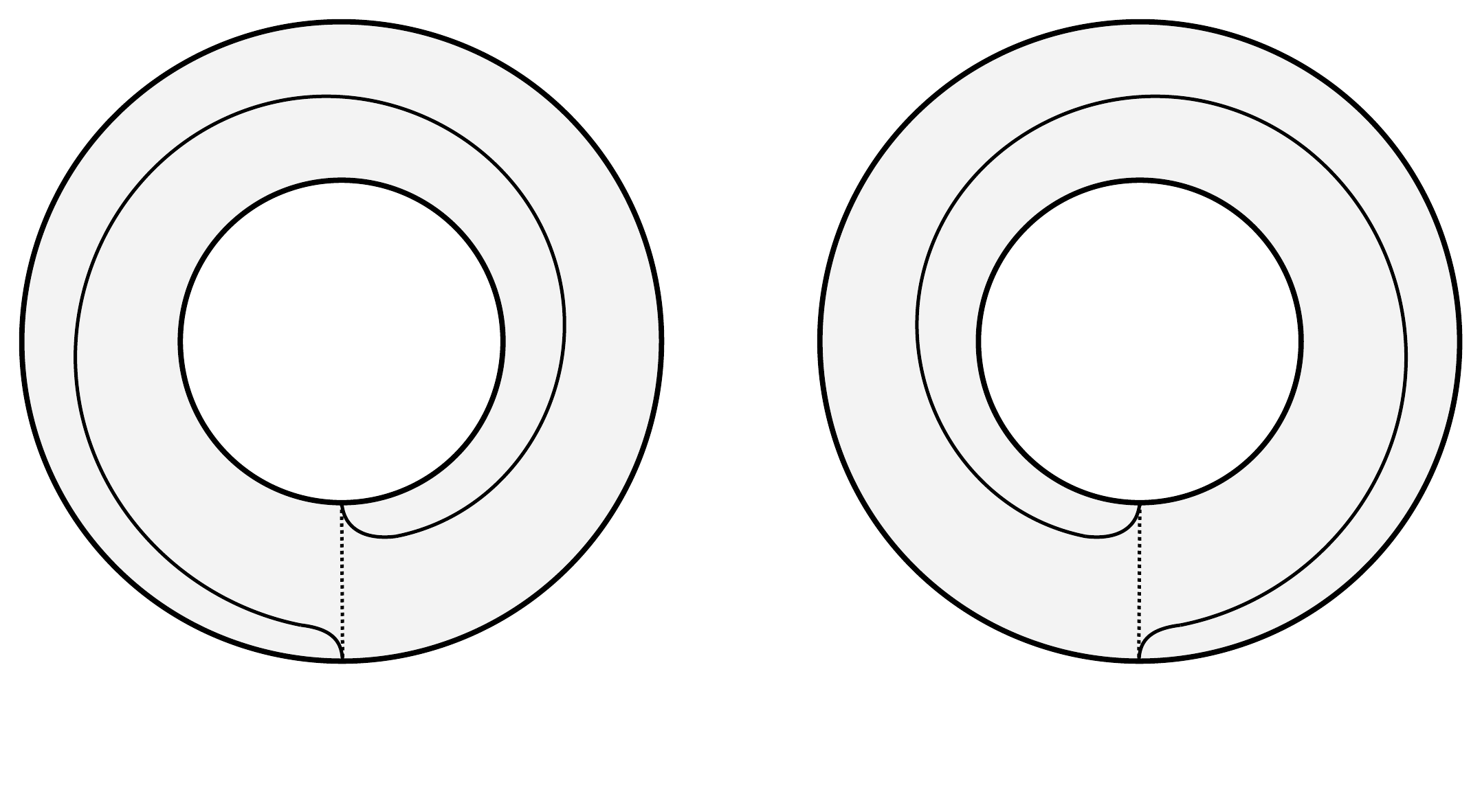
  \caption[The effects of a left (or negative) and a right (or positive) Dehn twist]%
  {The effects of a left (or negative) and a right (or positive) Dehn twist.
  The dotted interval is mapped to the one going once around.}
  \label{fig:leftandright}
\end{figure}

Often one of the two possibilities is called \emph{positive} and one
\emph{negative}, this choice is arbitrary and usage varies, but in this text
``positive'' is chosen to mean ``right''. It is interesting to restrict oneself
to only using positive (or only negative) Dehn twists and study
$Dehn^+(\Sigma)$, the monoid generated by all positive Dehn twists. For example,
Loi and Piergallini (\cite{LoiPiergallini2001}), and later Akbulut and Özbağcı
(\cite{AkbulutOzbagci2001}) proved the following theorem:

\begin{theorem}
If the monodromy of an open book can be factorized into a product of positive
Dehn twists, the open book is Stein-fillable.
\end{theorem}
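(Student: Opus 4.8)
The plan is to convert the given factorization of the monodromy into a Lefschetz fibration over the disk whose total space carries a Stein structure inducing, on its boundary, the contact structure supported by the open book. Throughout, write $\phi = t_{c_1}\cdots t_{c_k}$, where each $t_{c_i}$ is a \emph{positive} (right) Dehn twist along a simple closed curve $c_i\subset\Sigma$, and recall the bookbinding construction of the closed $3$-manifold $M_{(\Sigma,\phi)}$ from Section~\ref{sec:openbooks}.

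First I would build a compact oriented $4$-manifold $W$ together with a Lefschetz fibration $\pi\colon W\to D^2$ with regular fibre $\Sigma$. Start from $\Sigma\times D^2$, pick $k$ distinct points $p_1,\dots,p_k\in\partial D^2$, and for each $i$ attach a $4$-dimensional $2$-handle along $c_i$ sitting in the fibre $\Sigma\times\{p_i\}\subset\partial(\Sigma\times D^2)$, with framing one less than the framing $c_i$ inherits from that fibre. By construction the vanishing cycles of $\pi$ are the $c_i$, and the monodromy of $\pi$ around $\partial D^2$, computed as the ordered product of the positive Dehn twists around the critical values, is exactly $\phi$. Splitting $\partial W$ into the part over a collar of $\partial D^2$ (a $\Sigma$-bundle over an interval, i.e.\@ the mapping torus of $\phi$ compactified suitably along $\partial\Sigma$) and the part over the rest of $D^2$, one identifies $\partial W$ with $M_{(\Sigma,\phi)}$, carrying the open book $(\Sigma,\phi)$ and hence the contact structure this open book supports (existence by Thurston--Winkelnkemper, uniqueness up to isotopy by Giroux).

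Next I would put a Stein structure on $W$ via Eliashberg's handle criterion: a $4$-manifold assembled from $0$- and $1$-handles and then $2$-handles is a Stein domain, with boundary the compatible contact structure, provided each $2$-handle is attached along a Legendrian knot with framing $\operatorname{tb}-1$. The piece $\Sigma\times D^2$ deformation retracts onto $\Sigma$, a $1$-complex, so it is a subcritical handlebody and is Stein, its convex boundary carrying the trivial open book $(\Sigma,\mathrm{id})$. One then attaches the $k$ handles one at a time. At each stage the boundary carries an open book with page $\Sigma$ whose monodromy is a product of some of the $t_{c_i}$, the next vanishing cycle $c_i$ lies in a page, and a curve lying in a page of an open book can be Legendrian-realized with its Thurston--Bennequin framing equal to the page framing; since the Lefschetz handle is attached with framing one less than the page framing, it is attached with framing $\operatorname{tb}-1$, exactly Eliashberg's condition. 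Moreover attaching such a $2$-handle along a page curve is a plumbing/Murasugi-type move returning an open book with the same page $\Sigma$ and one more positive Dehn twist in the monodromy, so the induction propagates, and after all $k$ handles $W$ is Stein with $\partial W=M_{(\Sigma,\phi)}$ carrying the supported contact structure --- which is precisely the assertion that the open book is Stein-fillable.

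The main obstacle is this inductive step: one must check that each Lefschetz $2$-handle attachment genuinely returns a contact-type (convex) boundary again, with the open book visibly updated by a single positive Dehn twist, so that the principle "page framing $=$ contact framing" together with Eliashberg's criterion can be reapplied at the next stage --- equivalently, that the Stein structure of $\Sigma\times D^2$ propagates \emph{compatibly} through the $2$-handle attachments rather than merely yielding an almost-complex or symplectic cobordism. This compatibility is the heart of the theorems of Loi--Piergallini and of Akbulut--Özbağcı, who supply the explicit Stein handlebody pictures realizing it; granting it, the identification of $\partial W$ with $M_{(\Sigma,\phi)}$ and its supported contact structure completes the proof.
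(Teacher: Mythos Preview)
The paper does not actually prove this theorem. It is stated as a cited result, attributed to Loi--Piergallini and to Akbulut--\"Ozba\u{g}c\i, and is used only as motivation for the question that follows about whether every \tat\ twist admits a positive factorization. There is no proof in the paper to compare your attempt against.

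That said, your sketch is essentially the Akbulut--\"Ozba\u{g}c\i\ argument: build a Lefschetz fibration over the disk with the given vanishing cycles, realize $\Sigma\times D^2$ as a subcritical Stein domain, and attach each Lefschetz $2$-handle as a Weinstein/Stein handle by Legendrian-realizing the vanishing cycle on a page so that page framing equals $\operatorname{tb}$ and the attaching framing is $\operatorname{tb}-1$. You correctly identify the one nontrivial point, namely that after each handle attachment the new boundary again carries an open book with the same page and one more positive Dehn twist in its monodromy, so the Legendrian realization principle can be reapplied. This is exactly what those references establish, so your outline is sound; but since the paper itself offers no argument, there is nothing further to compare.
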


This condition is fulfilled by bi- and trefoil twists, as we have seen.
There are possible strategies to factorize other twists into positive
Dehn twists, but up to now, this remains without a definite answer:
\begin{question}
Can every \tat\ twist be factorized as a product of positive Dehn twists?
\end{question}
On closed surfaces, every mapping class has this property.
This is because there are so-called \emph{positive relations}, meaning products
of positive Dehn twists about nonseparating curves which are trivial in the
mapping class group.
Because they exist, and because all Dehn twists about nonseparating curves
are conjugate, a negative twist can be written as a product of positive
twists.
As an example, choose standard homological generators $\alpha$ and $\beta$ on
the torus and check that
\[
	(t_\alpha t_\beta)^6 = \id,
\]
and therefore
\[
	t_\alpha^{-1} = t_\beta (t_\alpha t_\beta)^5.
\]
Indeed the twists $t_\alpha$ and $t_\beta$ can be represented by the action of the matrices
$\begin{psmallmatrix*}[r] 1 & 1 \\
                          0 & 1 \end{psmallmatrix*}$
and
$\begin{psmallmatrix*}[r] 1 & 0 \\
                        \!-1 & 1 \end{psmallmatrix*}$
on $\R^2/\Z^2$, from which the above relation follows.

Following Wajnryb (\cite{Wajnryb2006}), a \emph{positive relation} on a surface
with boundary is any way to write a product of Dehn twists along boundary
components as a product of positive Dehn twists.
Any \tat\ twist which can be factorized explicitly into positive Dehn twists
provides such a positive relation.
There is a proposed list of all positive relations due to Ivan Smith, but whether
it is complete is still unknown. 

A weaker property than positivity -- right veer -- that has been defined and
studied by Honda, Kazez, and Matić (\cite{HondaKazezMatic2007}), can be proved easily:
\begin{proposition}
\Tat\ twists with nonnegative walk length are right-veering.

More generally,
multi-speed \tat\ twists with nonnegative walk lengths are right-veering.
\end{proposition}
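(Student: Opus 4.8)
The plan is to verify the right-veering condition of Honda--Kazez--Matić \cite{HondaKazezMatic2007} directly, using the explicit model of the twist from Proposition \ref{prop:basic}. A one-speed tête-à-tête twist is the special case $l_1=\ldots=l_r=l$ of a multi-speed one, so it suffices to prove the general statement; fix $\phi=\T{G,\underline{l}}$ with all $l_i\ge 0$. Recall that $\phi$ (fixing $\partial\Sigma$ pointwise) is right-veering if, for every properly embedded arc $\gamma$ and every endpoint $x$ of $\gamma$ on $\partial\Sigma$, the arc $\phi(\gamma)$, put into minimal position with $\gamma$ rel endpoints, is either isotopic to $\gamma$ or emanates from $x$ strictly to the right of $\gamma$, where ``right'' is read off from the orientation of $\Sigma$. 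The comparison at $x$ is local to a collar of the boundary component $C_i\ni x$ and is governed there by $l_i$; since all $l_i\ge 0$, the same argument applies at every boundary component, which is also why the multi-speed generality costs nothing.

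First I would set up coordinates from Proposition \ref{prop:basic}: represent $\phi$ by the diffeomorphism that is the identity on a collar $C_i\times[0,1/3]$ of each boundary component and, on the annulus $A_i=S^1\times[0,1]$ between $C_i$ and the thickened graph $\bar G$, equals $(\theta,t)\mapsto(\theta+l_i\,h(t),t)$ with $h$ nondecreasing, $h\equiv 0$ near $t=0$ and $h\equiv 1$ near $t=1$; away from the annuli, $\phi$ only permutes and rotates the disk pieces around $\bar G$, each such move being an orientation-preserving rotation in the direction of the safe walks, i.e.\ the positive $\theta$-direction. The content of the hypothesis $l_i\ge 0$, together with the convention that a positive walk length means ``to the right'' --- so that a twist with some negative walk length would not be right-veering --- is that this representative displaces \emph{every} point weakly in the rightward direction as one moves inward from the boundary: it never carries anything to the left.

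The heart of the proof is turning this ``no leftward motion'' into the right-veering inequality, and for that I would pass to the cover $p\colon\widehat\Sigma\to\Sigma$ corresponding to the cyclic subgroup generated by $[C_i]$ (equivalently, to the universal cover near a chosen lift of $C_i$). Fix a lift $\widehat C_i$ and a lift $\widehat x\in\widehat C_i$ of $x$. A properly embedded arc $\gamma$ based at $x$ lifts uniquely to $\widehat\gamma$ based at $\widehat x$, whose terminal end picks out a point of the circle at infinity of $\widehat\Sigma$ on the far side of $\widehat C_i$; these terminal points are linearly ordered along that circle, and --- this \emph{is} the Honda--Kazez--Matić notion --- $\phi(\gamma)$ is to the right of (or isotopic to) $\gamma$ at $x$ precisely when the terminal point of $\widehat\phi(\widehat\gamma)=\widehat{\phi(\gamma)}$ lies at or to the right of that of $\widehat\gamma$, where $\widehat\phi$ is the lift of the representative above fixing $\widehat C_i$ pointwise. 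Since $\widehat\phi$ is the identity near $\widehat C_i$ and pushes everything weakly in the rightward direction --- the annular $\theta$-shifts and the rotations of the disk pieces all lift to rightward motions --- the terminal point of $\widehat\phi(\widehat\gamma)$ is never to the left of that of $\widehat\gamma$. Hence $\phi(\gamma)$ is to the right of, or isotopic to, $\gamma$ at every endpoint of every arc, which is exactly right-veering; the degenerate case $l_i=0$ near $C_i$ simply lands in the ``isotopic'' alternative. A more conceptual route --- which I would at least sketch --- invokes the classification of right-veering periodic maps in \cite{HondaKazezMatic2007}: by Proposition \ref{prop:basic} the twist is freely periodic, its fractional Dehn twist coefficient at $C_i$ equals $l_i$ divided by the length of $C_i$ and so is $\ge 0$, and a periodic monodromy with all fractional Dehn twist coefficients $\ge 0$ is right-veering (the coefficient-zero components being exactly those where the map is a product of positive boundary Dehn twists, by Remark \ref{rmk:nonzero}).

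The main obstacle I expect is the bookkeeping in the cover: proving rigorously that the chosen representative --- with its $\theta$-shifts on the annuli \emph{and} its permuting-and-rotating of the disk pieces around the vertices of $\bar G$ --- lifts to a map whose effect on the terminal ideal endpoint of every lifted arc is monotone to the right and never to the left. One must check that each polygon move is consistent with the positive $\theta$-orientation (it is, since it realizes a safe walk of non-negative speed) and handle the borderline cases with care: arcs disjoint from $G$, boundary components with $l_i=0$, and arcs whose image is isotopic to themselves. Pinning down the identification ``cyclic ordering of ideal endpoints $\Longleftrightarrow$ Honda--Kazez--Matić's `to the right'\,'' and verifying that it is preserved is the one genuinely convention-sensitive point; the rest is a direct translation of Proposition \ref{prop:basic}.
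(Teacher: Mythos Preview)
Your ``more conceptual route'' at the end is precisely the paper's proof: it computes the fractional Dehn twist coefficient at the $i$th boundary as $c_i = l_i/b_i \ge 0$ and then invokes Proposition~3.2 of Honda--Kazez--Mati\'c, which says that a (freely) periodic map is right-veering with respect to $C_i$ if and only if $c_i>0$, or $c_i=0$ and all other $c_j\ge 0$. With all $l_i\ge 0$ this criterion is satisfied at every component, and that is the entire argument.

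Your main route---the direct monotonicity argument in the cover---is a different and more ambitious approach, and the obstacle you yourself flag is a genuine one. The assertion that ``the rotations of the disk pieces all lift to rightward motions'' is not well-defined as stated: away from the boundary annuli the representative is a finite-order map of the interior, and its lift to the universal cover permutes the preimages of the polygons in a way that has no intrinsic ``left/right'' description. What does make sense is the induced action on the ideal boundary, and the net rotational effect there is exactly what the fractional Dehn twist coefficient measures. Making your direct argument rigorous would therefore amount to reproving the periodic case of Honda--Kazez--Mati\'c's Proposition~3.2. The paper simply cites that result instead. If you want to keep a self-contained argument, the clean way is to isotope $\phi$ (freely on $\partial\Sigma$) to its finite-order representative $g$, observe that $g$ is an isometry for some metric so preserves geodesics, and then compare arc and image in a collar where the comparison reduces to the sign of $c_i$; this is essentially how one proves the periodic case of their criterion.
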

Begin \emph{right-veering} means that every properly embedded arc is moved to
the right.
More precisely:
A properly embedded arc can be lifted to the universal cover of the surface.
Looking from one of its endpoints, it will divide the universal cover into two
regions, the ``left'' and the ``right'' one.
Represent the lifted arc as well as its image geodesically, letting them share the
chosen endpoint on the boundary.
If the map is right-veering, the image must be contained in the closure
of the region to the right.

More generally, being right-veering with respect to one boundary component
means that every properly embedded arc with an endpoint on that
boundary component is mapped to the right when viewed from that endpoint.

Positive Dehn twists are right-veering.
Furthermore, one can show that compositions of positive Dehn twists are right-veering
as well, but also that there are right-veering diffeomorphisms that are not in any way
a product of positive Dehn twists.

In the same article, Honda, Kazez, and Matić consider the \emph{fractional
Dehn twist coefficient}. For a diffeomorphism 
$f \in \mathrm{Diff}(\Sigma,\partial\Sigma)$
that is freely isotopic to a diffeomorphism
$g \in \mathrm{Diff}(\Sigma)$
of finite order, say $g^k = \id_\Sigma$,
the fractional Dehn twist coefficient $c_i$ is the
amount of rotation of the $i$th boundary component in an isotopy that connects
$f$ to $g$.
This is well-defined once the mapping class of $f$ induces a unique symmetry
on the graph, which is true whenever $\Sigma$ is not a disk or an annulus.
In the language of \tat\ twists, $c_i$ is equal to $l_i/b_i$, where $b_i$ is
the length of the $i$th boundary component and $l_i$ the respective walk length.

The proposition follows from Proposition 3.2 in \cite{HondaKazezMatic2007}, which
states that a diffeomorphism is right-veering with respect to the $i$th boundary
component if and only if either $c_i>0$ or else $c_i=0$ and $c_j \geq 0$ for all other boundary
components.

\section{Generating the mapping class group}
Dehn twists generate the mapping class group, and a Dehn twist is a simple
example of a \tat\ twist. Like the annulus on which a Dehn twist is defined, the
ribbon graph of an arbitrary \tat\ twist can be embedded into a closed surface
and defines an element of its mapping class group. In this section, we will look
at the two next simplest examples of \tat\ twists, the bifoil twist
$\Tr$ and the trefoil twist $\Bi$ (see Section \ref{sec:bifoilandtrefoiltwists}), and show that they generate
the mapping class group as well, if the genus is high enough.

In the case of Dehn twists, it is easy to see that there are relations in the
mapping class group that allow us to write twists along separating curves as
compositions of twists along nonseparating curves. Since for every pair of
nonseparating curves there is a diffeomorphism sending one to the other, all
such twists are conjugate, and thus the mapping class group is generated by a
single conjugacy class of Dehn twists. The same is true for generation by
bifoil and trefoil twists, but for an even better reason: There is, up to
diffeomorphism, only one way to embed them into a surface.

\begin{theorem}\label{thm:generate}
Let $\Sigma$ be a closed surface of genus at least 3, let $G$ be one of
$E_{2,2}$ or $E_{3,3}$. Then $\mcg(\Sigma)$ is generated by \tat\ twists along
all embeddings of $G$. Moreover, all those twists are conjugate.
\end{theorem}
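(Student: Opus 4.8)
The plan is to establish the two assertions separately: first that all the twists along embeddings of $G$ form a single conjugacy class, and then — using that — that the subgroup $H\le\mcg(\Sigma)$ they generate is everything.

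For the conjugacy statement I would use the change of coordinates principle (\cite[Chapter~1]{FarbMargalit2012}). A regular neighbourhood of an embedded copy of $G$ in $\Sigma$ is a once-holed torus $T$, and $\partial T$ is a separating curve splitting $\Sigma$ into $T$ and a \emph{connected} subsurface of genus $g-1$ with one boundary component; the topological type of the complement is therefore fixed. Hence any two embeddings $G\hookrightarrow\Sigma$ differ by an orientation-preserving diffeomorphism of $\Sigma$, since a spine of $T$ isotopic to $G$ is determined up to the action of $\mcg(T)$. As conjugate embedded ribbon graphs yield conjugate \tat\ twists, all twists along embeddings of $G$ are conjugate; in particular $H$, being the normal closure of a single element, is normal in $\mcg(\Sigma)$.

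For generation, recall (Dehn, \cite{Dehn1938}; see \cite{FarbMargalit2012}) that $\mcg(\Sigma)$ is generated by the Dehn twists along nonseparating simple closed curves, and that these are pairwise conjugate. The key step is to exhibit in $H$ an element $t_x t_y^{-1}$ where $x,y$ are disjoint nonseparating curves whose union is again nonseparating. Choose a chain $c_1,c_2,c_3$ of nonseparating curves ($i(c_1,c_2)=i(c_2,c_3)=1$, $i(c_1,c_3)=0$) sitting inside a standard chain of length $2g+1$, arranged so that all the subsurfaces appearing below have connected complement — this is where $g\ge 3$ supplies the needed room, and also where $c_1\cup c_3$ is nonseparating. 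By the proposition of Section~\ref{sec:bifoilandtrefoiltwists}, $t_{c_1}t_{c_2}$ is a trefoil twist and $t_{c_1}t_{c_2}t_{c_1}$ a bifoil twist along $N(c_1\cup c_2)$, and likewise for $N(c_2\cup c_3)$; reversing a pair still gives such a twist, since $t_bt_a=t_a t_{t_a^{-1}(b)}$. Then
\[
	(t_{c_1}t_{c_2})(t_{c_2}t_{c_3})^{-1}=t_{c_1}\,t_{t_{c_2}(c_3)}^{-1}
\]
in the trefoil case, and, using the braid relation,
\[
	(t_{c_1}t_{c_2}t_{c_1})(t_{c_2}t_{c_3}t_{c_2})^{-1}=t_{c_2}\bigl(t_{c_1}t_{c_3}^{-1}\bigr)t_{c_2}^{-1}
\]
in the bifoil case. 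In either case this is $t_x t_y^{-1}$ with $\{x,y\}$ the image of $\{c_1,c_3\}$ under a power of $t_{c_2}$, hence a pair of disjoint nonseparating curves with nonseparating union; and it lies in $H$.

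To finish, I would run a normal-closure argument. Since $H$ is normal and $\mathrm{Diff}^+(\Sigma)$ acts transitively on pairs of disjoint nonseparating curves with connected complement (change of coordinates once more), $H$ contains $t_xt_y^{-1}$ for \emph{every} such pair. Therefore in $\mcg(\Sigma)/H$ the images of the Dehn twists along any two nonseparating curves agree, because the graph with vertices the nonseparating simple closed curves and edges the pairs just described is connected for $g\ge 2$ (a standard fact, implicit in Lickorish's generating procedure). Hence $\mcg(\Sigma)/H$ is cyclic, so abelian, so a quotient of $H_1(\mcg(\Sigma);\Z)$, which is trivial for $g\ge 3$ (\cite{FarbMargalit2012}); thus $H=\mcg(\Sigma)$. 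The main obstacle is not the algebra, which is short, but the geometric bookkeeping in the change-of-coordinates steps — verifying that each subsurface in sight has connected complement of the expected type — and it is exactly this that forces the hypothesis $g\ge 3$ (consistently with the failure of $H_1(\mcg)=0$ in genus $1$ and $2$).
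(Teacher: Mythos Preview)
Your overall strategy is sound and genuinely different from the paper's. The paper obtains a single nonseparating Dehn twist directly, by writing the lantern relation as
\[
	t_\delta = \bigl(t_\rho t_\alpha^{-1}\bigr)\bigl(t_\sigma t_\beta^{-1}\bigr)\bigl(t_\tau t_\gamma^{-1}\bigr)
\]
with all seven curves nonseparating (this is where $g\ge 3$ enters for the paper). You instead bypass the lantern: once $H$ is normal and contains one $t_xt_y^{-1}$ with $x,y$ disjoint and $x\cup y$ nonseparating, you let change of coordinates and connectivity of the relevant curve graph force $\mcg(\Sigma)/H$ to be abelian, then kill it with $H_1(\mcg(\Sigma_g))=0$ for $g\ge 3$. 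That is a clean alternative and uses $g\ge 3$ in a different place.

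There is, however, a genuine error in your trefoil computation. You write
\[
	(t_{c_1}t_{c_2})(t_{c_2}t_{c_3})^{-1}=t_{c_1}\,t_{t_{c_2}(c_3)}^{-1},
\]
which is correct as an identity, but then assert that $\{x,y\}=\{c_1,t_{c_2}(c_3)\}$ is the image of $\{c_1,c_3\}$ under \emph{a} power of $t_{c_2}$. It is not: $c_1$ is fixed while $c_3$ is hit by $t_{c_2}$. And the pair is not disjoint: since $\hat{i}(c_1,c_2)=\pm 1$ and $\hat{i}(c_1,c_3)=0$, one gets $\hat{i}\bigl(c_1,t_{c_2}(c_3)\bigr)=\pm 1$, so $i(c_1,t_{c_2}(c_3))\ge 1$. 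Your bifoil computation is fine (there both curves are hit by the same $t_{c_2}$), but in the trefoil case the conclusion that $x,y$ are disjoint with nonseparating union fails, and with it the change-of-coordinates step.

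The fix is immediate: pair the trefoil twists the other way and use
\[
	(t_{c_1}t_{c_2})(t_{c_3}t_{c_2})^{-1}=t_{c_1}t_{c_3}^{-1},
\]
noting that $t_{c_3}t_{c_2}$ is itself a trefoil twist along $N(c_2\cup c_3)$. This lands you directly on the disjoint nonseparating pair $\{c_1,c_3\}$, and the rest of your argument goes through. (This is exactly the identity the paper uses in its Proposition~\ref{prop:generatedoubletwist}, though the paper then finishes with the lantern rather than your abelianisation step.)
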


Since, unlike Dehn twists, $\Tr$ and $\Bi$ act on the homology
of $\Sigma$ by finite order, we immediately get:
\begin{corollary}
The symplectic group $\mathrm{Sp}(2n,\Z)$, $n \geq 3$, is generated by a set of
conjugate torsion elements of order $4$, and also by a set of conjugate
torsion elements of order $6$.\qed
\end{corollary}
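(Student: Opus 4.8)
The plan is to push Theorem~\ref{thm:generate} through the symplectic representation. Write $\Sigma$ for the closed orientable surface of genus $n\geq 3$, and let $\Psi\colon\mcg(\Sigma)\to\mathrm{Sp}(2n,\Z)$ be the action on $H_1(\Sigma;\Z)\cong\Z^{2n}$, which preserves the algebraic intersection form. The first ingredient I would invoke is the classical surjectivity of $\Psi$ (see e.g.\ \cite[Chapter~6]{FarbMargalit2012}). Combined with Theorem~\ref{thm:generate}, which says that $\mcg(\Sigma)$ is generated by the \tat\ twists along all embeddings of $G\in\{E_{2,2},E_{3,3}\}$ and that all those twists are conjugate in $\mcg(\Sigma)$, this at once gives that $\mathrm{Sp}(2n,\Z)$ is generated by the images $\Psi(\T{G})$; and since $\Psi$ is a homomorphism, these images again form a single conjugacy class in $\mathrm{Sp}(2n,\Z)$.

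The remaining work is to compute the order of $\Psi(\T{G})$. As all the twists in question are conjugate in $\mcg(\Sigma)$, their images are conjugate, hence of equal order, in $\mathrm{Sp}(2n,\Z)$, so it suffices to evaluate one standardly embedded representative, supported on a genus-$1$ subsurface $S\subset\Sigma$ with essential boundary (which exists as soon as $n\geq 2$). Such a twist acts as the identity on a symplectic complement of the rank-$2$ summand $H_1(S)\hookrightarrow H_1(\Sigma)$ and acts on $H_1(S)$ by the finite-order symplectic matrix describing its action on the homology of the one-holed torus. For $\Bi=\T{E_{2,2},1}$ this matrix is conjugate to $\begin{psmallmatrix*}[r] 0 & -1 \\ 1 & 0 \end{psmallmatrix*}$ --- exactly the matrix already recorded in the excerpt's Lefschetz-number computation for $\Bi$ --- which has order $4$. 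For $\Tr=t_\alpha t_\beta$ it is the product $\begin{psmallmatrix*}[r] 1 & 1 \\ 0 & 1 \end{psmallmatrix*}\begin{psmallmatrix*}[r] 1 & 0 \\ -1 & 1 \end{psmallmatrix*}=\begin{psmallmatrix*}[r] 0 & 1 \\ -1 & 1 \end{psmallmatrix*}$, whose square is not the identity and whose cube equals $-I$, hence of order $6$. Taking the least common multiple with the trivial action on the complement changes nothing, so $\Psi(\Bi)$ is a torsion element of order $4$ and $\Psi(\Tr)$ one of order $6$, which proves the corollary.

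I expect the only genuinely delicate point to be the bookkeeping in the second step: one must ensure that the chosen embedding of the one-holed torus is homologically injective (so that $H_1(S)$ is really a rank-$2$ summand on which the twist acts as claimed, with a symplectic complement) and that the order of $\Psi(\T{G})$ does not drop --- i.e.\ that the image is not already the identity or of smaller order. Both are handled by the explicit nontrivial matrices above, but they deserve an explicit line. Everything else --- surjectivity of $\Psi$, and the fact that a homomorphism carries one conjugacy class of generators to one conjugacy class of generators --- is formal once Theorem~\ref{thm:generate} is available.
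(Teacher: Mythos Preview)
Your proposal is correct and follows essentially the same route as the paper: push Theorem~\ref{thm:generate} through the surjective symplectic representation, observe that conjugacy is preserved, and compute the action of $\Bi$ and $\Tr$ on $H_1$ via the $2\times2$ block on the one-holed torus summand (with identity on the complement) to read off orders $4$ and $6$. The paper treats this as immediate and simply displays the resulting block matrices; your extra care about homological injectivity of the embedding and non-collapse of the order is appropriate but not a departure from the argument.
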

By calculating the action on homology, one finds that those elements are conjugate to
\[\renewcommand{\arraystretch}{0.9}%
\begin{pmatrix*}[r]
0  &  1 &   &        & \\
-1 &  0 &   &        & \\
   &    & 1 &        & \\
   &    &   & \ddots & \\
   &    &   &        & 1 
\end{pmatrix*}
\text{\quad and \quad}
\begin{pmatrix*}[r]
\ 0  & -1 &   &        & \\
  1  &  1 &   &        & \\
     &    & 1 &        & \\
     &    &   & \ddots & \\
     &    &   &        & 1 
\end{pmatrix*},
\]
respectively.

To prove the theorem, it is not sufficient to note that a power of these \tat\
twists is a Dehn twist, because this twist happens along a separating curve.
This could only be used to show that they generate the \emph{Johnson kernel},
the subgroup of $\mcg(\Sigma)$ generated by separating twists, which is itself
contained in the \emph{Torelli group}, the subgroup acting trivially on
homology.

We need the factorization of bifoil and trefoil twists into Dehn twists from
Section \ref{sec:bifoilandtrefoiltwists}. In both cases, we have two curves
$\alpha$ and $\beta$ transversely intersecting in exactly one point,
\begin{align*}
	\Tr &= t_\alpha t_\beta
	\shortintertext{and}
	\Bi &= t_\alpha t_\beta t_\alpha.
\end{align*}
And both \tat\ graphs $E_{2,2}$ and $E_{3,3}$ live on a one-holed torus which
deformation retracts to the union $\alpha\cup\beta$. Moreover, every two choices
of such a pair of curves are related by a diffeomorphism of the whole surface;
this follows directly from the classification of surfaces when one cuts, one at
a time, along the two curves.
Uniqueness up to diffeomorphism proves the statement that all bi- or trefoil
twists in a surface are conjugate.

In what follows, ``$\alpha\caponept\beta$'' means ``$\alpha$
intersects/intersecting $\beta$ transversely in one point''.
\begin{proposition}
Given simple closed curves $\alpha$ and $\beta$ in a closed surface $\Sigma$,
$\alpha\caponept\beta$, there is a an embedded graph $G\subset\Sigma$
representing $E_{3,3}$ such that $\T{G,1} = t_\alpha t_\beta$. Likewise, there is
$H\subset\Sigma$ representing $E_{2,2}$ such that $\T{H,1} = t_\alpha t_\beta
t_\alpha$.
\end{proposition}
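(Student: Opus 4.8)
The plan is to reduce everything to the single relations $\Tr=\T{E_{3,3},1}=t_{\alpha_0}t_{\beta_0}$ and $\Bi=\T{E_{2,2},1}=t_{\alpha_0}t_{\beta_0}t_{\alpha_0}$ that were established in Section~\ref{sec:bifoilandtrefoiltwists} on one fixed model surface, and then to transport them into $\Sigma$ by a change of coordinates. So first I would fix once and for all an abstract one-holed torus $T_0$ carrying an embedded graph $G_0$ realizing $E_{3,3}$ and an embedded graph $H_0$ realizing $E_{2,2}$, together with the two simple closed curves $\alpha_0,\beta_0\subset T_0$ of Figure~\vref{fig:bifoiltrefoilfactorized}, so that $\alpha_0\caponept\beta_0$, the graphs fill $T_0$ (which deformation retracts onto $\alpha_0\cup\beta_0$), and $\T{G_0,1}=t_{\alpha_0}t_{\beta_0}$, $\T{H_0,1}=t_{\alpha_0}t_{\beta_0}t_{\alpha_0}$ as mapping classes of $T_0$ fixing the boundary.

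Next I would take a closed regular neighbourhood $N=N(\alpha\cup\beta)\subset\Sigma$. Since $\alpha$ and $\beta$ are simple closed curves meeting transversely in exactly one point, $N$ is a one-holed torus onto which $\alpha\cup\beta$ deformation retracts, and $\alpha,\beta$ are essential simple closed curves in $N$ meeting once. By the change-of-coordinates principle for surfaces there is an \emph{orientation-preserving} diffeomorphism $f\colon T_0\to N$ with $f(\alpha_0)=\alpha$ and $f(\beta_0)=\beta$: indeed an ordered pair of unoriented essential simple closed curves meeting once in a one-holed torus determines, and is determined by, a positively oriented basis of its first homology $\Z^2$, and the mapping class group acts on such bases through all of $SL_2(\Z)$, hence transitively (see \cite[Ch.~1]{FarbMargalit2012}). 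Set $G:=f(G_0)$ and $H:=f(H_0)$. As $f$ is a diffeomorphism, $G$ and $H$ are embedded graphs in $N\subset\Sigma$ whose ribbon structures are exactly those of $E_{3,3}$ and of $E_{2,2}$; thus $G$ represents $E_{3,3}$ and $H$ represents $E_{2,2}$.

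It then remains to identify the \tat\ twists. All of $\T{G_0,1}$, $\T{H_0,1}$, $t_{\alpha_0}$, $t_{\beta_0}$ are supported in the interior of $T_0$, so conjugating by $f$ and extending by the identity outside $N$ gives a homomorphism $F\colon\mcg(T_0,\partial T_0)\to\mcg(\Sigma)$. Because $f$ is orientation preserving and $f(\alpha_0)=\alpha$, $f(\beta_0)=\beta$, it carries a positive Dehn twist to a positive Dehn twist, so $F(t_{\alpha_0})=t_\alpha$ and $F(t_{\beta_0})=t_\beta$. Because $f$ carries the ribbon graph $(T_0,G_0)$ to $(N,G)$ (and $(T_0,H_0)$ to $(N,H)$) respecting the cyclic orderings, the entire construction of the \tat\ twist in Proposition~\ref{prop:basic} — the adapted metric, the smoothed vertices, the map $(\theta,t)\mapsto(\theta+h(t),t)$ — transports verbatim, whence $F(\T{G_0,1})=\T{G,1}$ and $F(\T{H_0,1})=\T{H,1}$; here $\T{G,1}$, $\T{H,1}$ denote the \tat\ twists of the embedded graphs in the closed surface $\Sigma$, supported in $N$ and extended by the identity as in the introduction to Chapter~\ref{chap:mcg} (note that $G$ and $H$ need not fill $\Sigma$, but the twists are defined all the same). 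Applying $F$ to the two model relations now yields $\T{G,1}=t_\alpha t_\beta$ and $\T{H,1}=t_\alpha t_\beta t_\alpha$, which is the proposition.

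The step I expect to be the main obstacle is the second one: checking that the matching diffeomorphism between the two configurations of curves can be taken orientation-preserving, so that both the Dehn twists and the \tat\ twists — which are sensitive to the sign of the walk length — are transported without inversion. This is precisely the one-holed-torus case of the change-of-coordinates principle, and once it is in place the rest is routine bookkeeping about supports of mapping classes together with the naturality of the \tat\ construction under diffeomorphisms of ribbon graphs.
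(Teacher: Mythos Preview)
Your argument is correct and is essentially the paper's own proof: the paper's first sentence says exactly that the result ``is actually already proved by the fact that there is just one choice of $\alpha$ and $\beta$ up to diffeomorphism,'' which is your change-of-coordinates transport from a fixed model. Your care about orientation is appropriate and corresponds to the paper's later remark that for $E_{3,3}$ the two resolutions of the vertex yield $t_\alpha t_\beta$ versus $t_\beta t_\alpha$.

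The one thing the paper adds that you do not is an explicit description of the graphs in place: $H$ is simply $\alpha\cup\beta$ with its single $4$-valent vertex at the intersection point (no choice needed, since $t_\alpha t_\beta t_\alpha=t_\beta t_\alpha t_\beta$), and $G$ is obtained from $\alpha\cup\beta$ by splitting that vertex into two trivalent ones, the two possible splittings giving the two orders $t_\alpha t_\beta$ and $t_\beta t_\alpha$. This is not a different proof so much as a concrete identification of where $f(G_0)$ and $f(H_0)$ land; it makes the statement usable without reference to the model.
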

\begin{proof}
This is actually already proved by the fact that there is just one choice of
$\alpha$ and $\beta$ up to diffeomorphism, so if we embed $E_{3,3}$ or $E_{2,2}$
in any way we will only have erred by a diffeomorphism.

More explicitly, $E_{2,2}$ will of course just be embedded as $\alpha\cup\beta$
with its natural structure as a graph with one vertex $\alpha\cap\beta$, and
give us $t_\alpha t_\beta t_\alpha$, which is the same as $ t_\beta t_\alpha
t_\beta$. There is no choice involved here.

\begin{figure}
\centering
\def\svgwidth{0.6\textwidth}
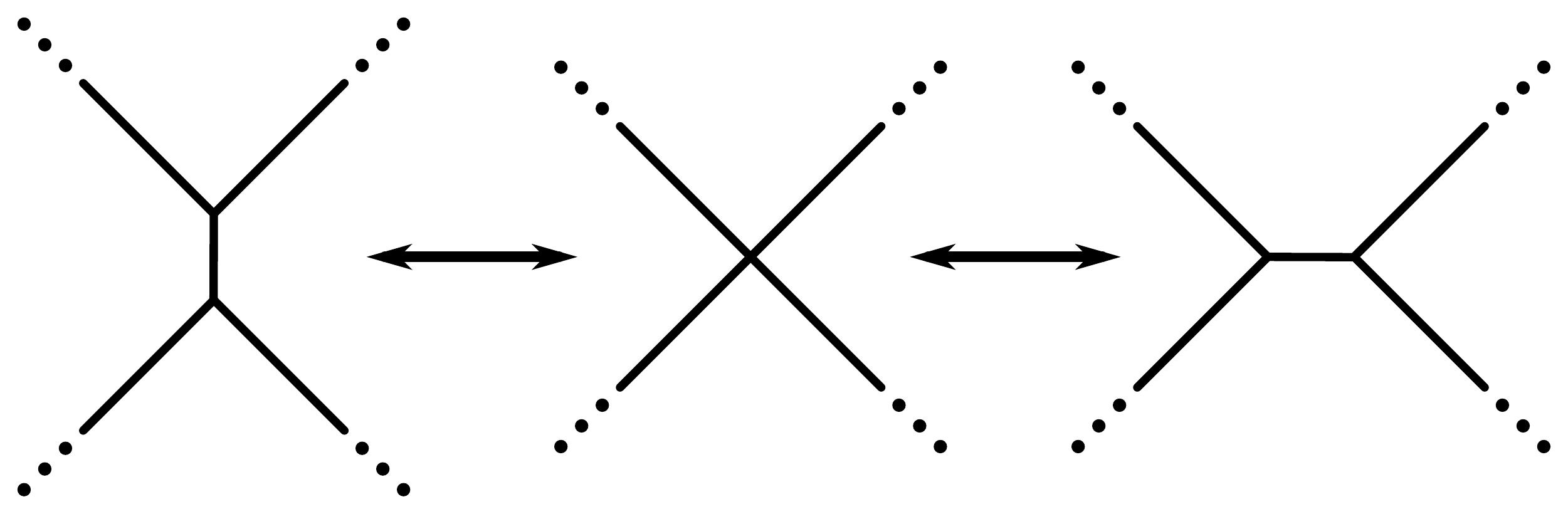
  \caption{Finding embeddings of $E_{3,3}$}
  \label{fig:expandvertex}
\end{figure}

On the other hand, when we expand the single vertex of $\alpha\cup\beta$ to two
trivalent vertices, we find $E_{3,3}$. There are two ways to do this,
corresponding to two different embeddings of $E_{3,3}$ that give back either
$t_\alpha t_\beta$ or $t_\beta t_\alpha$.
\end{proof}
\begin{proposition}\label{prop:generatedoubletwist}
Let $\alpha$ and $\gamma$ be two disjoint nonseparating simple closed curves.
Then $t_\alpha t_\gamma^{-1}$ can be written as a product of bifoil twist as
well as as a product of trefoil twists (and their inverses).
\end{proposition}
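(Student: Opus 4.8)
The plan is to reduce both assertions to one small fact about curves, after which each factorization is a one‑line manipulation of the expressions for $\Tr$ and $\Bi$ supplied by the preceding proposition.

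\medskip
\noindent\emph{A curve lemma.} First I would establish: if $\alpha$ and $\gamma$ are disjoint nonseparating simple closed curves in the closed surface $\Sigma$, there is a simple closed curve $\beta$ with $\alpha\caponept\beta$ and $\gamma\caponept\beta$. To prove it, cut $\Sigma$ along $\alpha$; since $\alpha$ is nonseparating the result $\Sigma'$ is connected, with two new boundary circles $\alpha^+,\alpha^-$, and $\gamma$ still lies in $\Sigma'$. It suffices to find a properly embedded arc in $\Sigma'$ from $\alpha^+$ to $\alpha^-$ meeting $\gamma$ in exactly one point: regluing (after sliding the endpoints so they match) turns it into a curve $\beta$ crossing $\alpha$ once and $\gamma$ once, and a single transverse crossing cannot be removed — the algebraic intersection number is $\pm1$ — so these geometric intersection numbers really are $1$. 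If $\gamma$ is nonseparating in $\Sigma'$, cut also along $\gamma$: the outcome is connected, so I join $\alpha^+$ to one copy of $\gamma$ and the other copy of $\gamma$ to $\alpha^-$ by disjoint embedded arcs and concatenate through $\gamma$. If $\gamma$ is separating in $\Sigma'$, then, because $\gamma$ is nonseparating in $\Sigma$, the circles $\alpha^+$ and $\alpha^-$ must lie in different components of $\Sigma'\smallsetminus\gamma$ (were they in the same one, regluing $\alpha^+$ to $\alpha^-$ would leave $\gamma$ separating in $\Sigma$); then again I join $\alpha^+$ to $\gamma$ in one component and $\gamma$ to $\alpha^-$ in the other. (Alternatively one can read the lemma off the change‑of‑coordinates principle, there being finitely many homeomorphism types of such pairs, and exhibit $\beta$ in each.)

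\medskip
\noindent\emph{The trefoil factorization.} Fix $\beta$ as in the lemma. Since $\alpha\caponept\beta$ and $\gamma\caponept\beta$, the preceding proposition supplies embeddings $G_1,G_2$ of $E_{3,3}$ with $\T{G_1,1}=t_\alpha t_\beta$ and $\T{G_2,1}=t_\gamma t_\beta$. Hence
\begin{equation*}
\T{G_1,1}\,\T{G_2,1}^{-1}=t_\alpha t_\beta\,t_\beta^{-1}t_\gamma^{-1}=t_\alpha t_\gamma^{-1},
\end{equation*}
so $t_\alpha t_\gamma^{-1}$ is a product of trefoil twists and an inverse.

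\medskip
\noindent\emph{The bifoil factorization.} A bifoil word repeats its outer curve, so I first move the repeated curve onto $\beta$: set $\beta_1=t_\beta^{-1}(\alpha)$ and $\beta_2=t_\beta^{-1}(\gamma)$. As $t_\beta$ is a homeomorphism, $\beta$ meets $\beta_i$ transversely in one point iff $t_\beta(\beta)=\beta$ meets $\alpha$, respectively $\gamma$, transversely in one point, which it does; thus $\beta\caponept\beta_1$ and $\beta\caponept\beta_2$, and the preceding proposition gives embeddings $H_1,H_2$ of $E_{2,2}$ with $\T{H_1,1}=t_\beta t_{\beta_1}t_\beta$ and $\T{H_2,1}=t_\beta t_{\beta_2}t_\beta$. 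Using the conjugation rule $f t_c f^{-1}=t_{f(c)}$ with $f=t_\beta$, together with $t_\beta(\beta_1)=\alpha$ and $t_\beta(\beta_2)=\gamma$,
\begin{equation*}
\T{H_1,1}\,\T{H_2,1}^{-1}=t_\beta t_{\beta_1}t_\beta\cdot t_\beta^{-1}t_{\beta_2}^{-1}t_\beta^{-1}=t_\beta\,t_{\beta_1}t_{\beta_2}^{-1}\,t_\beta^{-1}=t_\alpha t_\gamma^{-1},
\end{equation*}
so $t_\alpha t_\gamma^{-1}$ is likewise a product of bifoil twists and an inverse. (When $\alpha$ is isotopic to $\gamma$ both sides are the identity, and the statement is vacuous.)

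\medskip
The only real content is the curve lemma; once it is in hand both factorizations are immediate. Accordingly, the main point to get right is the lemma's bookkeeping in the case where $\gamma$ becomes separating after cutting along $\alpha$ — i.e. verifying that $\alpha^+$ and $\alpha^-$ end up in distinct components, which is precisely where the nonseparating hypothesis on $\gamma$ enters.
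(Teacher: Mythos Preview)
Your proof is correct and follows essentially the same route as the paper: find a curve $\beta$ with $\alpha\caponept\beta\caponept\gamma$, then use $(t_\alpha t_\beta)(t_\gamma t_\beta)^{-1}$ for the trefoil case and $(t_\beta t_{t_\beta^{-1}(\alpha)} t_\beta)(t_\beta t_{t_\beta^{-1}(\gamma)} t_\beta)^{-1}$ for the bifoil case. The only difference is that you give a careful proof of the curve lemma, whereas the paper simply asserts that such a $\beta$ is easy to find after noting that $\alpha\cup\gamma$ is either nonseparating or separates into two pieces.
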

\begin{proof}
The union $\alpha\cup\gamma$ either disconnects the surface into two pieces, or
it is still nonseparating. In either case, it is easy to see that there is a
curve $\beta$ such that $\alpha\caponept\beta$ and $\beta\caponept\gamma$.

For trefoil twists, we use
\[
	\left(t_\alpha t_\beta\right)\left(t_\gamma t_\beta\right)^{-1}
	= t_\alpha t_\beta t_\beta^{-1} t_\gamma^{-1}
	= t_\alpha t_\gamma^{-1}
\]
and notice that both factors on the left-hand side can be realized as trefoil twists.

For bifoil twists, we use a twist along the $E_{2,2}$ \tat\ graph
$t_\beta^{-1}(\alpha) \cup \beta$, an inverse twist along $t_\beta^{-1}(\gamma)
\cup \beta$, and calculate their product:
\begin{multline*}
	\left( t_\beta^{} t_{t_\beta^{-1}(\alpha)} t_\beta^{} \right)
	\left( t_\beta^{} t_{t_\beta^{-1}(\gamma)} t_\beta^{} \right)^{-1}\\
	=
	t_\beta^{}   t_{t_\beta^{-1}(\alpha)}      t_\beta^{}\enspace
	t_\beta^{-1} t_{t_\beta^{-1}(\gamma)}^{-1} t_\beta^{-1}\\
	=
	\Bigl( t_\beta^{} t_{t_\beta^{-1}(\alpha)}      t_\beta^{-1} \Bigr)
	\Bigl( t_\beta^{} t_{t_\beta^{-1}(\gamma)}^{-1} t_\beta^{-1} \Bigr)\\
	=
	t_\alpha^{} t_\gamma^{-1}.
\end{multline*}
\end{proof}
The last equation uses an identity in mapping class groups which is obvious when
stated in a more general form: For any curve $\delta$ and any diffeomorphism
$\varphi$,
\[
	\varphi t^{}_\delta \varphi^{-1} = t_{\varphi(\delta)}.
\]

\begin{figure}
\centering
\def\svgwidth{0.9\textwidth}
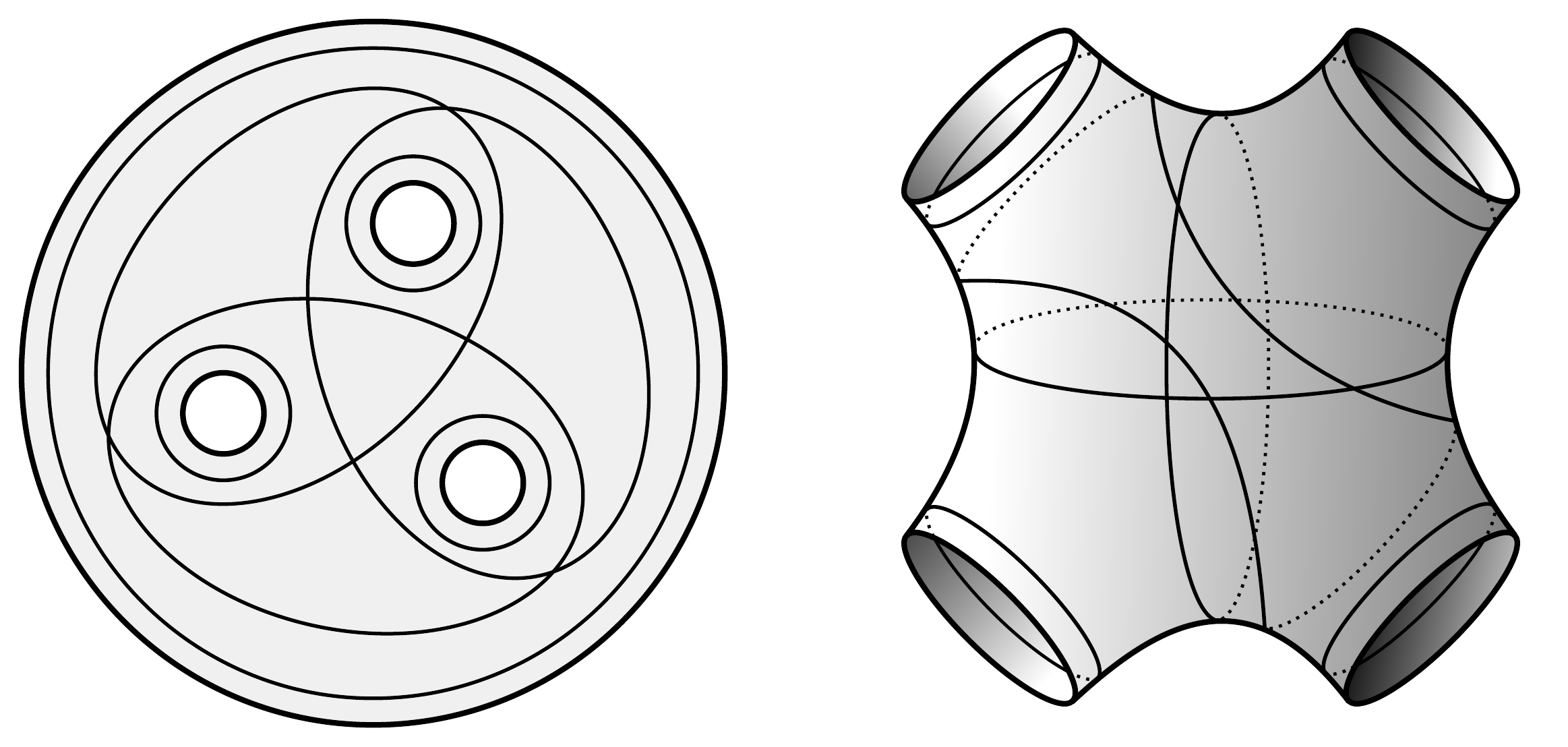
  \caption{Two views of the lantern relation}
  \label{fig:lantern}
\end{figure}
We are now ready to finish the proof of Theorem \ref{thm:generate}. We will show
how to write a Dehn twist along a nonseparating curve as a product of bi- or
trefoil twists. This part uses the \emph{lantern relation}, an important
equation between a product of three Dehn twists one side and four on the
other that has been found by Dehn and later by Johnson (\cite{Johnson1979}). The
``lantern'' is the four-holed sphere in Figure \ref{fig:lantern}, and we have that
\[
	t_\rho t_\sigma t_\tau = t_\alpha t_\beta t_\gamma t_\delta.
\]
\begin{figure}
\centering
\def\svgwidth{0.6\textwidth}
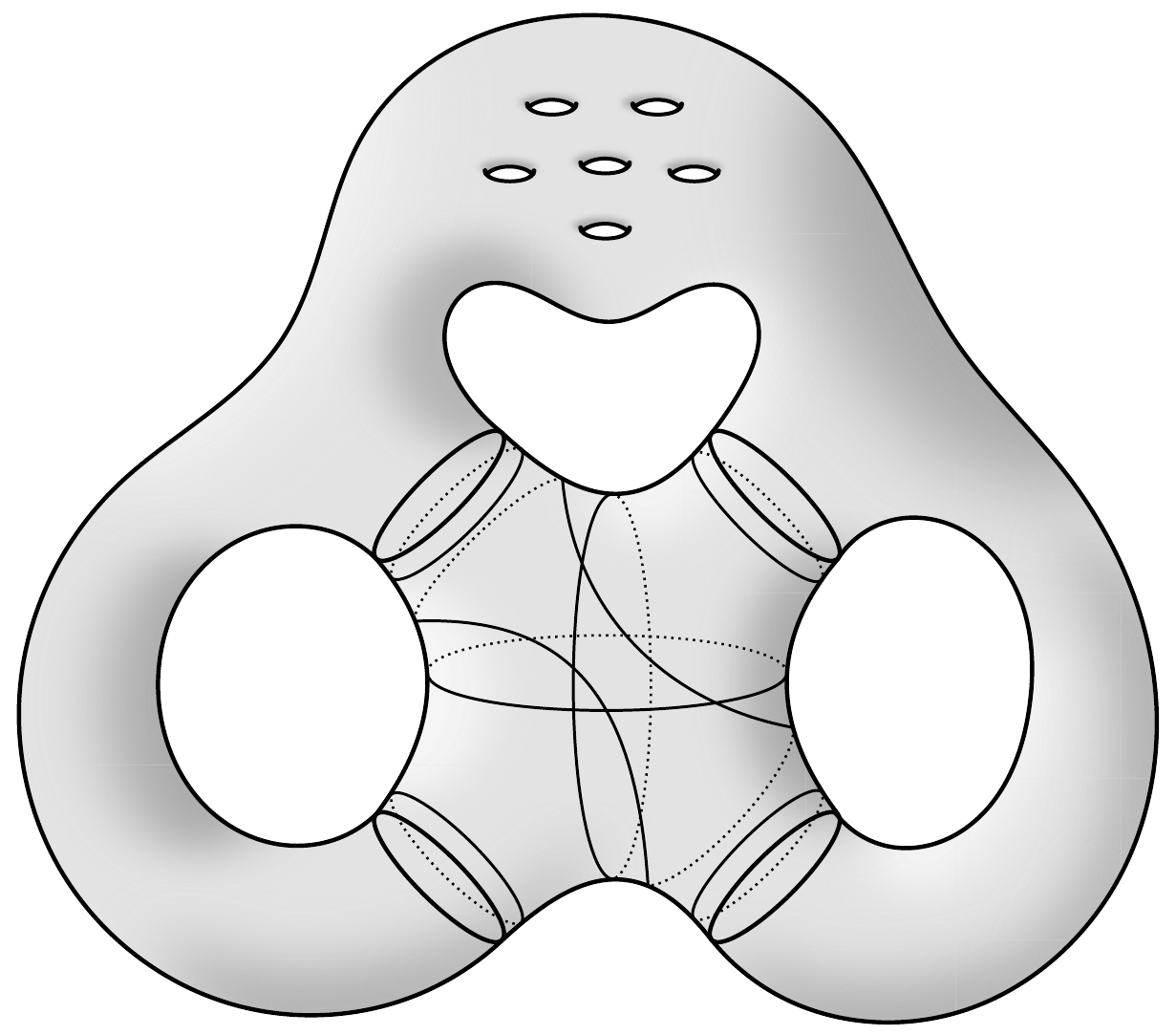
  \caption{A lantern in a surface of genus at least $3$}
  \label{fig:greatlantern}
\end{figure}
When the genus of our surface is at least $3$, we can embed the lantern in such a way that
all curves involved are nonseparating. This is shown in Figure \ref{fig:greatlantern}.
The trick is to rewrite the lantern relation in the following way:
\[
	t_\delta = \bigl( t_\rho^{}   t_\alpha^{-1} \bigr)
               \bigl( t_\sigma^{} t_\beta^{-1}  \bigr)
               \bigl( t_\tau^{}   t_\gamma^{-1} \bigr)
\]
Together with the previous proposition, this demonstrates the theorem.\qed

\subsection{Small genus}
Neither bi- nor trefoil twists generate the mapping class group of the torus;
and trefoil twists do not generate the mapping class group of a surface of genus
$2$.

Would they generate, they would map to a single generator of the abelianization
$\mcg^{ab}$ because they are all conjugate. But the same is true for Dehn twists
along separating curves since they also generate the mapping class group; call
their image $1 \in \mcg^{ab}$. $\T{E_{2,3}}$ now gets mapped to $2$,
$\T{E_{2,2}}$ to $3$.

The abelianization of $\mathrm{SL}(2,\Z)$, the mapping class of the torus, is
$\Z/{12\Z}$, which is not generated by $2$ nor by $3$. For the surface of genus
$2$ it is $\Z/{10\Z}$, which is not generated by $2$.

\subsubsection{Trefoil twists in genus $2$}
There is a presentation for the mapping class group of genus $2$, suggested by
Bergau and Mennicke (\cite{BergauMennicke1960}), proved to be correct by Birman
and Hilden (\cite{BirmanHilden1971}).
From it, can derive the abelianization $\Z/{10\Z}$ mentioned before and conclude:
When we have a diffeomorphism of the surface of genus $2$, the abelianization
allows us to count the number of Dehn twists along nonseparating curves needed
to write it, modulo $10$. In particular, it allows for a nontrivial
homomorphism to $\Z/2\Z$, i.\,e.\@ to count modulo two and distinguish
between ``even'' and ``odd'' diffeomorphisms.

It can be shown that trefoil twists in genus $2$ generate the normal subgroup of
even diffeomorphism. It is obvious that every product of trefoil
twists is even. And the converse is also true, as the following proposition shows.

\begin{proposition}
For any pair of nonseparating curves $\alpha$ and $\gamma$, $t_\alpha^{}
t_\gamma^{}$, $t_\alpha^{-1} t_\gamma^{-1}$, $t_\alpha^{} t_\gamma^{-1}$, and
$t_\alpha^{-1} t_\gamma^{}$ can be written as a product of trefoil twists.
\end{proposition}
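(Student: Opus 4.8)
The plan is to reduce the four products $t_\alpha^{\pm1}t_\gamma^{\pm1}$ to a handful of "building blocks" that are already known to be products of trefoil twists, inserting one auxiliary curve at each step; here and below "product of trefoil twists" is understood to permit inverses, as in Proposition~\ref{prop:generatedoubletwist}. Two facts will be used throughout. First, if nonseparating curves $a,b$ satisfy $a\caponept b$, then both $t_at_b$ and $t_bt_a$ are trefoil twists (the two embeddings of $E_{3,3}$ into a neighbourhood of $a\cup b$, as in the proposition preceding Proposition~\ref{prop:generatedoubletwist}), hence $t_a^{-1}t_b^{-1}=(t_bt_a)^{-1}$ and $t_b^{-1}t_a^{-1}=(t_at_b)^{-1}$ are inverse trefoil twists. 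Second, Proposition~\ref{prop:generatedoubletwist}: for disjoint nonseparating $x,y$ the commuting product $t_xt_y^{-1}=t_y^{-1}t_x$ is a product of trefoil twists.

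First I would note the reductions among the four sign patterns. Since the inverse of a product of (inverse) trefoil twists is again one, and since $t_\alpha^{-1}t_\gamma^{-1}=(t_\gamma t_\alpha)^{-1}$ and $t_\alpha^{-1}t_\gamma=(t_\gamma^{-1}t_\alpha)^{-1}$, it suffices to realize $t_\alpha t_\gamma$, $t_\alpha t_\gamma^{-1}$, and $t_\gamma^{-1}t_\alpha$ for every pair. For pairs in special position the building blocks apply almost directly. If $\alpha\caponept\gamma$, then $t_\alpha t_\gamma$ is a trefoil twist. If $\alpha$ and $\gamma$ are disjoint and non-isotopic, choose a nonseparating curve $\zeta$ with $\alpha\caponept\zeta$ and $\zeta$ disjoint from $\gamma$; then $t_\alpha t_\gamma=(t_\alpha t_\zeta)(t_\zeta^{-1}t_\gamma)=(t_\alpha t_\zeta)(t_\gamma t_\zeta^{-1})$ is a trefoil twist times a product of trefoil twists, and $t_\alpha t_\gamma^{-1}=(t_\alpha t_\zeta)(t_\gamma t_\zeta)^{-1}$, where $t_\gamma t_\zeta$ is again a disjoint-pair product. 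The mixed products $t_\gamma^{-1}t_\alpha$ and the once-intersecting mixed product $t_\delta^{-1}t_\gamma$ with $\delta\caponept\gamma$ are handled by the same trick: pick $\epsilon$ nonseparating with $\gamma\caponept\epsilon$ and $\epsilon$ disjoint from (and non-isotopic to) the other curve, and split as $(t_\epsilon t_\gamma)^{-1}\cdot(\text{disjoint-pair product})$. The genuinely degenerate case is $\alpha\simeq\gamma$, where $t_\alpha t_\gamma^{-1}=\id$ (the empty product) but $t_\alpha t_\gamma=t_\alpha^2$; here I would take a nonseparating curve $c$ disjoint from and not isotopic to $\alpha$ and write $t_\alpha^2=(t_\alpha t_c^{-1})(t_c t_\alpha)$, the first factor handled by Proposition~\ref{prop:generatedoubletwist} and the second by the disjoint-pair argument. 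It is precisely this last step that fails on the torus, where no curve is disjoint from and non-isotopic to a given nonseparating curve — and indeed $\Tr$ does not generate $\mcg(T^2)$ — so the hypothesis genus $2$ enters exactly here.

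For a general pair with $i(\alpha,\gamma)\ge2$, the idea is to step down the intersection number: choose a nonseparating curve $\delta$ disjoint from $\alpha$, not isotopic to $\alpha$, with $i(\delta,\gamma)<i(\alpha,\gamma)$, and write $t_\alpha t_\gamma=(t_\alpha t_\delta^{-1})(t_\delta t_\gamma)$ — the first factor a product of trefoil twists by Proposition~\ref{prop:generatedoubletwist}, the second handled by induction on the intersection number; likewise $t_\alpha t_\gamma^{-1}=(t_\alpha t_\delta^{-1})(t_\gamma t_\delta^{-1})^{-1}$ and the mixed products reduce the same way, with base cases $i=0$ (Proposition~\ref{prop:generatedoubletwist}) and $i=1$ (the trefoil and the mixed-product trick above). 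The auxiliary curves $\zeta$, $\epsilon$, $c$, $\delta$ all exist for a genus-$2$ surface by the change-of-coordinates principle: the pair $(\alpha,\gamma)$ of essential simple closed curves, with its invariants (whether each is separating — here both are not — and the geometric intersection number, together with whether the union separates), has only a controlled list of homeomorphism types, so one only has to exhibit the required curve in each standard model (for once-intersecting or disjoint non-isotopic pairs, a neighbouring curve in a standard chain $\zeta_1,\dots,\zeta_5$ works; for larger intersection, surger along an outermost arc of $\gamma\smallsetminus\alpha$ inside the genus-$1$, two-holed surface $\Sigma\smallsetminus\alpha$).

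The main obstacle I expect is this last point: proving rigorously that for every nonseparating pair with $i(\alpha,\gamma)\ge2$ there is a nonseparating curve $\delta$, disjoint from $\alpha$ and not isotopic to it, with strictly smaller intersection with $\gamma$ — i.e. making all the auxiliary-curve existence claims uniform. Everything else is routine bookkeeping with the two relations above. Once the curves are in hand the argument closes up, and combined with the fact that the index-two "even" subgroup of $\mcg(\Sigma_2)$ is generated by pairs $t_\alpha^{\pm1}t_\gamma^{\pm1}$ of Dehn twists along nonseparating curves, it shows that trefoil twists generate that subgroup.
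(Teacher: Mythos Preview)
Your overall strategy is sound but organised differently from the paper, and the difference matters at exactly the point you flag as the obstacle.

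The paper first proves a Lickorish-style chain lemma --- any two nonseparating curves are linked by a chain $\alpha=\beta_0\caponept\beta_1\caponept\cdots\caponept\beta_k=\gamma$ --- and then telescopes
\[
t_\alpha t_\gamma^{-1}=(t_{\beta_0}t_{\beta_1})(t_{\beta_2}t_{\beta_1})^{-1}(t_{\beta_2}t_{\beta_3})(t_{\beta_4}t_{\beta_3})^{-1}\cdots,
\]
each factor a trefoil twist or its inverse because consecutive $\beta_i$ meet once. The same-sign products then follow from $t_\alpha t_\gamma=(t_\alpha t_\gamma^{-1})\,t_\gamma^2$, where $t_\gamma^2$ is produced in one stroke from the braid relation: for any $\beta\caponept\gamma$,
\[
(t_\gamma t_\beta)^2(t_\beta t_\gamma)^{-1}
=t_\gamma(t_\beta t_\gamma t_\beta)(t_\beta t_\gamma)^{-1}
=t_\gamma(t_\gamma t_\beta t_\gamma)(t_\beta t_\gamma)^{-1}
=t_\gamma^2,
\]
visibly three trefoil twists. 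Your treatment of $t_\alpha^2$ via a disjoint auxiliary curve $c$ and the splitting $t_\alpha^2=(t_\alpha t_c^{-1})(t_c t_\alpha)$ is a legitimate alternative that avoids the braid relation altogether.

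The obstacle you identify, however, is real and is precisely what the paper's organisation sidesteps. Your induction step demands a nonseparating $\delta$ \emph{disjoint} from $\alpha$ with $i(\delta,\gamma)<i(\alpha,\gamma)$; but the arc-surgery you sketch (the same one the paper uses to prove its chain lemma) only yields a disjoint curve when the two adjacent intersections have opposite sign --- when they agree, the surgered curve meets $\alpha$ once. So if every intersection of $\alpha$ with $\gamma$ has the same sign (for instance when the geometric intersection number equals the absolute algebraic intersection number, which does occur on a genus-$2$ surface), each such surgery lands in the $\caponept$ case and no disjoint $\delta$ is produced this way. The easy repair is to allow $\alpha\caponept\delta$ as well in the induction step, since your own once-intersecting base cases then handle $t_\alpha t_\delta^{\pm1}$; but now the splitting $t_\alpha t_\gamma=(t_\alpha t_\delta)(t_\delta^{-1}t_\gamma)$ trades one sign pattern for another, forcing a simultaneous induction over all four patterns. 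That works; it is exactly the bookkeeping the paper's chain-plus-braid-relation route was designed to avoid.
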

We need a small lemma which is proved by techniques similar to the ones in Lickorish's proof
that the mapping class group is generated by Dehn twists.
\begin{lemma}
For any pair of nonseparating curves $\alpha$ and $\gamma$, intersecting
transversely, there is a chain of curves
$\alpha=\beta_0\caponept\beta_1\caponept\ldots\caponept\beta_{k-1}\caponept\beta_k=\gamma$.
\end{lemma}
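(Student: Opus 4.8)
\emph{Plan.} The statement is a standard surgery/induction argument of the kind used in Lickorish's proof that Dehn twists along nonseparating curves generate $\mcg(\Sigma)$. Since the conclusion is invariant under isotopy of $\alpha$ and $\gamma$, I would first isotope them into minimal position, so that the number of points of $\alpha\cap\gamma$ equals the geometric intersection number $i(\alpha,\gamma)=:n$ and no pair of subarcs bounds a bigon. The proof is then induction on $n$, with the understanding that all curves in the chain are required to be nonseparating (this is what the application needs).

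\emph{Base cases.} If $n=1$ then $\alpha\caponept\gamma$ and the one-step chain $\alpha,\gamma$ works. If $n=0$, so $\alpha$ and $\gamma$ are disjoint nonseparating curves, I would produce a single intermediate curve $\beta$ with $\alpha\caponept\beta\caponept\gamma$, exactly as in the proof of Proposition~\ref{prop:generatedoubletwist}: join $\alpha$ to $\gamma$ by an embedded arc $c$ whose interior lies in $\Sigma\smallsetminus(\alpha\cup\gamma)$ and which meets $\alpha$ and $\gamma$ only at its endpoints, and take $\beta$ to be the boundary component of a regular neighbourhood of $\alpha\cup c\cup\gamma$ that runs once over each of $\alpha$ and $\gamma$; a short check, splitting into the cases where $\alpha\cup\gamma$ does or does not separate $\Sigma$, shows $\beta$ can be chosen nonseparating and meeting each of $\alpha,\gamma$ in exactly one point.

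\emph{Inductive step.} Suppose $n\ge 2$. Pick points $x,y\in\alpha\cap\gamma$ consecutive along $\gamma$, so the subarc $g\subset\gamma$ between them has interior disjoint from $\alpha$, and write $\alpha=\alpha_1\cup\alpha_2$ for the two subarcs of $\alpha$ from $x$ to $y$. Let $\delta_i$ be the simple closed curve obtained from the embedded circle $g\cup\alpha_i$ by rounding its two corners at $x$ and $y$ and pushing slightly off $\alpha$ (to a fixed side) and off $g$, the rounding chosen so that $\delta_i$ remains embedded and disjoint from $\alpha$. Then $i(\gamma,\delta_i)$ is the number of points of $\alpha\cap\gamma$ in the interior of $\alpha_i$, hence at most $n-2<n$. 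In $H_1(\Sigma;\Z/2\Z)$ one has $[\delta_1]+[\delta_2]=[\alpha]\neq 0$ because $\alpha$ is nonseparating, so at least one $\delta_i$ is nonseparating; call it $\delta$. Now $\alpha$ and $\delta$ are disjoint nonseparating curves, so the base case $n=0$ joins them by a chain, while $i(\delta,\gamma)<n$ lets the inductive hypothesis join $\delta$ to $\gamma$ by a chain; concatenating at $\delta$ gives the required chain from $\alpha$ to $\gamma$.

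\emph{Main obstacle.} Everything above is formal except the surgery bookkeeping in the inductive step: one must check that the two corner roundings and the push-off can be carried out simultaneously so that $\delta_i$ is an embedded \emph{essential} simple closed curve, disjoint from $\alpha$, whose intersection with $\gamma$ is precisely the points of $\alpha\cap\gamma$ interior to $\alpha_i$. After fixing an orientation on a neighbourhood of $x\cup g\cup y$ this reduces to a finite check on which local resolution to use at each corner, and I expect this verification — together with the case analysis for the $n=0$ construction of $\beta$ — to be where essentially all of the work lies; the rest is the routine induction just sketched.
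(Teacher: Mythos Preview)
Your overall strategy matches the paper's Lickorish-style induction, but there is a genuine gap in the inductive step. You assert that the rounded curve $\delta_i=g\cup\alpha_i$ can always be pushed off $\alpha$ ``to a fixed side'' so as to be disjoint from $\alpha$. This is only possible when the arc $g$ approaches $\alpha$ from the \emph{same} side of the annular neighbourhood of $\alpha$ at both endpoints $x$ and $y$. When $g$ hits $\alpha$ from opposite sides at $x$ and $y$, the algebraic intersection number $\hat{\imath}(\delta_i,\alpha)=\pm 1$, so no choice of corner resolution or push-off makes $\delta_i$ disjoint from $\alpha$; your ``finite check'' will not succeed there.

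The paper's proof is precisely this case split. In the same-side situation it does what you do (and needs your homology trick $[\delta_1]+[\delta_2]=[\alpha]$ to guarantee one of the $\delta_i$ is nonseparating). In the opposite-side situation it observes that $\delta_i$ now meets $\alpha$ in exactly one point, so $\alpha\caponept\delta_i$ already; this both makes $\delta_i$ nonseparating for free and lets you take $\beta_1=\delta_i$ as the next link of the chain, then apply the inductive hypothesis to $(\delta_i,\gamma)$, which has fewer intersections. So the fix to your argument is small --- insert this second case --- but without it the claim ``$\delta_i$ disjoint from $\alpha$'' is simply false.
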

\begin{proof}
\begin{figure}
\centering
\def\svgwidth{0.9\textwidth}
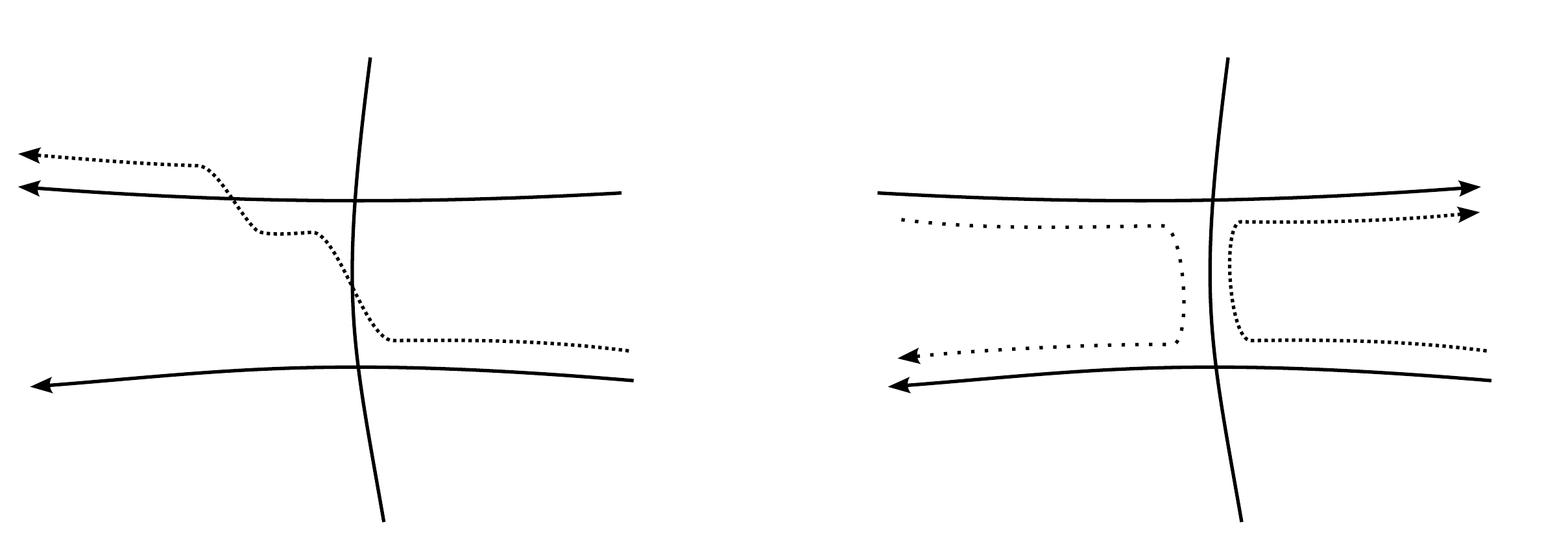
  \caption{Modifications between two crossings}
  \label{fig:curveslikelickorishs}
\end{figure}
If $\alpha\caponept\gamma$, we are done. If $\alpha\cap\gamma=\varnothing$, we
choose a curve $\beta$ with $\alpha\caponept\beta\caponept\gamma$. Else, choose
one arc of $\gamma$ between two intersection points with $\alpha$. We have one
of the two configurations pictured in Figure \ref{fig:curveslikelickorishs},
depending on whether $\alpha$ passes twice from the same side or not. Modify
$\alpha$ as in the pictures.

In the first situation, we find $\beta_1$ such that
$\alpha\caponept\beta_1$ and $1\leq\#\beta_1\cap\gamma<\#\alpha\cap\gamma$. Since
$\alpha\caponept\beta_1$, $\beta_1$ is still nonseparating. Proceed by induction.

In the second situation, we find $\beta_2$ with $\alpha\cap\beta_2=\varnothing$
and $\#\beta_2\cap\gamma\leq\#\alpha\cap\gamma-2$. There is a caveat: $\beta_2$
may well be separating. But as shown in the picture, there is a second possible choice
$\beta_2'$, and $[\beta_2]+[\beta_2']=[\alpha]\neq 0\in H_1(\Sigma)$, so one of
$\beta_2$ and $\beta_2'$ is nonseparating. Assume it is $\beta_2$. Choose
$\beta_1$ such that $\alpha\caponept\beta_1\caponept\beta_2$. Proceed by
induction. When at some point $\beta_k\caponept\gamma$, we are done; when
$\beta_{k-1}\cap\gamma = \varnothing$, we choose $\beta_{k}$ with
$\beta_{k-1}\caponept\beta_k\caponept\gamma$.
\end{proof}
\begin{proof}[Proof of the proposition] Choose such a chain for the two twist
curves $\alpha$ and $\gamma$. $t_\alpha^{} t_\gamma^{-1}$, and analogously
$t_\alpha^{-1} t_\gamma^{}$, are then a product of bifoil twists by induction
and Proposition \ref{prop:generatedoubletwist} from above.

Choose some curve $\beta\caponept\gamma$. Then
\begin{multline*}
	\left(t_\gamma t_\beta\right)^2\left(t_\beta t_\gamma\right)^{-1}\\
	= t_\gamma\left(t_\beta t_\gamma t_\beta\right)\left(t_\beta t_\gamma\right)^{-1}\\
	= t_\gamma\left(t_\gamma t_\beta t_\gamma\right)\left(t_\beta t_\gamma\right)^{-1}\\
	= t_\gamma^2.
\end{multline*}
Therefore, we are also able to get $t_\alpha t_\gamma = t_\alpha t_\gamma^{-1} t_\gamma^2$ and its
inverse, $\left(t_\alpha t_\gamma\right)^{-1}$.
\end{proof}

\subsubsection{Bifoil twists in genus 2}
\begin{figure}
\centering
\def\svgwidth{0.5\textwidth}
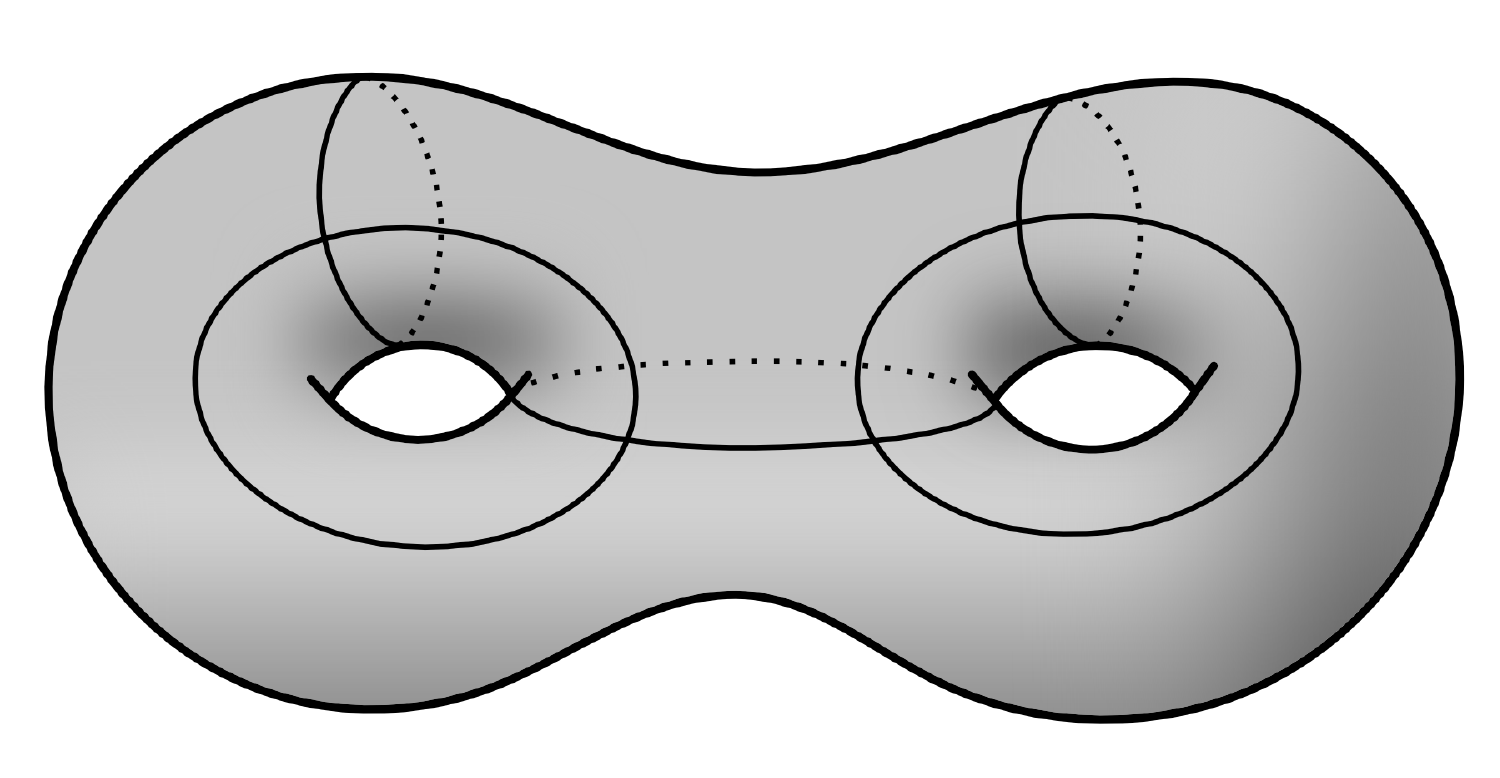
  \caption{Generators for the surface of genus $2$}
  \label{fig:genus2mcg}
\end{figure}
$3$ generates $\Z/10\Z$, so could bifoil twists also generate the mapping class
group of a genus $2$ surface?

Indeed they do; we can write a Dehn twist along a nonseparating curve as a
product of 21 bifoil twists or their inverses. We need a relation in the mapping
class group of genus $2$, to be found for example in the survey article by
Ivanov (\cite[p.~568]{Ivanov2001}). Set $A = t_\alpha$, $B = t_\beta$, $C =
t_\gamma$, $D = t_\delta$, and $E = t_\epsilon$ for the curves in picture
\ref{fig:genus2mcg}. Then
\[
   (ABC)^4 = E^2.
\]
As we have seen, products of the form $CA^{-1}$, for example, are a product
of a bifoil twist and an inverse bifoil twist. Thus we can form
\[
	\bigl((CBC)^{-1}(CA^{-1})\bigr)^4 = (ABC)^{-4} = E^{-2},
\]
using 4 bifoil twists and 8 inverses, and
\[
	(EA^{-1})\bigl((AD^{-1})(DED)(D^{-1}A)\bigr)(A^{-1}E) = (EA^{-1})(AEA)(A^{-1}E) = E^3,
\]
using 5 bifoil twists and 4 inverses. Hence we can form $E$, a twist along a
nonseparating curve, and thus produce the entire mapping class group.

\section{Roots of \tat\ and Dehn twists}
A diffeomorphism $\rho$ is called a \emph{root} of a diffeomorphism
$\phi$ if $\phi$ is a nontrivial power of $\rho$.

\subsection{Which \tat\ twists have roots?}
Some \tat\ twists are obviously powers of others, namely when the walk
length is not minimal: $\T{G,kl}^{} = \T{G,l}^k$.
In general, when a \tat\ twist $\T{G,l}$ is a power of another diffeomorphism
$\rho$, $\rho$ is again (freely) of finite order, hence we know from Chapter
\ref{chap:periodic} that $\rho$ is again a \tat\ twist, say $\rho = \T{G',l'}$
and $\T{G,l} = \T{G',kl'} = \rho^k$.
$G$ and $G'$ are spines for the same surface.

The graphs $G$ and $G'$ need not be equal, unfortunately.
For instance, $G'$ can have more symmetries than $G$, and could be obtained from
$G$ by the collapse of an edge orbit that consist of contractible components
(see Section \ref{sec:equivalence}).
It is also possible that $G$ has more symmetries, e.\,g.\@ when $\T{G,l}$
is a composition of Dehn twists around boundary components, in which case
it can be a bouquet of circles.
Even if the answer to the question \vpageref{q:collapsible} is ``yes'',
one would therefore have to introduce as well as remove contractible
edge orbits in order to see whether a \tat\ twist (or any periodic
map) has a root.

We see interesting examples when we look at powers of elementary twists.
They are best studied as chord diagrams, remembering that chords correspond
to edges and internal boundaries to vertices.
It is often possible to collapse an edge orbit and find roots of powers.
Recall also that collapsing an edge means removing a chord; see Section \ref{sec:buildingribbongraphs}.
As an example, the periodic map $E_{12,7}^3$ (short for $\T{E_{12,7},2}^3$)
on the surface of genus $3$ with one boundary component is the same as
$E_{8,5}^2$, and hence has a square root.
Or consider the fourth power $E_{12,7}^4$ --
it is equal to $E_{9,5}^3$, and hence has a cube root.

\subsection{Monodromies have no roots}
A monodromy of a fibred knot or link can never be a nontrivial power.
``Monodromy'' must be understood in the sense of Section \ref{sec:openbooks},
as a class of diffeomorphisms that fix the boundary, modulo isotopies that
fix the boundary; see also Example \vref{exmpl:boileau}.
The statement follows from the answer to a question asked by Paul A. Smith in  
the 40's of the last century when he could prove that the fixed point
set of a nontrivial finite-order orientation-preserving homeomorphism of the
3-sphere can either be empty or be a circle.
Smith then asked whether such a circle could be knotted; the answer turns
out to be ``no'' for diffeomorphisms:
\begin{SMITH}
	Let\/ $\phi$ be a nontrivial finite-order orientation-preserving diffeomorphism
	of\/ $S^3$ that has fixed points.
	Then the fixed point set of\/ $\phi$ is an unknot.
\end{SMITH}
The proof of this result was built upon the work of many mathematicians and was
finally assembled by Cameron Gordon.
Those efforts are described in a book (\cite{MorganBass1984}).

We can conclude from the Smith conjecture:
\begin{corollary}
	A nontrivial power of a nontrivial mapping class is never the monodromy of a
	fibred link.
\end{corollary}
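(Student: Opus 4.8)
The plan is to combine the branched–cover interpretation of powers of monodromies (as developed in Section \ref{sec:fundamentalgroup} and the subsection on powers and branched covers) with the Smith conjecture. Suppose, for contradiction, that $\phi$ is the monodromy of a fibred link $L\subset S^3$ and that $\phi=\rho^{k}$ for some mapping class $\rho\neq\mathrm{id}$ of the fibre surface $\Sigma$ and some integer $k$ with $\abs{k}\geq 2$; replacing $\rho$ by $\rho^{-1}$ if necessary, we may assume $k\geq 2$. Since $L$ is fibred, the open book it defines identifies $M_{\phi}\cong S^{3}$, with $\Sigma$ as a fibre surface of $L$ and $L=\partial\Sigma$ embedded as the binding. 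Both $\phi$ and $\rho$ are classes fixing $\partial\Sigma$, so $M_{\rho}$ is defined as well.

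Now I would recall that forming the open book of $\rho^{k}$ from that of $\rho$ is the same as passing to the $k$-fold cyclic branched cover: $M_{\rho^{k}}$ is the $k$-fold cyclic branched cover of $M_{\rho}$ branched over the binding $L_{\rho}$ of $\rho$'s open book, the covering being the pullback of the $k$-fold cover $S^{1}\to S^{1}$ along the fibration $M_{\rho}\smallsetminus L_{\rho}\to S^{1}$. Hence $S^{3}=M_{\rho^{k}}$ carries an orientation-preserving diffeomorphism $\psi$ generating the deck group $\Z/k$ of this branched cover. Away from the branch locus the $\Z/k$-action is free with quotient $M_{\rho}\smallsetminus L_{\rho}$, so $\psi$ has order exactly $k\geq 2$; and its fixed-point set is precisely the upstairs branch locus, i.e.\ the binding $L$ of the $\rho^{k}$-open book. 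In particular $\mathrm{Fix}(\psi)$ is nonempty.

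Next I would invoke Smith theory. By the classical homological Smith lemma, the fixed-point set of a nontrivial finite-order orientation-preserving diffeomorphism of $S^{3}$ is either empty or a single circle; since $\mathrm{Fix}(\psi)\neq\varnothing$, the link $L$ is a knot, and by the Smith conjecture this circle is unknotted. But the fibre surface of the unknot is a disc (its complement is a solid torus, so $\pi_{1}$ of the complement is abelian and, since a fibre surface is incompressible in the complement, $\pi_{1}(\Sigma)$ injects into it and is trivial). Thus $\Sigma\cong D^{2}$, hence $\mcg(\Sigma)$ is trivial, which forces $\rho=\mathrm{id}$ --- contradicting our assumption. This proves the corollary; the theorem stated before it (monodromies of fibred knots with \tat\ twists, and more generally finite order, are torus knots) has already been reduced to Seifert's classification, so nothing further is needed there.

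The only genuinely delicate point in this plan is in the second paragraph: identifying the deck transformation $\psi$ cleanly and checking that its fixed-point set is exactly the binding $L$ and nonempty. Concretely this means being careful about the Dehn fillings/collapses in the open-book construction and how the $k$-fold cover of the boundary tori extends across them as the standard local model $(z,w)\mapsto(z^{k},w)$ of a cyclic branched cover; but since the branched-cover description of $M_{\rho^{k}}\to M_{\rho}$ is already recorded earlier in the chapter, and the Smith conjecture is cited (\cite{MorganBass1984}) together with the homological Smith lemma, the remaining argument is short.
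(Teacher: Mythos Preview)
Your argument is correct and follows essentially the same route as the paper. The paper constructs the deck transformation explicitly by writing $M_{\rho^k}$ as $\Sigma\times[0,1]\times\Z/k\Z$ with the obvious gluings and taking the shift $(p,t,i)\mapsto(p,t,i+1)$; you describe the same map via the branched-cover language recorded earlier in the chapter. Both then invoke Smith's result (fixed set empty or a circle) to get that $L$ is a knot, and the Smith conjecture to get that it is the unknot. The only difference is that you spell out the final step (unknot $\Rightarrow$ fibre is a disc $\Rightarrow$ $\rho=\id$), which the paper leaves implicit.
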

\begin{proof}
	Assume that $\phi = \rho^n$ is the monodromy of a fibred link with fibre
	surface $\Sigma$.
	The open book $M_\phi = M_{\rho^k} \cong S^3$ can be constructed from $n$ copies
	of $\Sigma \times [0,1]$, labelled as $\Sigma \times [0,1] \times \Z/n\Z$,
	by identifying $(p,1,k)$ with $(\rho(p),0,k+1)$ for $0 \leq k \leq n-1$ and
	collapsing the boundary.
	Define a diffeomorphism $f \colon S^3 \to S^3$ by
	$f \colon (p,t,k) \mapsto (p,t,k+1)$.
	$f$ is well-defined since it respects the gluings.
	If $k>1$, its fixed point set is precisely $\partial\Sigma$,
	so by Smith's result $\partial\Sigma$ is connected.
	And $f^n = \id_{S^3}$, so the Smith conjecture implies that $\partial\Sigma$
	is an unknot.
\end{proof}

\begin{remark}
	The Smith conjecture in fact implies, as shown by Smith himself, that such a
	diffeomorphism is conjugate via diffeomorphism to an action of
	$\mathrm{SO}(4)$, where $S^3$ is considered as the unit sphere in $\R^4$.
	This looks like a direct analogue to Kerékjártó's lemma
	\vpageref{lem:kerekyarto}, which is also true for the sphere $S^2$.
	But the Smith conjecture is not true for homeomorphisms:
	Montgomery and Zippin, based on constructions by Bing, gave examples of
	homeomorphisms of the 3-sphere whose fixed-point set is a wild knot
	(see \cite{MontgomeryZippin1954}).
\end{remark}

The following example, suggested by Michel Boileau, shows that monodromies can
be nontrivial powers when we consider diffeomorphisms that can move the boundary,
modulo isotopies that can move the boundary.
To keep the naming conventions consistent, this mapping class is called the
\emph{free monodromy} in the example.
Two remarks should be made first:
\begin{remark}
Unlike for links (see Section \ref{sec:trivialmonodromy}), the free monodromy of a nontrivial \emph{knot} is always nontrivial.
If it were trivial, the monodromy would be freely periodic
(with order one), hence the knot would need to be a torus knot (see Theorem \ref{thm:finmon}).
But the free monodromy of (nontrivial) torus knots is also nontrivial, as we can see
from the description by complete bipartite graphs.
\end{remark}
\begin{remark}
A knot $K$ in $S^3$ is fibred if and only if its complement is fibred, that is, if there is a
fibration $\pi \colon S^3 \smallsetminus K \to S^1$.
This is, for example, true because of Stalling's fibration criterion.
Moreover, two fibrations of a knot complement are isotopic, therefore $\pi$ indeed
extends to a fibration for $K$, i.\,e.\@ the fibres of $\pi$ are Seifert surfaces for $K$.
\end{remark}
\begin{example}\label{exmpl:boileau}
The pretzel knot $K = P(-2,3,7)$ is a famous example of a knot with \emph{lens space surgery}:
Both $18$- and $19$-surgery on $K$ produce lens spaces, namely $L(18, 5)$ and $L(19, 8)$,
respectively, as shown by Fintushel and Stern in \cite{FintushelStern1980}.

When we do surgery on $K$, say $18$-surgery, we get an induced knot $K'$ in the new manifold, namely
the soul of the surgered solid torus.
A meridian of $K$ will induce a generator of the fundamental group of the lens space,
which is a cyclic group of order $18$.
An $18$-fold (unbranched) cover of this lens space is the 3-sphere, and it contains a new
knot $K''$ which is the preimage of $K'$ under the cover.
The exterior of $K''$ is the mapping torus of the $18$th power of $K$'s (free) monodromy.
Hence it is also fibred, and $K''$ is a fibred knot whose free monodromy is an $18$th power.
\end{example}

\subsection{Dehn twists have roots}
This is the title of a 2-page paper by Margalit and Schleimer
(\cite{MargalitSchleimer2008}), where they prove that every Dehn twist
along a nonseparating simple closed curve on a closed, connected, orientable
surface of genus at least two has a nontrivial root.
As they note, it easy to find a square root of a Dehn twist along a
\emph{separating} curve:
The curve cuts the surface into two halves; one can put the surface into a standard
position where one half is on the left and one on the right and then twist the left
half by 180 degrees.
We have seen that \tat\ twist with one boundary component give rise to many more roots
of separating Dehn twists.

The case of nonseparating curves is less obvious.
Note first, as mentioned by Margalit and Schleimer, that Dehn twists on the
torus have no roots; this comes from the fact that the mapping class group of the torus is $\mathrm{SL}(2,\Z)$ and
the matrix
$\begin{psmallmatrix*}[r] 1 & 1 \\
                          0 & 1 \end{psmallmatrix*}$,
which represents a Dehn twist in an appropriate basis, has no roots.
Indeed it has trace 2 and therefore, in the classification of matrices of $\mathrm{SL}(2,\R)$,
is parabolic.
That is to say, its action on the upper half plane has a unique fixed point
on the boundary, which in this case is the point $\infty$, and the same must be
true for every root of the matrix.
Since the matrices in $\mathrm{SL}(2,\R)$ which fix $\infty$ are all of the form
$\begin{psmallmatrix*}[r] 1 & x \\
                          0 & 1 \end{psmallmatrix*}$
for $x \in \R$, the claim follows.

For higher genus, however, we can always find roots, and we can use \tat\ twists to do so.
A Dehn twist $t_\alpha$ is obviously reducible with reduction system $\alpha$, and the following
lemma shows that the same is true for any root of $t_\alpha$: 
\begin{lemma}
Let $\alpha$ be an isotopy class of a simple closed curve on a closed, connected,
orientable surface.
Let $\rho$ be a mapping class such that $\rho^k = t_\alpha$
for some $k$.
Then $\rho(\alpha) = \alpha$.
\end{lemma}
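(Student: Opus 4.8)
The plan is to use that $\rho$ commutes with every one of its own powers, in particular with $t_\alpha=\rho^k$. Conjugating $t_\alpha$ by $\rho$ and invoking the identity $\varphi\, t_\delta\, \varphi^{-1}=t_{\varphi(\delta)}$ recorded above, I would first compute
\[
	t_{\rho(\alpha)}=\rho\, t_\alpha\, \rho^{-1}=\rho\, \rho^{k}\, \rho^{-1}=\rho^{k}=t_\alpha ,
\]
so that the Dehn twists along $\rho(\alpha)$ and along $\alpha$ coincide as elements of $\mcg(\Sigma)$. If $\alpha$ bounds a disk this already finishes the argument, since then $\alpha$ is the trivial isotopy class and is fixed by every mapping class; so from now on assume $\alpha$ is essential, and note that $\rho(\alpha)$ is then essential as well (a mapping class sends essential curves to essential curves).

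The key input is the injectivity of the assignment $\beta\mapsto t_\beta$, sending an (unoriented) isotopy class of an essential simple closed curve to its Dehn twist: $t_\beta=t_\gamma$ in $\mcg(\Sigma)$ forces $\beta=\gamma$. This is standard; see \cite[Chapter~3]{FarbMargalit2012}. A convenient way to see it is via intersection numbers: for simple closed curves $\delta$ one can recover each $i(\beta,\delta)$ from the linear growth of $i\bigl(t_\beta^{n}(\delta),\delta\bigr)$ as $n\to\infty$, and a curve on a surface is determined by its collection of geometric intersection numbers with all other curves. (On the torus this is even more transparent, as $t_\beta$ is a unipotent element of $\mathrm{SL}(2,\Z)$ whose fixed direction is $[\beta]$.) Applying this to the equality $t_{\rho(\alpha)}=t_\alpha$ gives $\rho(\alpha)=\alpha$, which is the assertion of the lemma.

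The only point needing a word of care — and it is minor — is that a Dehn twist does not depend on an orientation of its curve, so equality of twists yields equality only of the underlying \emph{unoriented} isotopy classes; but that is exactly what the lemma claims. There is no real obstacle here: once $t_{\rho(\alpha)}=t_\alpha$ is established, the conclusion is a direct application of well-known facts about Dehn twists. As a byproduct, $\rho(\alpha)=\alpha$ shows that any root $\rho$ of $t_\alpha$ preserves the reduction system $\alpha$, which is precisely the use to which the lemma is put in the surrounding discussion.
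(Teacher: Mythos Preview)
Your proof is correct and follows essentially the same route as the paper: conjugate $t_\alpha$ by $\rho$, use commutativity of powers to get $t_{\rho(\alpha)}=t_\alpha$, and conclude $\rho(\alpha)=\alpha$ from the injectivity of $\beta\mapsto t_\beta$. The paper compresses this into a single line without justifying the last implication, whereas you spell out the injectivity argument and the inessential-curve case explicitly.
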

\begin{proof}
$t_\alpha = \rho^k = \rho \circ \rho^k \circ \rho^{-1}
 = \rho \circ t_\alpha \circ \rho^{-1}
 = t_{\rho(\alpha)}$,
thus $\rho(\alpha) = \alpha$.
\end{proof}

We can therefore cut the surface along $\alpha$ and examine the root
as a diffeomorphism defined on a surface with two boundary components.

\begin{question}
	How can one show, directly, that a root will not exchange the two sides
	of $\alpha$, and that therefore the induced diffeomorphism of the cut surface preserves
	the two boundary components setwise?
\end{question}
In that case, by definition, this induced diffeomorphism is freely of finite order; hence it is described
by a multi-speed \tat\ twist with two boundary components, as seen in Chapter
\ref{chap:periodic}.
Note that, by Corollary \ref{cor:tatcusps}, we can choose the \tat\ graph to have an embedded
circle around one of the boundary components, which simplifies the set of graphs to consider.
This makes it possible to use a chord diagram of the graph along the second boundary component, together with
instructions on how to glue it to the circle around the first one.

Some power $\T{}^k$ of the \tat\ twist $\T{}$ will consist of a composition of Dehn twists around the
two boundary components.
We cannot choose one walk length to be zero because this would imply that we must choose
the walk length for the other boundary component so as to induce a power of a Dehn twist;
thus we would not get a nontrivial root.
Moreover, we must choose different signs for the two walk lengths, such that $\T{}^k$
consists of positive as well as negative Dehn twists that add up to a single
one on the original surface.

Modulo the question, \tat\ twists should allow for a complete description of
all possible roots of Dehn twists along nonseparating curves.
For example, those roots are subject to the conditions on the bounds for periodic
maps given in Chapter \ref{chap:periodic}.
Note that different embeddings of the same \tat\ graph into the same (cut) surface,
of which there are usually many, lead to different, but conjugate roots.
They are related by some map $\psi$ on the closed surface such that
$\psi(\alpha) = \alpha$, and when $\rho^k = t_\alpha$, then
$(\psi\rho\psi^{-1})^k = t_\alpha$.
\begin{example}
\begin{figure}
\centering
\def\svgwidth{0.375\textwidth}
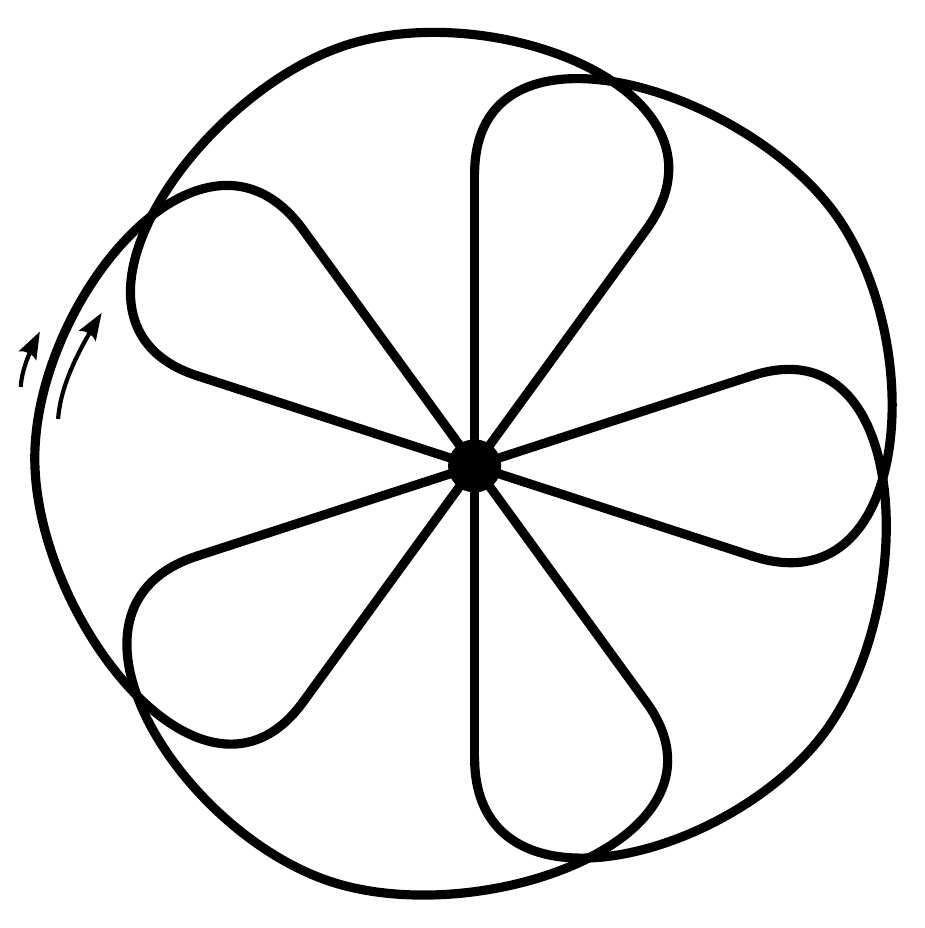
  \caption{A fifth root on a surface of genus 3}
  \label{fig:fifthroot}
\end{figure}
Figure \ref{fig:fifthroot} describes a fifth root of a Dehn twist along a nonseparating curve on a closed surface of genus 3.
The walk lengths are $l_1=-1$ and $l_2=2$.
The graph $G$ describes a surface of genus 2 with two boundary components
$\alpha_1$ and $\alpha_2$, both of length 5, which we identify to get the
nonseparating curve $\alpha$ for the Dehn twist.
With the chosen walk lengths,
$\T{G,-1,2}^5 = t_{\alpha_1}^{-1} t_{\alpha_2}^2 = t_\alpha$.

This example has been found by a chord diagram with two circles;
however, the symmetry is less obvious from the drawing than in the
single-boundary case.
\end{example}

\begin{example}
\begin{figure}
\centering
\def\svgwidth{0.8\textwidth}
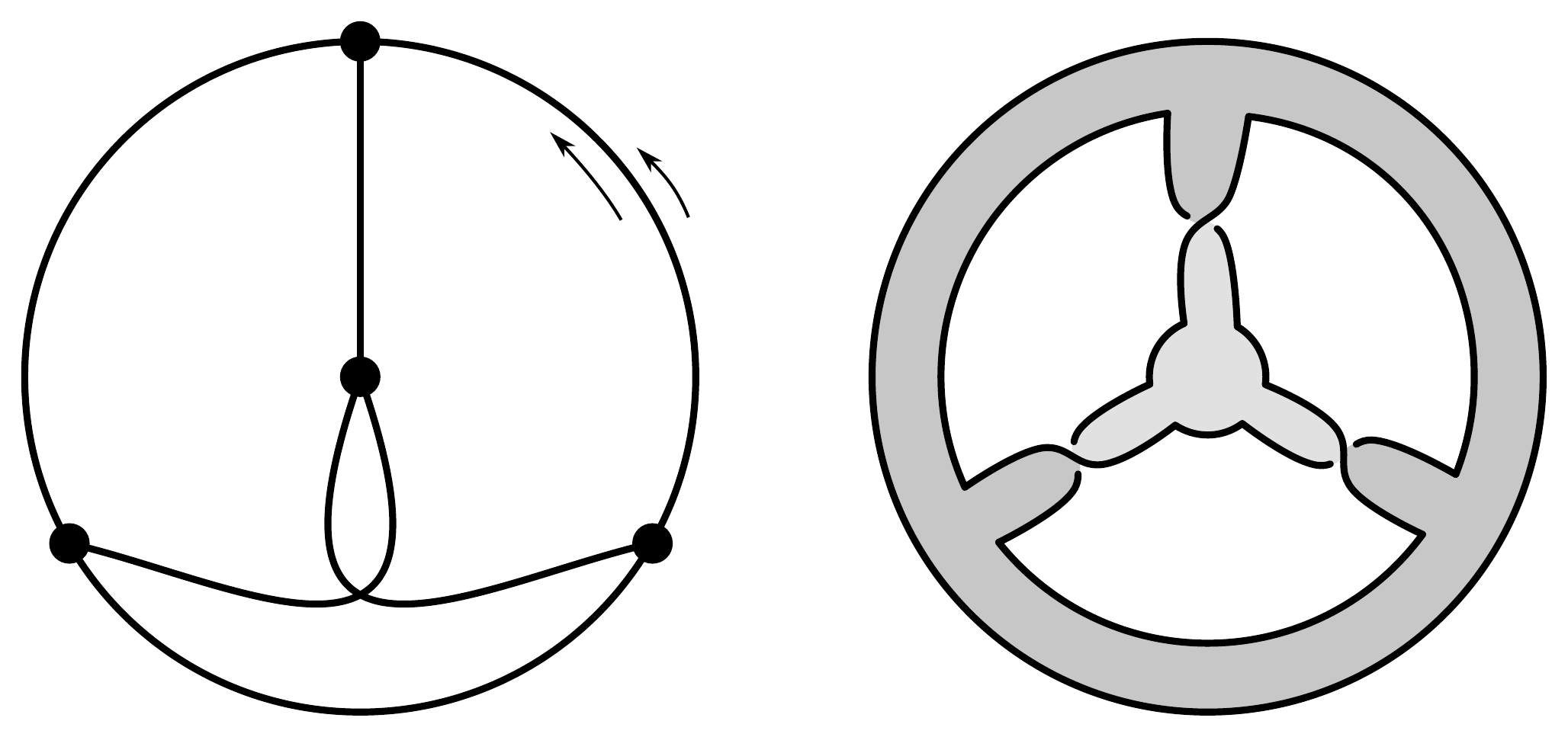
  \caption[A third root on a surface of genus 2]%
  {A third root on a surface of genus 2, to the left with blackboard
  framing, to the right, more symmetrically, as a ribbon graph}
  \label{fig:thirdroot}
\end{figure}
Figure \ref{fig:thirdroot} describes a third root of a Dehn twist along a nonseparating
curve on a closed surface of genus 2.
The walk lengths are $l_1=2$ and $l_2=-3$.
The graph $G$ describes a surface of genus 1 with two boundary components
$\alpha_1$ and $\alpha_2$, one of length 3, the other of length 9, which we identify like before to get the
nonseparating curve $\alpha$ for the Dehn twist.
With the chosen walk lengths,
$\T{G,2,-3}^3 = t_{\alpha_1}^{2} t_{\alpha_2}^{-1} = t_\alpha$.

This example corresponds to one constructed by Margalit and Schleimer:
Take the square of the monodromy of the trefoil.
This map leaves the two trivalent vertices invariant, so we can blow them up to
a circle.
When we cap off the third (original) boundary component, we get an embedded
graph on a two-holed torus.
Using the collapse from Chapter \ref{chap:periodic}
(see Figure \ref{fig:leveebreach}), we can get rid of the enclosed disk and get
the \tat\ twist from the picture.
\end{example}

\chapter{The computer program ``t.a.t.''}\label{chap:software}
The computer program ``t.a.t'' allows for some experiments and calculations with \tat\
twists. It has been written in the programming language Java and is best run
from within an integrated development environment (IDE) like the free application
``Eclipse'', where the code for the experiments can be easily modified and run.

\section{Features}
The main purpose of the program is to take a chord diagram and a compatible
walk length, display the diagram graphically and write down a presentation of
the fundamental group of the corresponding open book as a string that can
be given to the computer algebra system GAP.
Since it relies on chord diagrams, only \tat\ graphs with one boundary component
can be studied.

The class \code{TaTTest} contains the \code{main(\ldots)} method which is
executed.
It also provides some documented sample use cases, mostly for specific twists,
which can be called from \code{main(\ldots)}.
\code{TaTTest} provides some additional methods, like counting all
elementary \tat\ twists of a given order and a given genus, or finding the
unique elementary twist of order $3g$ or $3g+3$, respectively (verifying Lemma
\ref{lem:elementarygraphgenus}).

\code{ChordDiagram} is the class which represents a chord diagram.
It provides methods to calculate the genus of the \tat\ graph, a list of vertices,
a possible isomorphism with another chord diagram, and its rotational
symmetry and thus the minimal walk length for the graph.

Chord diagrams are constructed by the \code{ChordDiagramFactory}.
This factory class provides additional methods to create random chord diagrams,
random diagrams with a given symmetry, elementary chord diagrams, and chord
diagrams for torus knot monodromies.

Some simple mathematical methods are contained in the \code{MoreMath}
class, among others one to generate a list of pairs of coprime integers,
if one is interested in torus knot monodromies.

Finally, \tat\ twists are described by a chord diagram together with a walk length,
which is encapsulated by the \code{ChordTaTTwist} class, and
objects of this class can be asked for a presentation of the fundamental group
of their corresponding open book.

\appendix

\chapter{Java code extracts}
\lstset{language=Java, breaklines=true, breakatwhitespace=true,}
\lstset{basicstyle=\small, tabsize=3,}
\lstset{postbreak=\raisebox{0ex}[0ex][0ex]
        {\ensuremath{\hookrightarrow\space}}}
This appendix contains the Java code which writes down a presentation
of the fundamental group of the open book that corresponds
to a \tat\ twist, as described in Section \ref{sec:fundamentalgroup}.
The following code in the class \code{ChordTaTTwist} calculates the generators.
The method \code{getOpposite(\ldots)} that is used here is called with
a number that denotes an endpoint of a chord in a chord diagram; it returns
the opposite endpoint of the chord. \code{getSize()} returns the number of
chords.

\begin{lstlisting}
/**
 * Calculates the generators of the fundamental group.
 * 
 * @param inGenList
 *			the list of generators
 */
private void fillGenerators(List<String> inGenList) {
	// add a generator for each chord
	for (int i = 0; i < 2 * diagram.getSize(); i++) {
		if (diagram.getOpposite(i) > i) {
			// add a generator of the form "g0", "g27"
			// or the like, labelled by the first
			// endpoint of the chord
			inGenList.add(GEN_PREFIX + i);
		}
	}

	// add a generator for "going once around the circle"
	inGenList.add(GEN_OMEGA);
}
\end{lstlisting}

A method in the same class writes down the relations that come from the interior
boundaries of the chord diagram, or the vertices of the graph:

\begin{lstlisting}
/**
 * Calculates the relations coming from the
 * interior boundaries of the chord diagram.
 * 
 * @param inRelList
 *			the list of relations
 */
private void fillBoundaryRelations(List<String> inRelList) {
	// iterate over the list of boundaries
	for (List<Integer> aBoundary : diagram.getBoundaries()) {
		StringBuffer aRelation = new StringBuffer();
		boolean isEmpty = true;
		// iterate over chords in the boundary component
		for (Iterator<Integer> anIterator = aBoundary.iterator(); anIterator.hasNext();) {
			int aChordEnd = anIterator.next().intValue();
			int aChordStart = diagram.getOpposite(aChordEnd);
			// append a multiplication sign if necessary
			if (!isEmpty) {
				aRelation.append("*");
			}
			// append the name of the chord or its inverse
			if (aChordStart < aChordEnd) {
				aRelation.append(GEN_PREFIX);
				aRelation.append(aChordStart);
			} else {
				aRelation.append(GEN_PREFIX);
				aRelation.append(aChordEnd);
				aRelation.append("^-1");
			}
			// if we're at the last chord,
			// append the generator omega
			if (aChordEnd == diagram.getSize() * 2 - 1) {
				aRelation.append("*");
				aRelation.append(GEN_OMEGA);
			}
			isEmpty = false;
		}
		// add the new relation to our list
		inRelList.add(aRelation.toString());
	}
}
\end{lstlisting}

Finally, the relations coming from the twist itself, or the gluing map of the
open book, are listed. The function \code{pmod(\ldots)}, called with two
integers $x$ and $m$, $m$ positive, returns the number $\mathrm{pmod(x,m)}
\equiv x \pmod m$, represented by an integer between $0$ and $m-1$.

\begin{lstlisting}
/**
 * Calculates the relations coming from the mapping of
 * chords to one another.
 * 
 * @param aRelList
 *			the list of relations
 */
private void fillMappingRelations(List<String> aRelList) {
	// the modulus is the number of endpoints on the
	// chord diagram, which is twice the number of chords
	int aMod = 2 * diagram.getSize();
	for (int i = 0; i < 2 * diagram.getSize(); i++) {
		// Let n be the number of chords, l the walk length.
		// The chord c_i from i to j (i<j) is mapped to
		// c_(i+l), which goes from i+l to j+l.
		// The indices are modulo aMod, but we have to keep
		// track of how many times we made a complete turn.
		// Also, chords are labelled by their lower
		// endpoint, so we may have to change the label
		// accordingly and use inverses.
		int j = diagram.getOpposite(i);
		// treat each chord only once
		if (i > j) {
			continue;
		}
		// where i and j are mapped to
		int aNewI = pmod((i + walkLength), aMod);
		int aNewJ = pmod((j + walkLength), aMod);
		// how many times i and j are turned past the
		// basepoint. Note: this is integral division.
		int aRotI = (i + walkLength) / aMod;
		int aRotJ = (j + walkLength) / aMod;
		// where the chord is mapped to
		String aNewChord = GEN_PREFIX + (aNewI < aNewJ ? aNewI : aNewJ + "^-1");
		// the new relation
		StringBuffer aRelation = new StringBuffer();
		// write omega, or omega to some power, or nothing
		if (aRotI >= 2) {
			aRelation.append(GEN_OMEGA);
			aRelation.append("^");
			aRelation.append(aRotI);
			aRelation.append("*");
		} else if (aRotI == 1) {
			aRelation.append(GEN_OMEGA);
			aRelation.append("*");
		}
		// write the chord
		aRelation.append(aNewChord);
		// write the inverse of omega to some power,
		// or nothing
		if (aRotJ >= 1) {
			aRelation.append("*");
			aRelation.append(GEN_OMEGA);
			aRelation.append("^-");
			aRelation.append(aRotJ);
		}
		// all of this is the image of the original chord
		// c_i, so we add the inverse of c_i to complete the
		// relation
		aRelation.append("*");
		aRelation.append(GEN_PREFIX);
		aRelation.append(i);
		aRelation.append("^-1");
		// store the new relation
		aRelList.add(aRelation.toString());
	}
}
\end{lstlisting}

\backmatter


\chapter{Glossary of symbols}
\newcommand{\bl}{\pmb{(}}
\newcommand{\br}{\pmb{)}}
\newcommand{\ba}{\pmb{'}}
\newcommand{\bsimeq}{\pmb{\simeq}}
\newcommand{\bcong}{\pmb{\cong}}
\newcommand{\bcaponept}{\pmb{\caponept}}
\newcommand{\bpartial}{\pmb{\partial}}
\newcommand{\ooo}{\\[0.9ex]}

\vspace{0.1ex}
\begin{tabularx}{\textwidth}{>{\boldmath}l X}
$\bsimeq$ &
   isotopic
   \ooo
   
$\bcong$ &
   isomorphic, homeomorphic, or diffeomorphic
   \ooo

$\bcaponept$ &
   intersecting transversely in one point
   \ooo

$\bpartial G$ &
   (when $G$ is a \tat\ graph) boundary of the surface that
   deformation retracts to $G$
   \ooo

$\Bi$ &
   bifoil twist: $\T{E_{2,2},1}$
   \ooo

$E_{n,a}$ &
   elementary \tat\ graph belonging to a chord diagram with $n$ chords
   of length $a$ (see Section \ref{sec:elementary})
   \ooo

$M_{\bl\Sigma,\phi\br}$ or $M_\phi$ &
   open book built from the mapping class
   or diffeomorphism $\phi$ of the surface $\Sigma$ (see Section
   \ref{sec:openbooks})
   \ooo

$\mcg\bl\Sigma\br$ &
   mapping class group of the (compact oriented) surface
   $\Sigma$, fixing its boundary pointwise
   \ooo

$\mcg\ba\bl\Sigma\br$ &
   mapping class group of the (compact oriented) surface
   $\Sigma$, fixing its boundary components setwise; often encountered as the
   \emph{pure mapping class group} of a surface with punctures
   \ooo

$\ord\bl\phi\br$ &
   the order of the mapping class $\phi$
   \ooo

$\Tr$ &
   trefoil twist: $\T{E_{3,3},1}$
   \ooo

$t_\alpha$ &
   Dehn twist along the curve $\alpha$
   \ooo

$\T{G}$ &
   \tat\ twist along the graph $G$
   \ooo
   
$\T{G,l}$ &
   \tat\ twist along the graph $G$ with walk length $l$
   \ooo
   
$\T{G,l_1,\ldots,l_b}$ &
   multi-speed \tat\ twist along the graph $G$ with walk lengths $l_1$ to $l_b$
   \ooo
\end{tabularx}

\printbibliography

\end{document}